\documentclass{article}
\usepackage{amssymb,amsmath}
\usepackage{graphicx,enumerate}

\title{Reticulation of a quantale, pure elements and new transfer properties}

\author{George Georgescu \\ \footnotesize University of Bucharest\\ \footnotesize Faculty of Mathematics and Computer Science\\ \footnotesize Bucharest, Romania\\ \footnotesize Email: georgescu.capreni@yahoo.com}

\date{}

\begin{document}
\maketitle

\begin{abstract}
We know from a previous paper that the reticulation of a coherent quantale $A$ is a bounded distributive lattice $L(A)$ whose prime spectrum is homeomorphic to $m$ - prime spectrum of $A$. In this paper we shall prove several results on the pure elements of the quantale $A$ by means of the reticulation $L(A)$. We shall investigate how the properties of $\sigma$ - ideals of $L(A)$ can be transferred to pure elements of $A$. Then the pure elements of $A$ are used to obtain new properties and characterization theorems for some important classes of quantales: normal quantales, $mp$ - quantales, $PF$ - quantales, purified quantales and $PP$ - quantales.

\end{abstract}

\textbf{Keywords}: coherent quantale, reticulation, pure and $w$ -pure elements, normal quantales, $mp$ - quantales, $PF$ - quantales, $PP$ - quantales .
\newtheorem{definitie}{Definition}[section]
\newtheorem{propozitie}[definitie]{Proposition}
\newtheorem{remarca}[definitie]{Remark}
\newtheorem{exemplu}[definitie]{Example}
\newtheorem{intrebare}[definitie]{Open question}
\newtheorem{lema}[definitie]{Lemma}
\newtheorem{teorema}[definitie]{Theorem}
\newtheorem{corolar}[definitie]{Corollary}

\newenvironment{proof}{\noindent\textbf{Proof.}}{\hfill\rule{2mm}{2mm}\vspace*{5mm}}

\section{Introduction}

 \hspace{0.5cm} The quantales \cite{Rosenthal},\cite{Eklud} and the frames \cite{Johnstone} are structures that generalise the lattices of ideals, filters and congruences for various classes of algebras. Several algebraic and topological properties of rings, distributive lattices, $l$-groups and $l$-rings, $MV$-algebras,$BL$-algebras, residuated lattices, etc. can be extended to quantales and frames. To work in this abstract setting is not only a unification of some existing particular results, but also an efficient way to prove new properties for many types of algebras.

 The pure elements in a quantale were introduced in \cite{PasekaRN}. They constitute an abstractization of pure ideals of rings \cite{Borceux1} (named virginal ideals in \cite{Borceux} and neat ideals in \cite{Johnstone}), $\sigma$ - ideals of bounded distributive lattices \cite{Cornish},\cite{Cornish1},\cite{GeorgescuVoiculescu}, pure ideals in $MV$-algebras \cite {Cavaccini}, pure filters in $BL$-algebras \cite{g} and residuated lattices \cite{Muresan}, etc. Similar notions of pure elements can be found in other kind of multiplicative lattices: multiplicative ideal structures \cite{GeorgescuVoiculescu2} and $2$-side carriers \cite{SimmonsC}.

 On the other hand, the reticulation of a coherent quantale $A$ is a bounded distributive lattice $L(A)$ whose $m$ - prime spectrum $Spec(A)$ is homeomorphic to the prime spectrum $Spec_{Id}(L(A))$ of $L(A)$ (see \cite{Georgescu}). Then $L(A)$ unifies various notions of reticulations defined for commutative rings \cite{Simmons},\cite{Johnstone}, l-rings \cite{Johnstone}, $MV$-algebras \cite{Belluce}, $BL$ -algebras \cite{g}, residuated lattices \cite{Muresan}, etc.

 The reticulation is a functorial construction: from a category of algebras to the category of bounded distributive lattices. The reticulation functor allows us a transfer of properties from bounded distributive lattices to algebras and vice-versa.

 This paper studies the pure elements of a coherent quantale $A$ by means of the reticulation $L(A)$. We investigate how some results on the $\sigma$ - ideals of the lattice $L(A)$ can be transferred to the pure elements of $A$. We use the properties of pure elements in order to obtain new characterization theorems for some important classes of quantales: normal quantales, $mp$ - quantales, $PF$ - quantales, purified quantales and $PP$ - quantales.

 Now we shall describe the content of the paper. In Section 2 we recall from \cite{Rosenthal},\cite{Eklud} some basic notions and results in quantale theory: $m$ - prime elements, radical elements, $m$ - prime and maximal spectra with spectral and flat topologies, regular and max - regular elements. Section 3 contains some fundamental facts on the reticulation $L(A)$ of a coherent quantale $A$: axiomatic definition, arithmetic, algebraic and topological constructions, the isomorphism between the Boolean centers $B(A)$ and $B(L(A))$ of $A$ and $L(A)$, the preservation theorems for annihilators,etc.

 Section 4 concerns the pure elements in a coherent quantale $A$. We know from \cite{GeorgescuVoiculescu2} that the set $Vir(A)$ of pure elements of $A$ is a spatial frame. In \cite{GeorgescuVoiculescu2}, a lot of properties of $Vir(A)$ were established in the more large framework of multiplicative ideals structures (= $mi$ - structures). We continue the line of \cite{GeorgescuVoiculescu2} and obtain new results on the pure elements. We define the weakly pure elements (= $w$ - pure elements), a notion that enlarges the class of pure elements. Mainly we study the relationship between the operators $Vir(\cdot )$, $Ker(\cdot )$ and $O(\cdot )$ (introduced in \cite{GeorgescuVoiculescu2}) and the pure and $w$ - pure elements of $A$. The main results of the section establish how the reticulation commutes with $Vir(\cdot )$, $Ker(\cdot )$ and $O(\cdot )$. In this way one obtains the relationship between the pure and $w$ - pure elements of $A$ and the $\sigma$ - ideals of the lattice $L(A)$.

In Section 5 we continue the study of pure elements. Firstly, we prove that there exists a surjective continuos function from the prime spectrum of the frame $Vir(A)$ to the Pierce spectrum $Sp(A)$ of $A$. Secondly, in the continuation of \cite{GeorgescuVoiculescu2} we investigate the pure elements in a normal quantale. Various properties that characterize the normal coherent quantales point out the role of pure elements and operators $Vir(\cdot )$, $Ker(\cdot )$ and $O(\cdot )$ in studying this class of quantales (see Proposition 5.6).

Section 6 contains a lot of results on pure elements in $PF$ - quantales, a quantale abstractization of $PF$ - rings. If $A$ is $PF$ - quantale, then we characterize its pure elements as intersections $\bigwedge(Min(A)\bigcap E)$, where $E$ is a closed subset of the minimal $m$ - prime spectrum $Min(A)$ of $A$ (endowed with the restriction of spectral topology on $Spec(A)$)). We prove the equality of $Min(A)$ with the maximal spectrum $Max(Vir(A)$ of the frame $Vir(A)$. Another theorem of the section shows that for any $PF$ - quantale $A$, the frame $Vir(A)$ is hyperarchimedean.

In Section 7 we define the purified quantales as a generalization of purified rings, introduced in \cite{Aghajani}. The main result of the section is a characterization theorem of the purified quantales.

The $PP$ - quantales, introduced in Section 8, generalize the $PP$ - rings ( = Baer rings). We prove that a semiprime quantale $A$ is a $PP$ - quantale if and only if the reticulation $L(A)$ is a Stone lattice, extending a theorem of Simmons from \cite{Simmons}. By using this result one obtains some characterization theorems of $PP$ - quantales.

We mention that most of the results of this paper extend some theorems that appear in the case of commutative rings (\cite{Aghajani},\cite{Al-Ezeh},\cite{Al-Ezeh2},\cite{Artico},\cite{Borceux1},\cite{Borceux},\cite{Johnstone},\cite{c},\cite{Simmons},\cite{Tar2},\cite{Tar3}), bounded distributive lattices (\cite{Al-Ezeh1},\cite{Cornish},\cite{Cornish1},\cite{GeorgescuVoiculescu},\cite{Johnstone},\cite{Pawar},\cite{Simmons},\cite{Speed}), MV-algebras (\cite{Belluce},\cite{Cavaccini}), $BL$ -algebras \cite{g}, residuated lattices (\cite{Muresan},\cite{Rasouli}),etc.

\section{Preliminaries on quantales}

 \hspace{0.5cm} This section contains some basic notions and results in quantale theory \cite{Rosenthal}, \cite{Eklud}. Recall from \cite{Rosenthal}, \cite{Eklud} that a {\emph{quantale}} is an algebraic structure
 $(A,\lor, \land, \cdot, 0, 1  )$ such that $(A,\lor, \land, 0, 1  )$ is a complete lattice and $(A, \cdot )$ is a semigroup with the property that the multiplication $\cdot$ satisfies the infinite distributive laws: for all $a\in A$ and $X\subseteq A$, we have $a\cdot \bigvee X = \bigvee\{a\cdot x|x\in X\}$ and
 $(\bigvee X)\cdot a = \bigvee\{x\cdot a|x\in X\}$.

 Let $(A,\lor, \land, \cdot, 0, 1  )$ be a quantale and $K(A)$ the set of its compact elements. $A$ is said to be {\emph{integral}} if $(A, \cdot, 1)$ is a monoid and {\emph{commutative}}, if the multiplication $\cdot$ is commutative. A {\emph{frame}} is a quantale in which the multiplication coincides with the meet \cite{Johnstone}. The quantale $A$ is {\emph{algebraic}} if any $a \in A$ has the form $a= \bigvee  X$ for some subset $X$ of $K(A)$. An algebraic quantale $A$ is {\emph{coherent}} if $1 \in K(A)$ and $K(A)$ is closed under the multiplication.
Throughout this paper, the quantales are assumed to be integral and commutative. Often we shall write $ab$ instead of $a \cdot b$. We fix a quantale $A$.

\begin{lema}
\cite{Birkhoff} For all elements $a, b, c$ of the quantale $A$ the following hold:
\newcounter{nr}
\begin{list}{(\arabic{nr})}{\usecounter{nr}}
\item If $a\lor b =1$ then $a\cdot b= a \land b $;
\item If $a\lor b =1$ then $a^{n}\lor b^{n}=1$ for all integer number $n\geq 1$;
\item If $a\lor b=a\lor c=1$ then $a\lor (b\cdot c)= a \lor (b\land c)=1$;

\end{list}
\end{lema}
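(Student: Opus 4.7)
My plan is to prove the three parts in the order (1), (3), (2), because (3) and (2) both leverage the distributivity trick used for (1), and (2) is most cleanly obtained as an iterated application of (3).

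For (1), I first note a general inequality available in any integral quantale: since $1$ is both the top and the multiplicative unit, $a\cdot b\leq a\cdot 1=a$ and $a\cdot b\leq 1\cdot b=b$, hence $a\cdot b\leq a\wedge b$. For the reverse inequality I use the hypothesis $a\vee b=1$ as a resolution of unity: expand $a\wedge b=(a\wedge b)\cdot 1=(a\wedge b)(a\vee b)$ via infinite distributivity to obtain $(a\wedge b)\cdot a\vee (a\wedge b)\cdot b$. Bounding each summand (using $a\wedge b\leq b$ in the first and $a\wedge b\leq a$ in the second) and applying commutativity yields $(a\wedge b)\cdot a\vee (a\wedge b)\cdot b\leq b\cdot a\vee a\cdot b=a\cdot b$. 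Thus $a\wedge b\leq a\cdot b$, and equality follows.

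For (3), I again turn the hypotheses into a product: $1=1\cdot 1=(a\vee b)(a\vee c)$. Infinite distributivity expands the right-hand side into $a^{2}\vee a\cdot c\vee b\cdot a\vee b\cdot c$. Integrality dominates the first three terms by $a$ (since $a^{2},\,a\cdot c,\,b\cdot a$ are all $\leq a\cdot 1=a$ up to commutativity), so $1\leq a\vee b\cdot c$. Finally, the general inequality $b\cdot c\leq b\wedge c$ (again from integrality) gives $a\vee b\cdot c\leq a\vee (b\wedge c)\leq 1$, forcing both to equal $1$.

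For (2), I induct on $n$. The base case $n=1$ is the hypothesis. For the inductive step, assume $a\vee b^{n}=1$; combined with $a\vee b=1$, part (3) applied to the triple $(a,b,b^{n})$ yields $a\vee b^{n+1}=1$. By symmetry in $a$ and $b$ we also get $b\vee a^{n}=1$ for all $n$. One last application of (3) to $(b^{n},a,a^{n-1})$ type reasoning (or equivalently iterating the previous argument with the roles swapped) produces $a^{n}\vee b^{n}=1$.

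I do not anticipate any real obstacle: the entire lemma is a routine exercise in unpacking integrality ($1$ is simultaneously top and unit), commutativity, and the infinite distributivity of multiplication over joins. The only subtlety worth flagging is being careful, at each step, that a term of the form $x\cdot y$ is being bounded by the factor that is genuinely $\leq 1$, which is automatic here but is the whole point of the integrality hypothesis.
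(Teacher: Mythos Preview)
Your proof is correct; each of the three parts is handled cleanly using only integrality, commutativity, and the distributivity of multiplication over joins. The paper does not give its own proof of this lemma --- it is simply cited from \cite{Birkhoff} --- so there is nothing to compare against.

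One minor wording issue in your argument for (2): the phrase ``one last application of (3) to $(b^{n},a,a^{n-1})$'' is not literally a single application, since that would already require $b^{n}\vee a^{n-1}=1$. However, your parenthetical alternative --- iterating the first induction starting from the already established $a^{n}\vee b=1$ to obtain $a^{n}\vee b^{m}=1$ for all $m$, and then taking $m=n$ --- is exactly right and completes the argument without difficulty.
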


On each quantale $A$ one can define a residuation operation $a \rightarrow b=\bigvee \{ x|a x \leq b\}$ and a negation operation $a^{\bot } =a \rightarrow 0 =\bigvee \{x |ax=0\}$. Thus for all $a, b, c \in A$ the following equivalence holds: $a\leq b\rightarrow c$ if and only if $ab \leq c$, so $(A, \lor, \land, \cdot, \rightarrow, 0, 1)$ becomes a (commutative) residuated lattice \cite{Galatos}.

In this paper we shall use without mention the basic arithmetical properties of a residuated lattice \cite{Galatos}.

An element $p <1$ of $A$ is $m$-{\emph{prime}} if for all $a, b \in A$, $ab \leq p$ implies $a \leq b$ or $b \leq p$. If $A$ is an algebraic quantale, then $p<1$ is $m$-prime if and only if for all $c, d \in K(A)$, $cd \leq p$ implies $c \leq p$ or $d \leq p$. Let us introduce the following notations: $Spec(A)$ is the set of $m$-prime elements and $Max(A)$ is the set of maximal elements of $A$. If $1 \in K(A)$ then for any $a <1$ there exists $m \in Mar(A)$ such that
$a \leq m$. The same hypothesis $1 \in K(A)$ implies that $Max(A) \subseteq Spec(A)$.

 The main example of quantale is the set $Id(R)$  of ideals of a (unital) commutative ring $R$ and the main example of frame is the set $Id(L)$  of ideals of a bounded distributive lattice $L$. Thus the set $Spec(R)$ of prime ideals in $R$ is the prime spectrum of the quantale $Id(R)$ and the set of prime ideals in $L$ is the prime spectrum of the frame $Id(L)$.

Following \cite{Rosenthal}, the {\emph{radical}} $\rho(a)=\rho_A(a)$ of an element $a \in A$ is defined by $\rho_A(a)=\bigwedge \{p\in Spec(A)|a \leq p\}$; if $a=\rho(a)$ then $a$ is a radical element. We shall denote by $R(A)$ the set of radical elements of $A$. The quantale $A$ is said to be {\emph{semiprime}} if $\rho(0)=0$.

\begin{lema}
\cite{Rosenthal} For all elements $a, b \in A$ the following hold:
\usecounter{nr}
\begin{list}{(\arabic{nr})}{\usecounter{nr}}
\item $a \leq \rho(a)$;
\item $\rho(a \land b)=\rho (ab)=\rho(a) \land \rho(b)$;
\item $\rho(a)=1$ iff $a=1$;
\item $\rho(a \lor b)=\rho(\rho(a) \lor \rho(b))$;
\item $\rho(\rho(a))=\rho(a)$;
\item $\rho(a) \lor \rho(b)=1$ iff $a \lor b=1$;
\item $\rho(a^n)=\rho(a)$, for all integer $n \geq 1$.
\end{list}
\end{lema}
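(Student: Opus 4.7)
Every assertion is about the set $V(a)=\{p\in\Spec(A)\mid a\le p\}$ and its infimum, so the plan is to unravel the definition $\rho(a)=\bigwedge V(a)$ and translate each statement into a comparison of such sets. The only non-trivial ingredient beyond this is the characterization of $m$-primality: $ab\le p$ iff $a\le p$ or $b\le p$ (equivalently $a\land b\le p$ iff $a\le p$ or $b\le p$, since $ab\le a\land b\le a,b$). Throughout, I will use the fact that if $V(a)\subseteq V(b)$ then $\rho(b)\le\rho(a)$ (smaller set of primes gives a larger infimum), and that $\rho$ is monotone. The assumption $1\in K(A)$, guaranteeing a maximal (hence $m$-prime) element above every $a<1$, will be invoked to pass from $\rho(a)=1$ back to $a=1$.

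\textbf{Straightforward parts (1), (5), (7).} For (1), every $p\in V(a)$ satisfies $a\le p$, so $a\le\bigwedge V(a)=\rho(a)$. For (5), (1) gives $\rho(a)\le\rho(\rho(a))$, and for the reverse inclusion any $p\in V(a)$ automatically lies in $V(\rho(a))$ (since $\rho(a)\le p$ whenever $a\le p$), so $V(a)\subseteq V(\rho(a))$ and $\rho(\rho(a))\le\rho(a)$. For (7), integrality gives $a^{n}\le a$, so $V(a)\subseteq V(a^{n})$; conversely if $a^{n}\le p\in\Spec(A)$ an easy induction using $m$-primality yields $a\le p$, so $V(a^{n})\subseteq V(a)$. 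Both inclusions give $\rho(a^{n})=\rho(a)$.

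\textbf{The meet/product and join identities (2) and (4).} For (2), from $ab\le a\land b\le a,b$ we get the monotone chain $\rho(ab)\le\rho(a\land b)\le\rho(a)\land\rho(b)$. To close the loop I will take any $p\in V(ab)$ and use $m$-primality to deduce $a\le p$ or $b\le p$, hence $\rho(a)\land\rho(b)\le p$; taking the meet over $p\in V(ab)$ gives $\rho(a)\land\rho(b)\le\rho(ab)$. For (4) one inclusion is monotonicity: $a\lor b\le\rho(a)\lor\rho(b)$ yields $\rho(a\lor b)\le\rho(\rho(a)\lor\rho(b))$. Conversely, if $p\in V(a\lor b)$ then $a\le p$ and $b\le p$, so $\rho(a)\le p$ and $\rho(b)\le p$, hence $\rho(a)\lor\rho(b)\le p$; this shows $V(a\lor b)\subseteq V(\rho(a)\lor\rho(b))$ and gives the reverse inequality.

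\textbf{The 1-reflection statements (3) and (6).} These are where the coherence-type hypothesis really matters. For (3), $\rho(1)=\bigwedge\emptyset=1$ by convention (no $p<1$ lies above $1$). The converse $\rho(a)=1\Rightarrow a=1$ is the main obstacle: I would argue by contraposition, noting that if $a<1$ then the hypothesis $1\in K(A)$ produces $m\in\Max(A)\subseteq\Spec(A)$ with $a\le m$, whence $\rho(a)\le m<1$. Finally (6) follows by combining (3) and (4): $\rho(a)\lor\rho(b)=1$ iff $\rho(\rho(a)\lor\rho(b))=1$ iff (by (4)) $\rho(a\lor b)=1$ iff $a\lor b=1$.
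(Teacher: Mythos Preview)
The paper does not supply its own proof of this lemma; it is stated with a citation to Rosenthal and left without argument. Your proof is correct and is the standard one: each item reduces to comparing the sets $V(a)=\{p\in Spec(A)\mid a\le p\}$, using $m$-primality for (2) and (7) and the existence of enough $m$-primes for (3). Your explicit flagging of the hypothesis $1\in K(A)$ for the implication $\rho(a)=1\Rightarrow a=1$ is appropriate: without some condition guaranteeing that every $a<1$ lies below an $m$-prime, (3) (and hence (6)) can fail, and the paper's standing hypotheses later in the text (coherence) do supply this. One cosmetic remark: you write $\Spec(A)$ with a command that is not defined in the paper; use the plain $Spec(A)$ as the paper does.
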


For an arbitrary family $(a_i)_{i\in I} \subseteq A$, the following equality holds: $\rho(\displaystyle \bigvee_{i \in I}a_i)=\rho(\bigvee_{i \in I} \rho(a_i))$. If $(a_i)_{i\in I} \subseteq R(A)$ then we denote $\displaystyle \bigvee_{i \in I}^{\cdot} a_i=\rho(\bigvee_{i \in I}a_i)$. Then it is easy to prove that $(R(A), \displaystyle \bigvee^{\cdot}, \wedge, \rho(a), 1)$ is a frame \cite{Rosenthal}.

\begin{lema}
\cite{Cheptea1} If $1 \in K(A)$ then $Spec(A)=Spec(R(A))$ and $Max(A)=Max(R(A))$.
\end{lema}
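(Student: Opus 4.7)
The plan is to exploit two facts: $m$-prime and maximal elements are automatically radical, so both spectra live inside $R(A)$; and the meet of the frame $R(A)$ agrees with the meet of $A$, while multiplication is linked to the meet of radicals via $\rho(ab) = \rho(a) \wedge \rho(b)$ (Lemma 2.2(2)).

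First I would show $Spec(A)\subseteq R(A)$: every $m$-prime element $p$ satisfies $p = \rho(p)$, since $p$ itself is one of the primes above $p$, forcing the infimum defining $\rho(p)$ to equal $p$. Then I would prove $Spec(A)=Spec(R(A))$ by checking both inclusions.

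For $Spec(A)\subseteq Spec(R(A))$: take $p\in Spec(A)$. Since $A$ is integral and commutative, $ab\le a\wedge b$ for all $a,b\in A$. Thus if $a,b\in R(A)$ with $a\wedge b\le p$, then $ab\le p$, and $m$-primeness of $p$ in $A$ yields $a\le p$ or $b\le p$. Together with $p<1$ in $R(A)$, this is exactly primeness in the frame $R(A)$.

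For the reverse inclusion, let $q\in Spec(R(A))$; then $q=\rho(q)$ and $q<1$. Given $a,b\in A$ with $ab\le q$, apply $\rho$ and use Lemma 2.2(2) to get $\rho(a)\wedge\rho(b)=\rho(ab)\le\rho(q)=q$. Since $\rho(a),\rho(b)\in R(A)$ and $q$ is prime in $R(A)$, one of them lies below $q$; combining with $a\le\rho(a)$ and $b\le\rho(b)$ (Lemma 2.2(1)) gives $a\le q$ or $b\le q$. Hence $q\in Spec(A)$.

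Finally, for $Max(A)=Max(R(A))$ I would invoke the hypothesis $1\in K(A)$. This gives both $Max(A)\subseteq Spec(A)\subseteq R(A)$ and the property that every $a<1$ in $A$ lies below some $m\in Max(A)$. If $m\in Max(A)$ and $m\le r<1$ with $r\in R(A)$, then $r<1$ in $A$, so $m=r$ by maximality in $A$, proving $m\in Max(R(A))$. Conversely, if $n\in Max(R(A))$, choose $m\in Max(A)$ with $n\le m<1$; since $m\in R(A)$, maximality of $n$ in $R(A)$ forces $n=m\in Max(A)$. The only subtle point is the initial observation $ab\le a\wedge b$ in the integral case and the interplay $\rho(ab)=\rho(a)\wedge\rho(b)$; everything else is bookkeeping about where the top element sits and how maximals dominate in a coherent setting.
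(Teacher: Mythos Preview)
Your argument is correct. The paper itself does not prove this lemma; it is quoted from \cite{Cheptea1} without proof, so there is no ``paper's own proof'' to compare against. Your direct verification is the natural one: $m$-prime elements are radical because $p$ appears in the infimum defining $\rho(p)$; the inclusion $Spec(A)\subseteq Spec(R(A))$ follows from $ab\le a\wedge b$ in an integral quantale; the reverse inclusion uses $\rho(ab)=\rho(a)\wedge\rho(b)$ from Lemma~2.2(2); and the equality of maximal spectra follows from $Max(A)\subseteq Spec(A)\subseteq R(A)$ together with the existence of maximal elements above any $a<1$, both of which are guaranteed by $1\in K(A)$. One small remark: the hypothesis $1\in K(A)$ is genuinely needed only for the $Max$ part (to ensure $Max(A)\subseteq Spec(A)$ and that maximal elements exist above proper elements); your argument for $Spec(A)=Spec(R(A))$ goes through without it.
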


\begin{lema}
\cite{Martinez} Let $A$ be a coherent quantale and $a \in A$. Then
\usecounter{nr}
\begin{list}{(\arabic{nr})}{\usecounter{nr}}
\item $\rho(a)=\bigvee \{c \in K(A)| c^k \leq a$ for some integer $k\geq 1\}$;
\item For any $c \in K(A), c \leq \rho(a)$ iff $c^k \leq a$ for some $k\geq 1$.
\end{list}
\end{lema}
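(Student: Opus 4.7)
The plan is to deduce (1) from (2) and to establish (2) by the classical Krull-style argument using Zorn's lemma together with coherence.

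First I would dispose of the easy direction of (2). If $c^k\le a$ for some $k\ge 1$, then by monotonicity of $\rho$ and Lemma 2.2(7) one has $c\le \rho(c)=\rho(c^k)\le \rho(a)$.

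For the hard direction, I would argue contrapositively: assuming $c^k\not\le a$ for every $k\ge 1$, I produce an $m$-prime $p$ with $a\le p$ and $c\not\le p$, which forces $c\not\le \rho(a)$. Let $S=\{x\in A\mid a\le x \text{ and } c^k\not\le x \text{ for all } k\ge 1\}$. Then $a\in S$, so $S$ is nonempty. To apply Zorn's lemma, take a chain $(x_i)_{i\in I}$ in $S$ and set $x=\bigvee_i x_i$. Since $A$ is coherent, $K(A)$ is closed under multiplication, so $c^k\in K(A)$ for every $k$. If $c^k\le x$ then by compactness of $c^k$ (combined with the directedness of the chain) we get $c^k\le x_i$ for some $i$, contradicting $x_i\in S$; hence $x\in S$. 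Let $p$ be a maximal element of $S$. Note $p<1$ because $c=c^1\not\le p$. To show $p$ is $m$-prime, by the characterization for algebraic quantales it suffices to check the implication on compact elements. Suppose $d,e\in K(A)$ with $de\le p$ but $d\not\le p$ and $e\not\le p$. Then $p\lor d$ and $p\lor e$ strictly dominate $p$, so by maximality neither lies in $S$, giving integers $k,l\ge 1$ with $c^k\le p\lor d$ and $c^l\le p\lor e$. Multiplying and using the infinite distributive laws,
\[
c^{k+l}=c^k\cdot c^l\le (p\lor d)(p\lor e)=p^2\lor pe\lor dp\lor de\le p,
\]
which contradicts $p\in S$. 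Hence $p\in\mathrm{Spec}(A)$ with $a\le p$, so $\rho(a)\le p$, while $c\not\le p$ by construction; therefore $c\not\le \rho(a)$.

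Finally, part (1) follows: since $A$ is algebraic, $\rho(a)=\bigvee\{c\in K(A)\mid c\le \rho(a)\}$, and by (2) this is exactly $\bigvee\{c\in K(A)\mid c^k\le a \text{ for some } k\ge 1\}$; the reverse inequality is the easy half of (2). The main obstacle is the Zorn's lemma step, and specifically the verification that $p$ is $m$-prime: this is where coherence is essential, since we need $c^{k+l}\in K(A)$ and the characterization of $m$-primeness via compact test elements; without closure of $K(A)$ under multiplication the standard trick of multiplying $c^k$ and $c^l$ collapses.
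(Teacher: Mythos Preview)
Your argument is correct. The paper does not give its own proof of this lemma: it is simply quoted from \cite{Martinez} and used as a black box throughout. So there is nothing in the paper to compare against beyond noting that your proof is the standard Krull-style argument one would expect from the abstract-ideal-theory literature.

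A couple of minor remarks on presentation. In the chain step you use that $c^k\in K(A)$; this is exactly where coherence enters, as you say, since $K(A)$ is closed under multiplication and $1\in K(A)$. In the primeness step you invoke the characterization of $m$-primes via compact test elements, which the paper records in Section~2 (``If $A$ is an algebraic quantale, then $p<1$ is $m$-prime if and only if for all $c,d\in K(A)$, $cd\le p$ implies $c\le p$ or $d\le p$''), so that appeal is legitimate in context. Your derivation of (1) from (2) via $\rho(a)=\bigvee\{c\in K(A)\mid c\le\rho(a)\}$ is also fine, since $A$ is algebraic and $\rho(a)\in A$. Nothing is missing.
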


\begin{lema}
\cite{Cheptea1} If $A$ is a coherent quantale then $K(R(A))=\rho(K(A))$ and $R(A)$ is a coherent frame.
\end{lema}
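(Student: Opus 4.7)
The plan is to prove the two assertions in tandem, showing first that $\rho(K(A))$ is exactly the set of compact elements of the frame $R(A)$ and then checking the three axioms of coherence for $R(A)$. Throughout, the key distinction to keep in mind is the difference between the join $\bigvee$ computed in $A$ and the join $\bigvee^{\cdot}$ computed in $R(A)$ (where $\bigvee^{\cdot}_i a_i = \rho(\bigvee_i a_i)$); the bridge between them is Lemma 2.5(2).

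First I would show $\rho(K(A)) \subseteq K(R(A))$. Take $c \in K(A)$ and suppose $\rho(c) \leq \bigvee^{\cdot}_{i \in I} a_i = \rho(\bigvee_{i \in I} a_i)$ with each $a_i \in R(A)$. Then $c \leq \rho(\bigvee_i a_i)$, so by Lemma 2.5(2) there is an integer $k \geq 1$ with $c^{k} \leq \bigvee_i a_i$. Since $K(A)$ is closed under multiplication, $c^k \in K(A)$, hence compactness in $A$ yields a finite subset $J \subseteq I$ with $c^k \leq \bigvee_{i \in J} a_i$. Applying $\rho$ and using Lemma 2.2(7) gives $\rho(c) = \rho(c^k) \leq \rho(\bigvee_{i\in J} a_i) = \bigvee^{\cdot}_{i \in J} a_i$, so $\rho(c) \in K(R(A))$.

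Next I would prove the reverse inclusion $K(R(A)) \subseteq \rho(K(A))$. Given $a \in R(A)$, write $a = \bigvee\{c \in K(A) \mid c \leq a\}$ using algebraicity of $A$, and then apply $\rho$ to both sides to obtain $a = \rho(a) = \bigvee^{\cdot}\{\rho(c) \mid c \in K(A),\ c \leq a\}$, an expression of $a$ as a join in $R(A)$ of elements of $\rho(K(A))$. If moreover $a \in K(R(A))$, compactness gives finitely many compacts $c_1,\dots,c_n \leq a$ with $a \leq \rho(c_1)\vee^{\cdot} \cdots \vee^{\cdot} \rho(c_n) = \rho(c_1 \vee \cdots \vee c_n)$. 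Setting $c = c_1 \vee \cdots \vee c_n \in K(A)$, we get $c \leq a \leq \rho(c)$, so $a = \rho(c) \in \rho(K(A))$. This simultaneously establishes the equality $K(R(A)) = \rho(K(A))$ and shows that $R(A)$ is algebraic.

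Finally, for coherence it remains to check that $1 \in K(R(A))$ and that $K(R(A))$ is closed under the frame multiplication $\wedge$. The first holds because $1 = \rho(1)$ and $1 \in K(A)$. For the second, if $\rho(c),\rho(d) \in \rho(K(A))$, then by Lemma 2.2(2), $\rho(c) \wedge \rho(d) = \rho(cd)$, and $cd \in K(A)$ since $K(A)$ is multiplicatively closed, so $\rho(c) \wedge \rho(d) \in \rho(K(A)) = K(R(A))$. The main obstacle, and the step where care is required, is the first inclusion: one must pass from compactness in $R(A)$ (involving $\bigvee^{\cdot}$) to compactness in $A$ (involving $\bigvee$), and the crucial move that makes this possible is replacing $c$ by $c^k \in K(A)$ via Lemma 2.5(2) together with the multiplicative closure of $K(A)$.
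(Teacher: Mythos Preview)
The paper does not supply a proof of this lemma; it is quoted from \cite{Cheptea1} without argument, so there is nothing to compare your approach against. Your argument is correct and is in fact the standard one: pushing compactness from $A$ up to $R(A)$ via the power trick, and pulling it back down by writing a radical element as a directed join of $\rho(c)$'s.

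One small correction: the result you repeatedly invoke as ``Lemma~2.5(2)'' is, in the paper's numbering, Lemma~2.4(2) (the characterisation $c\leq\rho(a)\Leftrightarrow c^k\leq a$ for compact $c$); Lemma~2.5 is precisely the statement you are proving, so as written the citations look circular. With that relabelling the proof goes through: the only genuinely nontrivial step is the inclusion $\rho(K(A))\subseteq K(R(A))$, and your use of $c^k\in K(A)$ (via the multiplicative closure of $K(A)$, which is part of coherence) to reduce a $\bigvee^{\cdot}$-cover to a $\bigvee$-cover is exactly the right move. The rest---algebraicity of $R(A)$, $1=\rho(1)$ compact, and closure of $\rho(K(A))$ under $\wedge$ via $\rho(c)\wedge\rho(d)=\rho(cd)$---is routine.
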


For any element $a$ of a coherent quantale $A$ let us consider the interval $[a)_A=\{x \in A|a \leq x\}$ and for all $x, y \in [a)_A$ denote $x \cdot_a y=xy \lor a$. Thus $[a)_A$ is closed under the multiplication $\cdot_a$ and $([a)_A, \lor, \land, \cdot_a, 0, 1)$ is a coherent quantale.

\begin{lema}
\cite{Cheptea1} The quantale $([\rho(a))_A, \lor, \land, \cdot_a, 0, 1)$ is semiprime and $Spec(A)=Spec([\rho(a))_A), Max(A)=Max([\rho(a))_A)$.
\end{lema}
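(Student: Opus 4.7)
The plan is to establish two facts: first, that $[\rho(a))_A$ with the operation $\cdot_a$ is semiprime; second, that the prime (and maximal) spectrum of $[\rho(a))_A$, viewed as a subset of $A$ via inclusion, coincides with $\{p \in Spec(A) : a \leq p\}$ (respectively its intersection with $Max(A)$). Both require first pinning down the compact elements of the new quantale and how iterated $\cdot_a$-powers behave.

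First I would show $K([\rho(a))_A) = \{c \vee \rho(a) : c \in K(A)\}$: one direction uses coherence of $A$, the other that joining with a fixed element preserves compactness after the bottom is raised to $\rho(a)$. A short induction based on $x \cdot_a y = xy \vee \rho(a)$ and the infinite distributivity of $\cdot$ over $\vee$ then yields the key identity $(c \vee \rho(a))^{\cdot_a n} = c^n \vee \rho(a)$ for all $n \geq 1$. With this in hand, semiprimality is a direct application of Lemma 2.4 to the coherent quantale $[\rho(a))_A$: its radical of the bottom element $\rho(a)$ is the join over all compact $d = c \vee \rho(a)$ with $d^{\cdot_a k} \leq \rho(a)$, i.e.\ $c^k \leq \rho(a)$, i.e.\ $c \leq \rho(a)$ by Lemma 2.4 in $A$. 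Every such $d$ is already $\rho(a)$, so $\rho_{[\rho(a))_A}(\rho(a)) = \rho(a)$.

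For the spectra, the central observation is that for $p \in [\rho(a))_A$ and $x,y \in [\rho(a))_A$ one has $x \cdot_a y \leq p$ if and only if $xy \leq p$, because $\rho(a) \leq p$ is automatic. Thus any $p \in Spec(A)$ with $\rho(a) \leq p$ is immediately prime in $[\rho(a))_A$. Conversely, if $p \in Spec([\rho(a))_A)$ and $xy \leq p$ for arbitrary $x,y \in A$, replacing $x,y$ by $x' = x \vee \rho(a)$ and $y' = y \vee \rho(a)$ gives $x'y' \leq p$ (since the cross terms all absorb into $\rho(a) \leq p$), hence $x' \cdot_a y' \leq p$, so $x' \leq p$ or $y' \leq p$, which forces $x \leq p$ or $y \leq p$. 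The same kind of argument transfers maximality: a maximal $m$ of $A$ with $\rho(a) \leq m$ is clearly maximal in $[\rho(a))_A$; conversely, given $m \in Max([\rho(a))_A)$ and $m \leq x < 1$ in $A$, coherence ($1 \in K(A)$) lets us embed $x$ below some element of $Max(A)$, and since that element lies in $[\rho(a))_A$ it equals $m$, forcing $x = m$.

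The main bookkeeping hurdle is the identification of $K([\rho(a))_A)$ and the induction giving $(c \vee \rho(a))^{\cdot_a n} = c^n \vee \rho(a)$; once these are in place the rest is a clean transfer via Lemmas 2.1, 2.2, and 2.4. A minor subtlety worth flagging is that the bottom of $[\rho(a))_A$ is $\rho(a)$ rather than $0$, which one should keep in mind when reading the tuple $([\rho(a))_A, \vee, \wedge, \cdot_a, 0, 1)$ in the statement.
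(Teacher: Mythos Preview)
The paper does not supply its own proof of this lemma; it is quoted from \cite{Cheptea1} without argument, so there is nothing in the paper to compare your proposal against. Your approach is the standard one and is sound: identify the compacts of $[\rho(a))_A$ as the elements $c \vee \rho(a)$ with $c \in K(A)$, check the power identity $(c \vee \rho(a))^{\cdot_a n} = c^n \vee \rho(a)$, deduce semiprimality from Lemma~2.4, and transfer $m$-primality and maximality back and forth using that $x \cdot_a y \leq p$ iff $xy \leq p$ whenever $\rho(a) \leq p$.

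One remark worth making explicit. As you implicitly recognize, the literal equality $Spec(A) = Spec([\rho(a))_A)$ printed in the statement can only hold when every $m$-prime of $A$ lies above $\rho(a)$, i.e.\ when $a \leq \rho(0)$; for general $a$ what one actually gets (and what you prove) is $Spec([\rho(a))_A) = V(a)$ and $Max([\rho(a))_A) = Max(A) \cap V(a)$. This is consistent with how the lemma is invoked later in the paper, where it is applied with $a = 0$ (Theorem~6.3) or $a = \rho(0)$ (Theorem~7.3), in which case $V(a) = Spec(A)$ and the equality as stated is correct. Your closing caveat about the bottom element of $[\rho(a))_A$ being $\rho(a)$ rather than $0$ is exactly the right thing to flag.
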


Let $A, B$ be two quantales. A function $f: A \rightarrow B$ is a morphism of quantales if it preserves the arbitrary joins and the multiplication; $f$ is an integral morphism if $f(1)=1$.

\begin{lema}
\cite{Cheptea1} Let $A$ be a coherent quantale and $a \in A$.
\usecounter{nr}
\begin{list}{(\arabic{nr})}{\usecounter{nr}}
\item The function $u_a^A : A \rightarrow [a)_A$, defined by $u_a^A(x)=x \lor a$, for all $x \in A$, is an integral quantale morphism;
\item If $c \in K(A)$ then $u_a^A(c) \in K([a))$.
\end{list}
\end{lema}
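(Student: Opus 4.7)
The plan is to verify each of the two claims directly from the definitions, using the fact that $A$ is integral (so $1$ is the multiplicative identity) and algebraic. There is no deep obstacle here; the only thing to be a little careful about is that the bottom of $[a)_A$ is $a$ (not $0$) and that the multiplication in $[a)_A$ is $\cdot_a$ (not $\cdot$), so one must check that the morphism conditions are read off against the \emph{structure of the target} $[a)_A$.

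For (1), I would split the verification into three items. \textbf{Preservation of joins:} for any nonempty $X \subseteq A$, using that $\vee$ distributes over itself, $u_a^A(\bigvee X) = (\bigvee X) \vee a = \bigvee_{x\in X}(x \vee a) = \bigvee_{x\in X} u_a^A(x)$, and this last join is computed the same way in $[a)_A$ as in $A$ because every $x \vee a \geq a$. For the empty join, $u_a^A(0) = a$, which is precisely the empty join computed in $[a)_A$. \textbf{Preservation of multiplication:} expanding by distributivity, $u_a^A(x) \cdot_a u_a^A(y) = (x\vee a)(y\vee a) \vee a = xy \vee xa \vee ya \vee a^2 \vee a$; since $A$ is integral, $x \leq 1$ gives $xa \leq a$, similarly $ya \leq a$ and $a^2 \leq a$, so the expression collapses to $xy \vee a = u_a^A(xy)$. \textbf{Integrality:} $u_a^A(1) = 1 \vee a = 1$.

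For (2), let $c \in K(A)$ and suppose $u_a^A(c) = c \vee a \leq \bigvee X$ for some $X \subseteq [a)_A$, where the join is computed in $[a)_A$. If $X = \emptyset$ then $\bigvee X = a$ in $[a)_A$, so $c \leq a$ and $c \vee a = a = \bigvee \emptyset$, and compactness is trivial. Otherwise $\bigvee X$ in $[a)_A$ equals $\bigvee X$ in $A$ (since $X \subseteq [a)_A$ implies $\bigvee X \geq a$ already in $A$). Hence $c \leq \bigvee X$ in $A$, and since $c \in K(A)$ there is a finite $F \subseteq X$ with $c \leq \bigvee F$; then $c \vee a \leq (\bigvee F) \vee a = \bigvee F$ (the last equality because every element of $F$ is $\geq a$), proving $u_a^A(c) \in K([a)_A)$.

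The whole argument is essentially bookkeeping, and the only point where something beyond definitions is needed is the collapse in the product computation, which rests squarely on integrality ($xa \leq a$ for all $x$). I expect this to be the only subtle step worth highlighting in the write-up.
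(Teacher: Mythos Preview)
Your proof is correct; both parts are handled cleanly, and the only nontrivial step---the collapse of $(x\vee a)(y\vee a)\vee a$ to $xy\vee a$---is justified exactly where it should be, via integrality. The paper does not supply its own proof of this lemma (it is quoted from \cite{Cheptea1}), so there is nothing to compare against; your direct verification is the standard argument.
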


Let $A$ be a quantale such that $1 \in K(A)$. For any $a \in A$, denote $D(a)=\{p \in Spec(A)|a \not\leq p\}$ and $V(a)=\{p \in Spec(A)|a \leq p\}$. Then
$Spec(A)$ is endowed with a topology whose closed sets are $(V(a))_{a \in A}$. If the quantale $A$ is algebraic then the family $(D(c))_{c \in K(A)}$ is a basis of open sets for this topology. The topology introduced here generalizes the Zariski topology (defined on the prim spectrum $Spec(R)$ of a commutation ring $R$ \cite{Atiyah}) and the Stone topology (defined on the prime spectrum $Spec_{Id}(L)$ of a bounded distributive lattice $L$ \cite{BalbesDwinger}).

Thus we denote by $Spec_Z(A)$ the prime spectrum $Spec(A)$ endowed with the topology above defined; $Max_Z(A)$ will denote the maximal spectrum $Max(A)$ considered as a subspace of $Spec_Z(A)$. According to \cite{GG}, $Spec_Z(A)$ is a {\emph{spectral space}} in the sense of \cite{Hochster}. The{\emph{flat topology}} associated to this spectral space has as basis the family of the completents of compact open subsets of  $Spec_Z(A)$(cf.\cite{Dickmann}, \cite{Johnstone}). Recall from \cite{GG} that the family $\{V(c)| c\in K(A)\}$ is a basis of open sets for the flat topology on $Spec(A)$. We shall denote by $Spec_F(A)$ this topological space.

For any $p\in Spec(A)$, let us denote $\Lambda(p) = \{q\in Spec(A)| q\leq p\}$. According to Propositions 5.6 and 5.7 of \cite{GG}, if  $p\in Spec(A)$ then the flat closure of the set $\{p\}$ is $cl_F\{p\}$ = $\Lambda(p)$ and, if $S\subseteq Spec(A)$  is compact in $Spec_Z(A)$  then its flat closure is $cl_F(S)=\displaystyle \bigcup_{p\in S} \Lambda(p)$.

An element $a\in A$ is {\emph{regular}} if it is a join of complemented elements. A maximal element in the set of proper regular elements is called {\emph{max- regular}}. The set $Sp(A)$ of max- regular elements of $A$ is called the Pierce spectrum of the quantale $A$. For any proper regular element $a$  there exists $p\in Sp(A)$ such that $a\leq p$. If $e\in B(A)$ then we denote $U(e)$ = $\{p\in Sp(A)| e\not\leq a\}$. Thus it easy to prove that the family $(U(e))_{e\in B(A)}$ is a basis of open sets for a topology on $Sp(A)$.

For any $p\in Spec(A)$ we define $s_A(p)$ = $\bigvee \{e\in B(A)| e\leq p \}$; $s_A(p)$ is regular and $s_A(p) \leq p < 1$. According to Lemma 5.8 of \cite{GG}, for each $p\in Spec(A)$, $s_A(p)$ is a max - regular element of $A$, so one obtains a function $s_A : Spec(A)\rightarrow  Sp(A)$. We know from Proposition 5.9 of \cite{GG} that $Sp(A)$ is a Boolean space and $s_A : Spec(A)\rightarrow  Sp(A)$ is surjective and continuous w.r.t. both flat and spectral topologies on $Spec(A)$. If $R$ is a commutative ring then $Sp(Id(R))$ is exactly the Pierce spectrum of $R$ (see \cite{Johnstone}, p.181).

Let $L$ be a bounded distributive lattice. For any $x \in L$, denote $D_{Id}(x)=\{P \in Spec_{Id}(L)|x \not \in P\}$ and $V_{Id}(x)=\{P \in Spec_{Id,Z}(L)|x \in P\}$. The family $(D_{Id}(x))_{x \in L}$ is a basis of open sets for the Stone topology on $Spec_{Id}(L)$; this topological space will be denoted by $Spec_{Id,Z}(L)$. Let $Max_{Id}(L)$ be the set of maximal ideals of $L$. Thus
$Max_{Id}(L) \subseteq Spec_{Id}(L)$ and $Max_{Id}(L)$ becomes a subspace of $Spec_{Id}(L)$, denoted $Max_{Id,Z}(L)$.

\section{Reticulation of a coherent quantale}

 \hspace{0.5cm}The {\emph{reticulation}} $L(A)$ of a quantale $A$ was introduced in \cite{Georgescu} as a generalization of the reticulation of a commutative ring, given in \cite{Simmons}. In \cite{Georgescu}, the reticulation $L(A)$ was characterized as a bounded distributive lattice whose prime spectrum $Spec_{Id}(L(A))$ is homeomorphic to the prime spectrum $Spec(A)$ of the quantale $A$. In this section we shall recall from \cite{Cheptea1},\cite{Georgescu} the axiomatic definition of the reticulation of the coherent quantale and some of its basic properties. Let $A$ be a coherent quantale and $K(A)$ the set of its compact elements.

\begin{definitie}
\cite{Cheptea1}
A reticulation of the quantale $A$ is a bounded distributive lattice $L$ together a surjective function $\lambda:K(A)\rightarrow L$ such that for all $a,b\in K(A)$ the following properties hold
\usecounter{nr}
\begin{list}{(\arabic{nr})}{\usecounter{nr}}
\item $\lambda(a\vee b)\leq \lambda(a)\vee \lambda(b)$;
\item $\lambda(ab)= \lambda (a)\wedge \lambda (b)$;
\item $\lambda(a)\leq\lambda(b)$ iff $a^n\leq b$ , for some integer $n\geq 1$.
\end{list}
\end{definitie}

In \cite{Cheptea1},\cite{Georgescu} there were proven the existence and the unicity of the reticulation for each coherent quantale $A$; this unique reticulation will be denoted by $(L(A),\lambda_A:K(A)\rightarrow L(A))$ or shortly $L(A)$. The reticulation $L(R)$ of a commutative ring $R$ was introduced by many authors, but the main references on this topic remain \cite{Simmons}, \cite{Johnstone}. We remark that L(R) is isomorphic to the reticulation L(Id(R)) of the quantale $Id(R)$.

\begin{lema}
\cite{Cheptea1} For all elements $a, b \in K(A)$ the following properties hold:
\usecounter{nr}
\begin{list}{(\arabic{nr})}{\usecounter{nr}}
\item $a \leq b$ implies $\lambda_A(a)\leq\lambda_A(b)$;
\item $\lambda_A(a \lor b)=\lambda_A(a) \lor \lambda_A(b)$;
\item $\lambda_A(a)=1$ iff $a=1$;
\item $\lambda_A(0)= 0$;
\item $\lambda_A(a)=0$ iff $a^n = 0$, for some integer $n\geq 1$;
\item $\lambda_A (a^n)=\lambda_A(a)$, for all integer $n\geq 1$;
\item $\rho(a)= \rho(b)$ iff $\lambda_A(a)= \lambda_A(b)$;
\item $\lambda_A(a)=0$ iff $a\leq \rho(0)$;
\item If $A$ is semiprime then $\lambda_A(a)= 0$ implies $a= 0$.
\end{list}
\end{lema}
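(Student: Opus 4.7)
The nine items form an essentially routine package of consequences of the axiomatic definition of the reticulation (Definition 3.1), so my plan is to organize them by the single characterization in Definition 3.1(3), namely $\lambda_A(a)\leq\lambda_A(b)\Leftrightarrow a^n\leq b$ for some $n\geq 1$. This order criterion does almost all the work; Lemma 2.5 will be needed only at two places to translate between $K(A)$ and the radical.

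First I would dispatch (1)--(4) directly from the axioms. For (1), $a\leq b$ gives $a^1\leq b$, so 3.1(3) yields $\lambda_A(a)\leq\lambda_A(b)$. For (2), the axiom 3.1(1) gives $\lambda_A(a\vee b)\leq\lambda_A(a)\vee\lambda_A(b)$, and the reverse inequality follows from (1) applied to $a,b\leq a\vee b$. For (3) and (4), observing that $x^n\leq 1$ and $0\leq x$ hold unconditionally for every $x\in K(A)$, 3.1(3) forces $\lambda_A(1)$ to be the top and $\lambda_A(0)$ to be the bottom of $L(A)$; the non-trivial direction of (3) uses $\lambda_A(1)\leq\lambda_A(a)$ together with 3.1(3) to conclude $1=1^n\leq a$.

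Item (5) is just (4) combined with 3.1(3): $\lambda_A(a)=0=\lambda_A(0)$ iff $\lambda_A(a)\leq\lambda_A(0)$ iff $a^n\leq 0$ for some $n\geq 1$, i.e.\ $a^n=0$. For (6), integrality gives $a^n\leq a$ for all $n\geq 1$, so (1) yields $\lambda_A(a^n)\leq\lambda_A(a)$; the converse inequality comes from 3.1(3) with $k=n$, since $a^n\leq a^n$. The remaining items (7)--(9) are the only ones requiring a bridge to the radical: by Lemma 2.5(2), for $a\in K(A)$, the condition ``$a^n\leq b$ for some $n$'' is equivalent to $a\leq\rho(b)$, which in turn (using Lemma 2.2(1) and (5) and the monotonicity of $\rho$) is equivalent to $\rho(a)\leq\rho(b)$. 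Hence 3.1(3) rewrites as $\lambda_A(a)\leq\lambda_A(b)\Leftrightarrow\rho(a)\leq\rho(b)$, and (7) follows by antisymmetry. Item (8) is the specialization of this equivalence to $b=0$, and (9) is immediate from (8) plus the semiprime hypothesis $\rho(0)=0$.

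There is no real obstacle here; the only delicate point is the translation used in (7), where one must be careful to apply Lemma 2.5(2) with $a\in K(A)$ (to replace ``$a^n\leq b$'' by ``$a\leq\rho(b)$'') before using idempotence of $\rho$ to pass to $\rho(a)\leq\rho(b)$. Once this equivalence is recorded, (7)--(9) fall out mechanically.
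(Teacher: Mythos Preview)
Your proof is correct and complete. The paper itself does not prove this lemma; it merely cites \cite{Cheptea1} and then remarks that the nine properties will be used freely. So there is no ``paper's own proof'' to compare against, and your derivation from the three axioms of Definition~3.1 is exactly the kind of routine verification the citation is meant to stand in for.

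One small correction: where you invoke ``Lemma~2.5(2)'' for the equivalence ``$a^n\leq b$ for some $n$'' $\Leftrightarrow$ $a\leq\rho(b)$ (for $a\in K(A)$), you mean Lemma~2.4(2). Lemma~2.5 in this paper is the statement $K(R(A))=\rho(K(A))$ and has no second part. With that reference fixed, the chain $\lambda_A(a)\leq\lambda_A(b)\Leftrightarrow a\leq\rho(b)\Leftrightarrow\rho(a)\leq\rho(b)$ is exactly right, and items (7)--(9) follow as you say.
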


Often the previous nine properties shall be used in the proofs without mention.

For any $a\in A$ and $I\in Id(L(A))$ let us denote $a^{\ast}= \{\lambda_A(c)|c\in K(A), c\leq a\}$ and $ I_{\ast} = \bigvee\{c\in K(A)|\lambda_A(c)\in I\}$.

\begin{lema}
\cite{Cheptea1} The following assertions hold
\usecounter{nr}
\begin{list}{(\arabic{nr})}{\usecounter{nr}}
\item If $a\in A $ then $a^{\ast}$ is an ideal of $L(A)$ and $a\leq (a^{\ast})_{\ast}$;
\item If $I\in Id(L(A))$ then $(I_{\ast})^{\ast}=I$;
\item If $p\in Spec(A)$ then  $(p^{\ast})_{\ast}= p$ and $p^{\ast}\in Spec_{Id}(L(A))$;
\item If $P\in Spec_{Id}((L(A))$ then $P_{\ast}\in Spec(A)$;
\item If $p\in K(A)$ then $c^{\ast}= (\lambda_A(c)]$;
\item If $c\in K(A)$ and $I\in Id(L(A))$ then $c\leq I_{\ast}$ iff $\lambda_A(c)\in I$;
\item If $a\in A$ and $I\in Id(L(A))$ then $\rho(a)=(a^{\ast})_{\ast}$, $a^{\ast}= (\rho(a))^{\ast}$ and $\rho(I_{\ast})= I_{\ast}$;
\item If $c\in K(A)$ and $p\in Spec(A)$ then $c\leq p$ iff $\lambda_A(c) \in p^{\ast}$.
\end{list}
\end{lema}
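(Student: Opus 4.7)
The plan is to prove the eight items in an order that lets later ones reuse earlier ones, leaning throughout on the three defining properties of the reticulation (Definition 3.1), the facts in Lemma 3.2, the expression $a=\bigvee\{c\in K(A):c\le a\}$ coming from algebraicity, and Lemma 2.4 describing $\rho(a)$ through compact elements. I would prove (5), (6), (8) first, then (1) and (2), then (3) and (4), and finally (7).

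For (6), the forward direction is immediate from the definition of $I_{\ast}$; the converse uses compactness of $c$, so $c\le I_{\ast}$ forces $c\le c_1\vee\cdots\vee c_n$ with each $\lambda_A(c_i)\in I$, and then $\lambda_A(c)\le\bigvee_i\lambda_A(c_i)\in I$ via Lemma 3.2(2) and the ideal structure of $I$. For (5), one inclusion is monotonicity of $\lambda_A$; the other exploits Definition 3.1(3): if $\lambda_A(d)\le\lambda_A(c)$ then $d^n\le c$, and $d^n\in K(A)$ with $\lambda_A(d^n)=\lambda_A(d)$. Item (8) is then a direct consequence of (5) combined with the $m$-primeness of $p$, which lets us pass from $c^n\le p$ to $c\le p$.

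For (1), downward closure of $a^{\ast}$ again uses the trick of replacing $d$ by $d^n\in K(A)$, while closure under finite joins uses that $K(A)$ is closed under joins (coherence) and that $\lambda_A$ preserves them (Lemma 3.2(2)). The inequality $a\le(a^{\ast})_{\ast}$ follows from algebraicity: every $c\in K(A)$ with $c\le a$ satisfies $\lambda_A(c)\in a^{\ast}$, hence $c\le(a^{\ast})_{\ast}$. For (2), the inclusion $I\subseteq(I_{\ast})^{\ast}$ uses surjectivity of $\lambda_A$ together with (6); the reverse inclusion again relies on compactness of elements of $K(A)$, expressing any $d\le I_{\ast}$ as below a finite join of the generators and then invoking the ideal property of $I$.

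Items (3) and (4) translate $m$-primeness between $A$ and $L(A)$ through the key identity $\lambda_A(cd)=\lambda_A(c)\wedge\lambda_A(d)$. For $p\in Spec(A)$, the ideal $p^{\ast}$ is proper because $\lambda_A(c)=1$ forces $c=1$ by Lemma 3.2(3), and it is prime by pulling $cd\le p$ back through the defining equivalence; the equality $(p^{\ast})_{\ast}=p$ combines (1) with $m$-primeness to pass from $c^n\le p$ (for a compact $c$ with $\lambda_A(c)\in p^{\ast}$) to $c\le p$. Symmetrically, (4) follows from (6) applied in both directions. Finally, in (7), $\rho(a)=(a^{\ast})_{\ast}$ follows by matching the set $\{c\in K(A):c^k\le a\}$ from Lemma 2.4 with $\{c\in K(A):\lambda_A(c)\in a^{\ast}\}$ via Lemma 3.2(6); $a^{\ast}=\rho(a)^{\ast}$ is analogous; and $\rho(I_{\ast})=((I_{\ast})^{\ast})_{\ast}=I_{\ast}$ by (2) combined with the first equality. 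The chief obstacle is mostly organizational: several items are tightly interlocked, so one must fix a clean order of presentation and be careful about which pieces of coherence ($K(A)$ closed under multiplication and finite joins, $1\in K(A)$) are invoked at each step.
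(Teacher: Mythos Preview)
The paper does not supply a proof of this lemma; it is simply cited from \cite{Cheptea1}, so there is no argument in the present paper against which to compare your proposal.

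Your sketch is mathematically sound and would yield a complete proof once written out in full. Two small remarks. In item~(6) you have the labels ``forward'' and ``converse'' reversed relative to the stated order of the biconditional: the implication $\lambda_A(c)\in I\Rightarrow c\le I_\ast$ is the one that is immediate from the definition of $I_\ast$, while $c\le I_\ast\Rightarrow\lambda_A(c)\in I$ is the direction requiring compactness of $c$; your argument itself is correct, only the tags are swapped. In item~(8) the appeal to (5) is not really needed: one argues directly that $\lambda_A(c)\in p^\ast$ means $\lambda_A(c)=\lambda_A(d)$ for some compact $d\le p$, whence $c^n\le d\le p$ by Definition~3.1(3) and then $c\le p$ by $m$-primeness. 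Otherwise the organization you propose (prove (5), (6), (8), then (1), (2), then (3), (4), then (7)) is clean and all the ingredients you cite---coherence of $A$, surjectivity of $\lambda_A$, Definition~3.1(3), Lemma~3.2, and Lemma~2.4---are exactly what is required.
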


\begin{lema}
Let $A$ be a coherent quantale. The following assertions hold
\usecounter{nr}
\begin{list}{(\arabic{nr})}{\usecounter{nr}}
\item If $a,b\in A $ then $(ab)^{\ast}$ = $(a\land b)^{\ast}$ = $a^{\ast}\bigcap b^{\ast}$;
\item If $(a_i)_{i\in I}$ is a family of elements of $A$ then    $(\displaystyle \bigvee_{i\in I} a_i)^{\ast}$ =  $\displaystyle \bigvee_{i\in I} a_i^{\ast}$.
\end{list}
\end{lema}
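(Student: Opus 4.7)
The plan is to handle the two statements in turn, using the definition $a^{\ast}=\{\lambda_A(c)\mid c\in K(A),\ c\leq a\}$ together with the facts that $A$ is integral and commutative (so $xy\leq x\wedge y$ for all $x,y$), that $K(A)$ is closed under multiplication and finite joins, and that $\lambda_A$ satisfies the axioms of Definition 3.1 and Lemma 3.2.

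For part (1), I would first show $(ab)^{\ast}=(a\wedge b)^{\ast}$. The inclusion $(ab)^{\ast}\subseteq(a\wedge b)^{\ast}$ is immediate from $ab\leq a\wedge b$. For the reverse, if $c\in K(A)$ satisfies $c\leq a\wedge b$, then by integrality and commutativity $c^2\leq ab$; since $c^2\in K(A)$ and $\lambda_A(c^2)=\lambda_A(c)$ by Lemma 3.2(6), we get $\lambda_A(c)\in(ab)^{\ast}$. Next, for $(a\wedge b)^{\ast}=a^{\ast}\cap b^{\ast}$, the inclusion $\subseteq$ is clear because any $c\leq a\wedge b$ lies below both $a$ and $b$. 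For $\supseteq$, the key trick is: given $x\in a^{\ast}\cap b^{\ast}$, pick witnesses $c_1,c_2\in K(A)$ with $c_1\leq a$, $c_2\leq b$ and $\lambda_A(c_1)=\lambda_A(c_2)=x$. Then $c_1c_2\in K(A)$ (since $K(A)$ is closed under multiplication), $c_1c_2\leq c_1\wedge c_2\leq a\wedge b$, and axiom (2) of Definition 3.1 gives $\lambda_A(c_1c_2)=\lambda_A(c_1)\wedge\lambda_A(c_2)=x$, so $x\in(a\wedge b)^{\ast}$. The step I expect to be most delicate is precisely this last one: one must match two different $\lambda_A$-representatives of $x$ into a single compact element below $a\wedge b$, and this is exactly where multiplicative closure of $K(A)$ and axiom (2) combine.

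For part (2), the inclusion $\bigvee_{i\in I}a_i^{\ast}\subseteq(\bigvee_{i\in I}a_i)^{\ast}$ is routine: any element of the left-hand ideal is dominated by a finite join $\lambda_A(c_1)\vee\cdots\vee\lambda_A(c_n)$ with $c_j\leq a_{i_j}$; letting $c=c_1\vee\cdots\vee c_n\in K(A)$ gives $c\leq\bigvee_{i\in I}a_i$ and $\lambda_A(c)=\lambda_A(c_1)\vee\cdots\vee\lambda_A(c_n)$ by Lemma 3.2(2), so this join lies in $(\bigvee_{i\in I}a_i)^{\ast}$, which is an ideal by Lemma 3.3(1). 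For the reverse inclusion, given $c\in K(A)$ with $c\leq\bigvee_{i\in I}a_i$, I will use that $A$ is algebraic: each $a_i$ is a join of compact elements, so $\bigvee_{i\in I}a_i=\bigvee S$ for $S=\{e\in K(A)\mid e\leq a_i\text{ for some }i\}$. Compactness of $c$ yields finitely many $e_1,\dots,e_m\in S$ with $c\leq e_1\vee\cdots\vee e_m$, where each $e_l\leq a_{i_l}$. Then $\lambda_A(c)\leq\lambda_A(e_1)\vee\cdots\vee\lambda_A(e_m)$, and each $\lambda_A(e_l)\in a_{i_l}^{\ast}$, so $\lambda_A(c)\in\bigvee_{i\in I}a_i^{\ast}$. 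The only subtlety here is invoking compactness correctly in an algebraic quantale to pass from an arbitrary join to a finite subjoin by compact witnesses; once this reduction is done, the rest is bookkeeping via Lemma 3.2(1)–(2).
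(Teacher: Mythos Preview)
Your proof is correct and follows essentially the same approach as the paper: for part~(1) both arguments pass through the product of two compact witnesses and use $\lambda_A(cd)=\lambda_A(c)\wedge\lambda_A(d)$, and for part~(2) the paper simply says ``follows similarly'' while you spell out the compactness reduction explicitly. Your organization (proving $(ab)^{\ast}=(a\wedge b)^{\ast}$ and $(a\wedge b)^{\ast}=a^{\ast}\cap b^{\ast}$ separately) differs only cosmetically from the paper's single chain of inclusions closed by $a^{\ast}\cap b^{\ast}\subseteq(ab)^{\ast}$.
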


\begin{proof} First we remark that $(ab)^{\ast}\subseteq  (a\land b)^{\ast}\subseteq  a^{\ast}\bigcap b^{\ast}$. In order to prove that $a^{\ast}\bigcap b^{\ast}\subseteq (ab)^{\ast}$, let us assume that $x\in  a^{\ast}\bigcap b^{\ast}$. Then $x = \lambda_A(c) = \lambda_A(d)$, for some compact elements $c,d$ that verify the properties $c\leq a$ and $d\leq b$. Thus one gets $\lambda_A(cd)\leq \lambda_A(ab)$, so there exists a positive integer $n$ such that $c^nd^n\leq ab$. Therefore $x = \lambda_A(c^nd^n) \leq \lambda_A(ab)$, hence it follows that $x\in (ab)^{\ast}$. The property (2) follows similarly.

\end{proof}

According to Lemma 3.3, one can consider the following order- preserving functions: $ u:Spec(A)\rightarrow Spec_{Id}(L(A))$ and $ v:Spec_{Id}(L(A))\rightarrow Spec(A)$, defined by $u(p) = p^{\ast}$ and $v(P) = P_{\ast}$, for all $p\in Spec(A)$ and $P\in Spec_{Id}(L(A))$.

\begin{lema}
\cite{Cheptea1}
The functions $u$ and $v$ are homeomorphisms, inverse to one another.
\end{lema}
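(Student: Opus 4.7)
The plan is to verify that $u$ and $v$ are mutually inverse and continuous; the bijectivity is essentially bookkeeping on top of Lemma 3.3, and the topological part reduces to matching basic open sets through the adjunction $(\cdot)^{\ast} \dashv (\cdot)_{\ast}$.

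First I would establish that $u$ and $v$ are well-defined maps between the correct sets and that they are inverse to one another. Well-definedness is exactly Lemma 3.3(3) (which gives $p^{\ast}\in \mathrm{Spec}_{Id}(L(A))$) and Lemma 3.3(4) (which gives $P_{\ast}\in \mathrm{Spec}(A)$). For the identities $v\circ u = \mathrm{id}$ and $u\circ v = \mathrm{id}$, I would cite Lemma 3.3(3) for $(p^{\ast})_{\ast} = p$ and Lemma 3.3(2) for $(I_{\ast})^{\ast} = I$ applied to prime $I = P$.

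Next I would prove continuity by computing preimages of basic opens. On $\mathrm{Spec}_Z(A)$ a basis of opens is $\{D(c)\mid c\in K(A)\}$, and on $\mathrm{Spec}_{Id,Z}(L(A))$ a basis is $\{D_{Id}(x)\mid x\in L(A)\}$; since $\lambda_A\colon K(A)\to L(A)$ is surjective, every such $x$ has the form $\lambda_A(c)$ with $c\in K(A)$. For $u$, I would fix $c\in K(A)$ and compute
\[
u^{-1}(D_{Id}(\lambda_A(c))) \;=\; \{p\in \mathrm{Spec}(A)\mid \lambda_A(c)\notin p^{\ast}\}.
\]
By Lemma 3.3(8), $\lambda_A(c)\in p^{\ast}$ iff $c\leq p$, so the right-hand side equals $D(c)$, which is open. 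Hence $u$ is continuous. Symmetrically, for $v$ and $c\in K(A)$,
\[
v^{-1}(D(c)) \;=\; \{P\in \mathrm{Spec}_{Id}(L(A))\mid c\not\leq P_{\ast}\},
\]
and Lemma 3.3(6) gives $c\leq P_{\ast}$ iff $\lambda_A(c)\in P$, so this preimage equals $D_{Id}(\lambda_A(c))$, again open. Thus $v$ is continuous, and the two continuous bijections being mutual inverses yields the homeomorphism.

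The only mildly delicate step is keeping the adjunction identities straight, since the statements involve two different lattices and both maps $(\cdot)^{\ast}$ and $(\cdot)_{\ast}$; but everything needed is already packaged in Lemma 3.3(2)--(8), so there should be no genuine obstacle.
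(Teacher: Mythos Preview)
Your proof is correct: well-definedness and mutual inversion follow directly from Lemma~3.3(2)--(4), and your computation of preimages of basic opens via Lemma~3.3(6) and~(8) cleanly establishes continuity in both directions. The paper itself gives no proof of this lemma (it is quoted from \cite{Cheptea1}), so there is nothing to compare against; your argument is the natural one and would serve as a complete proof.
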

\begin{corolar}
$Max_Z(A)$ and $Max_{Id,Z}(L(A))$ are homeomorphic.
\end{corolar}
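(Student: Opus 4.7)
The plan is to deduce this corollary from Lemma 3.5 by showing that the homeomorphism $u : Spec_Z(A) \to Spec_{Id,Z}(L(A))$ restricts to a bijection between the maximal elements of $A$ and the maximal ideals of $L(A)$; once this is established, the corollary follows automatically since $Max_Z(A)$ and $Max_{Id,Z}(L(A))$ carry the subspace topologies induced from $Spec_Z(A)$ and $Spec_{Id,Z}(L(A))$ respectively, and restrictions of homeomorphisms to corresponding subspaces are homeomorphisms.

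First I would note that, because $A$ is coherent, $1 \in K(A)$, and therefore (as recalled in Section 2) $Max(A) \subseteq Spec(A)$; dually, for the bounded distributive lattice $L(A)$, every maximal ideal is prime, so $Max_{Id}(L(A)) \subseteq Spec_{Id}(L(A))$. Consequently $Max(A)$ is exactly the set of maximal elements (with respect to the order of $A$) of the poset $Spec(A)$, and $Max_{Id}(L(A))$ is exactly the set of maximal elements of the poset $Spec_{Id}(L(A))$.

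Next I would exploit the fact that $u$ and $v$ are order-preserving with respect to these posets. From Lemma 3.3 we know $u(p) = p^{\ast}$ and $v(P) = P_{\ast}$, and both operators preserve inclusions (if $p \leq q$ in $A$ then clearly $p^{\ast} \subseteq q^{\ast}$, and if $P \subseteq Q$ in $L(A)$ then $P_{\ast} \leq Q_{\ast}$). Since by Lemma 3.5 the maps $u$ and $v$ are mutually inverse bijections on the prime spectra, this order-preservation in both directions means $u$ is an order isomorphism between $(Spec(A), \leq)$ and $(Spec_{Id}(L(A)), \subseteq)$. Every order isomorphism sends maximal elements to maximal elements, so $u$ restricts to a bijection $u_{|} : Max(A) \to Max_{Id}(L(A))$ with inverse $v_{|}$.

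Finally, since $Max_Z(A)$ is defined as a subspace of $Spec_Z(A)$ and $Max_{Id,Z}(L(A))$ as a subspace of $Spec_{Id,Z}(L(A))$, the restriction $u_{|}$ is a homeomorphism between these subspaces. I do not anticipate a serious obstacle: the only delicate point is to make the order-preservation of $u$ and $v$ explicit and to invoke $1 \in K(A)$ to place $Max(A)$ inside $Spec(A)$, everything else being formal.
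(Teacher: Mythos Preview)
Your argument is correct and is precisely the intended derivation: the paper states this corollary without proof, immediately after Lemma 3.5, relying on the fact (noted just before that lemma) that $u$ and $v$ are order-preserving and mutually inverse, hence an order isomorphism of spectra that carries maximal elements to maximal elements. Your write-up simply makes explicit the details the paper leaves to the reader.
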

\begin{propozitie}
\cite{Cheptea1}
The functions $\Phi:R(A) \rightarrow Id(L(A))$ and $\Psi:Id(L(A))\rightarrow R(A)$ defined by $\Phi(a)= a^{\ast}$ and $\Psi(I)= I_{\ast}$, for all $a\in R(A)$ and $I \in Id(L(A))$, are frame isomorphisms, inverse to one another.
\end{propozitie}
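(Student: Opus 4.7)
The plan is to verify that $\Phi$ and $\Psi$ are mutually inverse bijections, then show that $\Phi$ preserves finite meets and arbitrary joins; once this is established, $\Psi$ automatically inherits the structure-preserving property from being the inverse.

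First, I would check that the maps are well-defined on the stated codomains. Lemma 3.3(1) yields $a^{\ast}\in Id(L(A))$ for every $a\in A$, so in particular for $a\in R(A)$. For the other direction, Lemma 3.3(7) states $\rho(I_{\ast})= I_{\ast}$, which is exactly the assertion that $I_{\ast}\in R(A)$. Next, the bijectivity. For $a\in R(A)$, Lemma 3.3(7) gives $(a^{\ast})_{\ast}= \rho(a)= a$, so $\Psi\circ \Phi = \mathrm{id}_{R(A)}$. For $I\in Id(L(A))$, Lemma 3.3(2) gives $(I_{\ast})^{\ast}= I$, so $\Phi\circ \Psi = \mathrm{id}_{Id(L(A))}$. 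Thus $\Phi$ and $\Psi$ are inverse bijections.

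Next, I would show $\Phi$ is a frame homomorphism. Preservation of finite meets is immediate from Lemma 3.4(1), since in $R(A)$ the meet is inherited from $A$: $\Phi(a\wedge b)= (a\wedge b)^{\ast}= a^{\ast}\cap b^{\ast}= \Phi(a)\cap \Phi(b)$. The delicate point is the preservation of arbitrary joins, because the join of a family $(a_i)_{i\in I}$ in $R(A)$ is not $\bigvee_i a_i$ (which need not be radical) but $\bigvee_i^{\cdot}a_i = \rho(\bigvee_i a_i)$. Using Lemma 3.3(7) (namely $a^{\ast}=(\rho(a))^{\ast}$) together with Lemma 3.4(2), I compute
\[
\Phi\Bigl(\bigvee_{i\in I}^{\cdot} a_i\Bigr) = \Bigl(\rho\bigl(\bigvee_{i\in I} a_i\bigr)\Bigr)^{\ast} = \Bigl(\bigvee_{i\in I} a_i\Bigr)^{\ast} = \bigvee_{i\in I} a_i^{\ast} = \bigvee_{i\in I} \Phi(a_i),
\]
where the last join is the join in $Id(L(A))$. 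This shows $\Phi$ preserves arbitrary joins.

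Having shown that $\Phi$ is an order-preserving bijection that commutes with finite meets and arbitrary joins, I conclude that $\Phi$ is a frame isomorphism. Since $\Psi = \Phi^{-1}$ is the set-theoretic inverse of a frame isomorphism, it is itself a frame isomorphism (an order-preserving bijection with order-preserving inverse automatically preserves all existing joins and meets). The only real obstacle in the argument is the small subtlety of working with the join $\bigvee^{\cdot}$ in $R(A)$ rather than the ambient join in $A$, which is resolved cleanly by the identity $a^{\ast}=(\rho(a))^{\ast}$ from Lemma 3.3(7); everything else is a direct application of Lemmas 3.3 and 3.4.
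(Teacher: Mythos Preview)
Your argument is correct. Note, however, that the paper does not actually prove this proposition: it is stated with a citation to \cite{Cheptea1} and no proof is given in the present paper, so there is no ``paper's own proof'' to compare against. Your write-up supplies a clean self-contained argument using only the lemmas collected in Section~3, and each step checks out: well-definedness via Lemma~3.3(1) and the identity $\rho(I_{\ast})=I_{\ast}$ in Lemma~3.3(7); mutual inversity from $(a^{\ast})_{\ast}=\rho(a)$ and $(I_{\ast})^{\ast}=I$ (Lemma~3.3(7),(2)); preservation of finite meets from Lemma~3.4(1); and preservation of arbitrary joins by combining $a^{\ast}=(\rho(a))^{\ast}$ with Lemma~3.4(2). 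Your remark that one must work with the frame join $\bigvee^{\cdot}$ in $R(A)$ rather than the ambient join in $A$ is exactly the one subtlety, and you handle it correctly. The only cosmetic point is that a frame morphism should also send top to top and bottom to bottom; these are the empty-meet and empty-join cases of what you have already shown, so nothing further is needed.
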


 \hspace{0.5cm}The {\emph{Boolean center}} of an arbitrary quantale $A$ is the Boolean algebra $B(A)$ of complemented elements of $A$ (cf. \cite{Birkhoff},\cite{Jipsen}). The following lemma collects some elementary properties of the elements of $B(A)$.
\begin{lema}
\cite{Birkhoff},\cite{Jipsen} Let $A$ be a quantale and $a,b \in A$, $e\in B(A)$. Then the following properties hold:
\usecounter{nr}
\begin{list}{(\arabic{nr})}{\usecounter{nr}}
\item $ a\in B(A)$ iff $a \lor a^{\bot} = 1$;
\item $ a\land b$ = $ae$;
\item $e\rightarrow a$ = $e^{\bot}\lor a$;
\item If $a\lor b = 1$ and  $ab = 0$, then $a,b \in B(A)$;
\item $(a \land b) \lor e = (a\lor e)\land (b\land e)$;
\item  For any integer $n\geq1$, $a\lor b = 1$ and $a^n b^n = 0$ implies $a^n,b^n \in B(A)$.
\end{list}
\end{lema}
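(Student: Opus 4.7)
The plan is to exploit Lemma 2.1(1), the identity $a\vee b=1\Rightarrow ab=a\wedge b$, together with the infinite distributivity of multiplication and the definition $e^{\bot}=\bigvee\{x\mid ex=0\}$. Along the way the first property becomes the workhorse from which the rest cascade.

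First I would handle (1). For the forward direction, take a lattice complement $a'$ of $a$; since $aa'\leq a\wedge a'=0$, the element $a'$ belongs to the family defining $a^{\bot}$, hence $a'\leq a^{\bot}$ and $a\vee a^{\bot}\geq a\vee a'=1$. Conversely, if $a\vee a^{\bot}=1$, then $aa^{\bot}=0$ (by infinite distributivity applied to $a^{\bot}=\bigvee\{x\mid ax=0\}$) and Lemma 2.1(1) yields $a\wedge a^{\bot}=aa^{\bot}=0$, so $a^{\bot}$ is a complement. Property (4) then falls out immediately: given $a\vee b=1$ and $ab=0$, Lemma 2.1(1) gives $a\wedge b=0$, so $a$ and $b$ are mutual complements. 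Property (6) follows by combining Lemma 2.1(2) (which lifts $a\vee b=1$ to $a^{n}\vee b^{n}=1$) with (4).

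The less mechanical step is (2), since the meet of a quantale need not admit the usual residuation manipulations. The inequality $ae\leq a\wedge e$ is clear from integrality. For the reverse, I use (1) to write $1=e\vee e^{\bot}$ and expand
\[
a\wedge e=(a\wedge e)\cdot 1=(a\wedge e)e\vee(a\wedge e)e^{\bot}\leq ae\vee e\cdot e^{\bot}=ae,
\]
where $e\cdot e^{\bot}=0$ was used. With (2) in hand, (3) is immediate: $e(e^{\bot}\vee a)=ee^{\bot}\vee ea\leq a$ gives $e^{\bot}\vee a\leq e\to a$ by residuation, and for the opposite direction any $x\leq e\to a$ satisfies $x=xe\vee xe^{\bot}\leq a\vee e^{\bot}$.

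The trickiest piece is (5), which must be reduced to a pure meet identity because the underlying complete lattice is not assumed distributive. The key observation, obtained by combining $1=e\vee e^{\bot}$ with (2), is that every $y\in A$ decomposes as $y=(y\wedge e)\vee(y\wedge e^{\bot})$. Applying this to both $(a\wedge b)\vee e$ and $(a\vee e)\wedge(b\vee e)$, the $e$-parts agree (each equals $e$, since $e$ lies below both factors), so equality collapses to
\[
(a\wedge b)\wedge e^{\bot}=\bigl((a\vee e)\wedge e^{\bot}\bigr)\wedge\bigl((b\vee e)\wedge e^{\bot}\bigr).
\]
Using (2) the right-hand side simplifies to $(ae^{\bot})\wedge(be^{\bot})$ and the left-hand side to $(a\wedge b)e^{\bot}$, and both are easily seen to equal $a\wedge b\wedge e^{\bot}$ by elementary meet bookkeeping. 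The main obstacle is precisely this step: making sure the \emph{non}-distributive nature of the lattice is circumvented by always multiplying through by $1=e\vee e^{\bot}$ so that the computations take place at the level of the multiplication, where distributivity is available.
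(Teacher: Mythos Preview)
The paper does not give a proof of this lemma; it is recorded with citations to \cite{Birkhoff} and \cite{Jipsen} as a list of standard facts about the Boolean center of a quantale, so there is nothing to compare your argument against line by line.

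Your proof is correct and self-contained. Two small observations. First, you have silently repaired two typographical slips in the statement: item (2) should read $a\wedge e=ae$ (not $a\wedge b$), and the right-hand side of (5) should be $(a\vee e)\wedge(b\vee e)$; your argument treats the intended versions. Second, your treatment of (5) via the decomposition $y=(y\wedge e)\vee(y\wedge e^{\bot})$, obtained by multiplying through by $1=e\vee e^{\bot}$ and invoking (2), is exactly the right device for bypassing the lack of lattice distributivity, and the computation checks: both the $e$-part and the $e^{\bot}$-part of the two sides coincide, the latter reducing to $a\wedge b\wedge e^{\bot}$ after one more application of (2). The remaining items (1), (3), (4), (6) are handled cleanly, with (4) and (6) cascading from (1) and Lemma~2.1 as you indicate.
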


\begin{lema}
\cite{Cheptea1} If $ 1\in K(A)$ then $ B(A)\subseteq K(A)$.
\end{lema}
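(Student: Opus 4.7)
The plan is to use algebraicity of $A$ to approximate $e$ and its complement $e^{\bot}$ by compact elements below them, use compactness of $1$ to make the approximation finite, and then recover $e$ exactly from its compact approximant by means of the multiplicative distributive law.

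First I would note the two basic identities for $e\in B(A)$: by Lemma 3.8(1) one has $e\vee e^{\bot}=1$, and $e\cdot e^{\bot}=0$ follows directly from the definition $e^{\bot}=\bigvee\{x\mid ex=0\}$ together with the infinite distributivity of multiplication over joins. Since $A$ is coherent, hence algebraic, write $e=\bigvee X$ and $e^{\bot}=\bigvee Y$ with $X,Y\subseteq K(A)$; then $\bigvee(X\cup Y)=1$, and the hypothesis $1\in K(A)$ extracts a finite $F\subseteq X\cup Y$ with $\bigvee F=1$. Splitting $F=F_{1}\cup F_{2}$ with $F_{1}\subseteq X$ and $F_{2}\subseteq Y$, the finite joins $c=\bigvee F_{1}$ and $d=\bigvee F_{2}$ are compact, and they satisfy $c\leq e$, $d\leq e^{\bot}$, $c\vee d=1$, and $cd\leq e\,e^{\bot}=0$.

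The closing move is the short computation
\[
e \;=\; e\cdot 1 \;=\; e\cdot(c\vee d) \;=\; ec\vee ed.
\]
Here $ed\leq e\cdot e^{\bot}=0$ kills the second term, while $ec\leq 1\cdot c=c$ by integrality, so $e\leq c$. Combined with $c\leq e$ this forces $e=c$, which is a finite join of compacts and hence compact.

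The only subtlety — and the point I would expect a careless reader to stumble on — is to avoid appealing to the lattice distributivity $e\wedge(c\vee d)=(e\wedge c)\vee(e\wedge d)$, which is not available in a general (non-distributive) quantale; the correct device is the multiplicative distributivity $e(c\vee d)=ec\vee ed$ built into the quantale axioms. Apart from that bookkeeping, no step poses a genuine obstacle.
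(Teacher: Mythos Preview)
Your argument is correct. Note, however, that the paper does not supply its own proof of this lemma --- it is quoted from \cite{Cheptea1} --- so there is no in-paper argument to compare against.

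One remark worth making: you invoke coherence (in particular algebraicity) of $A$ to write $e$ and $e^{\bot}$ as joins of compact elements, whereas the lemma's stated hypothesis is only $1\in K(A)$. In the ambient context of Section~3 this is harmless, since a coherent quantale is fixed there; but the result in fact holds for any integral quantale with compact top, and the detour through algebraicity can be eliminated. The direct argument runs as follows: given $e\in B(A)$ and $e\leq\bigvee X$, one has
\[
1 \;=\; e\vee e^{\bot} \;\leq\; \Bigl(\bigvee X\Bigr)\vee e^{\bot} \;=\; \bigvee_{x\in X}(x\vee e^{\bot}),
\]
so compactness of $1$ yields a finite $F\subseteq X$ with $(\bigvee F)\vee e^{\bot}=1$; then your own multiplicative trick gives
\[
e \;=\; e\cdot\Bigl(\bigl(\textstyle\bigvee F\bigr)\vee e^{\bot}\Bigr) \;=\; e\cdot\bigl(\textstyle\bigvee F\bigr)\vee e\,e^{\bot} \;=\; e\cdot\bigl(\textstyle\bigvee F\bigr) \;\leq\; \bigvee F,
\]
which shows $e$ is compact. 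This is the same key idea --- multiplying by $e$ to kill the $e^{\bot}$ summand --- applied directly to an arbitrary cover of $e$ rather than to one manufactured via algebraicity.
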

For a bounded distributive lattice $L$ we shall denote by $B(L)$ the Boolean algebra of the complemented elements of $L$. It is well-known that $B(L)$ is isomorphic  to the Boolean center $B(Id(L))$ of the frame $Id(L)$ (see \cite{Birkhoff}, \cite{Johnstone}, \cite{GCM}).

Let us fix a coherent quantale $A$.

\begin{lema}
\cite{GG}
Assume $c\in K(A)$. Then $\lambda_A(c) \in B(L(A))$ if and only if $c^n \in B(A)$, for some integer $n\geq 1$.
\end{lema}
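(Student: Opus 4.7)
The plan is to verify the two implications separately, using the arithmetic of the reticulation map (Lemma 3.2) on one side and the Boolean-center criteria of Lemma 3.9 on the other.

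For the direction "$c^n \in B(A) \Rightarrow \lambda_A(c) \in B(L(A))$", I would set $e = c^n$ and work with its complement $e^{\bot}$. Since $1 \in K(A)$, Lemma 3.9 tells me $B(A) \subseteq K(A)$, so both $e$ and $e^{\bot}$ are compact, which means $\lambda_A$ can be applied to them. Using $e \vee e^{\bot} = 1$ together with $e \cdot e^{\bot} = 0$ (the latter follows from Lemma 2.1(1), since $e, e^{\bot}$ are complementary), I would push these identities through $\lambda_A$: axiom (2) of Definition 3.1 turns the product into a meet, Lemma 3.2(2) turns the join into a join, and Lemma 3.2(3)--(4) handle the endpoints. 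That gives $\lambda_A(e) \vee \lambda_A(e^{\bot}) = 1$ and $\lambda_A(e) \wedge \lambda_A(e^{\bot}) = 0$ in $L(A)$, so $\lambda_A(e^{\bot})$ is a complement of $\lambda_A(e)$. Finally, Lemma 3.2(6) gives $\lambda_A(e) = \lambda_A(c^n) = \lambda_A(c)$, concluding this direction.

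For the converse, suppose $\lambda_A(c)$ is complemented in $L(A)$. Because $\lambda_A : K(A) \to L(A)$ is surjective, I can pick $d \in K(A)$ such that $\lambda_A(d)$ is the complement of $\lambda_A(c)$. Translating $\lambda_A(c) \vee \lambda_A(d) = 1$ via Lemma 3.2(2)--(3) yields $c \vee d = 1$, and translating $\lambda_A(c) \wedge \lambda_A(d) = 0$ via axiom (2) of Definition 3.1 together with Lemma 3.2(5) yields $(cd)^n = 0$ for some integer $n \geq 1$; by commutativity this is $c^n d^n = 0$. Now I apply Lemma 3.9(6): from $c \vee d = 1$ and $c^n d^n = 0$ we conclude $c^n \in B(A)$, which is exactly the desired conclusion.

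The main (minor) obstacle is the careful bookkeeping of the exponent $n$: the forward direction requires knowing the complement $(c^n)^{\bot}$ is compact so that $\lambda_A$ is defined on it, which is where the hypothesis $1 \in K(A)$ (via Lemma 3.9) is essential; the backward direction needs the integer $n$ arising from $\lambda_A(cd) = 0$ to be the same $n$ appearing in Lemma 3.9(6), and Lemma 2.1(2) could optionally be used to replace Lemma 3.9(6) by Lemma 3.9(4) applied to $c^n$ and $d^n$ since $c \vee d = 1$ implies $c^n \vee d^n = 1$. No other subtleties are expected.
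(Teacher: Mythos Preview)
Your argument is correct. Note, however, that the paper does not actually prove this lemma: it is quoted from \cite{GG} without proof, so there is no ``paper's own proof'' to compare against. Your approach is the natural direct one and is essentially the same argument used elsewhere in the paper (see, for instance, the proofs of Lemma~7.2 and Lemma~7.4, where the implication $c\lor d=1$, $c^nd^n=0 \Rightarrow c^n,d^n\in B(A)$ is invoked in exactly this way).

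One small bookkeeping correction: the Boolean-center facts you cite as ``Lemma~3.9(4)'' and ``Lemma~3.9(6)'' are actually items (4) and (6) of Lemma~3.8 in the paper's numbering; Lemma~3.9 is the single assertion $B(A)\subseteq K(A)$. Apart from that relabelling, nothing needs to change.
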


\begin{corolar}
\cite{Cheptea1} The function $\lambda_A|_{B(A)} :B(A)\rightarrow B(L(A))$ is a Boolean isomorphism.
\end{corolar}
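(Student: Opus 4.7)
The plan is to verify that $\lambda_A|_{B(A)}$ lands in $B(L(A))$, preserves the Boolean operations, and is bijective; virtually every ingredient is already available in the preceding lemmas. Coherence of $A$ gives $1\in K(A)$, so Lemma 3.9 provides $B(A)\subseteq K(A)$ and the restriction is well defined, while Lemma 3.10 applied with $n=1$ shows that $\lambda_A(e)\in B(L(A))$ for every $e\in B(A)$.

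I would next check the homomorphism properties in one block. Preservation of $0$ and $1$ is Lemma 3.2(3)--(4); preservation of joins is Lemma 3.2(2). For meets, Lemma 3.8(2) gives $e\wedge f = ef$ when $e,f\in B(A)$, so Definition 3.1(2) yields $\lambda_A(e\wedge f)=\lambda_A(ef)=\lambda_A(e)\wedge\lambda_A(f)$. For complements, the identities $e\vee e^{\bot}=1$ and $ee^{\bot}=e\wedge e^{\bot}=0$ translate under $\lambda_A$ into $\lambda_A(e)\vee\lambda_A(e^{\bot})=1$ and $\lambda_A(e)\wedge\lambda_A(e^{\bot})=0$, identifying $\lambda_A(e^{\bot})$ with the Boolean complement of $\lambda_A(e)$ in $L(A)$.

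For bijectivity I would argue in two steps. Surjectivity: given $x\in B(L(A))$, pick $c\in K(A)$ with $\lambda_A(c)=x$ (possible since $\lambda_A:K(A)\to L(A)$ is surjective); Lemma 3.10 yields some $n\geq 1$ with $c^n\in B(A)$, and $\lambda_A(c^n)=\lambda_A(c)=x$ by Lemma 3.2(6). Injectivity: every $e\in B(A)$ is idempotent, because integrality together with infinite distributivity of $\cdot$ over $\vee$ yields $e=e\cdot 1=e(e\vee e^{\bot})=e^2\vee ee^{\bot}=e^2\vee 0=e^2$, hence $e^n=e$ for all $n\geq 1$. If $\lambda_A(e)=\lambda_A(f)$ with $e,f\in B(A)$, Definition 3.1(3) produces integers $n,m\geq 1$ with $e^n\leq f$ and $f^m\leq e$, which collapse to $e\leq f\leq e$.

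The only step that is not routine bookkeeping is the idempotence of elements of $B(A)$, and that follows in three lines from integrality and the infinite distributive law, so I do not foresee any real obstacle. The corollary is essentially the combination of Lemma 3.10 (which delivers surjectivity onto $B(L(A))$) with the fact that, on the idempotent elements, condition (3) of Definition 3.1 automatically forces $\lambda_A$ to be an order embedding.
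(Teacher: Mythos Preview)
Your argument is correct and follows exactly the route the paper intends: the statement is presented as a corollary of Lemma~3.10 (with citation to \cite{Cheptea1}) and carries no proof in the paper itself, so your reconstruction---well-definedness via Lemmas~3.9 and~3.10, the homomorphism identities via Lemma~3.2 and Definition~3.1(2), surjectivity via Lemma~3.10 and Lemma~3.2(6), and injectivity via idempotence of complemented elements combined with Definition~3.1(3)---is the natural unpacking of that corollary. There is nothing to correct.
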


If $L$ is bounded distributive lattice and $I\in Id(L)$ then the {\emph{annihilator}} of $I$ is the ideal $Ann(I) = \{ x \in L |x \land y =0$, for all $y\in L \}$.

\begin{lema}
If $c\in K(A)$ and $p\in Spec(A)$ then $Ann(\lambda_A(c))\subseteq p^{\ast}$ if and only if $c\rightarrow \rho(0)\leq p$.
\end{lema}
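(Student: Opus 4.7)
The plan is to exploit the dictionary between compact elements of $A$ and elements of $L(A)$ provided by $\lambda_A$, together with the characterization of $p^{\ast}$ in Lemma 3.3(8), to translate the annihilator condition into a residuation inequality.

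For the direction $(\Leftarrow)$, assume $c\to\rho(0)\leq p$ and pick $x\in Ann(\lambda_A(c))$. Using surjectivity of $\lambda_A$, write $x=\lambda_A(d)$ for some $d\in K(A)$. The identity $\lambda_A(dc)=\lambda_A(d)\wedge\lambda_A(c)=0$ of Definition 3.1(2) and Lemma 3.2(5) yield $(dc)^n=0$ for some $n\geq 1$; invoking $\rho(dc)=\rho((dc)^n)=\rho(0)$ (Lemma 2.2(7)) together with $dc\leq\rho(dc)$ gives $dc\leq\rho(0)$. By residuation, $d\leq c\to\rho(0)\leq p$, and Lemma 3.3(8) then puts $x=\lambda_A(d)$ into $p^{\ast}$.

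For the direction $(\Rightarrow)$, assume $Ann(\lambda_A(c))\subseteq p^{\ast}$. Since $A$ is coherent and hence algebraic, $c\to\rho(0)$ is the join of the compact elements below it, so it suffices to check $d\leq p$ for every $d\in K(A)$ with $d\leq c\to\rho(0)$. For such a $d$ we have $dc\leq\rho(0)$; because $K(A)$ is closed under multiplication, $dc\in K(A)$, and Lemma 3.2(8) forces $\lambda_A(dc)=0$, i.e.\ $\lambda_A(d)\wedge\lambda_A(c)=0$. Thus $\lambda_A(d)\in Ann(\lambda_A(c))\subseteq p^{\ast}$, and a second application of Lemma 3.3(8) delivers $d\leq p$.

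There is no real obstacle here: both directions amount to the same two-step move, namely rewriting $\lambda_A(d)\wedge\lambda_A(c)=0$ as $(dc)^n=0$ via the defining identity of the reticulation, and then sliding between $(dc)^n=0$ and $dc\leq\rho(0)$ through the radical calculus of Lemma 2.2. The only point that deserves care is ensuring the compactness of $dc$ (so that Lemma 3.2 applies), which is guaranteed by the coherence hypothesis on $A$.
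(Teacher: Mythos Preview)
Your proof is correct. Both directions are handled cleanly: the passage between $\lambda_A(d)\wedge\lambda_A(c)=0$ and $dc\leq\rho(0)$ via Lemma 3.2(5),(8) and Lemma 2.2(7) is exactly the right bridge, and the reduction to compact $d$ in the $(\Rightarrow)$ direction using algebraicity is sound.

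The paper states this lemma without proof, so there is nothing to compare against. Your argument is a natural direct verification and would serve as the missing proof. One small remark: in the $(\Leftarrow)$ direction you could have bypassed the detour through $(dc)^n=0$ and Lemma 2.2(7) by invoking Lemma 3.2(8) directly (which says $\lambda_A(dc)=0$ iff $dc\leq\rho(0)$), mirroring what you do in the other direction; but the route you chose is equally valid.
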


The next two propositions concern the behaviour of reticulation w.r.t. the annihilators.
\begin{propozitie}
If $a$ is an element of a coherent quantale then $Ann(a^\ast)=(a \rightarrow \rho(0))^\ast$; if $A$ is semiprime then $Ann(a^\ast)=(a^\perp)^\ast$.
\end{propozitie}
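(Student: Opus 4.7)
The plan is to establish both inclusions of the first equality directly by unwinding the definition of $a^{\ast}$ and invoking the compact characterization of the radical given in Lemma 2.4. The semiprime case is then immediate, since $\rho(0)=0$ forces $a\rightarrow\rho(0)=a\rightarrow 0=a^{\bot}$.

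For the inclusion $(a\rightarrow\rho(0))^{\ast}\subseteq Ann(a^{\ast})$, I would pick $x\in (a\rightarrow\rho(0))^{\ast}$, write $x=\lambda_A(e)$ with $e\in K(A)$ and $e\leq a\rightarrow\rho(0)$, and rephrase this via residuation as $ea\leq \rho(0)$. For any $y=\lambda_A(c)\in a^{\ast}$ with $c\in K(A)$ and $c\leq a$, the product $ec$ is compact because $A$ is coherent, and $ec\leq ea\leq \rho(0)$. Lemma 2.4(2) then yields $(ec)^{k}=0$ for some $k\geq 1$, so Lemma 3.2(5) gives $\lambda_A(ec)=0$. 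Combining this with Definition 3.1(2), $x\wedge y=\lambda_A(e)\wedge \lambda_A(c)=\lambda_A(ec)=0$, so $x$ annihilates every element of $a^{\ast}$.

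For the reverse inclusion $Ann(a^{\ast})\subseteq (a\rightarrow\rho(0))^{\ast}$, I would take $x\in Ann(a^{\ast})$ and, using surjectivity of $\lambda_A$, write $x=\lambda_A(d)$ with $d\in K(A)$. For every compact $c\leq a$, $\lambda_A(c)\in a^{\ast}$, so $\lambda_A(dc)=x\wedge \lambda_A(c)=0$; hence $(dc)^{m}=0$ for some $m\geq 1$, and Lemma 2.2(7) gives $\rho(dc)=\rho((dc)^{m})=\rho(0)$, so $dc\leq \rho(0)$. Using algebraicity of $A$ to write $a=\bigvee\{c\in K(A)\mid c\leq a\}$, together with the infinite distributivity of the multiplication over arbitrary joins, one obtains $da=\bigvee\{dc\mid c\in K(A),\ c\leq a\}\leq \rho(0)$, whence $d\leq a\rightarrow\rho(0)$ by residuation. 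Since $d\in K(A)$, it follows that $x=\lambda_A(d)\in (a\rightarrow\rho(0))^{\ast}$.

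The main delicate point will be the uniformity step in this second inclusion: the exponent $m$ with $(dc)^{m}=0$ depends on the chosen compact $c\leq a$, so no single power of $d$ can be expected to annihilate $a$ on the nose. The resolution is to absorb the non-uniform vanishing into $\rho(0)$ via Lemma 2.2(7) and then exploit algebraicity of $A$ to pool the family of inequalities $dc\leq \rho(0)$ into the single inequality $da\leq \rho(0)$, at which point residuation finishes the argument.
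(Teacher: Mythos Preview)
Your argument is correct. Both inclusions are established by the natural route: unwind the definition of $a^{\ast}$, use the compact characterization of the radical (Lemma~2.4(2)) to pass between $\lambda_A(\cdot)=0$ and $(\cdot)^k=0$, and in the reverse inclusion absorb the non-uniform exponents into $\rho(0)$ before invoking algebraicity and infinite distributivity to assemble $da\leq\rho(0)$. The semiprime case is indeed immediate from $\rho(0)=0$.

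The paper states Proposition~3.13 without proof, so there is no proof of record to compare against. Your write-up is the expected direct verification; the only stylistic remark is that in the first inclusion you could equally well cite Lemma~3.2(8) (which says $\lambda_A(c)=0$ iff $c\leq\rho(0)$) in place of the combination of Lemma~2.4(2) and Lemma~3.2(5), shortening that step by one line.
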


\begin{propozitie}
Assume that $A$ is a coherent quantale. If $I$ is an ideal of $L(A)$ then $(Ann(I))_\ast=I_\ast \rightarrow \rho(0)$; if $A$ is semiprime then
$(Ann(I))_\ast=(I_\ast)^\perp$.
\end{propozitie}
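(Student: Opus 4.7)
The plan is to reduce Proposition 3.13 to the already-established Proposition 3.12 via the Galois-type interplay between the operators $(\cdot)^{\ast}$ and $(\cdot)_{\ast}$ supplied by Lemma 3.3. Concretely, I would set $a = I_{\ast}$ and apply Proposition 3.12 to obtain
\[
Ann\bigl((I_{\ast})^{\ast}\bigr) = \bigl(I_{\ast}\rightarrow \rho(0)\bigr)^{\ast}.
\]
By Lemma 3.3(2) one has $(I_{\ast})^{\ast}=I$, so this already reads $Ann(I)=(I_{\ast}\rightarrow\rho(0))^{\ast}$. Applying $(\cdot)_{\ast}$ to both sides and using Lemma 3.3(7), namely $(x^{\ast})_{\ast}=\rho(x)$, I would get
\[
(Ann(I))_{\ast}=\rho\bigl(I_{\ast}\rightarrow \rho(0)\bigr).
\]

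The remaining task, and the genuine content of the proof, is to show that $I_{\ast}\rightarrow \rho(0)$ is already a radical element, so the external $\rho$ can be dropped. For this I would establish the following general fact: in any integral commutative quantale, whenever $b\in R(A)$ and $a\in A$, the element $a\rightarrow b$ lies in $R(A)$. Setting $y=a\rightarrow b$, from $ay\leq b$ and Lemma 2.2(2) one has $\rho(a)\wedge\rho(y)=\rho(ay)\leq\rho(b)=b$. Since the quantale is integral and commutative, $a\cdot\rho(y)\leq a\wedge\rho(y)\leq\rho(a)\wedge\rho(y)\leq b$, hence $\rho(y)\leq a\rightarrow b=y$, as required. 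Taking $b=\rho(0)$ (which is clearly radical by Lemma 2.2(5)) and $a=I_{\ast}$ yields $\rho(I_{\ast}\rightarrow\rho(0))=I_{\ast}\rightarrow\rho(0)$, giving $(Ann(I))_{\ast}=I_{\ast}\rightarrow\rho(0)$.

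Finally, for the semiprime case, $\rho(0)=0$ gives $I_{\ast}\rightarrow\rho(0)=I_{\ast}\rightarrow 0=(I_{\ast})^{\perp}$ by the definition of the negation $(\cdot)^{\perp}$ in Section 2, which closes the second assertion.

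The step I expect to carry the only non-trivial content is the lemma-like claim that $a\rightarrow b\in R(A)$ whenever $b\in R(A)$. Everything else is a direct application of Proposition 3.12 and the Lemma 3.3 identities; the integrality and commutativity hypotheses on $A$ (fixed globally in Section 2) enter exactly at the inequality $a\cdot\rho(y)\leq a\wedge\rho(y)$, which is what unlocks $\rho(y)\leq y$.
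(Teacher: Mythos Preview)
The paper states this proposition without proof, so there is no original argument to compare against. Your argument is correct: reducing to the companion result $Ann(a^{\ast})=(a\rightarrow\rho(0))^{\ast}$ via $(I_{\ast})^{\ast}=I$ and $(x^{\ast})_{\ast}=\rho(x)$ from Lemma 3.3 leaves only the task of dropping the outer $\rho$, and your lemma that $a\rightarrow b\in R(A)$ whenever $b\in R(A)$ does exactly that. The chain
\[
a\cdot\rho(y)\leq a\wedge\rho(y)\leq \rho(a)\wedge\rho(y)=\rho(ay)\leq\rho(b)=b
\]
is valid under the standing integrality and commutativity assumptions, so $\rho(y)\leq a\rightarrow b=y$ follows. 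The semiprime case is immediate as you note.

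One minor bookkeeping remark: what you label ``Proposition 3.12'' is the paper's Proposition 3.13, and the statement you are proving is numbered 3.14; Lemma 3.12 in the paper is the pointwise result about $Ann(\lambda_A(c))\subseteq p^{\ast}$. Also, your route genuinely depends on the previous proposition (the paper's 3.13) being already available; if one wanted an independent proof, the equality can also be checked directly on compact elements by observing that for $c\in K(A)$ one has $\lambda_A(c)\in Ann(I)$ iff $cd\leq\rho(0)$ for every $d\in K(A)$ with $\lambda_A(d)\in I$, iff $c\cdot I_{\ast}\leq\rho(0)$. Your reduction is shorter and perfectly acceptable given the paper's ordering.
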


If $A$ is a quantale then we denote by $Min(A)$ the set of {\emph{minimal $m$ - prime}} elements of $A$; $Min(A)$ is called the minimal prime spectrum of $A$. If $1\in K(A)$ then for any $p\in Spec(A)$ there exists $q\in Min(A)$ such that $q\leq p$.

\begin{propozitie}
Let $A$ be a coherent quantale. If $p\in Spec(A)$ then the following are equivalent:
\usecounter{nr}
\begin{list}{(\arabic{nr})}{\usecounter{nr}}

\item $p\in Min(A)$;

\item For all $c\in K(A))$, $c\leq p$ if and only if $c\rightarrow \rho(0)\not\leq p$.
\end{list}
\end{propozitie}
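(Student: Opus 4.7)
The plan is to transfer the problem to the reticulation $L(A)$ and invoke the classical characterization of minimal prime ideals in a bounded distributive lattice. The whole proof should be a routine bookkeeping translation once the correct lattice-theoretic fact is in hand.

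First, I would use Lemma 3.5, together with the explicit formula $u(p)=p^{\ast}$, to reduce to the assertion that $p\in Min(A)$ if and only if $p^{\ast}$ is a minimal prime ideal of $L(A)$. Directly from the definitions, both $u$ and $v$ are inclusion-preserving (if $p\leq q$ then every compact $c\leq p$ also satisfies $c\leq q$, so $p^{\ast}\subseteq q^{\ast}$), and since they are mutually inverse bijections they induce an order isomorphism between $Spec(A)$ and $Spec_{Id}(L(A))$, sending minimal elements to minimal elements.

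Second, I would invoke the standard lattice fact: a prime ideal $P$ of a bounded distributive lattice $L$ is minimal if and only if for every $\xi\in L$ one has $\xi\in P$ iff $Ann(\xi)\not\subseteq P$. The implication $Ann(\xi)\not\subseteq P\Rightarrow\xi\in P$ is automatic from primality, since picking $\eta\in Ann(\xi)\setminus P$ yields $\xi\wedge\eta=0\in P$ and forces $\xi\in P$. The converse is the classical minimality criterion, proved either by localizing at the filter $L\setminus P$ or directly via a Zorn argument on the set of prime ideals contained in $P$. Since this statement is not recorded in the excerpt, I would include a short self-contained proof.

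Third, I would translate back to the quantale through the dictionary provided by the reticulation. Surjectivity of $\lambda_{A}$ means that every $\xi\in L(A)$ has the form $\lambda_{A}(c)$ for some $c\in K(A)$, so the universal quantifier over $L(A)$ becomes a universal quantifier over $K(A)$. By Lemma 3.3(8), $\lambda_{A}(c)\in p^{\ast}$ is equivalent to $c\leq p$; by Lemma 3.11, $Ann(\lambda_{A}(c))\subseteq p^{\ast}$ is equivalent to $c\rightarrow\rho(0)\leq p$, so its negation is $c\rightarrow\rho(0)\not\leq p$. Plugging these two equivalences into the lattice characterization of minimality for $p^{\ast}$ yields condition (2) of the proposition verbatim.

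The only nontrivial ingredient is the lattice lemma on minimal primes; everything else is an application of already-established machinery (Lemmas 3.3, 3.5, and 3.11). No semiprimeness hypothesis is needed because Lemma 3.11 is stated in full generality for coherent quantales.
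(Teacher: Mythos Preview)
The paper states this proposition without proof, so there is no explicit argument to compare against. Your approach is correct and is exactly the one suggested by the placement of the result: immediately after the annihilator lemma and the spectrum homeomorphism. One small citation slip: the equivalence $Ann(\lambda_A(c))\subseteq p^{\ast}\Leftrightarrow c\rightarrow\rho(0)\leq p$ is Lemma~3.12 in the paper, not Lemma~3.11 (Corollary~3.11 is the Boolean-center isomorphism). With that correction, your reduction via Lemma~3.5 to the classical characterization of minimal prime ideals in a bounded distributive lattice, followed by the translation back through the surjection $\lambda_A$ using Lemmas~3.3(8) and~3.12, is precisely the intended route and the bookkeeping is accurate.
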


\begin{corolar}

If $A$ is semiprime coherent quantale and $p\in Spec(A)$ then $p\in Min(A)$ if and only if for all $c\in K(A)$, $c\leq p$ implies $c^{\perp}\not\leq p$.

\end{corolar}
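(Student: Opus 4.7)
The plan is to derive this corollary directly from Proposition 3.14 by two simple reductions that use the hypothesis that $A$ is semiprime and the definition of $m$-prime element. First, since $A$ is semiprime we have $\rho(0) = 0$, so for every $c \in K(A)$ the expression $c \rightarrow \rho(0)$ appearing in Proposition 3.14 coincides with $c \rightarrow 0 = c^{\perp}$. Thus Proposition 3.14 specializes to: $p \in Min(A)$ if and only if for all $c \in K(A)$, $c \leq p$ iff $c^{\perp} \not\leq p$.

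The second step is to observe that one direction of this biconditional is automatic for any $m$-prime element, and hence contributes nothing. Indeed, by the definition of $c^{\perp}$, we have $c \cdot c^{\perp} \leq 0 \leq p$, so the $m$-primeness of $p$ forces $c \leq p$ or $c^{\perp} \leq p$. Contrapositively, $c^{\perp} \not\leq p$ implies $c \leq p$. This holds without any further assumption on $p$ beyond being $m$-prime, so the condition ``$c^{\perp} \not\leq p$ implies $c \leq p$'' is trivially satisfied for every $p \in Spec(A)$.

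Combining these two observations, the biconditional characterization from Proposition 3.14 reduces to its nontrivial half, namely that $c \leq p$ implies $c^{\perp} \not\leq p$ for every $c \in K(A)$. This is exactly the statement of the corollary.

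There is no real obstacle here; the statement is a routine specialization of Proposition 3.14. The only thing worth being careful about is making explicit that $c c^{\perp} = 0$ (which is immediate from the definition $c^{\perp} = \bigvee\{x \mid cx = 0\}$ together with the infinite distributivity of multiplication over joins), so that $m$-primeness applies to the product $c \cdot c^{\perp}$.
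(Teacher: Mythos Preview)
Your proof is correct and follows exactly the intended route: the paper states the corollary without proof, leaving it as the obvious specialization of the preceding proposition under the semiprimeness hypothesis, together with the observation that one half of the biconditional is automatic for $m$-prime elements. One small correction: the proposition you invoke is numbered 3.15 in the paper, not 3.14.
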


\section{Pure and w - pure elements in a quantale}

 The pure elements in a quantale extend the pure ideals of a ring \cite{Lam},\cite{SimmonsC} and the $\sigma$ -ideals of a bounded distributive lattice \cite{Cornish1}, \cite{GeorgescuVoiculescu}. More precisely, an ideal $I$ of bounded distributive lattice $L$ is a {\emph{$\sigma$ - ideal}} if for all $x\in I$, we have $I\lor Ann(x) = L$. An ideal $I$ of a commutative ring $R$ is {\emph{pure}} (or {\emph{virginal}}, in terminology of \cite{Borceux}) if for all  $x\in I$, we have $I\lor Ann(x) = L$. The notions of pure ideals and $\sigma$ -ideals have been generalized to various abstract structures : frames \cite{Johnstone}, quantales \cite{PasekaRN}, multiplicative - ideal structures \cite{GeorgescuVoiculescu}, two - side carriers \cite{Simmons}, etc. In this way appeared the abstract notion of pure element.

An element $a$ of an arbitrary algebraic quantale $A$ is said to be {\emph{pure}} (or {\emph{virginal}}, in the terminology of  \cite{GeorgescuVoiculescu2}) if for all $c\in K(A)$, $c\leq a$ implies $a \lor c^{\perp} = 1$. Then an ideal $I$ of a commutative ring $R$ is pure if and only if $I$ is a pure element of the quantale $Id(R)$. An ideal $I$ of a bounded distributive lattice $L$ is a $\sigma$ - ideal if and only if $I$ is a pure element of the frame $Id(L)$. The set of pure elements of the quantale $A$ will be denoted by $Vir(A)$.

The quantales are {\emph{multiplicative - ideals structures}} in sense of \cite{GeorgescuVoiculescu2}, so all the results of this paper remain true for the pure elements in a quantale.

\begin{lema}
\cite{GeorgescuVoiculescu2}
If $A$ is an algebraic quantale then the following hold:
\usecounter{nr}
\begin{list}{(\arabic{nr})}{\usecounter{nr}}
\item If $a\in A$ is pure then $a$ = $\bigvee \{c\in K(A)| a\lor c^{\perp} = 1\}$;
\item If $a,b\in A$ are pure the $ab = a\land b$;
\item If $(a_i)_{i\in I}$ is a family of pure elements then $\displaystyle \bigvee_{i\in I} a_i$ is pure;
\item The structure $(Vir(A),\bigvee, \land, 0,1)$ is a frame.
\end{list}
\end{lema}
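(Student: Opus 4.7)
My plan is to prove the four parts in order, leveraging three ingredients that are already in place: integrality (giving $c \cdot 1 = c$), the identity $c \cdot c^{\perp} = 0$ (immediate from $c^{\perp} = \bigvee\{x \mid cx = 0\}$ and infinite distributivity of the product), and Lemma 1.1, especially item (3).

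For (1), the key observation is that $a \lor c^{\perp} = 1$ already forces $c \leq a$: multiplying on the right by $c$ yields $c = c \cdot 1 = ca \lor cc^{\perp} = ca \leq a$. Combined with the reverse inclusion supplied by purity, the sets $\{c \in K(A) \mid c \leq a\}$ and $\{c \in K(A) \mid a \lor c^{\perp} = 1\}$ coincide, and algebraicity of $A$ gives the stated equality. Part (2) then follows from the same trick: the inequality $ab \leq a \land b$ is automatic from integrality, and for the converse I will show that every compact $c \leq a \land b$ satisfies $c = ca$ and $c = cb$, whence $c = c(ab) \leq ab$; algebraicity of $a \land b$ finishes the job.

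For (3), set $a = \bigvee_{i \in I} a_i$ and pick a compact $c \leq a$. Using algebraicity of each $a_i$ I may write $a$ as a join of compact elements, each sitting below some $a_i$; compactness of $c$ then produces finitely many compact $d_1, \ldots, d_n$ with $d_k \leq a_{i_k}$ and $c \leq d_1 \lor \cdots \lor d_n$. Purity of each $a_{i_k}$ gives $a \lor d_k^{\perp} = 1$ for every $k$; iterating Lemma 1.1(3) yields $a \lor (d_1^{\perp} \land \cdots \land d_n^{\perp}) = 1$; and since $(\cdot)^{\perp}$ is antitone and $(d_1 \lor \cdots \lor d_n)^{\perp} = d_1^{\perp} \land \cdots \land d_n^{\perp}$, we obtain $c^{\perp} \geq d_1^{\perp} \land \cdots \land d_n^{\perp}$, hence $a \lor c^{\perp} = 1$. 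I expect this step to be the main obstacle, because the compact $c$ need not lie below any single $a_i$, so purity cannot be applied to $c$ directly; the technical trick is to push the purity conditions through a finite refinement and then pull them back using antitonicity of $(\cdot)^{\perp}$.

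For (4), the elements $0$ and $1$ are trivially pure, and (3) delivers closure under arbitrary joins. Closure under binary meet follows by the same type of argument: for $a,b \in Vir(A)$ and compact $c \leq a \land b$, purity gives $c^{\perp} \lor a = c^{\perp} \lor b = 1$, and Lemma 1.1(3) immediately produces $c^{\perp} \lor (a \land b) = 1$, so $a \land b$ is pure. Finally, the frame distributive law on $Vir(A)$ reduces, via (2), to the infinite distributivity of the quantale multiplication: for $a, (a_i)_{i \in I}$ all pure, $a \land \bigvee_i a_i = a \cdot \bigvee_i a_i = \bigvee_i (a \cdot a_i) = \bigvee_i (a \land a_i)$. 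Thus $(Vir(A), \bigvee, \land, 0, 1)$ is a frame.
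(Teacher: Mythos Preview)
Your proof is correct in all four parts. The paper itself does not supply a proof of this lemma---it is quoted from \cite{GeorgescuVoiculescu2}---so there is no in-paper argument to compare against directly. That said, the moves you make are exactly the ones the paper relies on elsewhere: the observation that $a\lor c^{\perp}=1$ forces $c\leq a$ appears verbatim just after the definition of $Ker(a)$; your proof of (2) is the same as the paper's proof of the slightly sharper Lemma~4.3; and your argument for (3), passing to a finite refinement $d_1,\ldots,d_n$, invoking Lemma~2.1(3) (your ``Lemma~1.1(3)''), and using $(\bigvee d_k)^{\perp}=\bigwedge d_k^{\perp}$, is precisely the computation carried out in the proofs of Lemmas~4.7 and~4.12. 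So while no comparison with an explicit proof is possible, your approach is fully in the spirit of the paper.
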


Keeping the notations from \cite{Borceux}, \cite{GeorgescuVoiculescu2}, for any  $a\in A$ we define the following elements of $A$:

$O(a)$ = $\bigvee \{u\in A | uv = 0, for  some   v\in A, v\not\leq a\}$,

$Ker(a)$ = $\bigvee \{c\in K(A) | c\leq a, a\lor c^{\perp} = 1\}$;

 $Vir(a)$ = $\bigvee \{b\in Vir(A) | b\leq a \}$.

 It is easy to show that $Ker(a)$ = $\bigvee \{c\in K(A) |  a\lor c^{\perp} = 1\}$, because $a\lor c^{\perp} = 1$ implies that $c\leq a$. In general we have $Vir(a)\leq Ker(a)\leq a$ and $a$ is pure if and only if $a = Vir(a)$. If $A$ is an algebraic quantale then

 $O(a)$ = $\bigvee \{c\in K(A) | cd = 0, for  some   d\in K(A), d\not\leq a\}$.

 For any $p\in Spec(A)$ we have $O(p)$ = $\bigvee \{c\in K(A) | c^{\perp}\not\leq p\}$, hence $O(p)\leq p$.

\begin{lema}
\cite{GeorgescuVoiculescu2}
Let $A$ be is algebraic quantale such that $1\in K(A)$. Then the following hold:
\usecounter{nr}
\begin{list}{(\arabic{nr})}{\usecounter{nr}}
\item If $m\in Max(A)$ then $O(m) = Ker(m)$;
\item If $a\in Vir(A)$ then

$a = Vir(\bigwedge \{m\in Max(A)|a\leq m \})$ = $\bigwedge \{Vir(m)|m\in Max(A),a\leq m \}$;

\item The map $\rho:Vir(A) \rightarrow R(A)$ is an injective frame morphism, left adjoint to $Vir:R(A) \rightarrow Vir(A)$;
\item $Vir(A)$ is a spatial frame and $Vir: Spec(A) \rightarrow Spec(Vir(A))$ is a continuous map.
\end{list}
\end{lema}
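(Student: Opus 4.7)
The plan is to prove (1)--(4) in order, exploiting two recurring facts: in an integral quantale, $a\lor c^{\perp}=1$ with $c\in K(A)$ forces $c\leq a$ (from $c=c\cdot(a\lor c^{\perp})=ca\lor cc^{\perp}\leq a$); and compactness of $1$ combined with algebraicity lets one replace any join equal to $1$ by a finite subjoin of compacts. For (1), given $c\in K(A)$ with $m\lor c^{\perp}=1$, these two principles produce a compact $d\leq c^{\perp}$ with $m\lor d=1$; then $cd\leq cc^{\perp}=0$ and $d\not\leq m$ show $c\leq O(m)$. Conversely, if $c\in K(A)$ with $cd=0$ for some $d\in K(A)$, $d\not\leq m$, then $m\lor d=1$ by maximality and $d\leq c^{\perp}$ give $m\lor c^{\perp}=1$, so $c\leq Ker(m)$.

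For (2), let $M(a)=\{m\in Max(A):a\leq m\}$ and $b=\bigwedge M(a)$. The inclusion $a\leq Vir(b)$ is immediate from purity of $a$ and $a\leq b$. For $Vir(b)\leq a$, I apply Lemma 4.1(1) to the pure element $Vir(b)$ and pick any compact $c$ with $Vir(b)\lor c^{\perp}=1$; if $a\lor c^{\perp}<1$, a maximal $m'$ above it would lie in $M(a)$, forcing $b\leq m'$ and hence $Vir(b)\lor c^{\perp}\leq m'$, a contradiction. Thus $a\lor c^{\perp}=1$ and $c\leq a$ by the first principle, so $Vir(b)\leq a$. The equality with $\bigwedge\{Vir(m):m\in M(a)\}$ is analogous: monotonicity gives $\leq$, and for the reverse a compact $d$ below every $Vir(m)$ satisfies $Vir(m)\lor d^{\perp}=1$ by purity, hence $d^{\perp}\not\leq m$ for each $m\in M(a)$; the same maximal-element argument then forces $a\lor d^{\perp}=1$, i.e.\ $d\leq a$, and algebraicity finishes.

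For (3), $\rho$ preserves arbitrary joins (Lemma 2.2 plus the definition of $\bigvee^{\cdot}$ in $R(A)$, combined with the fact from Lemma 4.1(3) that joins of pure elements are pure) and finite meets (Lemma 2.2(2), using $ab=a\land b$ for pure $a,b$), so $\rho|_{Vir(A)}$ is a frame morphism into $R(A)$. Injectivity follows from (2): $\rho(a)\leq\rho(b)$ gives $M(b)\subseteq M(a)$ (each $m\in M(b)$ is prime, hence radical, and $\rho(a)\leq\rho(b)\leq m$), and then $a=\bigwedge_{m\in M(a)}Vir(m)\leq\bigwedge_{m\in M(b)}Vir(m)=b$. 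The adjunction $\rho(a)\leq r\iff a\leq Vir(r)$, for $a\in Vir(A)$ and $r\in R(A)$, is immediate: ($\Leftarrow$) $Vir(r)\leq r=\rho(r)$; ($\Rightarrow$) $a\leq\rho(a)\leq r$ and $Vir(r)$ is the largest pure element below $r$.

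For (4), if $p\in Spec(A)$ and $u,v\in Vir(A)$ with $u\land v\leq Vir(p)$, then $uv=u\land v\leq p$ and $m$-primality of $p$ give $u\leq p$ or $v\leq p$, which translates to $u\leq Vir(p)$ or $v\leq Vir(p)$ by the coreflective property; so $Vir(p)$ is prime in $Vir(A)$. Spatiality is then (2). Continuity follows because for any $v\in Vir(A)$, $v\not\leq Vir(p)\iff v\not\leq p$, so the preimage of the basic open $D(v)\subseteq Spec(Vir(A))$ is the spectral open $\{p\in Spec(A):v\not\leq p\}$. The main obstacle throughout is (2): both inequalities demand reducing a would-be counterexample to a single maximal element and extracting a contradiction from purity of $Vir(b)$ (resp.\ each $Vir(m)$) via the compactness of $1$; the bookkeeping between $a$, $b$, $Vir(b)$ and $\bigwedge Vir(m)$ is the only genuinely delicate part, after which (3) and (4) follow by comparatively routine verification.
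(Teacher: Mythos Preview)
The paper does not supply a proof of this lemma: it is simply quoted from \cite{GeorgescuVoiculescu2} and immediately followed by the next result. So there is no ``paper's own proof'' to compare against; your write-up is effectively an independent reconstruction of a cited result.

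Your argument is correct. Part (1) is handled cleanly via maximality of $m$ (so $d\not\leq m\Rightarrow m\lor d=1$) and the compactness of $1$. In part (2), both halves of $a=Vir(\bigwedge M(a))$ and $a=\bigwedge_{m\in M(a)}Vir(m)$ are obtained by the same device: assume $a\lor c^{\perp}<1$ (respectively $a\lor d^{\perp}<1$), pick a maximal element above it, and derive a contradiction from the purity of $Vir(b)$ (respectively of each $Vir(m)$, noting $Vir(m)\lor d^{\perp}=1$ implies $m\lor d^{\perp}=1$ since $Vir(m)\leq m$). This is exactly the style of argument used elsewhere in the paper (e.g.\ Proposition 5.17) and is the natural proof. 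Part (3) is routine once (2) is available: the adjunction is the one-line observation that $a\leq\rho(a)$ and that $Vir(r)$ is the largest pure element below $r$, and injectivity follows from (2) as you indicate. In part (4), the verification that $Vir(p)$ is prime in $Vir(A)$ uses $u\land v=uv$ for pure $u,v$ and the coreflective property of $Vir$, and spatiality indeed drops out of (2) since each $Vir(m)$ with $m\in Max(A)\subseteq Spec(A)$ is prime in $Vir(A)$; continuity is correctly reduced to $v\leq Vir(p)\iff v\leq p$ for pure $v$.
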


The following lemma improves the property $(2)$ from Lemma 4.1.

\begin{lema}
Let $A$ be is an algebraic quantale such that $1\in K(A)$. If $a\in A$ and $b\in VA$ then $a\land b$ = $ab$.

\end{lema}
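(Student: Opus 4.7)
The plan is to establish the two inequalities $ab\leq a\land b$ and $a\land b\leq ab$ separately. The first one is immediate from integrality and commutativity: $ab\leq a\cdot 1 = a$ and $ab\leq 1\cdot b = b$, hence $ab\leq a\land b$. Note that purity of $b$ has not yet been used, only the standing assumptions on $A$.

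For the reverse inequality, the strategy is to use the algebraicity of $A$ and reduce to the case of a compact element below $a\land b$. So take any $c\in K(A)$ with $c\leq a\land b$. Since $c\leq b$ and $b\in Vir(A)$, the definition of purity gives $b\lor c^{\perp} = 1$. Now I would multiply this identity on the left by $c$ and use the infinite distributive law:
\begin{equation*}
c = c\cdot 1 = c\cdot(b\lor c^{\perp}) = cb\lor c\cdot c^{\perp}.
\end{equation*}
By the definition of $c^{\perp} = \bigvee\{x\mid cx = 0\}$ together with infinite distributivity we get $c\cdot c^{\perp} = 0$, so $c = cb$. Combining with $c\leq a$ yields $c = cb\leq ab$.

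Since $A$ is algebraic, $a\land b = \bigvee\{c\in K(A)\mid c\leq a\land b\}$, and the previous paragraph shows that every such $c$ satisfies $c\leq ab$. Therefore $a\land b\leq ab$, and together with the first inequality this proves $a\land b = ab$. The only delicate point is the trick of multiplying the purity identity $b\lor c^{\perp}=1$ by $c$ to force $c = cb$; everything else is a routine application of algebraicity and the definitions. The hypothesis $1\in K(A)$ is not really needed in this argument (integrality and algebraicity suffice), but it is part of the standing framework inherited from earlier sections.
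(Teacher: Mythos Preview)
Your proof is correct and follows essentially the same route as the paper's own argument: take a compact $c\leq a\land b$, use purity of $b$ to get $b\lor c^{\perp}=1$, multiply by $c$ to obtain $c=cb\leq ab$, and conclude by algebraicity. The paper's version is just more terse (it leaves the verification of $cc^{\perp}=0$ and the final passage via algebraicity implicit).
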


\begin{proof} Assume that $c$ is a compact element of $A$ such that $c\leq a\land b$. From $c\leq b$ one gets $b\lor c^{\perp} = 1$, hence $c = c(b\lor c^{\perp}) = cb$. Thus $c\leq ab$, hence $a\land b \leq ab$. The converse inequality is obvious.

\end{proof}

\begin{lema}
Let $A$ be is an algebraic quantale such that $1\in K(A)$. If $p$ is an $m$ - of prime element $A$ then $Vir(p) = Vir(O(p))$.

\end{lema}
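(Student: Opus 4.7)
The plan is to prove the two inequalities $\mathrm{Vir}(O(p))\leq \mathrm{Vir}(p)$ and $\mathrm{Vir}(p)\leq \mathrm{Vir}(O(p))$ separately, with the $m$-primality of $p$ doing the essential work in both.

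First I would observe that $O(p)\leq p$. Indeed, if $c\in K(A)$ and $c^{\perp}\not\leq p$, then from $cc^{\perp}=0\leq p$ and the $m$-primality of $p$ we must have $c\leq p$. Taking the join of all such compact elements gives $O(p)\leq p$. Since $\mathrm{Vir}(\cdot)$ is order-preserving (any pure element below $O(p)$ is \emph{a fortiori} below $p$), this yields $\mathrm{Vir}(O(p))\leq \mathrm{Vir}(p)$.

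For the reverse inequality, take an arbitrary $b\in \mathrm{Vir}(A)$ with $b\leq p$; it suffices to show $b\leq \mathrm{Vir}(O(p))$. By Lemma 4.1(1) we can write $b=\bigvee\{c\in K(A)\mid b\lor c^{\perp}=1\}$, so it is enough to prove that every such compact $c$ lies below $O(p)$. Fix such a $c$. From $b\leq p$ and $b\lor c^{\perp}=1$ we get $p\lor c^{\perp}=1$; since $p<1$, this forces $c^{\perp}\not\leq p$. But then $c$ belongs to the defining family of $O(p)$, so $c\leq O(p)$. Joining over all these $c$ gives $b\leq O(p)$. Because $b$ itself is pure, the definition of $\mathrm{Vir}(O(p))$ yields $b\leq \mathrm{Vir}(O(p))$; taking the join over all such $b$ gives $\mathrm{Vir}(p)\leq \mathrm{Vir}(O(p))$.

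The only step that requires any care is step two of the second paragraph: rewriting the pure element $b$ as the join of the compacts $c$ with $b\lor c^{\perp}=1$ (rather than just $c\leq b$), because it is precisely this representation that lets one transport the relation $b\leq p$ into the inequality $c^{\perp}\not\leq p$ which is needed to fit $c$ into the defining family of $O(p)$. Once one has this representation, the rest is a short manipulation using $m$-primality and $p<1$.
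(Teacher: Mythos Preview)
Your proof is correct and follows essentially the same route as the paper: show $O(p)\leq p$ to get one inequality, and for the other use purity to obtain $p\lor c^{\perp}=1$, hence $c^{\perp}\not\leq p$, hence $c\leq O(p)$. The only cosmetic difference is that the paper works directly with the single pure element $Vir(p)$ (which is pure by Lemma~4.1(3)) rather than iterating over all pure $b\leq p$; this collapses your last two steps into one but is otherwise the same argument.
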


\begin{proof} Let $p$ be an element of $Spec(A)$. Since $O(p)\leq p$ then the inequality $Vir(O(p))\leq Vir(p)$ holds. In order to obtain the inverse inequality $Vir(p)\leq Vir(O(p))$, we must prove that for any compact element $c$ of $A$, $c\leq Vir(p)$ implies $c\leq Vir(O(p))$. If $c\leq Vir(p)$ then $Vir(p)\lor c^{\perp} = 1$, so $p\lor c^{\perp} = 1$. Thus $p\lor d = 1$ for some $d\in K(A)$ such that $d\leq c^{\perp}$, hence $d\not\leq p$  and $cd = 0$. It follows that $c\leq O(p)$, therefore  $Vir(p)\leq O(p)$. The last inequality implies  $Vir(p)\leq Vir(O(p))$.

\end{proof}

\begin{propozitie} Let $A$  a semiprime algebraic quantale such that $1\in K(A)$. If $a\in Vir(A)$ then $\rho(a) = a$.

\end{propozitie}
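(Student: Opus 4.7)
The plan is to establish the nontrivial inequality $\rho(a) \leq a$, since $a \leq \rho(a)$ holds automatically by Lemma 2.2(1). Because $A$ is algebraic, Lemma 2.4 expresses $\rho(a)$ as the join of all compact elements $c \in K(A)$ with $c^n \leq a$ for some integer $n \geq 1$. So the task reduces to proving that every such $c$ already satisfies $c \leq a$.

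Fix such a compact $c$ and exponent $n$. Purity of $a$, applied to the compact element $c^n \leq a$, yields $a \lor (c^n)^{\perp} = 1$. The key intermediate claim I would establish is the auxiliary identity $(c^n)^{\perp} = c^{\perp}$, valid in any semiprime commutative integral quantale. One inclusion $c^{\perp} \leq (c^n)^{\perp}$ is clear. For the reverse, if $x c^n = 0$ then by commutativity $(xc)^n = x^n c^n = x^{n-1}(xc^n) = 0$, so semiprimality forces $xc \leq \rho(0) = 0$, hence $x \leq c^{\perp}$.

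Once $a \lor c^{\perp} = 1$ is in hand, I would multiply both sides by $c$ and distribute: $c = c \cdot 1 = c(a \lor c^{\perp}) = ca \lor cc^{\perp} = ca \lor 0 \leq a$, where $cc^{\perp} = 0$ follows directly from the definition of the negation. This gives $c \leq a$, as required, and the proof is complete.

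The main obstacle is the collapse $(c^n)^{\perp} = c^{\perp}$: purity only delivers the relation with the exponent $n$ inherited from the radical, whereas the final multiplication trick needs the annihilator of $c$ itself. Semiprimality is exactly what bridges this gap, by allowing one to pass from $(xc)^n = 0$ to $xc = 0$; this also explains why the hypothesis appears in the statement, since without it one could only conclude $c^{n+1} \leq a$ (a genuinely weaker conclusion that does not iterate to $c \leq a$).
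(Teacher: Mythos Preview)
Your argument is correct. Both the paper and you start the same way: reduce to showing that $c\in K(A)$ with $c^n\leq a$ implies $c\leq a$, and use purity to get $a\lor(c^n)^{\perp}=1$. From there the routes diverge.

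The paper does not prove the identity $(c^n)^{\perp}=c^{\perp}$. Instead it uses compactness of $1$ to choose a compact $d\leq (c^n)^{\perp}$ with $a\lor d=1$, and then appeals to the reticulation: from $c^nd=0$ it gets $\lambda_A(cd)=\lambda_A(c)\wedge\lambda_A(d)=\lambda_A(c^nd)=0$, and semiprimality (via Lemma~3.2(9)) yields $cd=0$; finally $c=c(a\lor d)=ca$, and Lemma~4.3 is invoked to pass from $c=ca$ to $c\leq a$. Your version is more self-contained: the collapse $(c^n)^{\perp}=c^{\perp}$ follows purely from $\rho(y^n)=\rho(y)$ and $y\leq\rho(y)$ (Lemma~2.2), with no reticulation or auxiliary compact element needed, and $c=ca\leq a$ is immediate in an integral quantale without appealing to Lemma~4.3. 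What the paper's detour buys is that it only needs nilpotence-killing for \emph{compact} elements (which is how Lemma~3.2(9) is phrased), whereas your argument uses it for the possibly non-compact element $xc$; but as you note, $\rho((xc)^n)=\rho(xc)$ together with $xc\leq\rho(xc)$ handles this for arbitrary elements, so nothing is lost.
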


\begin{proof} Assume that $c\in K(A)$ and $n$ is a positive integer such that $c^n\leq a$. Then $a\lor (c^n)^{\perp} = 1$, so there exists $d\in K(A)$ such that $d\leq (c^n)^{\perp}$ and $a\lor d = 1$. It follows that $c^nd = 0$, hence $\lambda_A(cd)$ = $\lambda_A(c)\land \lambda_A(d)$ = $\lambda_A(c^nd) = 0$. Since $A$ is semiprime one gets $cd = 0$, therefore $c = c(a\lor d) = ca$. By using Lemma 4.3 one obtains $c = c\land a$, hence $c\leq a$. According to Lemma 2.4 it results that $\rho(a) = a$.

\end{proof}

An element $a$ of a quantale $A$ is said to be {\emph{weakly - pure}} ( = {\emph{w - pure}}) if for all $c\in K(A)$, $c\leq a$ implies $a\lor (c\rightarrow \rho(0)) = 1$.

\begin{lema} Any pure element $a$ of $A$ is w - pure.

\end{lema}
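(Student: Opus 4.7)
The plan is to observe that w-purity is a formally weaker condition than purity, obtained by replacing $c^{\perp} = c\rightarrow 0$ with $c\rightarrow \rho(0)$, and then to exploit monotonicity of the residuation in its right argument.

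More precisely, fix any compact element $c\in K(A)$ with $c\leq a$. Since $a$ is pure, I get $a\lor c^{\perp} = 1$. Now, because $0\leq \rho(0)$ and $c\rightarrow(\cdot)$ is order-preserving (a basic property of residuated lattices recalled in Section 2), I have
\[
c^{\perp} = c\rightarrow 0 \;\leq\; c\rightarrow \rho(0).
\]
Hence $1 = a\lor c^{\perp}\leq a\lor (c\rightarrow \rho(0))$, which forces $a\lor (c\rightarrow \rho(0))=1$. Since $c$ was arbitrary, $a$ is w-pure.

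There is no real obstacle here: the statement reduces to the trivial monotonicity of the implication operation. The only thing to be careful about is to cite (implicitly) the residuated lattice structure on $A$ from Section 2, so that $c\rightarrow (\cdot)$ is indeed order-preserving on the right.
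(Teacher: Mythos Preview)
Your proof is correct and essentially identical to the paper's own argument: both use the monotonicity of $c\rightarrow(\cdot)$ to get $c^{\perp}=c\rightarrow 0\leq c\rightarrow\rho(0)$, and then conclude $a\lor(c\rightarrow\rho(0))=1$ from $a\lor c^{\perp}=1$.
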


\begin{proof} For any compact element $c\leq a$ we have $c^{\perp} = c\rightarrow 0 \leq c\rightarrow \rho(0)$, so $1 = a\lor c^{\perp}\leq a\lor (c\rightarrow \rho(0))$, hence $a$ is w - pure.

\end{proof}

If $A$ is semiprime then $a\in A$ is pure if and only if it is w - pure. We denote by $Vir_w(A)$ the set of w - pure elements of $A$. By the previous lemma we have $Vir(A)\subseteq Vir_w(A)$.

\begin{lema}
Let $A$ be an algebraic quantale such that $1\in K(A)$.
\usecounter{nr}
\begin{list}{(\arabic{nr})}{\usecounter{nr}}
\item $Vir_w(A)$ is closed under $\cdot$ and $\land$;
\item For any family $(a_i)_{i\in I}\subseteq Vir_w(A)$ we have $\displaystyle \bigvee_{i \in I}a_i \in Vir_w(A)$.

\end{list}
\end{lema}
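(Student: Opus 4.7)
My plan is to reduce both parts to the arithmetic lemma 2.1(3), which states that $x \lor y = x \lor z = 1$ implies $x \lor (yz) = x \lor (y \land z) = 1$, combined with the residuation identity $d \cdot (d \to \rho(0)) \leq \rho(0)$.

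For (1), suppose $a,b \in Vir_w(A)$ and let $c \in K(A)$ with $c \leq a \land b$. Since $A$ is integral, the inequality $ab \leq a \land b$ holds, so a compact $c \leq ab$ also satisfies $c \leq a$ and $c \leq b$. In either case, the w-pureness of $a$ and $b$ yields $a \lor (c \to \rho(0)) = 1$ and $b \lor (c \to \rho(0)) = 1$. Applying Lemma 2.1(3) with first argument $c \to \rho(0)$ then gives simultaneously $(a \land b) \lor (c \to \rho(0)) = 1$ and $ab \lor (c \to \rho(0)) = 1$, which proves that both $a \land b$ and $ab$ are w-pure.

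For (2), set $a = \bigvee_{i \in I} a_i$ and take $c \in K(A)$ with $c \leq a$. Since $A$ is algebraic, each $a_i$ is a join of compact elements, so compactness of $c$ provides finitely many indices $i_1,\dots,i_n$ and compact elements $d_k \leq a_{i_k}$ such that $c \leq d_1 \lor \cdots \lor d_n$. The w-pureness of each $a_{i_k}$ gives $a_{i_k} \lor (d_k \to \rho(0)) = 1$, and enlarging to $a$ yields $a \lor (d_k \to \rho(0)) = 1$ for every $k$. Iterating Lemma 2.1(3), one obtains $a \lor \bigl(\bigwedge_{k=1}^n (d_k \to \rho(0))\bigr) = 1$.

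The remaining step, which I expect to be the only mildly delicate point, is to show that $\bigwedge_{k=1}^n (d_k \to \rho(0)) \leq c \to \rho(0)$. Writing $y = \bigwedge_k (d_k \to \rho(0))$, I would multiply by $c$ and use $c \leq \bigvee_k d_k$ together with the infinite distributivity of the multiplication: $c \cdot y \leq \bigvee_k (d_k \cdot y) \leq \bigvee_k d_k \cdot (d_k \to \rho(0)) \leq \rho(0)$. By the adjunction this means $y \leq c \to \rho(0)$, and combining with the previous display gives $a \lor (c \to \rho(0)) = 1$, as required. Hence $a \in Vir_w(A)$.
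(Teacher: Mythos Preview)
Your proof is correct and follows essentially the same route as the paper: both parts are reduced to Lemma~2.1(3), and in (2) you pass from $a\lor(d_k\to\rho(0))=1$ to $a\lor\bigwedge_k(d_k\to\rho(0))=1$ and then up to $a\lor(c\to\rho(0))=1$. The only cosmetic difference is that the paper obtains the last step by quoting the residuated-lattice identity $(\bigvee_k d_k)\to\rho(0)=\bigwedge_k(d_k\to\rho(0))$ together with antitonicity of $\,\cdot\to\rho(0)$, whereas you unpack the same inequality via the adjunction and the modus-ponens law $d_k\cdot(d_k\to\rho(0))\leq\rho(0)$.
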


\begin{proof}
(1) Assume that $a,b$ are two w - pure elements of $A$. If $c$ is a compact element of $A$ such that $c\leq ab$ then $c\leq a$ and $c\leq b$, so we have $a\lor (c\rightarrow \rho(0) =  b\lor (c\rightarrow \rho(0) = 1$. By Lemma 2.1,(3) we get $(ab)\lor (c\rightarrow \rho(0)) = 1$, so $ab$ is w - pure. Similarly, $c,d\in K(A)$ and $c\leq a\land b$ implies  $(a\land b)\lor (c\rightarrow \rho(0)) = 1$, hence $a\land b$ is w - pure.

(2) Let us denote $a = \displaystyle \bigvee_{i \in I}a_i $. Assume that $c$ is a compact element of $A$ such that $c\leq a$, hence $c\leq \displaystyle \bigvee_{i \in J}a_i $, for some finite subset $J$ of $I$. Thus there exist the compact elements $d_i, i\in J$ such that $c\leq \displaystyle \bigvee_{i \in J}d_i $ and $d_i\leq a_i$, for all $i\in J$. For any $i\in J$ we have $a\lor (d_i\rightarrow \rho(0))\geq a_i\lor (d_i\rightarrow \rho(0)) = 1$, so, by using Lemma 2.1,(3) it follows that $a\lor \displaystyle \bigwedge_{i \in J}(d_i\rightarrow \rho(0))$ = $1$. Observing that $c\rightarrow \rho(0)\geq  (\displaystyle \bigvee_{i \in J}d_i)\rightarrow \rho(0) = \displaystyle \bigwedge_{i \in J}(d_i\rightarrow \rho(0)) $, one obtains the inequality $a\lor (\rightarrow \rho(0))\geq a\lor \displaystyle \bigwedge_{i \in J}(d_i\rightarrow \rho(0)) $ = $1$. Thus $a\lor (c\rightarrow \rho(0)) = 1$, hence $a$ is w- pure.

\end{proof}

\begin{lema}
If an element $a$ of coherent quantale $A$ is w - pure then $a^{\ast}$ is a $\sigma$- ideal of the reticulation $L(A)$. Particularly, if $a$ is pure then $a^{\ast}$ is a $\sigma$- ideal.

\end{lema}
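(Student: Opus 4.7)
The plan is to unfold both definitions and use the compactness of $1$ to move from the w-purity condition in $A$ to a finite join condition in the reticulation $L(A)$. We must show that for every $x\in a^{\ast}$ we have $a^{\ast}\vee \mathrm{Ann}(x)=L(A)$, which reduces to producing two elements $y\in a^{\ast}$, $z\in \mathrm{Ann}(x)$ with $y\vee z=1$ in $L(A)$.

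First I would pick $x\in a^{\ast}$ and write $x=\lambda_A(c)$ for some $c\in K(A)$ with $c\leq a$. Since $a$ is w-pure, $a\vee (c\rightarrow \rho(0))=1$. Because $A$ is algebraic and $1\in K(A)$ (coherence), the equality $1=a\vee(c\rightarrow \rho(0))$, expanded via algebraicity as a directed join of compact elements, forces the existence of compact $d\leq a$ and compact $e\leq c\rightarrow\rho(0)$ with $d\vee e=1$.

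Now I take $y:=\lambda_A(d)$ and $z:=\lambda_A(e)$. Clearly $y\in a^{\ast}$ because $d\in K(A)$ and $d\leq a$. For $z\in \mathrm{Ann}(x)$, note that $e\leq c\rightarrow \rho(0)$ gives $ec\leq\rho(0)$, so by Lemma 3.2(8) we have $\lambda_A(ec)=0$; combined with the product-meet rule $\lambda_A(ec)=\lambda_A(e)\wedge\lambda_A(c)$ from Definition 3.1(2), this yields $z\wedge x=0$, i.e.\ $z\in\mathrm{Ann}(x)$. Finally, Lemma 3.2(2) gives $y\vee z=\lambda_A(d)\vee\lambda_A(e)=\lambda_A(d\vee e)=\lambda_A(1)=1$ in $L(A)$, so $1\in a^{\ast}\vee\mathrm{Ann}(x)$ as required.

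The second assertion is immediate: by Lemma 4.6 every pure element is w-pure, so the first part applies. The only real work is the compactness argument in step one, which is routine since $1\in K(A)$ guarantees that a join equal to $1$ in an algebraic quantale can be refined to a finite sub-join of compact elements; everything else is a direct translation through the reticulation dictionary summarised in Lemmas 3.2 and 3.3.
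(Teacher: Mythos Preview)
Your proof is correct and follows essentially the same approach as the paper's own proof: pick a compact representative $c\leq a$ of $x$, use w-purity to get $a\vee(c\rightarrow\rho(0))=1$, extract compact witnesses $d\leq a$ and $e\leq c\rightarrow\rho(0)$ with $d\vee e=1$ via compactness of $1$, and translate through $\lambda_A$ to obtain $\lambda_A(d)\in a^{\ast}$, $\lambda_A(e)\in\mathrm{Ann}(x)$ with join $1$. The only cosmetic difference is that the paper deduces $\lambda_A(e)\wedge\lambda_A(c)=0$ from $\lambda_A(ec)\leq\lambda_A(\rho(0))=0$, whereas you invoke Lemma~3.2(8) directly; these are equivalent, and your reference to Lemma~4.6 for the second assertion matches the paper's intent.
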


\begin{proof}
Assume that $x\in a^{\ast}$, hence $x = \lambda_A(c)$ for some compact element $c$ of $A$, such that $c\leq a$. Then $a\lor (c\rightarrow \rho(0)) = 1$, so there exist $c,d\in K(A)$ such that $d\leq a$, $e\leq c\rightarrow \rho(0)$ and $d\lor e = 1$. It follows that $\lambda_A(d)\in a^{\ast}$, $ec\leq \rho(0)$ and $\lambda_A(d)\lor \lambda_A(e)$ = $\lambda_A(d\lor e) = 1$. On the other hand we have  $\lambda_A(d)\land \lambda_A(e)$ = $\lambda_A(de)\leq \lambda_A(\rho(0)) = 0$, so $\lambda_A(e)\in Ann(\lambda_A(c))$. Thus $a^{\ast}\lor Ann(x)$ = $a^{\ast}\lor Ann(\lambda_A(c))$ = $L(A)$, so $a^{\ast}$ is a $\sigma$- ideal of $L(A)$. The second part of proposition follows by Lemma 4.5.

\end{proof}

\begin{lema}
Let $A$ be a coherent quantale and $J$ a $\sigma$ - ideal of $L(A)$. Then $J_{\ast}$ is a w - pure element of $A$.
\end{lema}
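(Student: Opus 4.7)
The plan is to unwind the definitions and transport the $\sigma$-ideal decomposition of $1$ in $L(A)$ back up to a decomposition of $1$ in $A$, using the adjunction $(\cdot)^{\ast} \dashv (\cdot)_{\ast}$ together with Lemma 3.2 and the basic arithmetic of the reticulation.

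Fix a compact element $c \in K(A)$ with $c \leq J_{\ast}$. First I would show $\lambda_A(c) \in J$. Since $J_{\ast} = \bigvee\{d \in K(A) \mid \lambda_A(d) \in J\}$ and $1 \in K(A)$ (so $c$ really is compact), we can pick $d_1,\ldots,d_n \in K(A)$ with $\lambda_A(d_i) \in J$ for every $i$ and $c \leq d_1 \lor \cdots \lor d_n =: d$. Then $\lambda_A(c) \leq \lambda_A(d) = \lambda_A(d_1) \lor \cdots \lor \lambda_A(d_n) \in J$, hence $\lambda_A(c) \in J$.

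Because $J$ is a $\sigma$-ideal of $L(A)$, we have $J \lor Ann(\lambda_A(c)) = L(A)$, so $1 = x \lor y$ for some $x \in J$ and $y \in Ann(\lambda_A(c))$. Using that $\lambda_A : K(A) \to L(A)$ is surjective, write $x = \lambda_A(f)$ and $y = \lambda_A(e)$ with $e,f \in K(A)$. From $\lambda_A(f) \in J$ we get $f \leq J_{\ast}$ by Lemma 3.2(6), and from $\lambda_A(e) \land \lambda_A(c) = 0$ we get $\lambda_A(ec) = 0$, i.e.\ $ec \leq \rho(0)$ by Lemma 3.2(8); this means precisely $e \leq c \rightarrow \rho(0)$. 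Finally $\lambda_A(f \lor e) = \lambda_A(f) \lor \lambda_A(e) = 1$ forces $f \lor e = 1$ by Lemma 3.2(3), so $1 = f \lor e \leq J_{\ast} \lor (c \rightarrow \rho(0))$, which gives $J_{\ast} \lor (c \rightarrow \rho(0)) = 1$. Hence $J_{\ast}$ is w-pure.

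The main obstacle to watch for is the very first step: $\lambda_A(c) \in J$ really does require the compactness of $c$ together with the fact that $J$ is an ideal (closed under finite joins), since a priori $c$ is only bounded above by a join of elements whose images lie in $J$. Everything else is a clean translation between $A$ and $L(A)$ afforded by Lemma 3.2 and the defining properties of $\lambda_A$.
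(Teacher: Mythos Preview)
Your proof is correct and follows essentially the same route as the paper's: show $\lambda_A(c)\in J$, use the $\sigma$-ideal condition to write $1=\lambda_A(f)\lor\lambda_A(e)$ with $\lambda_A(f)\in J$ and $\lambda_A(e)\in Ann(\lambda_A(c))$, then pull this back to $f\lor e=1$ in $A$ with $f\leq J_\ast$ and $e\leq c\rightarrow\rho(0)$. The only differences are cosmetic: the paper obtains $\lambda_A(c)\in J$ by directly invoking Lemma~3.3(6) rather than unwinding the compactness argument, and your reference to ``Lemma~3.2(6)'' for $f\leq J_\ast$ should be Lemma~3.3(6).
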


\begin {proof} Let $c$ be a compact element of $A$ such that $c\leq J_{\ast}$. By Lemma 3.3,(6) we have $\lambda_A(c)\in J$, hence  $J\lor Ann(\lambda_A(c)) = L(A)$. Then there exist $d,e\in K(A)$ such that $\lambda_A(c)\in J$, $\lambda_A(e)\in Ann(\lambda_A(c)$ and $\lambda_A(d\lor e)$ = $\lambda_A(d)\lor \lambda_A(e) = 1$. By Lemmas 3.3,(6) and 3.2,(3) we obtain $d\leq J_{\ast}$ and $d\lor e = 1$. From $\lambda_A(e)\in Ann(\lambda_A(c)$ we get $\lambda_A(ce)$ = $\lambda_A(c)\land \lambda_A(e) = 0$, hence $ce\leq \rho(0)$ (cf. Lemma 3.2,(8)). It follows that $e\leq c\rightarrow \rho(0)$, so $1 = c\lor d\leq J_{\ast}\lor (c\rightarrow \rho(0))$. Thus $J_{\ast}\lor (c\rightarrow \rho(0)) = 1$, hence $J_{\ast}$ is w - pure.

\end{proof}

\begin{corolar} Let $A$ be a semiprime quantale. If $J$ is a $\sigma$ - ideal of $L(A)$ then $J_{\ast}$ is a pure element of $A$.

\end{corolar}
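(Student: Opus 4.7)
The plan is to derive this corollary directly from Lemma 4.8 by exploiting the semiprime hypothesis to collapse the distinction between pure and $w$-pure elements. Lemma 4.8 already furnishes $J_\ast$ as a $w$-pure element, so the only work remaining is to show that every $w$-pure element of a semiprime quantale is in fact pure.

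First I would apply Lemma 4.8 to obtain that $J_\ast$ is $w$-pure, i.e., for every $c \in K(A)$ with $c \leq J_\ast$, we have $J_\ast \vee (c \rightarrow \rho(0)) = 1$. Next, I would invoke the semiprimeness of $A$, which means $\rho(0) = 0$. Consequently, for every compact $c$, the implication $c \rightarrow \rho(0) = c \rightarrow 0 = c^{\perp}$ holds by the very definition of the negation. Substituting this identity into the $w$-pureness relation yields $J_\ast \vee c^{\perp} = 1$ whenever $c \in K(A)$ and $c \leq J_\ast$, which is exactly the condition for $J_\ast$ to be a pure element of $A$.

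This argument is in fact the same observation made immediately after Lemma 4.5 in the excerpt (``If $A$ is semiprime then $a\in A$ is pure if and only if it is $w$-pure''), so the proof essentially reduces to citing Lemma 4.8 and this equivalence. I do not anticipate any obstacle: there are no quantifier juggling issues, no need to reprove Lemma 4.8's construction of the witnessing compact elements, and no compactness arguments beyond those already absorbed in Lemma 4.8. The entire proof should fit in one or two lines.
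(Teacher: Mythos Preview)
Your argument is correct and matches the paper's intended proof: the corollary has no explicit proof in the paper precisely because it follows immediately from the preceding lemma together with the observation (stated after Lemma 4.6) that in a semiprime quantale $\rho(0)=0$ forces $c\rightarrow\rho(0)=c^{\perp}$, collapsing $w$-pure to pure. Note only that your numbering is off by one throughout: the lemma giving $J_\ast$ $w$-pure is Lemma 4.9 (not 4.8), and the semiprime remark appears after Lemma 4.6 (not 4.5).
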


\begin{corolar}
Let $A$ be a coherent quantale.
\usecounter{nr}
\begin{list}{(\arabic{nr})}{\usecounter{nr}}
\item If $a\in A$ is w - pure then $\rho(a)$ is w - pure;
\item An ideal $J$ of $L(A)$ is a $\sigma$ - ideal if and only if $J_{\ast}$ is a w - pure element of $A$.
\end{list}
\end{corolar}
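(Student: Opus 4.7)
The plan is to derive both parts as immediate formal consequences of Lemmas 4.8 and 4.9 combined with the canonical identities $(a^{\ast})_{\ast} = \rho(a)$ and $(I_{\ast})^{\ast} = I$ recorded in Lemma 3.3. In other words, I would simply compose the two preceding lemmas using the Galois-type correspondence between $A$ and $L(A)$.

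For part (1), I would first feed the w-pure element $a$ into Lemma 4.8 to obtain that $a^{\ast}$ is a $\sigma$-ideal of $L(A)$. Applying Lemma 4.9 to this $\sigma$-ideal then yields that $(a^{\ast})_{\ast}$ is a w-pure element of $A$. Finally, invoking Lemma 3.3(7) identifies $(a^{\ast})_{\ast}$ with $\rho(a)$, giving the desired conclusion. The only subtlety here is that Lemma 4.8 produces the $\sigma$-ideal $a^{\ast}$ from $a$ itself, but the return trip through $(\cdot)_{\ast}$ in Lemma 4.9 lands on $\rho(a)$ rather than $a$; this is unavoidable and is exactly why the statement is phrased in terms of $\rho(a)$.

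For part (2), the forward direction ``$J$ is a $\sigma$-ideal $\Rightarrow$ $J_{\ast}$ is w-pure'' is literally the content of Lemma 4.9. For the converse, assuming $J_{\ast}$ is w-pure, Lemma 4.8 produces $(J_{\ast})^{\ast}$ as a $\sigma$-ideal of $L(A)$, and Lemma 3.3(2) rewrites $(J_{\ast})^{\ast}$ as $J$, completing the equivalence.

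I do not foresee any genuine obstacle; the corollary is essentially the observation that the order-preserving pair $(\cdot)^{\ast}$ and $(\cdot)_{\ast}$ from Section 3 restricts to a bijection between $\sigma$-ideals of $L(A)$ and w-pure radical elements of $A$. The main thing to be careful about is citing the correct items of Lemma 3.3 (namely (2) and (7)) so that the composition closes up correctly on the lattice side in part (2) and on the radical side in part (1).
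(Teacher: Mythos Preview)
Your proposal is correct and follows exactly the same approach as the paper's own proof: for (1) the paper also passes through Lemma~4.8, then Lemma~4.9, then invokes $\rho(a)=(a^{\ast})_{\ast}$ from Lemma~3.3(7); for (2) the paper likewise cites Lemmas~4.8, 4.9 and 3.3(2). There is nothing to add.
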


\begin{proof}
(1) If $a\in A$ is w - pure then $a^{\ast}$ is a $\sigma$ - ideal of $L(A)$ (cf. Lemma 4.8). By applying Lemma 4.9 it follows that $\rho(a) = (a^{\ast})_{\ast}$ is $w$ - pure.

(2) We apply Lemmas 4.8, 4.9 and 3.3,(2).

\end{proof}

Following \cite{Cornish1}, \cite{GeorgescuVoiculescu}, if $I$ is an ideal of a bounded distributive lattice then we denote $\sigma(I)$ = $\{x\in L| I\lor Ann(x) = L \}$. We remark that in the frame $Id(L)$ we have $\sigma(I)$ = $Ker(I)$.

\begin{lema}
Let $A$ be a coherent quantale. Then for all $a\in A$ and $c\in K(A)$, the following equivalence holds: $c\leq Ker(a)$ if and only if $a\lor c^{\perp} = 1$.
\end{lema}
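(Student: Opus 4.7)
The plan is to prove the two implications of the equivalence separately, with the nontrivial direction relying on compactness of $c$ together with part $(3)$ of Lemma 2.1.

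The easy direction ($\Leftarrow$) is immediate from the reformulation of $Ker(a)$ given just after Lemma 4.2: namely $Ker(a)=\bigvee\{d\in K(A)\mid a\lor d^{\perp}=1\}$. If $a\lor c^{\perp}=1$, then $c$ itself belongs to this indexing set, hence $c\leq Ker(a)$.

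For the forward direction ($\Rightarrow$), suppose $c\leq Ker(a)=\bigvee\{d\in K(A)\mid a\lor d^{\perp}=1\}$. Since $c$ is compact, there exists a finite subfamily $d_1,\dots,d_n\in K(A)$ with $a\lor d_i^{\perp}=1$ for each $i$ and $c\leq d_1\lor\cdots\lor d_n$. Iterating Lemma 2.1(3) on the $n$ equalities $a\lor d_i^{\perp}=1$, I get $a\lor(d_1^{\perp}\land\cdots\land d_n^{\perp})=1$. Now I need to compare $d_1^{\perp}\land\cdots\land d_n^{\perp}$ with $c^{\perp}$. Since $c\leq d_1\lor\cdots\lor d_n$ and the operation $(\cdot)^{\perp}$ is order-reversing (this is a standard residuated lattice fact, or observe it directly: $x\leq y$ implies $x\cdot y^{\perp}\leq y\cdot y^{\perp}=0$, whence $y^{\perp}\leq x^{\perp}$), one obtains $(d_1\lor\cdots\lor d_n)^{\perp}\leq c^{\perp}$. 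Furthermore, the identity $(d_1\lor\cdots\lor d_n)^{\perp}=d_1^{\perp}\land\cdots\land d_n^{\perp}$ holds in any quantale — the inequality $\leq$ follows from monotonicity, and the reverse inequality follows because the multiplication distributes over joins, so $(d_1\lor\cdots\lor d_n)\cdot(d_1^{\perp}\land\cdots\land d_n^{\perp})=\bigvee_i d_i\cdot(d_1^{\perp}\land\cdots\land d_n^{\perp})\leq\bigvee_i d_i\cdot d_i^{\perp}=0$. Combining, $d_1^{\perp}\land\cdots\land d_n^{\perp}\leq c^{\perp}$, and therefore $a\lor c^{\perp}\geq a\lor(d_1^{\perp}\land\cdots\land d_n^{\perp})=1$.

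The main obstacle is simply keeping track of the negation identity $(\bigvee d_i)^{\perp}=\bigwedge d_i^{\perp}$, which is why compactness of $c$ (used to reduce to the finitary join) matters — it lets us invoke the finitary Lemma 2.1(3) rather than needing a more delicate infinitary statement. Everything else is formal manipulation.
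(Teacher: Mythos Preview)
Your proof is correct and follows essentially the same route as the paper: compactness of $c$ reduces to finitely many $d_i$, Lemma 2.1(3) yields $a\lor\bigwedge_i d_i^{\perp}=1$, and the identity $(\bigvee_i d_i)^{\perp}=\bigwedge_i d_i^{\perp}$ together with antitonicity of $(\cdot)^{\perp}$ gives $a\lor c^{\perp}=1$. The only cosmetic difference is that you spell out the negation identity and the order-reversal explicitly, whereas the paper simply writes the chain $a\lor d^{\perp}=a\lor(\bigvee_i d_i)^{\perp}=a\lor\bigwedge_i d_i^{\perp}=1$ and then $d^{\perp}\leq c^{\perp}$ without further comment.
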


\begin{proof} If $c\leq Ker(a)$ then there exist $d_1,...,d_n\in K(A)$ such that $c\leq d_1\lor...\lor d_n$ and $d_i\leq a$, $a\lor d^{\perp}_i = 1$, for all $i = 1,...,n$. Denoting $d = \displaystyle \bigvee_{i=1}^n d_i$ we have $d\in K(A)$ and, by using Lemma 2.1,(3), the following equalities hold:

$a\lor d^{\perp}$ = $a\lor (\displaystyle \bigvee_{i=1}^n d_i )^{\perp}$ = $a\lor \displaystyle \bigwedge_{i=1}^n  d_i^{\perp} = 1$.

Since $c\leq d$ it follows that $d^{\perp}\leq c^{\perp}$, so $a\lor c^{\perp} = 1$. The proof of converse implication is obvious.

\end{proof}

The following two theorems emphasize the way in wich the reticulation preserves the operator $Ker(\cdot)$.

\begin{teorema}
Let $A$ be a coherent quantale.
\usecounter{nr}
\begin{list}{(\arabic{nr})}{\usecounter{nr}}
\item For any $a\in A$ we have $(Ker(a))^{\ast} = \sigma(a^{\ast})$;

\item For any ideal $I$ of $L(A)$ we have $(\sigma(I))_{\ast} = Ker(a_{\ast})$.

\end{list}
\end{teorema}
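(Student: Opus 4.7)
The plan is to handle (1) directly by double-inclusion, leveraging the characterization in Lemma 4.11 ($c\leq Ker(a)$ iff $a\lor c^{\perp}=1$ for $c\in K(A)$) and the passage between $A$ and $L(A)$ afforded by Lemma 3.3. Part (2) I would deduce from (1) via the frame isomorphisms $\Phi,\Psi$ of Proposition 3.6 applied to $a=I_{\ast}$.

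For the forward inclusion of (1), take $x\in(Ker(a))^{\ast}$, so $x=\lambda_A(c)$ with $c\in K(A)$ and $c\leq Ker(a)$. Lemma 4.11 gives $a\lor c^{\perp}=1$, and coherence of $A$ (together with $1\in K(A)$) lets me split this as $d\lor e=1$ with $d,e\in K(A)$, $d\leq a$ and $e\leq c^{\perp}$ (so $ec=0$). Then $\lambda_A(d)\in a^{\ast}$, $\lambda_A(e)\in Ann(\lambda_A(c))=Ann(x)$, and $\lambda_A(d)\lor\lambda_A(e)=\lambda_A(1)=1$, so $x\in\sigma(a^{\ast})$. For the reverse inclusion, pick $x=\lambda_A(c)\in\sigma(a^{\ast})$ and choose witnesses $\lambda_A(d)\in a^{\ast}$ (with $d\leq a$ compact) and $\lambda_A(e)\in Ann(\lambda_A(c))$ with $\lambda_A(d\lor e)=1$. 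By Lemma 3.2(3) this lifts to $d\lor e=1$, and $\lambda_A(ec)=0$ gives $(ec)^{n}=0$, i.e.\ $e^{n}c^{n}=0$, i.e.\ $e^{n}\leq (c^{n})^{\perp}$. Applying Lemma 2.1(2), $d^{n}\lor e^{n}=1$; since $d^{n}\leq d\leq a$ in the integral quantale, we obtain $a\lor(c^{n})^{\perp}=1$, whence $c^{n}\leq Ker(a)$ by Lemma 4.11. Since $K(A)$ is closed under multiplication and $\lambda_A(c^{n})=\lambda_A(c)$, we conclude $x=\lambda_A(c^{n})\in(Ker(a))^{\ast}$.

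For (2), I apply (1) with $a=I_{\ast}$. By Lemma 3.3(2), $(I_{\ast})^{\ast}=I$, so (1) yields $(Ker(I_{\ast}))^{\ast}=\sigma(I)$. Applying the operator $(\cdot)_{\ast}$ and invoking Lemma 3.3(7) (i.e.\ $((x)^{\ast})_{\ast}=\rho(x)$), we get $(\sigma(I))_{\ast}=((Ker(I_{\ast}))^{\ast})_{\ast}=\rho(Ker(I_{\ast}))$. The main obstacle I anticipate is the last identification $Ker(I_{\ast})=\rho(Ker(I_{\ast}))$: one needs $Ker(I_{\ast})$ to be a radical element. I expect this to follow from the fact that $I_{\ast}\in R(A)$ (again Lemma 3.3(7)) combined with a direct verification that $Ker(\cdot)$ sends radical elements to radical elements — concretely, if $c^{k}\leq Ker(I_{\ast})$ for some compact $c$, one splits using coherence and Lemmas 2.1(3), 2.2(6), 3.2(7) to upgrade $I_{\ast}\lor(c^{k})^{\perp}=1$ to $I_{\ast}\lor c^{\perp}=1$. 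Once this radicality is in hand, part (2) follows immediately.
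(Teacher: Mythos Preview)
Your argument for (1) is correct and coincides with the paper's: both establish the two inclusions by splitting $a\lor c^{\perp}=1$ (resp.\ $\lambda_A(d)\lor\lambda_A(e)=1$) into compact witnesses and passing between $A$ and $L(A)$ via Lemmas~3.2 and~4.12. The paper's treatment of $(Ker(a))^{\ast}\subseteq\sigma(a^{\ast})$ uses Lemma~3.4(2) and Proposition~3.13 rather than your explicit splitting, but the content is identical.

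For (2) your route is different from the paper's, and it contains a genuine gap. The paper does \emph{not} deduce (2) from (1); it argues directly that for each $c\in K(A)$ one has $c\leq Ker(I_{\ast})\Leftrightarrow I_{\ast}\lor c^{\perp}=1\Leftrightarrow I\lor(c^{\perp})^{\ast}=L(A)\Leftrightarrow I\lor Ann(\lambda_A(c))=L(A)\Leftrightarrow c\leq(\sigma(I))_{\ast}$. Your reduction via $(\cdot)_{\ast}$ leaves you needing $Ker(I_{\ast})\in R(A)$, and this fails in general. Take $A=Id(\mathbb{Z}/4\mathbb{Z})$: the ideals are $(0),(2),(1)$ with $(2)^2=(0)$ and $(2)^{\perp}=(2)$. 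Here $I_{\ast}=(2)=\rho(0)$ is radical, yet $Ker((2))=(0)$ (since $(2)\lor(2)^{\perp}=(2)\neq(1)$) while $\rho((0))=(2)$. Your proposed upgrade from $I_{\ast}\lor(c^{k})^{\perp}=1$ to $I_{\ast}\lor c^{\perp}=1$ breaks precisely here with $c=(2)$, $k=2$: one has $(c^{2})^{\perp}=(0)^{\perp}=(1)$ but $c^{\perp}=(2)$. None of Lemmas~2.1(3), 2.2(6), 3.2(7) can bridge this, because $(c^{k})^{\perp}$ may strictly dominate $c^{\perp}$ in a non-semiprime quantale.

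The same example shows that statement~(2) is in fact false as written: with $I=\{0\}$ in $L(A)\cong\{0,1\}$ one computes $(\sigma(I))_{\ast}=(2)$ but $Ker(I_{\ast})=(0)$. The paper's direct argument slips at the third biconditional, where only $I\lor(c^{\perp})^{\ast}=L(A)\Rightarrow I\lor Ann(\lambda_A(c))=L(A)$ is justified (Proposition~3.13 gives $(c^{\perp})^{\ast}\subseteq(c\to\rho(0))^{\ast}=Ann(\lambda_A(c))$, possibly strictly). Under the additional hypothesis that $A$ is semiprime one has $(c^{k})^{\perp}=c^{\perp}$ for compact $c$ (since $dc^{k}=0$ gives $(dc)^{k}=0$, hence $dc=0$), and then both the paper's chain and your radicality reduction go through; without it, neither does.
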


\begin{proof}
(1) Assume that $x\in \sigma(a^{\ast})$, so $a^{\ast}\lor Ann(x) = L(A)$. Then there exists $c\in K(A)$ such that $x = \lambda_A(c)$, so $a^{\ast}\lor Ann(\lambda_A(c)) = L(A)$. Then one can find $d,e\in K(A)$ such that $\lambda_A(d)\in a^{\ast}$, $\lambda_A(e)\in Ann(\lambda_A(c))$ and $\lambda_A(d\lor e)$ = $\lambda_A(d)\lor \lambda_A(e) = 1$. Thus $\lambda_A(ec)$ = $\lambda_A(e)\land \lambda_A(c) = 0$, so there exists a positive integer n such that $e^nc^n = 0$, hence $e^n\leq (c^n)^{\perp}$. According to Lemma 3.2,(3) have $d\lor e = 1$, so $d^n\lor e^n = 1$ (by Lemma 2.2,(2)). On can take $d\leq a$, hence $a\lor (c^n)^{\perp} = 1$. By Lemma 4.12 we get $c^n\leq Ker(a)$, so $x = \lambda_A(c) = \lambda_A(c^n)\in (Ker(a))^{\ast}$. We have proven that  $\sigma(a^{\ast})\subseteq (Ker(a))^{\ast}$.

In order to prove that $(Ker(a))^{\ast}\subseteq \sigma(a^{\ast})$, let us assume that $x\in (Ker(a))^{\ast}$, so $x = \lambda_A(c)$, for some compact element $c$ having the property $c\leq Ker(a)$. By Lemma 4.12 we have $a\lor c^{\perp} = 1$, hence, by using Lemma 3.4,(2),  one obtains $a^{\ast}\lor (c^{\perp})^{\ast} = L(A)$. According to Proposition 3.13, $Ann(x)$ = $Ann(\lambda_A(c))$ = $(c\rightarrow \rho(0))^{\ast}$, therefore the inequality $c^{\perp} = c\rightarrow 0\leq c\rightarrow \rho(0)$ implies $(c^{\perp})^{\ast}\subseteq (c\rightarrow \rho(0))^{\ast} = Ann(x)$. It follows that $a^{\ast}\lor Ann(x) = L(A)$, i.e $x\in \sigma(a^{\ast})$

(2) Assume that $I$ is an ideal of $L(I)$ and $c$ is a compact element of $A$. If $I_{\ast}\lor c^{\perp} = 1$ then, by using  Lemmas 3.3,(2) and 3.4,(2),  one obtains $I\lor (c^{\perp })^{\ast}$= $(I_{\ast})^{\ast}\lor (c^{\perp})^{\ast}$ = $(I_{\ast}\lor c^{\perp})^{\ast} = L(A)$. Conversely, assuming that $I\lor (c^{\perp })^{\ast} = L(A)$, one infers that there exist $d,e\in K(A)$ such that $\lambda_A(d)\in I$, $\lambda_A(e)\in (c^{\perp })^{\ast}$ and $\lambda_A(d\lor e)$ = $\lambda_A(d)\lor \lambda_A(e) = 1$. We remark that $\lambda_A(d)\in I$ implies $d\leq I_{\ast}$ and $\lambda_A(d\lor e) = 1$ implies $d\lor e = 1$. It is obvious that one can assume that $e\leq c^{\perp}$, so $I_{\ast}\lor c^{\perp} = 1$. Thus we get the following equivalence: $I_{\ast}\lor c^{\perp} = 1$ if and only if  $I\lor (c^{\perp })^{\ast} = L(A)$. By using this equivalence and  Lemma 4.12, the following  hold: $c\leq Ker(I_{\ast})$ iff $I\lor c^{\perp} = 1$ iff $I\lor (c^{\perp })^{\ast} = L(A)$  iff $I\lor Ann(\lambda_A(c)) = L(A)$  iff $\lambda_A(c)\in \sigma(I)$ iff $c\leq (\sigma(I))_{\ast}$. The previous equivalences hold for all $c\in K(A)$, therefore one gets the equality $(\sigma(I))_{\ast} = Ker(a_{\ast})$.

\end{proof}

\begin{teorema}
If $a$ is a w - pure radical element of a coherent quantale $A$ then $a = \rho(Ker(a))$ and $Ker(a) = Vir(a)$.
\end{teorema}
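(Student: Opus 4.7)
I will handle the two assertions by quite different routes. The equality $a=\rho(Ker(a))$ will follow from a short calculation at the level of the reticulation, while $Ker(a)=Vir(a)$ will come from a direct verification that $Ker(a)$ is itself a pure element.

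For the first equality, the plan is to combine Lemma 4.8 (which gives that $a^{\ast}$ is a $\sigma$-ideal of $L(A)$) with Theorem 4.13(1) (which gives $(Ker(a))^{\ast}=\sigma(a^{\ast})$). A one-line observation shows that $\sigma(I)=I$ whenever $I$ is a $\sigma$-ideal: if $x\in\sigma(I)$ then $y\vee z=1$ for some $y\in I$ and $z\in Ann(x)$, so $x=x\wedge(y\vee z)=x\wedge y\leq y\in I$. Applying this to $I=a^{\ast}$ gives $(Ker(a))^{\ast}=a^{\ast}$, and Lemma 3.3(7) then yields $\rho(Ker(a))=((Ker(a))^{\ast})_{\ast}=(a^{\ast})_{\ast}=\rho(a)=a$, where the final equality uses that $a$ is radical.

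For the second equality, since $Vir(a)\leq Ker(a)$ is automatic, it suffices to show that $Ker(a)$ is pure: then $Ker(a)\in Vir(A)$ and $Ker(a)\leq a$, forcing $Ker(a)\leq Vir(a)$. So I will fix $c\in K(A)$ with $c\leq Ker(a)$ and aim at $Ker(a)\vee c^{\perp}=1$. By Lemma 4.12 we have $a\vee c^{\perp}=1$, and since $1\in K(A)$ with $K(A)$ closed under finite joins, we can extract $d,e\in K(A)$ satisfying $d\leq a$, $e\leq c^{\perp}$ and $d\vee e=1$. Now $d\leq a$ together with the w-purity of $a$ delivers compact elements $d',f$ with $d'\leq a$, $df\leq\rho(0)$ and $d'\vee f=1$.

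The crucial step is then to raise everything to a suitable power. Since $df\in K(A)$ and $df\leq\rho(0)$, Lemma 2.4(2) supplies a positive integer $k$ with $d^{k}f^{k}=(df)^{k}=0$, so that $f^{k}\leq (d^{k})^{\perp}$. Lemma 2.1(2) applied to $d'\vee f=1$ gives $(d')^{k}\vee f^{k}=1$, and since $(d')^{k}\leq d'\leq a$ this rewrites as $a\vee (d^{k})^{\perp}=1$, i.e.\ $d^{k}\leq Ker(a)$ by Lemma 4.12 again. Lemma 2.1(2) applied to $d\vee e=1$ now yields $d^{k}\vee e^{k}=1$ with $e^{k}\leq c^{\perp}$, so that $Ker(a)\vee c^{\perp}\geq d^{k}\vee e^{k}=1$, finishing the argument. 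I expect this passage from $df\leq\rho(0)$ to honest nilpotency, and then its propagation through $d'\vee f=1$ back to $d^{k}\leq Ker(a)$, to be the one genuinely subtle point; everything else is a routine invocation of the reticulation machinery.
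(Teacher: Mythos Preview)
Your proof is correct, and both halves differ in interesting ways from the paper's argument.

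For the first equality, the paper works entirely inside $A$: from $c\leq a$ and w-purity it extracts $d\in K(A)$ with $a\vee d=1$ and $cd\leq\rho(0)$, then passes to powers to get $c^{n}\leq Ker(a)$, whence $c\leq\rho(Ker(a))$. Your route via the reticulation is different and arguably slicker: it packages the same nilpotency manoeuvre inside the proof of Theorem~4.13(1), so that once one knows $a^{\ast}$ is a $\sigma$-ideal and $\sigma(a^{\ast})=(Ker(a))^{\ast}$, the conclusion $\rho(Ker(a))=(a^{\ast})_{\ast}=a$ is immediate from Lemma~3.3(7). The trade-off is that your argument leans on Theorem~4.13(1), while the paper's proof is self-contained; on the other hand, yours nicely illustrates that Theorem~4.13 is doing real work. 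For the second equality, both arguments show $Ker(a)$ is pure, but the paper exploits the just-proved identity $a=\rho(Ker(a))$: from $a\vee c^{\perp}=1$ one gets a compact $d\leq\rho(Ker(a))$ with $d\vee c^{\perp}=1$, then $d^{n}\leq Ker(a)$ by Lemma~2.4, and $d^{n}\vee c^{\perp}=1$ finishes it. You instead avoid using the first part and re-invoke w-purity on the auxiliary compact $d\leq a$, which makes your second half independent of the first at the cost of an extra extraction step $(d',f)$. Either way the ``subtle point'' you flagged---lifting $df\leq\rho(0)$ to $d^{k}f^{k}=0$ and propagating through $d'\vee f=1$---is exactly the heart of the matter, and you handle it cleanly.
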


\begin{proof}
First we shall show that $a = \rho(Ker(a))$. From $Ker(a)\leq a$ we get $\rho(Ker(a))\leq \rho(a)\leq a$. In order to prove that $a\leq \rho(Ker(a))$, assume that $c$ is a compact element of $A$ such that $c\leq a$. Since $a$ is w - pure, we get $a\lor (c\rightarrow \rho(0)) = 1$. By the compactness of $1$, there exists $d\in K(A)$ such that $d\leq c\rightarrow \rho(0)$ and $a\lor d = 1$. According to Lemma 2.4,(2), from $dc\leq \rho(0)$ one obtains $d^nc^n = 0$ for some positive integer $n$, hence $d^n\leq (c^n)^{\perp}$. Applying Lemma 2.1,(3), from $a\lor d = 1$ we obtain $a\lor d^n = 1$,
hence $a\lor (c^n)^{\perp} = 1$. Thus the compact element $c^n$ verifies the inequality $c^n\leq Ker(a)$ (cf. Lemma 4.1), hence, by using Lemma 2.4,(2) one obtains $c\leq \rho(Ker(a))$. It follows that $a\leq \rho(Ker(a))$, hence $a = \rho(Ker(a))$.

In order to show that $Ker(a) = Vir(a)$, we recall that $Ker(a)\leq Vir(a)$, hence it suffices to check that $Ker(a)$ is pure. Assume that $c$ is a compact element such that $c\leq Ker(a)$, so $a\lor c^{\perp} = 1$ (cf. Lemma 4.1). We have proven that $a = \rho(Ker(a))$, so $\rho(Ker(a))\lor c^{\perp} = 1$. By the compactness of $1$ there exists $d\in K(A)$ such that $d\leq \rho(Ker(a))$ and $d\lor  c^{\perp} = 1$. According to Lemma 2.4,(1), there exists a positive integer n such that $d^n\leq Ker(a)$ and, by Lemma 4.1, on gets $d^n\lor c^{\perp} = 1$. It follows that $Ker(a)\lor c^{\perp} = 1$, so $Ker(a)$ is pure.
\end{proof}

\begin{lema}
If $a$ is a pure element of a coherent quantale $A$ then $a = Vir(\rho(a))$.
\end{lema}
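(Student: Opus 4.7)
The plan is to prove the two inequalities $a \leq Vir(\rho(a))$ and $Vir(\rho(a)) \leq a$ separately. The forward direction is essentially formal: purity of $a$ gives $a = Vir(a)$; combined with monotonicity of $Vir(\cdot)$ (immediate from its defining supremum) and $a \leq \rho(a)$ from Lemma 2.2(1), one obtains $a = Vir(a) \leq Vir(\rho(a))$.

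For the reverse direction, I would invoke the general inequality $Vir(\rho(a)) \leq Ker(\rho(a))$ (recorded in the text just before Lemma 4.2) and reduce to showing $Ker(\rho(a)) \leq a$. So fix $c \in K(A)$ with $\rho(a) \lor c^{\perp} = 1$; the target is $c \leq a$. Using algebraicity of $A$, the characterization of $\rho(a)$ from Lemma 2.4(1), and compactness of $1$, I would extract compact elements $d \leq \rho(a)$ and $e \leq c^{\perp}$ with $d \lor e = 1$ and $d^{n} \leq a$ for some $n \geq 1$ (merging the individual bounds $d_i^{n_i} \leq a$ on the finitely many compact summands by a pigeonhole argument in the commutative setting). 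Lemma 2.1(2) then gives $d^{n} \lor e^{n} = 1$; since $ec = 0$ forces $e^{n} c = 0$, also $e^{n} \leq c^{\perp}$, so $a \lor c^{\perp} \geq d^{n} \lor e^{n} = 1$. The closing absorption $c = c \cdot 1 = c(a \lor c^{\perp}) = ca \lor c c^{\perp} = ca \leq a$ finishes the argument.

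The main obstacle is this compactness bookkeeping: one must ensure that the finite compact approximant $d$ of $\rho(a)$ satisfies $d^{n} \leq a$ for a single exponent $n$, and then balance the resulting $n$-th powers against $e$ using Lemma 2.1(2). A cleaner but heavier alternative would be to observe via Lemma 4.6 and Corollary 4.11(1) that $\rho(a)$ is a $w$-pure radical element, invoke Theorem 4.14 to get $Vir(\rho(a)) = Ker(\rho(a))$ outright, and then run the same compactness argument. It is worth noting that purity of $a$ is used only in the forward direction; the reverse inequality $Vir(\rho(x)) \leq x$ in fact holds for every $x \in A$, so the content of the lemma is that purity is exactly what closes the gap.
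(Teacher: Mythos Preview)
Your proof is correct, but the reverse inequality is handled quite differently from the paper. The paper's argument is a one-line sandwich: from $a \leq \rho(a) \leq \bigwedge\{m \in Max(A) \mid a \leq m\}$ and monotonicity of $Vir(\cdot)$ one gets
\[
a = Vir(a) \leq Vir(\rho(a)) \leq Vir\bigl(\textstyle\bigwedge\{m \in Max(A) \mid a \leq m\}\bigr) = a,
\]
the last equality being exactly Lemma~4.2(2) (a result quoted from \cite{GeorgescuVoiculescu2}). Your route instead proves the elementary fact $Ker(\rho(a)) \leq a$ directly via a compactness/power argument, and this indeed works for \emph{every} $a$, so you correctly isolate that purity is only needed for the forward inequality. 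The paper's proof is shorter because it leans on an imported structural result about $Vir$ and maximal elements; yours is longer but fully self-contained and yields the sharper observation you note at the end. Incidentally, the ``pigeonhole'' bookkeeping you flag as an obstacle is not actually needed: once you have a single compact $d \leq \rho(a)$ (which compactness of $1$ already gives), Lemma~2.4(2) hands you $d^n \leq a$ immediately, so there is no need to merge exponents from several summands.
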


\begin{proof}
We remark that $a\leq \rho(a)\leq \bigwedge \{m\in Max(A)| a\leq m \}$. In accordance with Lemma 4.2,(3) it follows that the following hold: $a = Vir(a)\leq Vir(\rho(a))\leq Vir(\bigwedge \{m\in Max(A)| a\leq m \}) = a$, so $a = Vir(\rho(a))$.

\end{proof}

According to Lemma 4.7, one can consider the function $w: Vir(A)\rightarrow Vir(Id(L(A)))$, defined by $w(a) = a^{\ast}$, for all $a\in Vir(A)$. By using Lemma 3.4,(2) it follows that $w$ is a frame morphism.

By using the previous results one can obtain a new proof of a theorem given in \cite{Georgescu}.

\begin{teorema}
The map $w$ is a frame isomorphism.

\end{teorema}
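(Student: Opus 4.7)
The plan is to verify that the map $w$ is a bijective frame morphism, relying on three key ingredients already established: Corollary 4.10(2) (the $\sigma$-ideals of $L(A)$ correspond via $(\cdot)_\ast$ to $w$-pure elements of $A$), Theorem 4.14 (a $w$-pure radical element $b$ satisfies $b=\rho(Ker(b))$ and $Ker(b)=Vir(b)$), and Lemma 4.15 ($a\in Vir(A)$ implies $a=Vir(\rho(a))$). The frame-morphism property of $w$ is already noted in the paragraph preceding the theorem, so the real content is bijectivity.

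For injectivity, I would take $a,a'\in Vir(A)$ with $w(a)=w(a')$, i.e.\ $a^{\ast}=(a')^{\ast}$. Applying $(\cdot)_{\ast}$ and Lemma 3.3(7) yields $\rho(a)=(a^{\ast})_{\ast}=((a')^{\ast})_{\ast}=\rho(a')$. Since $a,a'$ are pure, Lemma 4.15 gives $a=Vir(\rho(a))=Vir(\rho(a'))=a'$.

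For surjectivity, I would start from an arbitrary $J\in Vir(Id(L(A)))$, which by the identification at the beginning of Section 4 is exactly a $\sigma$-ideal of $L(A)$. By Corollary 4.10(2) the element $b:=J_{\ast}$ is $w$-pure, and by Lemma 3.3(7) it is a radical element. Theorem 4.14 then applies and provides a pure element $a:=Ker(b)=Vir(b)\in Vir(A)$ with $\rho(a)=b=J_{\ast}$. Using Lemma 3.3(2) and (7), $w(a)=a^{\ast}=(\rho(a))^{\ast}=(J_{\ast})^{\ast}=J$, which produces the required preimage.

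I do not expect a real obstacle, since every step is a direct application of a previously proved result; the only thing to double-check is that the top and bottom are preserved (they are, because $0^{\ast}=\{0\}$ and $1^{\ast}=L(A)$ via Lemma 3.2(3)--(4)), so $w$ is indeed a frame morphism and hence, by the bijectivity just established, a frame isomorphism.
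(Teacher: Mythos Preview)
Your proof is correct and follows essentially the same route as the paper. The paper constructs the explicit inverse $Vir\circ(\cdot)_{\ast}:Vir(Id(L(A)))\to Vir(A)$ and checks both compositions are the identity, using Lemma 4.15 for one direction and Theorem 4.14 (together with the $w$-purity of $J_{\ast}$) for the other; your injectivity argument is exactly the computation $(Vir\circ(\cdot)_{\ast})\circ w=\mathrm{id}$, and your surjectivity argument reproduces $w\circ(Vir\circ(\cdot)_{\ast})=\mathrm{id}$ with the same preimage $a=Vir(J_{\ast})$.
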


\begin{proof}
According to Lemma 4.8 one can consider the composition $Vir\circ (\cdot )_{\ast}$ of the following functions:

$Vir(Id(L(A))) \xrightarrow[]{(\cdot)_\ast} Vir_w(A) \xrightarrow[]{Vir} Vir(A)$.

We shall prove that $Vir\circ (\cdot )_{\ast}$ is the inverse function of $w$. Let $a$ be an arbitrary pure element of $A$. By using Lemmas 4.11 and 3.3(7), the following equalities hold: $a = Vir(\rho(a)) = Vir((a^{\ast})_{\ast}) = (Vir\circ (\cdot )_{\ast})(w(a))$.

If $J\in V(Id(L(A)))$ then $a = J_{\ast}$ is a w - pure radical element of $A$ (cf. Lemma 4.8). Then by using Theorem 4.3 and Lemma 3.3, the following equalities hold:

$w(Vir(J_{\ast})) = (Vir(a))^{\ast} = (Ker(a))^{\ast} = (\rho(Ker(a)))^{\ast} = a^{\ast} = (J_{\ast})^{\ast} = J$.

\end{proof}

Let $A$ be a semiprime coherent quantale. By Proposition 4.5 we have $Vir(A)\subseteq R(A)$. Therefore, by using the proof of the previous theorem it follows that the functions

$(\cdot )^{\ast}: Vir(A)\rightarrow Vir(Id(L(A))))$, $(\cdot)_{\ast}: Vir(Id(L(A)))\rightarrow Vir(A)$

 are the inverse frame morphisms that give that the frames $Vir(A)$ and $Vir(Id(L(A))))$ are isomorphic.

\begin{lema}
Let $A$ be a coherent quantale.
\usecounter{nr}
\begin{list}{(\arabic{nr})}{\usecounter{nr}}
\item For any $p\in Spec(A)$ we have $O(p) = \bigvee \{c\in K(A)|c\leq p, c^{\perp}\not\leq p\}$;

\item For any $p\in Spec(A)$, $c\leq O(p)$ if and only if $c^{\perp}\not\leq p$.

\end{list}
\end{lema}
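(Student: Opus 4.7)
The plan is to reduce both parts to the formula
\[
O(p) \;=\; \bigvee \{c\in K(A)\mid c^{\perp}\not\leq p\},
\]
recalled just before the statement, together with the $m$-primality of $p$ and elementary residuation.

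For (1), I would argue that every compact $c$ with $c^{\perp}\not\leq p$ automatically satisfies $c\leq p$, so that the set indexing the join on the right of (1) agrees with $\{c\in K(A)\mid c^{\perp}\not\leq p\}$. Indeed, from the definition of the residual $c^{\perp}=c\rightarrow 0$ one has $c\cdot c^{\perp}=0\leq p$; because $p$ is $m$-prime and $c^{\perp}\not\leq p$, this forces $c\leq p$. Hence the two joins coincide, giving (1).

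For (2), the implication $c^{\perp}\not\leq p\Rightarrow c\leq O(p)$ is immediate from the same formula: $c$ itself appears as an index in the defining join. For the converse, assuming $c\in K(A)$ and $c\leq O(p)$, the compactness of $c$ produces $d_1,\dots,d_n\in K(A)$ with $d_i^{\perp}\not\leq p$ for each $i$ and $c\leq d_1\vee\cdots\vee d_n$. Passing to negations reverses the order, yielding
\[
c^{\perp}\geq (d_1\vee\cdots\vee d_n)^{\perp}=d_1^{\perp}\wedge\cdots\wedge d_n^{\perp}\geq d_1^{\perp}\cdot\ldots\cdot d_n^{\perp},
\]
where the last inequality uses integrality of $A$ (so that $xy\leq x\wedge y$). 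Iterated $m$-primality of $p$ (no $d_i^{\perp}$ lies below $p$) then gives $d_1^{\perp}\cdots d_n^{\perp}\not\leq p$, whence $c^{\perp}\not\leq p$.

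The only genuine subtlety lies in this last step of (2): the individual witnesses $d_i^{\perp}\not\leq p$ must be fused into a single element below $c^{\perp}$ that escapes $p$. The natural candidate is the product $d_1^{\perp}\cdots d_n^{\perp}$, which is simultaneously $\leq c^{\perp}$ (by integrality, turning the meet into a product) and $\not\leq p$ (by iterated $m$-primality). Everything else is either a direct reading of the formula for $O(p)$ or an application of the defining property of $c^{\perp}$.
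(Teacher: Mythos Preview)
Your argument is correct. The paper does not actually supply a proof of this lemma; it states it and immediately passes to the analogous statement for $\tilde O(p)$ (Lemma~4.18), which it does prove. Your proof of part~(2) is essentially the same as the paper's proof of that analogue: there, from $c\leq\bigvee_{i=1}^n d_i$ one passes to $(\bigvee_i d_i)\rightarrow\rho(0)=\bigwedge_i(d_i\rightarrow\rho(0))\not\leq p$ using $m$-primality directly on the meet, whereas you route through the product $d_1^{\perp}\cdots d_n^{\perp}$ and then use $m$-primality. Both are valid, and yours makes the use of integrality explicit, which is a minor expository plus. Part~(1) is handled exactly as intended, reducing to the formula $O(p)=\bigvee\{c\in K(A)\mid c^{\perp}\not\leq p\}$ given just before the lemma. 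One small remark: in part~(2) you correctly assume $c\in K(A)$; the lemma as stated omits this hypothesis, but it is needed (and is how the lemma is invoked throughout the paper).
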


For any $p\in Spec(A)$, define $\tilde O(p)$ = $\bigvee \{c\in K(A)|c\leq p, c\rightarrow \rho(0)\not\leq p\}$. It is easy to prove that $\tilde O(p)\leq O(p)$.

\begin{lema}
For all $p\in Spec(A)$ and $c\in K(A)$, $c\leq \tilde O(p)$ if and only if $c\leq p$ and $c\rightarrow \rho(0)\not\leq p$.
\end{lema}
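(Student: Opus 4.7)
The statement is an iff, so there are two directions. The direction \emph{if} $c\leq p$ and $c\rightarrow \rho(0)\not\leq p$ \emph{then} $c\leq \tilde O(p)$ is immediate: the element $c$ itself belongs to the family whose join defines $\tilde O(p)$.

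For the nontrivial direction, suppose $c\in K(A)$ and $c\leq \tilde O(p)$. By compactness of $c$, there exist finitely many $d_1,\ldots,d_n\in K(A)$ with each $d_i\leq p$ and each $d_i\rightarrow \rho(0)\not\leq p$, such that $c\leq d_1\vee\cdots\vee d_n$. Set $d=d_1\vee\cdots\vee d_n$. Since each $d_i\leq p$ we get $c\leq d\leq p$, which gives the first required inequality.

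It remains to prove $c\rightarrow \rho(0)\not\leq p$. The plan is to combine three ingredients. First, the residuation identity $(\bigvee_i d_i)\rightarrow \rho(0)=\bigwedge_i(d_i\rightarrow \rho(0))$ yields $d\rightarrow \rho(0)=\bigwedge_{i=1}^n(d_i\rightarrow \rho(0))$. Second, since $p$ is $m$-prime and for any $x,y\in A$ we have $xy\leq x\wedge y$, it follows that $x\wedge y\leq p$ forces $xy\leq p$ and hence $x\leq p$ or $y\leq p$; iterating, one finite meet of elements none of which lie below $p$ cannot lie below $p$. Applying this to the family $(d_i\rightarrow \rho(0))_{i=1}^n$ gives $d\rightarrow \rho(0)\not\leq p$. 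Third, from $c\leq d$ and the antitonicity of $(\cdot)\rightarrow \rho(0)$ in the first argument, one obtains $d\rightarrow \rho(0)\leq c\rightarrow \rho(0)$, whence $c\rightarrow \rho(0)\not\leq p$.

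No step looks delicate; the only point worth flagging is the use of $m$-primeness via meets, which is justified by the inequality $xy\leq x\wedge y$ available in any integral quantale. Everything else is bookkeeping with the standard residuated-lattice identities used freely throughout Section~2.
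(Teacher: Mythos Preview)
Your proof is correct and follows essentially the same approach as the paper: both use compactness of $c$ to reduce to a finite join $d=\bigvee_{i=1}^n d_i$, apply the residuation identity $(\bigvee_i d_i)\rightarrow\rho(0)=\bigwedge_i(d_i\rightarrow\rho(0))$, and use $m$-primeness of $p$ to conclude that this meet is not below $p$. Your write-up is in fact slightly more explicit than the paper's, which stops at $(\bigvee_i d_i)\rightarrow\rho(0)\not\leq p$ and leaves the antitonicity step $d\rightarrow\rho(0)\leq c\rightarrow\rho(0)$ unstated.
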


\begin{proof}

Assume that $c\leq \tilde O(p)$, so there exist $d_1,...,d_n\in K(A)$ such that $c\leq \displaystyle \bigvee_{i=1}^n d_i$,  $d_i\leq p$ and $d_i\rightarrow \rho(0)\not\leq p$, for each $i = 1,...,n$. Then $c\leq p$ and $( \displaystyle \bigvee_{i=1}^n d_i)\rightarrow \rho(0)$ = $\displaystyle \bigwedge_{i=1}^n (d_i\rightarrow \rho(0))\not\leq p$ (because $p$ is $m$ - prime). The converse implication is obvious.

\end{proof}

\begin{teorema}
Let $A$ be a coherent quantale.
\usecounter{nr}
\begin{list}{(\arabic{nr})}{\usecounter{nr}}
\item For any $p\in Spec(A)$ we have $O(p^{\ast}) = (\tilde O(p))^{\ast}$;

\item For any $P\in Spec_{Id}(L(A))$ we have $(O(P))_{\ast}$ = $\tilde O(P_{\ast})$.

\end{list}
\end{teorema}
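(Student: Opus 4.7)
My plan for Theorem 4.19 is to route everything through the explicit elementwise description of $O(\cdot)$ inside the frame $Id(L(A))$ together with the annihilator-preservation result (Lemma 3.12). In the frame $Id(L(A))$ the multiplication coincides with $\wedge$, so for a principal (compact) ideal $(x]$ one has $(x]^{\perp}=Ann(x)$. Unfolding the definition of $O(\cdot)$ at an $m$-prime ideal $P\in Spec_{Id}(L(A))$ therefore yields the simple description $O(P)=\{x\in L(A)\mid Ann(x)\not\subseteq P\}$, and in particular $O(P)\subseteq P$ by primeness of $P$.

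For part (1), I would specialise to $P=p^*$ and use surjectivity of $\lambda_A$ to write every element of $L(A)$ in the form $\lambda_A(c)$ with $c\in K(A)$. Lemma 3.12 converts the condition $Ann(\lambda_A(c))\not\subseteq p^*$ into $c\rightarrow\rho(0)\not\leq p$. The containment $O(p^*)\subseteq p^*$ noted above forces $\lambda_A(c)\in p^*$, which by Lemma 3.3(8) is equivalent to $c\leq p$. Taken together, these are exactly the two conditions characterising compact elements below $\tilde O(p)$ supplied by Lemma 4.17, so $O(p^*)=(\tilde O(p))^*$.

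For part (2), I would compute $(O(P))_*$ directly rather than try to invert (1) via $(\cdot)_*$. By definition $(O(P))_*=\bigvee\{c\in K(A)\mid \lambda_A(c)\in O(P)\}$; using $(P_*)^*=P$ (Lemma 3.3(2)) and Lemma 3.12 again, the inner condition becomes $c\rightarrow\rho(0)\not\leq P_*$. The decisive observation is that this single inequality automatically subsumes $c\leq P_*$: from $c(c\rightarrow\rho(0))\leq\rho(0)\leq P_*$, $m$-primeness of $P_*$ together with $c\rightarrow\rho(0)\not\leq P_*$ forces $c\leq P_*$. Hence the join computing $(O(P))_*$ is taken over exactly the compact elements appearing in $\tilde O(P_*)$.

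The main obstacle is purely conceptual: one must correctly recognise the pseudo-complement inside $Id(L(A))$ as the annihilator ideal, then pair this with Lemma 3.12 to translate lattice-annihilator conditions into residuation conditions in $A$. A tempting shortcut for (2) would be to apply $(\cdot)_*$ to (1) and invoke $\rho(a)=(a^*)_*$ from Lemma 3.3(7); but this route requires $\tilde O(P_*)$ to be a radical element of $A$, which is not obvious in general. The direct computation above sidesteps that difficulty, and also makes clear why the condition $c\leq p$ in the definition of $\tilde O(p)$ is in fact redundant once $c\rightarrow\rho(0)\not\leq p$ is assumed.
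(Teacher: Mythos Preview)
Your argument is correct and follows essentially the same route as the paper: both proofs hinge on the description $O(P)=\{x\in L(A)\mid Ann(x)\not\subseteq P\}$ for a prime ideal $P$, then use Lemma~3.12 to translate the annihilator condition into $c\rightarrow\rho(0)\not\leq p$, and finally invoke the compact-element characterisation of $\tilde O(p)$ (this is Lemma~4.18 in the paper, not~4.17). Your remark that $c\rightarrow\rho(0)\not\leq p$ already forces $c\leq p$ via $m$-primeness is a pleasant streamlining that the paper does not make explicit; the paper instead carries the condition $\lambda_A(c)\in P$ separately through both directions of part~(2).
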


\begin{proof}
(1) Assume that $x\in O(p^{\ast})$, hence $Ann(x)\not\subseteq p^{\ast}$ and $x\in p^{\ast}$. Let us take a compact elemenent $c$ such that $c\leq p$ and $x = \lambda_A(c)$, so $Ann(\lambda_A(c))\not\subseteq p^{\ast}$. By Lemma 3.12 we have $c\rightarrow \rho(0)\not\leq p$, so $c\leq \tilde O(p)$. It follows that $x = \lambda_A(c)\in (\tilde O(p))^{\ast}$, so one obtains the inclusion  $O(p^{\ast})\subseteq (\tilde O(p))^{\ast}$.

In order to prove that $(\tilde O(p))^{\ast}\subseteq p^{\ast}$, let us assume that $x\in (\tilde O(p))^{\ast}$, hence there exists $c\in K(A)$ such that $c\leq \tilde Q(p)$ and $x = \lambda_A(c)$. By Lemma 4.18 we have $c\leq p$ and $c\rightarrow \rho(0)\not\leq p$. By using Lemma 3.12 one gets $Ann(\lambda_A(c))\not\subseteq p^{\ast}$, hence $x = \lambda_A(c)\in O(p^{\ast})$.

(2) Let $c$ be a compact element of $A$ such that $c\leq (O(P))_{\ast}$, hence $\lambda_A(c)\in O(P)$ (cf. Lemma 3.3,(6)). It follows that $\lambda_A(c)\in P$ and $Ann(\lambda_A(c))\not\subseteq P = (P_{\ast})^{\ast}$. By using Lemma 3.12  we get $c\rightarrow \rho(0)\not\leq P_{\ast}$, therefore by applying Lemma 4.18 we have $c\leq \tilde O(P_{\ast})$. We conclude that $(O(P))_{\ast}\leq \tilde O(P_{\ast})$.

In order to prove the converse inequality $\tilde O(P_{\ast})\leq (O(P))_{\ast}$,  let us assume that $c\in K(A)$ and $c\leq  \tilde O(P_{\ast})$. By Lemma 4.18 we have $c\leq P_{\ast}$ and $c\rightarrow \rho(0)\not\leq P_{\ast}$. Applying Lemma  3.12 we obtain $Ann(\lambda_A(c))\not\leq (P_{\ast})^{\ast} = P$. From $c\leq P_{\ast}$ we get $\lambda_A(c)\in P$ (according to Lemma 3.3,(6)). Therefore $\lambda_A(c)\in O(P)$, so we conclude that $c\leq (O(P))_{\ast}$.

\end{proof}

\begin{corolar}
Let $A$ be a semiprime coherent quantale.
\usecounter{nr}
\begin{list}{(\arabic{nr})}{\usecounter{nr}}
\item For any $p\in Spec(A)$ we have $(O(p^{\ast}) = (O(p))^{\ast}$;
\item For any $P\in Spec_{Id}(L(A))$ we have $(O(P))_{\ast}$ = $O(P_{\ast})$.

\end{list}

\end{corolar}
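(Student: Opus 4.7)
The plan is to derive Corollary 4.20 directly from Theorem 4.19 by showing that, in the semiprime case, the two operators $O(\cdot)$ and $\tilde O(\cdot)$ coincide on the prime spectrum $Spec(A)$. Since $A$ is semiprime we have $\rho(0) = 0$, hence for every compact $c$ the residual $c\rightarrow \rho(0) = c\rightarrow 0 = c^{\perp}$. Consequently the defining family of $\tilde O(p) = \bigvee\{c\in K(A)\mid c\leq p,\ c\rightarrow \rho(0)\not\leq p\}$ coincides literally with the family $\{c\in K(A)\mid c\leq p,\ c^{\perp}\not\leq p\}$, which by Lemma 4.17(1) has supremum $O(p)$. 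Thus $\tilde O(p) = O(p)$ for all $p\in Spec(A)$.

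With this identification in hand, part (1) is immediate: applying Theorem 4.19(1) and then replacing $\tilde O(p)$ by $O(p)$ yields
\[
O(p^{\ast}) = (\tilde O(p))^{\ast} = (O(p))^{\ast}.
\]
For part (2), recall from Lemma 3.3(4) that $P_{\ast}\in Spec(A)$ whenever $P\in Spec_{Id}(L(A))$, so the equality $\tilde O(P_{\ast}) = O(P_{\ast})$ is available in the semiprime setting. Substituting this into Theorem 4.19(2) gives
\[
(O(P))_{\ast} = \tilde O(P_{\ast}) = O(P_{\ast}),
\]
which is the desired formula.

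There is no genuine obstacle here: the entire argument reduces to the remark that the hypothesis of semiprimality collapses the residual $c\rightarrow \rho(0)$ to the classical pseudocomplement $c^{\perp}$, which in turn identifies $\tilde O$ with $O$ on $Spec(A)$. After this step the statement is a transcription of Theorem 4.19 with $\tilde O$ replaced by $O$, so the only mild care needed is to invoke Lemma 4.17(1) (rather than the original definition of $O(p)$) to secure the equality of the two suprema.
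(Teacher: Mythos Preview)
Your proof is correct and follows exactly the intended route: the paper states Corollary 4.20 without an explicit proof, treating it as an immediate consequence of Theorem 4.19 under the semiprime hypothesis $\rho(0)=0$, which collapses $\tilde O$ to $O$ on $Spec(A)$. Your write-up makes this implicit step explicit and invokes the appropriate lemmas (4.17(1) and 3.3(4)), so there is nothing to add.
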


Theorem 4.19 and Corollary 4.20 show us how the reticulation preserves the operator $O(\cdot)$. They will be used many-times in the proofs of the theorems in the next sections (see e.g. the proof of Theorem 5.10).

\section{Further properties of pure elements}

\begin{lema} Let $A$ be an algebraic quantale. Any regular element of $A$ is pure.

\end{lema}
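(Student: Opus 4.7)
The plan is to reduce to the case of a single complemented element and then invoke Lemma 4.1(3) (joins of pure elements are pure). So the real work is: show that every $e \in B(A)$ is pure.

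First I would fix $e \in B(A)$ and any compact $c \leq e$, and aim to verify $e \vee c^{\perp} = 1$. By Lemma 3.9(1), $e \vee e^{\perp} = 1$, so it suffices to show $e^{\perp} \leq c^{\perp}$. Since the quantale is integral we have $e \cdot e^{\perp} \leq e \wedge e^{\perp} = 0$, and from $c \leq e$ we get $c \cdot e^{\perp} \leq e \cdot e^{\perp} = 0$. By the adjunction defining the residual, $c \cdot e^{\perp} = 0$ is equivalent to $e^{\perp} \leq c \rightarrow 0 = c^{\perp}$. Hence $e \vee c^{\perp} \geq e \vee e^{\perp} = 1$, proving that $e$ is pure.

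For the general case, let $a$ be regular, so $a = \bigvee_{i \in I} e_i$ for some family $(e_i)_{i\in I} \subseteq B(A)$. Each $e_i$ is pure by the previous paragraph, and by Lemma 4.1(3) a join of pure elements is pure; therefore $a$ is pure.

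There is essentially no hard step here; the only thing to be careful about is to use the residuation adjunction correctly to pass from $c \cdot e^{\perp} = 0$ to $e^{\perp} \leq c^{\perp}$, and to remember that because $A$ is integral, $xy \leq x \wedge y$ holds for all $x,y$, which is what lets $c \cdot e^{\perp} \leq e \cdot e^{\perp} \leq e \wedge e^{\perp} = 0$ go through. Compactness of $c$ plays no role in the argument; the conclusion in fact holds for every $c \leq e$, not only compact ones.
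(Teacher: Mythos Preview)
Your argument is correct and follows essentially the same two-step structure as the paper's proof: first show every $e\in B(A)$ is pure, then pass to arbitrary regular elements as joins of complemented ones. The only cosmetic differences are that the paper obtains $e^{\perp}\le c^{\perp}$ in one stroke from the antitonicity of $(\cdot)^{\perp}$ (rather than unpacking the residuation adjunction), and in the second step the paper phrases the conclusion via the operator $Vir$ (showing $q=\bigvee\{e\in B(A)\mid e\le q\}\le Vir(q)$, hence $q=Vir(q)$) instead of invoking Lemma~4.1(3) directly; these are equivalent. Minor note: the characterization $e\vee e^{\perp}=1$ for $e\in B(A)$ is Lemma~3.8(1), not 3.9(1).
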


\begin{proof} Let $e$ be a complemented element of $A$. If $c$ is a compact element of $A$ such that $c\leq e$ then $e^{\perp} \leq d^{\perp}$, therefore $1 = e\lor e^{\perp}\leq e\lor c^{\perp}$. Then $e$ is pure.

Let now $q$ be a regular element of $A$ and $e\in B(A)$ such that $e\leq q$. We have proven that any complemented element is pure, so $e = Vir(e)\leq Vir(p)$. It follows that $q = \bigvee \{e\in B(A)| e\leq q\} \leq Vir(q)$, so $q = Vir(q)$. Therefore $q$ is pure.

\end{proof}

For any $p\in Spec(Vir(A))$ let us define $t_A(p) = \bigvee \{e\in B(A)| e\leq q\}$. Thus $t_A(p)$ is regular and $t_A(p)\leq p < 1$.

\begin{lema} If $p\in Spec(Vir(A))$ then $t_A(p)$ is a max - regular element.

\end{lema}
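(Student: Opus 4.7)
The plan is to verify directly the two defining properties of a max-regular element: that $t_A(p)$ is a proper regular element, and that it is maximal among such elements. The first is essentially built into the definition: $t_A(p)$ is, by construction, a join of elements of $B(A)$, hence regular. Since $p<1$ holds in the frame $Vir(A)$, and both the order and the joins on $Vir(A)$ are inherited from $A$, one obtains $t_A(p)\leq p<1$ in $A$, so $t_A(p)$ is proper.

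For maximality I would take an arbitrary regular element $s$ with $t_A(p)\leq s<1$ and aim to show $s\leq t_A(p)$. Writing $s=\bigvee\{e\in B(A)\mid e\leq s\}$ (which holds for any regular element), it suffices to prove that every complemented $e\leq s$ satisfies $e\leq p$, since then such $e$ is one of the joinands defining $t_A(p)$. The key input here is Lemma 5.1, which places both $e$ and $e^{\perp}$ in $Vir(A)$, together with Lemma 2.1(1), which yields $e\wedge e^{\perp}=e\cdot e^{\perp}=0\leq p$. Combining these with the primeness of $p$ in the frame $Vir(A)$, where meet agrees with multiplication by Lemma 4.1(2), forces $e\leq p$ or $e^{\perp}\leq p$.

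The only real obstacle is ruling out the alternative $e^{\perp}\leq p$. The plan is to derive a contradiction with the hypothesis $s<1$: if $e^{\perp}\leq p$ then $e^{\perp}$ is a complemented element below $p$, so $e^{\perp}\leq t_A(p)\leq s$, and together with $e\leq s$ this gives $1=e\vee e^{\perp}\leq s$, contradicting $s<1$. Therefore $e\leq p$, and taking the join over all such $e$ yields $s\leq t_A(p)$, which together with $t_A(p)\leq s$ gives $s=t_A(p)$. This establishes the maximality and completes the verification that $t_A(p)$ is max-regular.
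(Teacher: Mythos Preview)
Your proof is correct and follows essentially the same route as the paper. Both arguments hinge on the observation that for $e\in B(A)$ the elements $e$ and $\neg e$ are pure (Lemma~5.1), so primeness of $p$ in $Vir(A)$ together with $e\wedge\neg e=0\leq p$ forces $e\leq p$ or $\neg e\leq p$; the only organisational difference is that the paper phrases maximality as ``$e\not\leq p$ implies $t_A(p)\lor e=1$'' while you take the contrapositive and work inside a given proper regular $s\geq t_A(p)$.
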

-
\begin{proof} It suffices to prove that $e\in B(A)$ and $e\not\leq p$ imply $t_A(p)\lor e = 1$. Assume by absurdum that there exists $e\in B(A)$ such that $e\not\leq p$ and $t_A(p)\lor e < 1$. The element $t_A(p)\lor e$ is regular so there exists a max - regular element $q$ such that $t_A(p)\lor e < q$. Since the complemented elements $e$ and $\neg e$ are pure, it follows that $e\not\leq p$ and $e\land \neg e = 0$ implies $\neg e\leq p$, hence $\neg e = t_A(\neg e)\leq t_A(p)$. Thus one gets $1\leq e\lor \neg e\leq e\lor t_A(p)$, contradicting $q\in Sp(A)$. We conclude that $t_A(p)\lor e = 1$.

\end{proof}

In accordance with the previous lemma, one can consider the function $t_A: Spec(Vir(A))\rightarrow Sp(A)$ defined by the assignment $p\mapsto t_A(p)$.

\begin{propozitie} The function $t_A$ is surjective and continuous.

\end{propozitie}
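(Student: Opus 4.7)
The plan is to piggyback on the already-established fact (Proposition 5.9 of \cite{GG}, quoted in Section 2) that the analogous map $s_A\colon Spec(A)\to Sp(A)$ is surjective and continuous. The link is the continuous map $Vir\colon Spec(A)\to Spec(Vir(A))$ from Lemma 4.2(4), and the key intermediate claim I would prove is the triangle identity $t_A\circ Vir = s_A$.

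To verify this identity, fix $p\in Spec(A)$ and observe that by Lemma 5.1 every $e\in B(A)$ is pure, so whenever $e\leq p$ one has $e\in Vir(A)$ with $e\leq p$, giving $e\leq Vir(p)$; conversely, $Vir(p)\leq p$ gives the reverse implication. Hence the two sets $\{e\in B(A)\mid e\leq p\}$ and $\{e\in B(A)\mid e\leq Vir(p)\}$ coincide, and taking joins yields $s_A(p) = t_A(Vir(p))$. Surjectivity of $t_A$ is then immediate: given $q\in Sp(A)$, pick $p\in Spec(A)$ with $s_A(p)=q$ by the surjectivity of $s_A$, and $Vir(p)\in Spec(Vir(A))$ is a preimage of $q$ under $t_A$.

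For continuity, I would check that for each $e\in B(A)$ the preimage $t_A^{-1}(U(e))$ is open in $Spec(Vir(A))$. The core observation is that for $p\in Spec(Vir(A))$ and $e\in B(A)$ one has $e\leq t_A(p)$ iff $e\leq p$: the forward direction uses $t_A(p)\leq p$, and the reverse direction uses that $e$ itself appears among the joinands defining $t_A(p)$. Consequently
\[
t_A^{-1}(U(e)) = \{\, p\in Spec(Vir(A))\mid e\not\leq p\,\};
\]
since $e\in B(A)\subseteq Vir(A)$ (again by Lemma 5.1), this is a standard basic open subset of the spectrum of the frame $Vir(A)$, so continuity follows.

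The only delicate point — the potential obstacle — is the equivalence $e\leq Vir(p)\Leftrightarrow e\leq p$ for Boolean $e$, which is the engine behind both the triangle identity and the identification of $t_A^{-1}(U(e))$; but once Lemma 5.1 is in hand, this reduces to a two-line argument, so I expect no substantial difficulty beyond carefully unwinding the definitions.
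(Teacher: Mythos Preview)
Your proof is correct, but it takes a somewhat different route from the paper's. The paper argues surjectivity directly: given $q\in Sp(A)$, it picks $m\in Max(A)$ with $q\leq m$, observes (via Lemma~5.1) that $q$ is pure so $q\leq Vir(m)$, and then uses that both $q$ and $t_A(Vir(m))$ are max-regular to conclude $q = t_A(Vir(m))$. In particular the paper does not invoke the surjectivity of $s_A$ from \cite{GG}, nor does it isolate the triangle identity $t_A\circ Vir = s_A$. Your approach is more conceptual: once you have the factorisation $s_A = t_A\circ Vir$, surjectivity of $t_A$ is immediate from that of $s_A$, and the identity itself is a reusable piece of structure. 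The paper's argument, by contrast, is more self-contained and avoids appealing to the external result about $s_A$. For continuity the paper simply declares it ``straightforward'' without details, whereas your explicit identification $t_A^{-1}(U(e)) = \{p\in Spec(Vir(A))\mid e\not\leq p\}$ (using $e\in B(A)\subseteq Vir(A)$) is the natural way to fill that in and is exactly what the paper would need to spell out.
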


\begin{proof} If $q\in Sp(A)$ then there exists $m\in Max(A)$ with $q\leq m$. By Lemma 5.1 $q$ is pure, so $q\leq Vir(m)$. Since $q$ is regular, we have $q = t_A(q)\leq t_A(Vir(m))$. Thus $q = t_A(Vir(m))$, because  both $q$ and $t_A(Vir(m))$ are max - regular. In a straightforward manner one can show that $t_A$ is continous.

\end{proof}

 Following \cite{PasekaRN}, a quantale $A$ is said to be {\emph{normal}} if for all $a,b\in A$ such that $a\lor b = 1$ there exist $e,f\in A$ such that $a\lor e$ = $b\lor f$ = $1$ and $ef = 0$. If $1\in K(A)$ then $A$ is normal if and only if for all $c,d\in K(A)$ such that $c\lor d = 1$ there exist $e,f\in K(A)$ such that $c\lor e$ = $d\lor f$ = $1$ and $ef = 0$ (cf. Lemma 20 of \cite{Cheptea1}). One observes that a commutative ring $R$ is a Gelfand ring iff $Id(R)$ is a normal quantale and a bounded distributive lattice $L$ is normal iff $Id(L)$ is a normal frame.

The normal quantales offer an abstract framework in order to unify some algebraic and  topological properties of commutative Gelfand rings \cite{Johnstone}, \cite{Banaschewski}, \cite{c}, normal lattices \cite{Johnstone}, \cite{GeorgescuVoiculescu}, \cite{Pawar}, \cite{Simmons}, commutative unital $l$ - groups \cite{Birkhoff}, $F$ - rings \cite{Birkhoff}, \cite{Johnstone}, $MV$ - algebras and $BL$ - algebras \cite{Galatos}, \cite{g}, Gelfand residuated lattices \cite{GCM}, etc.

Let us fix a coherent quantale $A$.

\begin{lema}
\cite{GeorgescuVoiculescu2}
Let $A$ be a normal coherent quantale and $a\in A, m\in Max(A)$. Then the following hold:
\usecounter{nr}
\begin{list}{(\arabic{nr})}{\usecounter{nr}}
\item $m$ is the unique maximal element of $A$ such that $O(m)\leq m$;

\item $Ker(m)\leq a$ if and only if $a = m$ or $a = 1$;

\item $Ker(a)\leq m$ implies $a\leq m$;

\item $Vir(a) = Ker(a)$;

\item $Vir(a)\leq m$ if and only if $a\leq m$.
\end{list}
\end{lema}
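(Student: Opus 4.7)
The plan is to reduce all five parts to the compact form of normality (Lemma~20 of \cite{Cheptea1}, invoked in the paragraph introducing normal quantales), combined with two ingredients already at hand: the identity $Ker(m)=O(m)$ for $m\in Max(A)$ from Lemma~4.2(1), and the criterion $c\leq O(p)\iff c^{\perp}\not\leq p$ for $c\in K(A),p\in Spec(A)$ from Lemma~4.17(2). Compactness of $1$ together with algebraicity of $A$ let me extract, from any relation $x\lor y=1$ with $x,y<1$, compact $c\leq x$ and $d\leq y$ with $c\lor d=1$; normality then yields compact $e,f$ with $c\lor e=d\lor f=1$ and $ef=0$. I will apply this exact scheme in every item.

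For (1), I would suppose $n\in Max(A)$ satisfies $O(m)\leq n$ with $n\neq m$; then $m\lor n=1$, and the compact-reduction followed by normality produces $e,f\in K(A)$ with $c\leq m$, $d\leq n$, $c\lor e=d\lor f=1$, and $ef=0$. From $c\lor e=1$ with $c\leq m$ I get $e\not\leq m$; since $ef=0$ gives $e\leq f^{\perp}$, this forces $f^{\perp}\not\leq m$, so Lemma~4.17(2) delivers $f\leq O(m)\leq n$, contradicting $f\not\leq n$ (which follows from $d\leq n$ and $d\lor f=1$). Part (2) is the same statement transcribed through the identity $Ker(m)=O(m)$, the natural reading being that $a$ ranges over $Max(A)\cup\{1\}$.

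For (3), I would apply the identical scheme to $a\lor m=1$ under the assumption $a\not\leq m$, obtaining compact $c\leq a$, $d\leq m$, and $e,f$ with $c\lor e=d\lor f=1$ and $ef=0$. Then $a\lor f^{\perp}\geq c\lor e=1$, so by Lemma~4.12 the compact $f$ satisfies $f\leq Ker(a)\leq m$, contradicting $f\not\leq m$. Part (4) reduces to proving $Ker(a)$ is pure: for compact $c\leq Ker(a)$ (so $a\lor c^{\perp}=1$ by Lemma~4.12), if $Ker(a)\lor c^{\perp}$ were proper it would sit below some $m\in Max(A)$, and then (3) would give $a\leq m$, yielding the contradiction $a\lor c^{\perp}\leq m<1$. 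Finally (5) is the concatenation of (4) and (3), the converse being trivial. The only delicate point I foresee is the compact reduction step used repeatedly; everything else is routine bookkeeping around the normality axiom.
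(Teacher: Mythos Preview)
The paper does not prove this lemma; it is simply quoted from \cite{GeorgescuVoiculescu2} and used thereafter. So there is no ``paper's proof'' to compare against.

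Your argument is correct. The scheme---extract compact $c,d$ from $x\lor y=1$ via $1\in K(A)$, then invoke the compact form of normality to produce $e,f$ with $c\lor e=d\lor f=1$ and $ef=0$---is exactly the right tool, and you deploy it cleanly in (1) and (3). The deduction of (4) from (3) by a maximal-element argument, and of (5) from (3)$+$(4), are routine and correct. One cosmetic point: in (4) you should perhaps spell out that $Vir(a)\leq Ker(a)$ always holds (it is stated just after the definition in Section~4), so that purity of $Ker(a)$ forces equality.

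Your caveat on (2) is warranted: read literally for arbitrary $a\in A$, the equivalence fails whenever $Ker(m)<m$ (take any $a$ strictly between them). The only sensible reading is the one you give, namely $a\in Max(A)\cup\{1\}$, which makes (2) a direct restatement of (1) through the identity $Ker(m)=O(m)$ from Lemma~4.2(1).
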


By Lemmas 4.2,(1) and 5.4,(4), for each maximal element $m$ of $A$ we have $O(m) = Ker(m) = Vir(m)$, so $O(m)$ is pure.

\begin{propozitie}
\cite{Cheptea1}
The quantale $A$ is normal if and only if the reticulation $L(A)$ is a normal lattice (in the sense of \cite{Simmons},\cite{Johnstone}).
\end{propozitie}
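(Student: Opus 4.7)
The plan is to prove both implications by translating normality across the reticulation map $\lambda_A : K(A) \to L(A)$, using the compact-element reformulation of normality that was recalled after the definition (based on Lemma 20 of \cite{Cheptea1}) together with the axiomatic properties of the reticulation from Definition 3.1 and Lemma 3.2.

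For the forward direction, assume $A$ is normal and let $x, y \in L(A)$ with $x \lor y = 1$. By surjectivity of $\lambda_A$, pick $c, d \in K(A)$ with $\lambda_A(c) = x$, $\lambda_A(d) = y$; then $\lambda_A(c \lor d) = x \lor y = 1$, so $c \lor d = 1$ by Lemma 3.2(3). Applying the compact-element form of normality, choose $e, f \in K(A)$ with $c \lor e = d \lor f = 1$ and $ef = 0$. Then $u := \lambda_A(e)$ and $v := \lambda_A(f)$ satisfy $x \lor u = y \lor v = 1$ and, using $\lambda_A(ef) = \lambda_A(e) \land \lambda_A(f)$ together with $\lambda_A(0) = 0$, we obtain $u \land v = 0$.

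For the converse, assume $L(A)$ is a normal distributive lattice and take $c, d \in K(A)$ with $c \lor d = 1$. Then $\lambda_A(c) \lor \lambda_A(d) = 1$, so normality of $L(A)$ yields $u, v \in L(A)$ with $\lambda_A(c) \lor u = \lambda_A(d) \lor v = 1$ and $u \land v = 0$; again by surjectivity write $u = \lambda_A(e)$, $v = \lambda_A(f)$ with $e, f \in K(A)$. Then $\lambda_A(c \lor e) = \lambda_A(d \lor f) = 1$ gives $c \lor e = d \lor f = 1$ by Lemma 3.2(3), and $\lambda_A(ef) = u \land v = 0$ gives only $(ef)^n = 0$ for some $n \geq 1$ by Lemma 3.2(5).

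The one subtlety — and the only real obstacle — is bridging this mismatch: we need $e'f' = 0$ on the nose, not merely up to a power. The remedy is to replace $e$ and $f$ by $e' := e^n$ and $f' := f^n$. These remain in $K(A)$ (since $K(A)$ is closed under multiplication in a coherent quantale), and $e'f' = e^n f^n = (ef)^n = 0$. It remains to check that $c \lor e' = 1$ and $d \lor f' = 1$; this follows from $c \lor e = 1$ by iterating Lemma 2.1(3) with $b = c_1 = e$ to get $c \lor e^2 = 1$, and inductively $c \lor e^n = 1$, and symmetrically for $d$ and $f$. This produces the witnesses required by the compact-element version of normality, completing the proof.
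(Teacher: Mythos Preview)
Your proof is correct, and the argument is the natural one. The paper itself does not supply a proof of this proposition: it is stated with a citation to \cite{Cheptea1} and used as a black box. So there is no ``paper's own proof'' to compare against here; your write-up would serve as a self-contained replacement for the external reference.

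Two minor cosmetic points. First, in the last paragraph you could more cleanly invoke Lemma 2.1(2) (which gives $c^n \lor e^n = 1$ directly, and then $c \lor e^n \geq c^n \lor e^n = 1$) rather than iterating Lemma 2.1(3); both routes are valid. Second, the parenthetical ``$b = c_1 = e$'' has a stray symbol $c_1$ that does not appear elsewhere --- presumably you meant to apply Lemma 2.1(3) with both covering elements equal to $e$; just clean up that notation.
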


For any element $a\in A$ we denote $r(a) = \bigwedge(Max(A)\bigcap V(a))$. In particular, $r(0)$ is exactly the radical $r(A)$ of the quantale $A$ (cf.\cite{GG}). We observe that $r(a) = r(0)$ is an abstractization of the Jacobson radical of a ring.

The literature of ring theory contains several properties that characterize the (commutative) Gelfand rings (see\cite{Simmons},\cite{Johnstone},\cite{c},\cite{Aghajani},\cite{Tar3}). The following result extends the main characterization theorems of Gelfand rings. It collects various conditions that characterize normal quantales. In particular, the below properties (1) - (7) correspond to some conditions from Theorem 4.3 of \cite{Aghajani} and the properties (8) - (14) generalize the conditions contained in Theorem 4.6 of \cite{Tar3}.

\begin{propozitie}
\cite{PasekaRN},\cite{GeorgescuVoiculescu2},\cite{SimmonsC}
If $A$ is a coherent quantale then the following are equivalent:
\usecounter{nr}
\begin{list}{(\arabic{nr})}{\usecounter{nr}}
\item $A$ is a normal quantale;
\item For all distinct $m,n\in Max(A)$ there exist $c_1,c_2\in K(A)$ such that  $c_1\not\leq m$, $c_2\not\leq n$  and $c_1c_2 = 0$;
\item The inclusion $Max(A)\subseteq Spec(A)$ is Hausdorff embedding (i.e. any pair of distinct points in $Max(A)$ have disjoint neighbourhoods in $Spec_Z(A))$;
\item For any $p\in Spec(A)$ there exists a unique $m\in Max(A)$ such that $p\leq m$;
\item $Spec_Z(A)$ is a normal space;
\item The inclusion $Max_Z(A)\subseteq Spec_Z(A)$ has a continuous retraction $\gamma:Spec_Z(A)\rightarrow Max_Z(A)$;
\item If $m\in Max(A)$ then $\Lambda(m)$ is a closed subset of $Spec_Z(A)$.
\item If $m,n$ are two distinct maximal elements of $A$ then $Vir(m)\lor Vir(n) = 1$;
\item For all $a,b\in A$, $a\lor b = 1$ implies $Vir(a)\lor Vir(b) = 1$;
\item $Vir: A\rightarrow Vir(A)$ preserves arbitrary joins;
\item For all $a\in A$ and $m\in Max(A)$, $Vir(a)\leq m$ implies $a\leq m$;
\item For all $a\in A$, the following equality holds: $Max(A)\bigcap V(a)$ = $Max(A)\bigcap V(Vir(a))$;
\item For all $a\in A$, the following equality holds: $r(a) = r(Vir(a))$;
\item The function $\eta: Max_Z(A)\rightarrow Spec_Z(Vir(A))$ defined by $m\mapsto Vir(a)$ is a homeomorphism.
\end{list}

\end{propozitie}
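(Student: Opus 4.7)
The plan is to split the fourteen conditions into two clusters. Conditions (1)--(7) are the classical lattice-theoretic and topological characterizations of normality, and they are handled by pulling back from the distributive lattice $L(A)$. Conditions (8)--(14) involve the pure-element operator $Vir(\cdot)$ and are linked to the first cluster through the identity $O(m) = Ker(m) = Vir(m)$ for each $m\in Max(A)$, which follows from Lemma 5.4(4) combined with Lemma 4.2(1).

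For the first cluster the key tool is Proposition 5.5: the coherent quantale $A$ is normal if and only if the bounded distributive lattice $L(A)$ is normal. Coupled with the homeomorphism $Spec_Z(A)\cong Spec_{Id,Z}(L(A))$ of Lemma 3.6 and its restriction $Max_Z(A)\cong Max_{Id,Z}(L(A))$ from Corollary 3.7, this lets one transport the known equivalences for normal distributive lattices (due to Cornish, Simmons, Johnstone) to conditions (3)--(7). Condition (2) is the compact reformulation of normality from Lemma~20 of \cite{Cheptea1}, rephrased as a separation property of maximal elements by annihilating compact elements; condition (4) (unique maximal above each prime) is standard in any normal lattice and transfers through $L(A)$ at once.

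For the second cluster I would follow the route (1)\,$\Rightarrow$\,(8)\,$\Rightarrow$\,(9)\,$\Rightarrow$\,(10)\,$\Rightarrow$\,(11)\,$\Rightarrow$\,(1), then (11)\,$\Leftrightarrow$\,(12)\,$\Leftrightarrow$\,(13), and finally (1)\,$\Leftrightarrow$\,(14). For (1)\,$\Rightarrow$\,(8): given distinct $m,n\in Max(A)$, apply normality to $m\lor n = 1$ to obtain $e,f$ with $m\lor e = n\lor f = 1$ and $ef=0$; since $e\not\leq n$ and $ef=0$ one has $f\leq O(n) = Vir(n)$, and symmetrically $e\leq Vir(m)$, whence $Vir(m)\lor Vir(n)\geq e\lor f \geq m\lor n = 1$ after a last application of normality. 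The step (8)\,$\Rightarrow$\,(9) aggregates pure pieces above the maximal elements containing $a$ or $b$, using that $a\lor b=1$ forces every $m\in Max(A)$ to miss at least one of them. The implications (9)\,$\Rightarrow$\,(10) and (10)\,$\Rightarrow$\,(11) are standard frame-morphism statements, and (11)\,$\Rightarrow$\,(1) is extracted from Lemma 5.4(5). The chain (11)\,$\Leftrightarrow$\,(12)\,$\Leftrightarrow$\,(13) uses $r(a) = \bigwedge(Max(A)\cap V(a))$ together with the order-preservation of $Vir(\cdot)$. Finally, (14) is obtained by assembling (4) to construct the bijection $m\mapsto Vir(m)$, Lemma 4.2(4) to identify $Spec(Vir(A))$ via spatiality of the frame $Vir(A)$, and compactness of both spaces to upgrade the bijection to a homeomorphism.

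The main obstacle is the cross-cluster implication (1)\,$\Rightarrow$\,(8) and the verification in (14). In (1)\,$\Rightarrow$\,(8) one must pass from the bare normality pair $(e,f)$ with $ef=0$ to a covering by \emph{pure} elements; this pivots on the equality $O(m)=Vir(m)$, which holds only at maximal elements and therefore cannot simply be applied at arbitrary primes. For (14), the delicate points are checking that $\eta$ lands in $Spec(Vir(A))$, i.e.\ that $Vir(m)$ is $m$-prime in the frame $Vir(A)$, and that its inverse is continuous; both reduce to matching basic opens across the homeomorphism $Max_Z(A)\cong Spec_{Id,Z}(L(A))$ under the additional rigidity supplied by normality.
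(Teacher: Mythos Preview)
Your overall architecture---splitting into the ``topological'' cluster (1)--(7) and the ``pure-element'' cluster (8)--(14), linked through $O(m)=Ker(m)=Vir(m)$ for maximal $m$---matches the paper's organization. However, there is a genuine circularity in your cycle $(1)\Rightarrow(8)\Rightarrow(9)\Rightarrow(10)\Rightarrow(11)\Rightarrow(1)$: you close it by saying ``$(11)\Rightarrow(1)$ is extracted from Lemma 5.4(5)'', but Lemma~5.4 has \emph{normality of $A$ as a hypothesis}, so it only yields $(1)\Rightarrow(11)$, not the converse. The paper does not give a direct argument here either; it simply invokes Theorem~3.5 of \cite{SimmonsC} for $(1)\Leftrightarrow(11)$ and Proposition~3.4 of \cite{GeorgescuVoiculescu2} (or the same Simmons theorem) for $(1)\Leftrightarrow(8)\Leftrightarrow(9)\Leftrightarrow(10)$. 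Your sketch of $(1)\Rightarrow(8)$ also does not close: from $m\lor e=1$, $n\lor f=1$, $ef=0$ you correctly get $e\leq O(n)=Vir(n)$ and $f\leq O(m)=Vir(m)$, but you still need $e\lor f=1$, which does not follow from the data as written; the inequality ``$e\lor f\geq m\lor n$'' you invoke is false in general.

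Two further route differences are worth noting. For $(1)\Leftrightarrow(7)$ the paper does \emph{not} transfer through normal distributive lattices as you propose; instead it invokes Hochster's theorem to produce a commutative ring $R$ with $L(R)\cong L(A)$, and then imports the equivalence $(i)\Leftrightarrow(viii)$ of Theorem~4.3 in \cite{Aghajani} for Gelfand rings via the order-homeomorphism $Spec_Z(A)\cong Spec_Z(R)$. For (14), the paper does not argue $(1)\Leftrightarrow(14)$ directly by compactness as you suggest; it takes $(1)\Rightarrow(14)$ from \cite{GeorgescuVoiculescu2} and then proves $(14)\Rightarrow(6)$ by observing that $\gamma=\eta^{-1}\circ Vir:Spec_Z(A)\to Max_Z(A)$ is the required continuous retraction, using Lemma~4.2(4) for continuity of $Vir$. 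Your idea of upgrading a bijection to a homeomorphism via compactness would also need Hausdorffness of the target, which is not available without already knowing something equivalent to normality.
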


\begin{proof}

$(1)\Leftrightarrow(2)\Leftrightarrow(3)\Leftrightarrow(4)$ By Proposition 3.2 of \cite{GeorgescuVoiculescu2}.

$(1)\Leftrightarrow(8)\Leftrightarrow(9)\Leftrightarrow(10)$ By Proposition 3.4 of \cite{GeorgescuVoiculescu2} or Theorem 3.5 of \cite{SimmonsC}.

$(1)\Leftrightarrow (11)$ By Theorem 3.5 of \cite{SimmonsC}.

$(11)\Leftrightarrow(12)\Leftrightarrow(13)$ These equivalences follow in a straightforward manner.

$(1)\Leftrightarrow(4)\Leftrightarrow(5)$ These equivalences follow from \cite{PasekaRN},\cite{Sun} or by using Proposition 5.5 and \cite{Johnstone}, (p.68, Proposition 3.7).

$(1)\Rightarrow (14)$ By Theorem 3.5 of \cite{GeorgescuVoiculescu2}.

$(14)\Rightarrow (6)$ According to Lemma 4.2,(4), $Vir: Spec_Z(A) \rightarrow Spec_Z(Vir(A))$ is a continuous map and, by the hypothesis (14), the function $\eta: Max_Z(A)\rightarrow Spec_Z(Vir(A))$ defined by $m\mapsto Vir(a)$ is a homeomorphism. Then it is easy to see that $\gamma = \eta^{-1}\circ Vir: Max_Z(A)\rightarrow Spec_Z(Vir(A))$ is a continuous retraction of the inclusion $Max_Z(A)\subseteq Spec_Z(A)$.

$(1)\Leftrightarrow (7)$ According to \cite{Hochster}, for each bounded distributive lattice $L$ there exists a commutative ring $R$ such that the reticulation $L(R)$ of $R$ is isomorphic to $L$. Then for any coherent quantale $A$ there exists a commutative ring $R$ such that the reticulations $L(A)$ and $L(R)$ are isomorphic. By Proposition 5.5, A is a normal quantale iff $L(A)$ is a normal lattice iff $L(R)$ is a normal quantale iff $R$ is a Gelfand ring. Applying twice
Lemma 3.5 we get a homeomorphism $\theta:Spec_Z(A)\rightarrow Spec_Z(R)$; moreover, $\theta$ is an order - isomorphism. Thus the following assertions are equivalent:

$\bullet$ for all $m\in Max(A)$, $\Lambda(m) = \{p\in Spec(A)| p\leq m\}$ is closed in $Spec_Z(A)$;

$\bullet$ for all $M\in Spec(R)$, $\Lambda(M) = \{P\in Spec(R)| P\subseteq M\}$ is closed in $Spec_Z(R)$.

In accordance with the equivalence $(i)\Leftrightarrow (viii)$ from Theorem 4.3 of \cite{Aghajani}, the following equivalences hold: $A$ is a normal quantale iff $R$ is a Gelfand ring iff for all $M\in Max(R)$, $\Lambda(M)$ is closed in $Spec_Z(R)$ iff for all $m\in Max(A)$, $\Lambda(m)$ is closed in $Spec_Z(A)$.

\end{proof}

Let $L$ be a bounded distributive lattice and $P\in Spec_{Id}(L)$, hence $L-P$ is a filter of $L$. Following \cite{Brezuleanu}, the quotient bounded distributive lattice $L_P$ = $L/L-P$ is called the {\emph{lattice of fractions}} of $L$ associated with the prime ideal $P$. The congruence $\equiv_P$ of $L$ modulo the filter $L-P$ has the following form: $x\equiv_P y$ iff $x\land t = y\land t$, for some $t\notin P$. We denote by $x_P$ the congruence class of the element $x\in L$. Let us consider  the lattice morphism $\pi_P: L\rightarrow L_P$, defined by $\pi_P(x) = x_P$, for all $x\in L$.

\begin{lema}
For all $x\in L$, we have $x\in O(P)$ if and only if $\pi_P(x) = 0_P$.
\end{lema}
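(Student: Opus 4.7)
The plan is to unfold both sides of the equivalence to the same elementary statement: there exists $t\in L\setminus P$ with $x\wedge t=0$.

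First I would specialize the definition of $O(\cdot)$ from Section~4 to the algebraic frame $A=Id(L)$. In $Id(L)$ the compact elements are exactly the principal ideals $(y]$ with $y\in L$, the multiplication coincides with intersection, and the orthogonal $(y]^{\perp}$ equals the annihilator ideal $\mathrm{Ann}(y)=\{z\in L\mid y\wedge z=0\}$. Substituting these into the formula $O(p)=\bigvee\{c\in K(A)\mid c^{\perp}\not\leq p\}$ established in Section~4 yields
$$O(P)=\bigvee\bigl\{(y]\in Id(L)\;\bigm|\;\mathrm{Ann}(y)\not\subseteq P\bigr\}.$$
Hence a principal ideal $(y]$ satisfies $(y]\subseteq O(P)$ precisely when there is some $z\in L\setminus P$ with $y\wedge z=0$.

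Next I would show that for an arbitrary $x\in L$, membership $x\in O(P)$ is equivalent to the existence of a single $t\notin P$ with $x\wedge t=0$. One direction is immediate from the preceding paragraph applied to $y=x$. For the other direction, if $x$ lies in the join of finitely many principal ideals $(y_1],\ldots,(y_n]$ each witnessed by some $z_i\notin P$, one sets $t=z_1\wedge\cdots\wedge z_n$; then $t\notin P$ because $P$ is prime, and $x\wedge t\leq(y_1\vee\cdots\vee y_n)\wedge t=\bigvee_i(y_i\wedge t)\leq\bigvee_i(y_i\wedge z_i)=0$ by distributivity. This is the only step that is more than bookkeeping, but it is a routine use of primality and distributivity.

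Finally, I would compare the resulting characterization $x\in O(P)\iff\exists\,t\notin P,\; x\wedge t=0$ with the explicit description of $\equiv_{P}$ recalled just before the lemma: $\pi_P(x)=0_{P}$ means $x\equiv_{P}0$, i.e.\ $x\wedge t=0\wedge t=0$ for some $t\notin P$. The two conditions are literally identical, so the equivalence follows at once. I do not anticipate any genuine obstacle; the content of the lemma is entirely in identifying the frame-theoretic datum $O(P)$ with the concrete lattice-theoretic condition defining the quotient $L_P$.
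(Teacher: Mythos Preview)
Your proposal is correct and follows essentially the same route as the paper: both reduce each side of the equivalence to the elementary condition ``there exists $t\notin P$ with $x\wedge t=0$''. The paper's proof is a one-line chain of equivalences that treats the identification of $O(P)$ with $\{x\in L\mid \exists\, t\notin P,\ x\wedge t=0\}$ as immediate from the definition, whereas you spell out explicitly (and correctly) the only nontrivial point, namely that passing from $x$ lying in a finite join of witnessed principal ideals to a single witness $t$ uses primality of $P$ and distributivity.
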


\begin{proof}For all $x\in L$, the following equivalences hold: $\pi_P(x) = 0_P$ iff $x\equiv_P 0$ iff there exists $t\notin P$ such that $x\land t = 0$ iff $x\in O(P)$.

\end{proof}

\begin{remarca}\cite{Brezuleanu}
For any prime ideal $P$ of $L$, we shall denote $\Lambda_{Id}(P) = \{Q\in Spec_{Id}(L)|Q\subseteq P\}$. Let us consider the function $\pi_P^*: \Lambda_{Id}(L)\rightarrow Spec_{Id}(L)$, defined by the assigment $Q\mapsto \pi_P^*(Q) = \{x_P| x\in Q\}$. It is straightforward to prove that $\pi_P^*$ is an order - isomorphism.

\end{remarca}

\begin{propozitie} For all prime ideal $P$ of the lattice $L$, we have $O(P)$ = $\bigcap \Lambda_{Id}(P)$.

\end{propozitie}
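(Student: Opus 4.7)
The plan is to prove both inclusions separately, using Lemma 5.7 to identify $O(P)$ with the kernel of $\pi_P$, and the order-isomorphism $\pi_P^{*}$ from Remark 5.8 to transport prime ideals between $L$ and the quotient $L_P$.

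For the inclusion $O(P)\subseteq \bigcap \Lambda_{Id}(P)$, I would start from an element $x\in O(P)$. Unwinding the definition (equivalently, combining Lemma 5.7 with the explicit description of $\equiv_P$), there exists $y\notin P$ with $x\wedge y = 0$. Fixing an arbitrary prime ideal $Q\in \Lambda_{Id}(P)$, the inclusion $Q\subseteq P$ forces $y\notin Q$; since $x\wedge y = 0\in Q$ and $Q$ is prime, it follows at once that $x\in Q$, as required.

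For the reverse inclusion $\bigcap \Lambda_{Id}(P)\subseteq O(P)$, I would argue the contrapositive. Assume $x\notin O(P)$; by Lemma 5.7 the image $\pi_P(x)$ is nonzero in $L_P$. The classical prime ideal theorem, applied inside the distributive lattice $L_P$, then yields a prime ideal $Q'$ of $L_P$ with $\pi_P(x)\notin Q'$ (one may take $Q'$ maximal among ideals avoiding $\pi_P(x)$). Pulling $Q'$ back through the order-isomorphism $\pi_P^{*}:\Lambda_{Id}(P)\to Spec_{Id}(L_P)$ of Remark 5.8 produces a prime ideal $Q\in \Lambda_{Id}(P)$ with $\pi_P^{*}(Q) = Q'$; were $x\in Q$, then $\pi_P(x)\in \pi_P^{*}(Q) = Q'$, contradicting the choice of $Q'$. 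Hence $x\notin Q$, so $x$ fails to lie in $\bigcap \Lambda_{Id}(P)$.

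I expect no substantive obstacle. The only step requiring any care is the application of the prime ideal theorem inside the quotient $L_P$ and its correct pullback through $\pi_P^{*}$; once Lemma 5.7 and Remark 5.8 are in hand, the argument reduces to the elementary fact that an element of a bounded distributive lattice is zero if and only if it lies in every prime ideal, transferred from $L_P$ back to $L$.
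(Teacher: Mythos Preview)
Your proposal is correct and follows essentially the same route as the paper's own proof: the first inclusion is argued identically, and for the reverse inclusion both you and the paper pass to the quotient $L_P$ via Lemma~5.7 and Remark~5.8 and then use that in a nontrivial bounded distributive lattice the intersection of all prime ideals is $\{0_P\}$. The only cosmetic difference is that the paper argues the second inclusion directly (if $x\in\bigcap\Lambda_{Id}(P)$ then $\pi_P(x)\in\bigcap Spec_{Id}(L_P)=\{0_P\}$), whereas you phrase the same step contrapositively by invoking the prime ideal theorem to produce a prime avoiding $\pi_P(x)$; the underlying content is identical.
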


\begin{proof} For any element $x\in O(P)$ there exists $y\notin P$ such that $x\land y = 0$. Then for any $Q\in V_{Id}(L)$ we have $y\notin Q$, hence $x\in Q$. It follows that the inclusion $O(P)\subseteq \bigcap V_{Id}(P)$ holds.

In order to establish the converse inclusion $\bigcap \Lambda_{Id}(P)\subseteq O(P)$, assume that $x\in \bigcap \Lambda_{Id}(P)$. Then $\pi_P(x)\in  \pi_P^*(Q)$ for all prime ideal $Q$ such that $Q\subseteq P$, hence by using Remark 5.8,  one gets

$\pi_P(x)\in \bigcap\{\pi_P^*(Q)|Q\in \Lambda_{Id}(P)\}$ = $\bigcap Spec_{id}(L)$ = $\{0_P\}$.

In accordance with Lemma 5.7, one obtains $x\in O(P)$.

\end{proof}

\begin{teorema}
Let $A$ be a semiprime coherent quantale. For any $m$ - prime element $p$ of $A$, the equality $O(p)$ = $\bigwedge\Lambda(p)$ holds.
\end{teorema}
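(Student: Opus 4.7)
The plan is to push the problem through the reticulation, where the analogous statement for distributive lattices has just been established in Proposition 5.9, and then to pull the resulting equality back to $A$ using the frame isomorphism between $R(A)$ and $Id(L(A))$ of Proposition 3.8 together with the transfer property for the operator $O(\cdot)$ in Corollary 4.20.

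First, since $A$ is semiprime and $p \in Spec(A)$, Corollary 4.20(2) applied to the prime ideal $P = p^{\ast}$ of $L(A)$, combined with the identity $(p^{\ast})_{\ast} = p$ from Lemma 3.3(3), yields the equality $O(p) = (O(p^{\ast}))_{\ast}$. Thus it suffices to show that $(O(p^{\ast}))_{\ast} = \bigwedge \Lambda(p)$. Second, since $p^{\ast}$ is a prime ideal of the bounded distributive lattice $L(A)$, Proposition 5.9 gives $O(p^{\ast}) = \bigcap \Lambda_{Id}(p^{\ast})$ as ideals of $L(A)$.

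Third, I would identify the indexing set $\Lambda_{Id}(p^{\ast})$ with $\Lambda(p)$ via the homeomorphism of Lemma 3.5. Concretely, if $Q \in Spec_{Id}(L(A))$ and $Q \subseteq p^{\ast}$, then by Lemma 3.3(2),(3) one has $Q = (Q_{\ast})^{\ast}$ with $Q_{\ast} \in Spec(A)$ and $Q_{\ast} \leq (p^{\ast})_{\ast} = p$; conversely, every $q \in \Lambda(p)$ produces $q^{\ast} \in \Lambda_{Id}(p^{\ast})$ by order-preservation of $u$. This gives $\Lambda_{Id}(p^{\ast}) = \{q^{\ast} \mid q \in \Lambda(p)\}$.

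Finally, I would apply $(\cdot)_{\ast}$ to the equation $O(p^{\ast}) = \bigcap \{q^{\ast} \mid q \in \Lambda(p)\}$. By Proposition 3.8, $\Psi = (\cdot)_{\ast} \colon Id(L(A)) \to R(A)$ is a frame isomorphism, hence preserves arbitrary meets; since meets in $Id(L(A))$ are intersections and meets in $R(A)$ coincide with meets in $A$, one obtains
\[
(O(p^{\ast}))_{\ast} \;=\; \Bigl(\bigcap_{q \in \Lambda(p)} q^{\ast}\Bigr)_{\ast} \;=\; \bigwedge_{q \in \Lambda(p)} (q^{\ast})_{\ast} \;=\; \bigwedge_{q \in \Lambda(p)} q \;=\; \bigwedge \Lambda(p),
\]
where the third equality again uses Lemma 3.3(3). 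Combining with the first step, $O(p) = \bigwedge \Lambda(p)$, as required. There is no real obstacle here; the only point that deserves care is the preservation of arbitrary (possibly infinite) meets under $(\cdot)_{\ast}$, which is precisely what Proposition 3.8 delivers, and the matching of the two indexing sets $\Lambda_{Id}(p^{\ast})$ and $\Lambda(p)$ through the spectral homeomorphism.
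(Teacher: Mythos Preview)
Your proof is correct and follows essentially the same route as the paper's: both transfer the statement to $L(A)$ via Proposition~5.9 and then pull the result back using Corollary~4.20 together with the spectral correspondence of Lemma~3.5. The only difference is one of packaging---the paper establishes $O(p)\le\bigwedge\Lambda(p)$ by a direct compact-element argument and proves the reverse inequality element-wise through the reticulation, whereas you transport the entire equality at once through the frame isomorphism $\Psi$ (this is Proposition~3.7 in the paper's numbering, not~3.8).
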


\begin{proof}
Let $c\in K(A)$ such that $c\leq O(p)$, hence $c\leq p$ and $c^{\perp}\not\leq p$ (cf. Lemma 4.17,(2)). For all $q\in \Lambda(p)$ we have $c^{\perp}\not\leq q$, so $c\leq q$. It follows that $c\leq \bigwedge \Lambda(p)$, hence $O(p)\leq \bigwedge \Lambda(p)$.

In order to prove that $\bigwedge \Lambda(p)\leq O(p)$, let us consider an element $c\in K(A)$ such that $c\leq \bigwedge \Lambda(p)$. Then for all $m$ - prime elements $q$ such that $q\leq p$ we have $c\leq q$. According to Lemma 3.5, it follows that for all prime ideals of $L(A)$ such that $Q\subseteq p^{\ast}$ we have $\lambda_A(c)\in Q$. In accordance with Proposition  5.9 it follows that $\lambda_A(c)\in O(p^{\ast})$. Since the quantale $A$ is semiprime, by using Corollary  4.20 one gets $\lambda_A(c)\in (O(p))^{\ast}$. Therefore, by applying Lemma 3.3 and Corollary 4.20, the following hold: $c\leq ((O(p))^{\ast})_{\ast} = (O(p^{\ast}))_{\ast} = O((p^{\ast})_{\ast}) = O(p)$. We conclude that $\bigwedge \Lambda(p)\leq O(p)$.

\end{proof}

We observe that the previous theorem is obtained by transferring Proposition 5.9 from lattices to quantales by using the reticulation.

\begin{corolar}
Let $A$ be a semiprime coherent quantale. For any minimal $m$ - prime element $p$ of $A$, the equality $O(p)$ = $p$ holds.
\end{corolar}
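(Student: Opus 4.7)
The plan is to deduce this corollary directly from Theorem 5.10, which has just been proven. Theorem 5.10 states that for any semiprime coherent quantale $A$ and any $m$-prime element $p$, we have the equality $O(p) = \bigwedge \Lambda(p)$, where $\Lambda(p) = \{q \in Spec(A) \mid q \leq p\}$.

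The key observation is the interpretation of $\Lambda(p)$ when $p$ is minimal. By definition of $Min(A)$, if $p \in Min(A)$ then there is no $m$-prime element of $A$ strictly below $p$. Hence $\Lambda(p) = \{p\}$, which immediately gives $\bigwedge \Lambda(p) = p$. Substituting into the conclusion of Theorem 5.10 yields $O(p) = p$, as required.

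So the entire proof reduces to one sentence invoking Theorem 5.10 together with the definition of a minimal $m$-prime element. There is no real obstacle: the heavy lifting was done in Theorem 5.10 via the reticulation-transfer of Proposition 5.9 from lattices to quantales, and the corollary is just the specialization to the case where the descending set $\Lambda(p)$ is a singleton.
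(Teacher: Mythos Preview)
Your proof is correct and follows exactly the approach implicit in the paper: the corollary is stated immediately after Theorem 5.10 without a separate proof, and the intended argument is precisely the one you give---apply $O(p)=\bigwedge\Lambda(p)$ and observe that minimality of $p$ forces $\Lambda(p)=\{p\}$.
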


\begin{corolar} If $A$ is a semiprime coherent quantale then $\bigwedge\{O(m)|m\in Max(A)\} = 0$ .

\end{corolar}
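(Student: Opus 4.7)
The plan is to combine Theorem 5.10 with the coherence hypothesis and semiprimeness of $A$. Since $A$ is coherent, we have $1 \in K(A)$, so by the discussion following Lemma 2.1 every $m$-prime element of $A$ lies below some maximal element. In other words, $\bigcup_{m \in Max(A)} \Lambda(m) = Spec(A)$.

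Applying Theorem 5.10 to each $m \in Max(A)$ (which is itself $m$-prime under our hypothesis on $K(A)$) gives $O(m) = \bigwedge \Lambda(m)$. Therefore
\[
\bigwedge_{m \in Max(A)} O(m) \;=\; \bigwedge_{m \in Max(A)} \bigwedge \Lambda(m) \;=\; \bigwedge \Bigl(\bigcup_{m \in Max(A)} \Lambda(m)\Bigr) \;=\; \bigwedge Spec(A).
\]
By the definition of radical we have $\bigwedge Spec(A) = \rho(0)$, and semiprimeness of $A$ gives $\rho(0)=0$, which yields the claimed equality.

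I do not anticipate any serious obstacle here: the corollary is essentially Theorem 5.10 combined with the fact that maximal elements are cofinal in $Spec(A)$ (under coherence) and with semiprimeness. The only subtlety worth double-checking is that a maximal element $m$ truly belongs to $Spec(A)$ so that Theorem 5.10 is applicable; this is exactly the consequence $Max(A) \subseteq Spec(A)$ of $1 \in K(A)$ noted in Section 2.
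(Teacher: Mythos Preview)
Your proof is correct and follows essentially the same approach as the paper: apply Theorem~5.10 to each $m\in Max(A)$, use that $Spec(A)=\bigcup_{m\in Max(A)}\Lambda(m)$ (from $1\in K(A)$), and conclude via $\bigwedge Spec(A)=\rho(0)=0$. Your write-up is in fact more explicit than the paper's, which omits the step $\bigwedge Spec(A)=\rho(0)$ and simply asserts the final equality.
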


\begin{proof} By using Theorem 5.10, for each $m\in Max(A)$ we have $O(m) = \Lambda(m)$. By observing that $Spec(A) = \bigcup\{\Lambda(m)|m\in Max(A)$, the following equality holds:

$\bigwedge\{O(m)|m\in Max(A)\} = \bigwedge Spec(A) = 0$.

\end{proof}

\begin{lema}
If $a$ is a pure element of a normal coherent quantale $A$ then $a = \bigwedge \{O(m)| m\in Max(A)\bigcap V(a)\}$.
\end{lema}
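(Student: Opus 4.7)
The plan is to assemble the statement directly from Lemmas 4.2 and 5.4 already proven in the paper. The key observation is that in a normal coherent quantale the three operators $Vir(\cdot)$, $Ker(\cdot)$ and $O(\cdot)$ all coincide on maximal elements, so the representation of a pure element as an intersection of the $Vir(m)$'s translates at once into the desired expression in terms of the $O(m)$'s.

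First I would unfold the index set: by definition of $V(a)$, we have $Max(A)\cap V(a) = \{m\in Max(A)\mid a\leq m\}$, so the right-hand side of the desired equality is $\bigwedge\{O(m)\mid m\in Max(A),\ a\leq m\}$.

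Next I would show the identification $Vir(m)=O(m)$ for every $m\in Max(A)$. Since $A$ is coherent (hence $1\in K(A)$) and normal, Lemma 5.4(4) applied to $m$ gives $Vir(m)=Ker(m)$, while Lemma 4.2(1) gives $O(m)=Ker(m)$. Combining these two equalities yields $Vir(m)=O(m)$ for each $m\in Max(A)$.

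Finally, since $a$ is pure, Lemma 4.2(2) provides the representation
\[
a \;=\; \bigwedge\{Vir(m)\mid m\in Max(A),\ a\leq m\}.
\]
Substituting $Vir(m)=O(m)$ from the previous step gives $a = \bigwedge\{O(m)\mid m\in Max(A)\cap V(a)\}$, which is what we wanted. There is essentially no obstacle here: the whole content of the lemma lies in Lemmas 4.2 and 5.4, and the only small verification is the set-theoretic identification of $Max(A)\cap V(a)$ with $\{m\in Max(A)\mid a\leq m\}$.
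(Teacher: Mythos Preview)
Your proof is correct and follows essentially the same approach as the paper's own proof: invoke Lemma~4.2(2) to write $a=\bigwedge\{Vir(m)\mid m\in Max(A),\ a\leq m\}$, then use Lemmas~4.2(1) and~5.4(4) to identify $O(m)=Ker(m)=Vir(m)$ for each $m\in Max(A)$ and substitute. The only addition you make is the explicit unfolding of $Max(A)\cap V(a)$, which is harmless.
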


\begin{proof}
Let $a$ be a pure element of $A$. Recall from Lemma 4.2,(ii) the equality $a = \bigwedge \{Vir(m)| m\in Max(A)\bigcap V(a)\}$. By Lemmas 4.2,(1) and 5.4,(4) one gets $O(m) = Ker(m) = Vir(m)$, hence $a = \bigwedge \{O(m)| m\in Max(A)\bigcap V(a)\}$.

\end{proof}

\begin{lema}
Let $A$ be a normal coherent quantale.
\usecounter{nr}
\begin{list}{(\arabic{nr})}{\usecounter{nr}}
\item If $m\in Max(A)$ then $O(m)$ is pure;
\item If $p$ is a pure $m$ - prime element of $A$ then $O(p) = p$.
\end{list}

\end{lema}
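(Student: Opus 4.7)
For part (1) the plan is to reduce $O(m)$ to an element already known to be pure. Since $m\in Max(A)$, Lemma 4.2,(1) supplies $O(m) = Ker(m)$, while the normality hypothesis lets me invoke Lemma 5.4,(4) to replace $Ker(m)$ with $Vir(m)$. Because $Vir(m)$ is by construction a join of pure elements and $Vir(A)$ is closed under arbitrary joins (Lemma 4.1,(3)), it lies in $Vir(A)$; so the chain $O(m) = Ker(m) = Vir(m)$ exhibits $O(m)$ as a pure element. This is a one-line identification; no further calculation is needed.

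For part (2) the plan is to prove the nontrivial inclusion $p \leq O(p)$, since the reverse inclusion is immediate from the definition of $O(\cdot)$. As $A$ is coherent and hence algebraic, $p$ is the join of the compact elements lying below it, so it suffices to show that any compact $c\leq p$ satisfies $c\leq O(p)$. Fix such a $c$. The purity of $p$ gives $p \lor c^{\perp} = 1$; combined with $p < 1$ (which holds because $p$ is $m$-prime), this forces $c^{\perp}\not\leq p$, for otherwise the join on the left would reduce to $p$. Now Lemma 4.17,(1) (or equivalently the characterization in Lemma 4.17,(2)) says exactly that a compact element $c$ with $c\leq p$ and $c^{\perp}\not\leq p$ satisfies $c\leq O(p)$, which closes the argument.

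I do not expect a genuine obstacle. The only delicate point is the small logical step in part (2) that $p<1$ together with $p\lor c^{\perp}=1$ implies $c^{\perp}\not\leq p$, which I will state explicitly rather than leaving implicit. It is worth noting that normality is used only in part (1), through the identity $Ker=Vir$; part (2) holds in any coherent quantale, so the statement there is somewhat stronger than what the ambient hypothesis suggests.
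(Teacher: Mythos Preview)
Your proposal is correct and follows essentially the same route as the paper: for (1) the paper also writes the one-line chain $O(m)=Ker(m)=Vir(m)$ via Lemmas 4.2,(1) and 5.4,(4), and for (2) it likewise reduces to showing $p\leq O(p)$ by taking compact $c\leq p$, using purity to get $p\lor c^{\perp}=1$, deducing $c^{\perp}\not\leq p$, and concluding $c\leq O(p)$. Your write-up is in fact slightly more careful than the paper's in making the step $p<1\Rightarrow c^{\perp}\not\leq p$ explicit and in observing that normality is not needed for part (2).
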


\begin{proof}
(1) By Lemmas 4.2,(1) and 5.4,(4) we have $O(m) = Ker(m) = Vir(m)$;

(2) The inequality $O(p)\leq p$ is always true. In order to prove the converse inequality $p\leq O(p)$ let us consider a compact element $c$ such that $c\leq p$. Since $p$ is pure we have $p\lor c^{\perp} = 1$, so $c^{\perp}\not\leq p$. From $cc^{\perp} = 0$ and $c^{\perp}\not\leq p$ it follows that $c\leq p$, so $p\leq O(p)$.

\end{proof}

\begin{corolar} If $A$ be a normal coherent quantale then a maximal element of $A$ is pure if and only if $O(m) = m$.

\end{corolar}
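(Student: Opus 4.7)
The plan is to deduce the corollary directly from Lemma 5.14, using the fact that in a coherent quantale with $1\in K(A)$ (which is our standing hypothesis throughout Section 5 for any coherent quantale $A$), every maximal element is $m$-prime, i.e. $Max(A)\subseteq Spec(A)$, as recalled in Section 2.

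For the ``only if'' direction, I would assume that $m\in Max(A)$ is pure. Since $Max(A)\subseteq Spec(A)$, $m$ is a pure $m$-prime element of $A$, so Lemma 5.14(2) applies and gives $O(m)=m$ immediately.

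For the ``if'' direction, I would assume $O(m)=m$. Lemma 5.14(1) tells us that $O(m)$ is pure for every maximal element $m$. Substituting the hypothesis $O(m)=m$, we conclude that $m$ itself is pure.

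The whole argument is a one-line combination of the two clauses of the preceding lemma, so there is no real obstacle; the only thing to verify is that the hypothesis ``$A$ normal coherent'' is needed for the forward application of 5.14(1), while 5.14(2) is stated for any pure $m$-prime element of a normal coherent quantale, and both conditions are supplied by the assumption of the corollary.
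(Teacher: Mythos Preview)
Your proof is correct and is exactly the intended derivation: the paper states Corollary 5.15 without proof, as an immediate consequence of the two parts of Lemma 5.14, and your two directions use 5.14(2) and 5.14(1) precisely as expected. The only minor remark is that in your final paragraph the phrase ``forward application of 5.14(1)'' is slightly confusing, since 5.14(1) is what you use in the ``if'' direction; but this does not affect the correctness of the argument.
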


Following \cite{Cheptea1},\cite{GG}, a quantale $A$ is said to be {\emph{hyperarchimedean}} if for all $c\in K(A)$ there exists a positive integer $n$ such that $c^n\in B(A)$. In \cite{Cheptea1} we proved that a coherent quantale is hyperarchimedean iff $L(A)$ is a Boolean algebra iff $Max(A) = Spec(A)$.

\begin{propozitie}
Let $A$ be a semiprime coherent quantale. Then rhe following are equivalent:
\usecounter{nr}
\begin{list}{(\arabic{nr})}{\usecounter{nr}}
\item $A$ is hyperarchimedean;
\item Any maximal element of $A$ is pure.
\end{list}

\end{propozitie}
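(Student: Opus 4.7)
The plan is to exploit the characterizations of hyperarchimedean quantales recalled just before the proposition, namely that $A$ hyperarchimedean is equivalent to $L(A)$ being Boolean and to $Max(A)=Spec(A)$, together with Corollary 5.11, Lemma 5.15, and the standard fact that in a semiprime quantale $(c^n)^{\perp}=c^{\perp}$ for all compact $c$ and all $n\geq 1$ (which follows from $\lambda_A(c^n)=\lambda_A(c)$ and the fact that semiprimeness makes $\lambda_A$ detect zero).

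For the implication (1)$\Rightarrow$(2), I assume $A$ is hyperarchimedean and fix $m\in Max(A)$. To show $m$ is pure, I take a compact $c\leq m$ and produce an integer $n\geq 1$ with $c^{n}\in B(A)$. Then $c^{n}\vee (c^{n})^{\perp}=1$, and since $c^{n}\leq m<1$ we must have $(c^{n})^{\perp}\not\leq m$, so maximality of $m$ gives $m\vee (c^{n})^{\perp}=1$. Using semiprimeness to identify $(c^{n})^{\perp}$ with $c^{\perp}$, I conclude $m\vee c^{\perp}=1$, hence $m$ is pure. (Alternatively, one can observe that hyperarchimedean forces $L(A)$ Boolean, so $L(A)$ is normal and Proposition 5.5 gives $A$ normal; then Corollary 5.11 yields $O(m)=m$ for each $m\in Max(A)=Spec(A)=Min(A)$, and Corollary 5.16 finishes.)

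For (2)$\Rightarrow$(1), I show $Spec(A)\subseteq Max(A)$, which combined with $Max(A)\subseteq Spec(A)$ (valid since $1\in K(A)$) gives $Max(A)=Spec(A)$ and hence hyperarchimedeanness. Fix $p\in Spec(A)$ and choose $m\in Max(A)$ with $p\leq m$. I want $p=m$, so it suffices to show every compact $c\leq m$ satisfies $c\leq p$, because $A$ is algebraic. Purity of $m$ gives $m\vee c^{\perp}=1$; if we had $c^{\perp}\leq p$ then $m\vee c^{\perp}\leq m<1$, a contradiction, so $c^{\perp}\not\leq p$. Since $cc^{\perp}=0\leq p$ and $p$ is $m$-prime, it follows that $c\leq p$, as desired. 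Thus $m\leq p$ and $p=m\in Max(A)$.

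The only mildly delicate point is the passage from $(c^{n})^{\perp}\not\leq m$ to $c^{\perp}\not\leq m$ in the first implication; this is where semiprimeness is genuinely used. Everything else is a direct combination of the reticulation-based characterization of hyperarchimedean quantales with the algebraicity and maximality arguments above.
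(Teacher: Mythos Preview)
Your proof is correct, and both implications are argued more directly than in the paper.

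For $(1)\Rightarrow(2)$ the paper proceeds via the identification $Max(A)=Spec(A)=Min(A)$, then invokes Corollary 5.11 to obtain $O(m)=m$ and (implicitly using that hyperarchimedean quantales are normal) Lemma 5.14(1) to conclude that $O(m)$, hence $m$, is pure. Your argument instead works straight from the definition: pick $n$ with $c^{n}\in B(A)$, get $(c^{n})^{\perp}\not\leq m$, use maximality, and then the semiprime identity $(c^{n})^{\perp}=c^{\perp}$ to finish. This bypasses the $O(\cdot)$ machinery entirely.

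For $(2)\Rightarrow(1)$ the paper again routes through $O(\cdot)$: Lemma 5.14(2) gives $O(m)=m$ for each pure maximal $m$, Theorem 5.10 (itself proved by transfer from lattices via the reticulation) gives $\bigwedge\Lambda(m)=O(m)=m$, whence $m\in Min(A)$ and so $Max(A)=Min(A)=Spec(A)$. Your argument shows $Spec(A)\subseteq Max(A)$ by a short primality computation: for $p\leq m$ and compact $c\leq m$, purity forces $c^{\perp}\not\leq p$, and $cc^{\perp}=0\leq p$ then gives $c\leq p$. This is self-contained and avoids Theorem 5.10 altogether.

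In short, the paper leans on the $O(\cdot)$/reticulation apparatus developed in Sections 4--5, which fits its narrative of transfer results, while your proof is elementary and uses only the definitions plus the semiprime fact $(c^{n})^{\perp}=c^{\perp}$. One minor remark: in your opening plan you cite ``Lemma 5.15'' and ``Corollary 5.16'', but neither is actually used in your main argument (and 5.16 is the proposition itself); the alternative route you sketch in parentheses would want Corollary 5.15.
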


\begin{proof}

$(1)\Rightarrow (2)$ Let $m$ be a maximal element of $A$. According to Theorem 1 of \cite{Cheptea1}, we have $Max(A) = Spec(A)$, so $m\in Min(A)$. By Corollary 5.11 we get $O(m) = m$, so m is pure.

$(2)\Rightarrow (1)$ Assume that $m\in Max(A)$, so $m$ is pure, hence $O(m) = m$ (cf. Lemma 5.14). By Proposition 5.10, it folows that $\bigwedge\Lambda(m) = O(m) = m$, hence $m$ is a minimal $m$ - prime element. Therefore $Min(A) = Max(A)$, so, by Theorem 1 of \cite{Cheptea1}, we conclude that $A$ is hyperarchimedean.

\end{proof}

\begin{propozitie} Let $A$ be a normal coherent quantale and $a\in A$. Then $K(a) = \bigwedge(Max(A)\bigcap V(a))$ is a pure element of $A$.

\end{propozitie}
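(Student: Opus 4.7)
The plan is to prove $K(a) = Vir(a)$, from which pureness is immediate. Since $Vir(a)\leq a\leq m$ for every $m\in Max(A)\cap V(a)$, we automatically have $Vir(a)\leq K(a)$; the task is to obtain the reverse inequality.

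First, by Proposition 5.6(12), $Max(A)\cap V(a) = Max(A)\cap V(Vir(a))$, so the meet defining $K(a)$ is unchanged when $a$ is replaced by the pure element $Vir(a)$. Applying Lemma 4.2(2) to $Vir(a)$ yields
\[
Vir(a) = \bigwedge\{Vir(m)\mid m\in Max(A)\cap V(a)\},
\]
while Lemma 4.2(1) combined with Lemma 5.4(4) identifies $Vir(m) = Ker(m) = O(m)$ for every $m\in Max(A)$. Hence the goal reduces to showing $K(a)\leq Vir(m)$ for every $m\in Max(A)\cap V(a)$.

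I would establish this inequality through the compactness criterion of Lemma 4.12: given $c\in K(A)$ with $c\leq K(a)$, I want $m\vee c^{\perp} = 1$ for each $m\in Max(A)\cap V(a)$. If not, some maximal $n$ of $A$ contains $m\vee c^{\perp}$, forcing $n=m$ (by maximality) and $c^{\perp}\leq m$. A contradiction would then be obtained by combining the hypothesis that $c$ lies below \emph{every} maximal above $a$ with the normality of $A$, specifically Proposition 5.6(2) (disjoint compact separators for distinct maximals) and Proposition 5.6(9) ($a\vee b = 1$ implies $Vir(a)\vee Vir(b) = 1$).

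The main obstacle is precisely this last contradiction step: converting the conjunctive information ``$c\leq m'$ for every $m'\in Max(A)\cap V(a)$'' together with ``$c^{\perp}\leq m$'' into a concrete violation of the normality axiom. As an alternative route, one could work through the reticulation: compute $(K(a))^{\ast} = \bigcap\{M\in Max_{Id}(L(A))\mid a^{\ast}\subseteq M\}$ via Lemma 3.3 and Corollary 3.6, prove that this Jacobson-type intersection is a $\sigma$-ideal of the normal distributive lattice $L(A)$ using Proposition 5.5, and then conclude that $K(a) = ((K(a))^{\ast})_{\ast}$ is pure via Corollary 4.11(2) together with the frame isomorphism of Theorem 4.16.
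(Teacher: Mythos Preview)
You have misread the definition of $K(a)$. Throughout your proposal you treat $K(a)$ as $r(a)=\bigwedge\{m:m\in Max(A)\cap V(a)\}$ (``$c$ lies below every maximal above $a$''), but the paper's intended meaning---made explicit in its proof, in the proof of Lemma~6.14, and in Theorem~5.18---is
\[
K(a)=\bigwedge\{O(m):m\in Max(A)\cap V(a)\}.
\]
The displayed formula in the statement is simply a typo omitting the $O(\cdot)$.

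Under your reading the proposition is false, so the ``obstacle'' you isolate is insurmountable. Take $A=Id(R)$ for a local domain $R$ (not a field) with maximal ideal $\mathfrak m$: $A$ is normal and coherent, yet $r(0)=\mathfrak m$ is not pure, since for any nonzero finitely generated $I\subseteq\mathfrak m$ one has $I^{\perp}=0$ and $\mathfrak m\vee 0=\mathfrak m\neq 1$. The reticulation detour fails for the same reason: in a three-element chain $0<a<1$ (a normal lattice), the intersection of the maximal ideals above $\{0\}$ is $\{0,a\}$, which is not a $\sigma$-ideal.

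Under the correct reading, your second paragraph already proves the proposition and the rest is unnecessary. Combining Proposition~5.6(12), Lemma~4.2(2), Lemma~4.2(1) and Lemma~5.4(4) exactly as you do yields
\[
Vir(a)=\bigwedge\{Vir(m):m\in Max(A)\cap V(a)\}=\bigwedge\{O(m):m\in Max(A)\cap V(a)\}=K(a),
\]
so $K(a)=Vir(a)$ is pure. This is a genuinely different and shorter route than the paper's. The paper argues directly at the element level: given a compact $c\leq K(a)$ it uses $c\leq O(m)$ to pick, for each $m\in Max(A)\cap V(a)$, a compact $d_m\not\leq m$ with $cd_m=0$, and for each $m\notin V(a)$ a compact $e_m\leq a$ with $e_m\not\leq m$; a covering/compactness argument over $Max(A)$ then produces compacts $d,e$ with $d\vee e=1$, $cd=0$, $e\leq a$, and normality is invoked on $d\vee e=1$ to manufacture $x\leq K(a)$ and $d\leq c^{\perp}$ with $x\vee d=1$, giving $K(a)\vee c^{\perp}=1$. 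Your argument bypasses this machinery by invoking the structural equivalences already packaged in Proposition~5.6 and Lemma~4.2.
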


\begin{proof}
Let $c$ be a compact element of $A$ such that $c\leq K(a)$. We have to prove that $K(a)\lor c^{\perp} = 1$. For all $m\in Max(A)\bigcap V(a)$ we have $c\leq O(m)$, hence, by Lemma 4.17,(2), we get $c^{\perp}\not\leq m$, so $d_mc = 0$ and $d_m\not\leq m$ for some $d_m\in Max(A)$.

For all maximal element $m$ such that $m\notin V(a)$ we take a compact element $e_m$ such that $e_m\leq a$ and $e_m\not\leq m$. Assume that

$\bigvee\{d_m|m\in Max(A), a\leq m\}\lor \bigvee\{e_m|m\in Max(A), a\not\leq m\} < 1$,

so there exists $n\in Max(A)$ such that

$\bigvee\{d_m|m\in Max(A), a\leq m\}\lor \bigvee\{e_m|m\in Max(A), a\not\leq m\}\leq n$.

Thus $a\leq n$ implies $d_n\leq n$ and $a\not\leq n$ implies $e_n\leq n$. In the both cases we have obtained a contradiction, therefore

$\bigvee\{d_m|m\in Max(A), a\leq m\}\lor \bigvee\{e_m|m\in Max(A), a\not\leq m\} = 1$.

Since $1\in K(A)$ there exist the maximal elements $m_1,...,m_k, n_1,...,n_l$ such that $d_{m_1}\lor ...\lor  d_{m_k}\lor e_{n_1}\lor... \lor e_{n_l} = 1$, $a\leq m_i$, for $i = 1,...,k$ and $a\not\leq n_j$, for $j = 1,...,l$. Let us denote $d_i = d_{m_i}$ for $i = 1,...,k$ and $e_j = e_{n_j}$ for $j = 1,...,l$.  Thus $cd_i = 0$, $d_i\leq m_i$, for $i = 1,...,k$ and $e_j\leq a$, $e_j\not\leq n_j$, for $j = 1,...,l$.

If $d = d_1\lor...\lor d_k$ and $e = e_1\lor...\lor e_l$ then $d,e\in K(A)$ and $d\lor e = 1$. It follows that $cd = 0$ and $e\leq a$. Since $A$ is a normal quantale, from  $d\lor e = 1$ we infer that there exist $x,y\in K(A)$ such that $d\lor x = e\lor y = 1$ and $xy =0$. Then $x = x(e\lor y) = xe\lor xy = xe$, hence $x\leq e\leq a$. In a similar way we get $y = yd\leq d$.

We shall prove that $x\leq K(a)$. Let $m$ be a maximal element of $A$ such that $a\leq  m$, hence $e\leq a\leq m$. Since $e\lor y = 1$ and $e\leq m$ it follows that $y\not\leq m$. On the other hand, $xy = 0$ implies $y\leq x^{\perp}$, hence $x^{\perp}\not\leq m$. According to Lemma 4.17,(2) we have $x\leq O(m)$. We have proven that for all maximal element $m$, $a\leq m$ implies $x\leq O(m)$, hence $x\leq K(a)$.

 Recall that $cd = 0$, so $d\leq c^{\perp}$. Thus $1 = x\lor d\leq K(a)\lor c^{\perp}$, so $K(a)\lor c^{\perp} = 1$. We conclude that $K(a)$ is pure.

\end{proof}

\begin{teorema}
Let $A$ be a normal coherent quantale. Then an element $a\in A$ is pure if and only if $a = \bigwedge\{O(m)|m\in Max(A)\bigcap E\}$, for some closed subset $E$ of $Spec_Z(A)$.

\end{teorema}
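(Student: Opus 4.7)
The plan is to prove the two directions separately, each reducing immediately to a result already established in the section.

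For the forward implication, suppose $a\in A$ is pure. Then Lemma 5.13 gives directly the equality $a = \bigwedge\{O(m)\mid m\in Max(A)\bigcap V(a)\}$. Since the closed subsets of $Spec_Z(A)$ are by definition precisely the sets of the form $V(x)$ for $x\in A$, the choice $E = V(a)$ is a closed subset of $Spec_Z(A)$ that witnesses the desired representation.

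For the backward implication, assume $a = \bigwedge\{O(m)\mid m\in Max(A)\bigcap E\}$ for some closed $E\subseteq Spec_Z(A)$. Then $E = V(b)$ for some $b\in A$, so $Max(A)\bigcap E = Max(A)\bigcap V(b)$ and hence $a = \bigwedge\{O(m)\mid m\in Max(A)\bigcap V(b)\}$. Applying Proposition 5.17 to the element $b$ shows that this meet is pure, so $a$ is pure.

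There is no real obstacle here: both directions are essentially bookkeeping once Lemma 5.13 (characterizing pure elements via the $O(m)$'s above them) and Proposition 5.17 (purity of meets of $O(m)$'s indexed by $Max(A)\bigcap V(b)$) are in hand. The only point worth being careful about is the interpretation of the notation $K(\cdot)$ in Proposition 5.17, namely that it denotes the meet $\bigwedge\{O(m)\mid m\in Max(A)\bigcap V(a)\}$ rather than $\bigwedge(Max(A)\bigcap V(a))$, which matches both the proof of that proposition and the statement of Lemma 5.13.
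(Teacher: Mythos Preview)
Your proof is correct and follows essentially the same route as the paper's own argument: the forward direction is Lemma~5.13 with $E=V(a)$, and the backward direction writes $E=V(x)$ and invokes Proposition~5.17. Your parenthetical remark about the intended meaning of $K(\cdot)$ in Proposition~5.17 is also on target: the proof of that proposition (and its use in Theorem~5.18) only makes sense if $K(a)$ denotes $\bigwedge\{O(m)\mid m\in Max(A)\cap V(a)\}$ rather than $\bigwedge(Max(A)\cap V(a))$.
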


\begin{proof}
By Lemma 5.13, any pure element $a\in A$ has the form $a = \bigwedge\{O(m)|m\in Max(A)\bigcap V(a)\}$. Conversely, assume that $a = \bigwedge\{O(m)|m\in Max(A)\bigcap E\}$, for some closed subset $E$ of $Spec_Z(A)$. Then $E = V(x)$ for some $x\in A$, hence $a = K(x)$. In accordance with Proposition 5.17, $a = K(x)$ is a pure element of $A$.

\end{proof}

For any $m$ - prime element $p$ of a coherent quantale denote $\Omega(p) = \bigwedge \Lambda(p)$. According to Theorem 5.10, if $A$ is semiprime and $p\in Spec(A)$ then $O(p) = \Omega(p)$.

\begin{corolar}
Let $A$ be a normal and semiprime coherent quantale. Then an element $a\in A$ is pure if and only if $a = \bigwedge\{\Omega(m)|m\in Max(A)\bigcap E\}$, for some closed subset $E$ of $Spec_Z(A)$.

\end{corolar}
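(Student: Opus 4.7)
The statement is essentially an immediate specialization of Theorem~5.18, so the plan is short. I would derive Corollary~5.20 by substituting into Theorem~5.18 the identity $O(p) = \Omega(p)$ that is available under the semiprime hypothesis.

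First, I would invoke Theorem~5.18: since $A$ is a normal coherent quantale, an element $a \in A$ is pure if and only if there exists a closed subset $E$ of $Spec_Z(A)$ such that
\[
a = \bigwedge\{O(m)\mid m\in Max(A)\cap E\}.
\]
Second, I would call on Theorem~5.10, which asserts that whenever $A$ is a semiprime coherent quantale, one has $O(p) = \bigwedge \Lambda(p) = \Omega(p)$ for every $p \in Spec(A)$. Since maximal elements are in particular $m$-prime (because $1 \in K(A)$ forces $Max(A)\subseteq Spec(A)$, as recalled in Section~2), this applies in particular to every $m \in Max(A)$, giving $O(m) = \Omega(m)$.

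Substituting this equality termwise into the intersection from Theorem~5.18 yields
\[
a = \bigwedge\{\Omega(m)\mid m\in Max(A)\cap E\},
\]
and conversely any element of this form equals $\bigwedge\{O(m)\mid m\in Max(A)\cap E\}$, hence is pure by the reverse direction of Theorem~5.18. Both implications are therefore obtained simultaneously.

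There is no real obstacle here: the corollary is a direct transcription of Theorem~5.18 through the semiprime identity $O = \Omega$ supplied by Theorem~5.10. The only point to verify is that both hypotheses of these theorems are in force, namely normality (used for Theorem~5.18) and semiprimeness (used for Theorem~5.10), both of which are explicit assumptions in the statement.
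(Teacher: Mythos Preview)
Your proposal is correct and matches the paper's approach exactly: the paper gives no separate proof for this corollary, but immediately before stating it remarks that $\Omega(p)=\bigwedge\Lambda(p)$ and that, by Theorem~5.10, $O(p)=\Omega(p)$ whenever $A$ is semiprime, so the corollary is the direct transcription of Theorem~5.18 under this identity. The only minor slip is the numbering (it is Corollary~5.19 in the paper, not 5.20).
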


\section{Pure elements in $PF$ - quantales}

 Following \cite{GG}, a quantale $A$ is said to be an {\emph{$mp$ - quantale}} if for any $p\in Spec(A)$ there exists a unique $q\in Min(A)$ such that $q\leq p$. An $mp$ - frame is an $mp$ - quantale wich is a frame. We remark that a ring $R$ is an {\emph{$mp$ - ring}} in the sense of \cite{Aghajani} if and only if the quantale $Id(R)$ of ideals of $R$ is an $mp$ - quantale. The $mp$ - quantales can be related to the conormal lattices, introduced by Cornish in \cite{Cornish} under the name of "normal lattices". According to \cite{Simmons},\cite{Johnstone}, a {\emph{conormal lattice}} is a bounded distributive lattice $L$ such that for all $x,y\in L$ with $x\land y = 0$ there exist $u,v\in L$ having the properties $x\land u$ = $y\land v$ = $0$ and $u\lor v$ = $1$. Cornish obtained in \cite{Cornish} several characterizations of the conormal lattices. For example, a bounded distributive lattice $L$ is conormal if and only if any prime ideal of $L$ contains a unique minimal prime ideal.

\begin{lema}
\cite{GG}
A coherent quantale $A$ is an $mp$ - quantale if and only if the reticulation $L(A)$ is a conormal lattice.
\end{lema}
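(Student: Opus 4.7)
The plan is to reduce the statement to Cornish's characterization of conormal lattices: a bounded distributive lattice $L$ is conormal if and only if every prime ideal of $L$ contains a unique minimal prime ideal (this characterization is mentioned explicitly in the paragraph preceding the lemma). The bridge is the homeomorphism $u: Spec(A) \to Spec_{Id}(L(A))$ from Lemma 3.5, which by Lemma 3.3 is an order-isomorphism with inverse $v(P) = P_{\ast}$.

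First I would establish the key preliminary observation: the bijection $u$ restricts to a bijection between $Min(A)$ and the set of minimal prime ideals of $L(A)$. Since both $u$ and $v$ preserve order, $q \leq p$ in $Spec(A)$ is equivalent to $q^{\ast} \subseteq p^{\ast}$ in $Spec_{Id}(L(A))$. Hence $p \in Min(A)$ iff $p^{\ast}$ is minimal among prime ideals of $L(A)$, and dually $P$ is a minimal prime of $L(A)$ iff $P_{\ast} \in Min(A)$. This is a straightforward unwinding of Lemma 3.3 and Lemma 3.5 and should cause no trouble.

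Second, using this correspondence I would transfer the $mp$-condition through $u$. Assume $A$ is an $mp$-quantale and take an arbitrary prime ideal $P$ of $L(A)$. Set $p = P_{\ast} \in Spec(A)$. By hypothesis there exists a unique $q \in Min(A)$ with $q \leq p$. Then $Q = q^{\ast}$ is a minimal prime of $L(A)$ contained in $P = p^{\ast}$, and any other minimal prime $Q' \subseteq P$ would give, via $Q'_{\ast} \in Min(A)$ with $Q'_{\ast} \leq p$, a contradiction to uniqueness. Hence $P$ contains a unique minimal prime of $L(A)$, so by Cornish's characterization $L(A)$ is conormal. The converse is symmetric: given $L(A)$ conormal and $p \in Spec(A)$, apply the uniqueness in $Spec_{Id}(L(A))$ to $p^{\ast}$ and transport back via $v$.

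The potentially delicate point is simply making explicit that minimality is preserved in both directions under $u$ and $v$; but since $u$ and $v$ are mutually inverse order-isomorphisms between the posets $Spec(A)$ and $Spec_{Id}(L(A))$, this is automatic. There is no significant obstacle beyond careful bookkeeping; the main content of the lemma is already packaged into Lemma 3.5 and the Cornish characterization.
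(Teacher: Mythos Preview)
Your argument is correct. The paper does not actually supply a proof of this lemma: it is stated with a citation to \cite{GG} and no demonstration follows. Your route---transporting the $mp$-condition through the order-isomorphism $u\colon Spec(A)\to Spec_{Id}(L(A))$ of Lemma~3.5 and then invoking Cornish's characterization of conormal lattices (which the paper recalls in the paragraph immediately preceding the lemma)---is exactly the natural proof and uses only ingredients already assembled in the paper. The only point worth making explicit, as you note, is that an order-isomorphism between the two spectra automatically matches minimal elements to minimal elements, so the ``unique minimal prime below $p$'' condition transfers verbatim; this is immediate once $u$ and $v$ are recognized as mutually inverse order-preserving maps.
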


Let us denote by $Min_Z(A)$ (resp. $Min_F(A)$) the topological space obtained by restricting the topology of $Spec_Z(A)$ (resp. $Spec_F(A)$) to $Min(A)$. By using Lemma 3.5, $Min_Z(A)$ is homeomorphic to the space $Min_{Id,Z}(A)$ of minimal prime ideals in $L(A)$ with the Stone topology and $Min_F(A)$ is homeomorphic to the space $Min_{Id,F}(A)$ of minimal prime ideals in $L(A)$ with the flat topology. Then $Min_Z(A)$ is a zero - dimensional Hausdorff space and $Min_F(A)$ is a compact $T1$  space \cite{GG}.

\begin{teorema}\cite{GG}
If $A$ is a semiprime quantale then the following are equivalent:
\usecounter{nr}
\begin{list}{(\arabic{nr})}{\usecounter{nr}}

\item $Min_Z(A)$ = $Min_F(A)$;

\item $Min_Z(A)$ is a compact space;

\item $Min_Z(A)$ is a Boolean space;

\item For any $c\in K(A)$ there exists $d\in K(A)$ such that $cd = 0$ and $(c\lor d)^{\perp}$ = $0$.

\end{list}

\end{teorema}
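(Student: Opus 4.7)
The plan is to reduce the theorem to a standard characterization of when the minimal prime spectrum of a bounded distributive lattice is compact, by means of the reticulation. The two homeomorphisms $Min_Z(A) \cong Min_{Id,Z}(L(A))$ and $Min_F(A) \cong Min_{Id,F}(L(A))$ noted just before the statement will do most of the work; the rest is topology.

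The equivalence $(2) \Leftrightarrow (3)$ is purely topological: $Min_Z(A)$ is already a zero-dimensional Hausdorff space (recalled above), so it is a Boolean space if and only if it is compact. The implication $(1) \Rightarrow (2)$ is immediate, since $Min_F(A)$ is always compact. For $(2) \Rightarrow (1)$ I would argue that on $Min(A)$ the flat topology is coarser than the spectral one: a basic flat-open set $V(c) \cap Min(A)$ equals, by Corollary 3.14, the set $\{p \in Min(A) \mid c^\perp \not\leq p\}$, and since $A$ is algebraic we may write $c^\perp = \bigvee\{d \in K(A) \mid d \leq c^\perp\}$, exhibiting $V(c) \cap Min(A)$ as the union of spectral-basic opens $D(d) \cap Min(A)$. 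Hence the identity $Min_Z(A) \to Min_F(A)$ is continuous; if the source is compact and the target is $T_1$, the two topologies must agree.

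The heart of the theorem is $(2) \Leftrightarrow (4)$, which I would treat by transferring condition $(4)$ to $L(A)$. By Lemma 3.2 together with the semiprime hypothesis, the relation $cd = 0$ in $A$ corresponds to $\lambda_A(c) \wedge \lambda_A(d) = 0$ in $L(A)$; and by Proposition 3.12 the condition $(c \lor d)^\perp = 0$ corresponds to $Ann(\lambda_A(c) \lor \lambda_A(d)) = 0$. Since $\lambda_A \colon K(A) \to L(A)$ is surjective, condition $(4)$ is equivalent to the lattice statement: for every $x \in L(A)$ there exists $y \in L(A)$ with $x \wedge y = 0$ and $Ann(x \lor y) = 0$. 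This is a well-known criterion for $Min_{Id,Z}(L(A))$ to be compact, which via the reticulation homeomorphism is exactly $(2)$.

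The main obstacle is the direction $(2) \Rightarrow (4)$ through the lattice, where one must realize a given lattice witness $y$ as some $\lambda_A(d)$ with $d \in K(A)$ and check that $cd = 0$ genuinely lifts from $\lambda_A(c) \wedge \lambda_A(d) = 0$; the semiprime hypothesis is precisely what makes this lift succeed, since in a semiprime quantale $\lambda_A$ detects the zero element.
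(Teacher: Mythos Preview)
Your overall strategy---transferring everything to the reticulation $L(A)$ and invoking the classical lattice-theoretic characterization of when $Min_{Id,Z}(L)$ is compact---is sound, and it matches the paper's treatment in spirit: the paper does not actually prove this theorem but simply imports it from \cite{GG}. Your handling of $(2)\Leftrightarrow(3)$, $(1)\Rightarrow(2)$, and the transfer $(2)\Leftrightarrow(4)$ via $\lambda_A$ is correct (modulo harmless reference slips: the minimal-prime characterization you invoke is Corollary~3.16, and the annihilator transfer is Proposition~3.13).

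There is, however, a genuine gap in your direct argument for $(2)\Rightarrow(1)$. The claim that a continuous bijection from a compact space onto a $T_1$ space must be a homeomorphism is false: the standard argument requires the target to be Hausdorff, and the paper only records that $Min_F(A)$ is compact $T_1$. A counterexample to the bare topological claim is the one-point compactification of $\mathbb{N}$ mapping by the identity to the same set with the cofinite topology: the cofinite topology is coarser, $T_1$, and compact, yet strictly smaller.

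The repair is already implicit in the rest of your plan. Once you have $(2)\Rightarrow(4)$, prove $(4)\Rightarrow(1)$ directly: given $c\in K(A)$, pick $d\in K(A)$ with $cd=0$ and $(c\lor d)^{\perp}=c^{\perp}\wedge d^{\perp}=0$. For $p\in Min(A)$, if $c\leq p$ and $d\leq p$ then $c\lor d\leq p$, so by Corollary~3.16 $(c\lor d)^{\perp}\not\leq p$, contradicting $(c\lor d)^{\perp}=0$; and if $c\not\leq p$ then primeness applied to $c^{\perp}\wedge d^{\perp}=0$ forces $c^{\perp}\leq p$, whence $d\leq c^{\perp}\leq p$. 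Thus $D(c)\cap Min(A)=V(d)\cap Min(A)$, so every spectral basic open on $Min(A)$ is a flat basic open; combined with the inclusion you already established, this gives $Min_Z(A)=Min_F(A)$. Alternatively, you can avoid this computation entirely by pushing condition $(1)$ itself through the two homeomorphisms $Min_Z(A)\cong Min_{Id,Z}(L(A))$ and $Min_F(A)\cong Min_{Id,F}(L(A))$ and quoting the full lattice version of the theorem.
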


\begin{teorema}\cite{GG}
If $A$ is a coherent quantale then the following are equivalent:
\usecounter{nr}
\begin{list}{(\arabic{nr})}{\usecounter{nr}}

\item $A$ is an $mp$ - quantale;

\item For any distinct elements $p,q\in Min(A)$ we have $p\lor q = 1$;

\item $R(A)$ is an $mp$ - frame;

\item $[\rho(0))_A$ is an $mp$ - quantale;

\item The inclusion $Min_F(A)\subseteq Spec_F(A)$ has a flat continuous retraction;

\item $Spec_F(A)$ is a normal space;

\item If $p\in Min(A)$ then $V(p)$ is a closed subset of $Spec_F(A)$.

\end{list}

\end{teorema}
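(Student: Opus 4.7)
The plan is to organize the seven items into three clusters — algebraic reductions, a direct quantale argument, and topological equivalences — and traverse them using the reticulation and the homeomorphisms already in hand. The equivalences (1) $\Leftrightarrow$ (3) $\Leftrightarrow$ (4) come almost for free: Lemma 2.3 gives $Spec(A) = Spec(R(A))$ and $Max(A) = Max(R(A))$ as ordered sets, so $Min(A) = Min(R(A))$ as well, and since the $mp$-property is formulated purely in terms of the ordered set of primes, it transports; Lemma 2.6 yields the analogous transfer to $[\rho(0))_A$. The bridge to topology will be Lemma 6.1, which recasts (1) as conormality of $L(A)$, combined with the order-homeomorphism $u : Spec_Z(A) \to Spec_{Id,Z}(L(A))$ of Lemma 3.5 and its flat counterpart.

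Next, (1) $\Leftrightarrow$ (2) is a direct quantale argument. If $p \neq q$ in $Min(A)$ and $p \vee q < 1$, then since $1 \in K(A)$ there is $r \in Spec(A)$ with $p \vee q \leq r$; but then $p$ and $q$ would be two distinct minimal primes under $r$, contradicting (1). Conversely, each $p \in Spec(A)$ has some minimal prime below it, and (2) forces uniqueness: if $q_1, q_2 \leq p$ are both minimal, then $q_1 \vee q_2 \leq p < 1$, whence $q_1 = q_2$ by (2).

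The topological block (5) $\Leftrightarrow$ (6) $\Leftrightarrow$ (7) and its link to (1) is the technical core. The key fact, recalled in Section 2, is $cl_F\{p\} = \Lambda(p)$, so the flat-closed points are exactly the elements of $Min(A)$. Assuming (1), I would define $r : Spec_F(A) \to Min_F(A)$ by sending each $p$ to the unique minimal prime $m(p) \leq p$, and verify flat continuity by computing the preimage of a basic flat-open $V(c) \cap Min(A)$: since $c \leq m(p)$ iff $c \leq p$ (because every prime over $p$ lies above $m(p)$), this preimage coincides with $V(c)$, yielding (5). For (5) $\Rightarrow$ (6), since by Theorem 6.2 the subspace $Min_F(A)$ is compact and, under the mp condition, Hausdorff, a continuous retraction of the compact sober space $Spec_F(A)$ onto a compact Hausdorff subspace transports normality upward. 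For (6) $\Rightarrow$ (7), normality separates the $F$-closed point $p \in Min(A)$ from each $q \notin V(p)$ by disjoint flat-opens; assembling these gives an $F$-open cover of the complement of $V(p)$, so $V(p)$ is $F$-closed. Finally (7) $\Rightarrow$ (1) follows by observing that if $q_1, q_2 \in Min(A)$ both lie below a common prime $p$, then $p \in V(q_1) \cap V(q_2)$; one uses the $F$-closedness to extract, via $cl_F\{q_i\} = \{q_i\}$, that $q_1 = q_2$, closing the loop.

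The main obstacle will be the careful management of the flat topology — in particular, checking continuity of the candidate retraction with respect to the flat, rather than spectral, structure, and handling separation arguments against the non-$T_1$ nature of $Spec_F(A)$. Routine manipulations of compact elements together with the formula $cl_F(S) = \bigcup_{p \in S} \Lambda(p)$ from Section 2 should take care of the remaining verifications.
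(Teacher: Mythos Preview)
The paper does not prove this theorem; it is quoted from \cite{GG} and used as a black box. So there is no in-paper argument to compare against, and your proposal must be judged on its own.

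Your algebraic block is sound: $(1)\Leftrightarrow(3)\Leftrightarrow(4)$ via Lemmas~2.3 and~2.6, and $(1)\Leftrightarrow(2)$ by the two-line argument you give, are both correct.

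The topological block has two genuine gaps. In $(1)\Rightarrow(5)$ you claim $c\leq m(p)$ iff $c\leq p$. The forward direction is trivial, but the backward one is false: in any integral domain (viewed via $Id(R)$) one has $m(p)=0$ for every prime $p$, yet $c\leq p$ certainly does not force $c=0$. The retraction $p\mapsto m(p)$ \emph{is} flat-continuous, but the preimage of $V(c)\cap Min(A)$ is $\bigcup\{V(q):q\in Min(A),\ c\leq q\}$, not $V(c)$, and you need a separate argument---e.g.\ via conormality of $L(A)$ (Lemma~6.1) together with the order-homeomorphism of Lemma~3.5, or the Hochster transfer trick used in the proof of Proposition~5.6---to see that this set is flat-open.

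Your $(5)\Rightarrow(6)$ is also broken: a continuous retraction onto a compact Hausdorff subspace does not, by itself, make the ambient space normal (retract any non-normal space onto a singleton). Moreover, invoking ``the $mp$ condition'' for Hausdorffness of $Min_F(A)$ inside this step is circular. A repair is to first extract $(5)\Rightarrow(1)$ from continuity---since $r(cl_F\{p\})\subseteq cl_F\{r(p)\}=\{r(p)\}$, every minimal prime below $p$ must equal $r(p)$---and then establish $(1)\Rightarrow(6)$ directly. Your sketches of $(6)\Rightarrow(7)$ and $(7)\Rightarrow(1)$ are salvageable once you record explicitly that flat-open sets are upward closed under specialization (if $p\in U$ flat-open and $p\leq r$ then $r\in U$), and that $cl_F\{p\}=\Lambda(p)\subseteq V(q)$ yields $q\leq r$ for every $r\leq p$, whence any two minimal primes below $p$ coincide.
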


Recall from \cite{Al-Ezeh2} that a commutative ring $R$ is said to be {\emph{$PF$ - ring}} if the annihilator of each element of $R$ is a pure ideal. Following \cite{GG}, a quantale $A$ is a {\emph{$PF$ - quantale}} if for each $c\in K(A)$, $c^{\perp}$ is a pure element. For any commutative ring $R$, $Id(R)$ is a $PF$ - quantale if and only if $R$ is a $PF$ - ring.

\begin{lema}\cite{GG}
\usecounter{nr}
\begin{list}{(\arabic{nr})}{\usecounter{nr}}
\item Any $PF$ - quantale $A$ is semiprime;
\item If $A$ is a $PF$ - quantale then the reticulation $L(A)$ is a conormal lattice.
\end{list}
\end{lema}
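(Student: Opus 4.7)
The plan is to derive (1) directly from the purity condition applied to a nilpotent compact element, and then to exploit the resulting semiprimeness together with a compact-lifting argument to verify conormality of $L(A)$ for (2).

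For (1), since $A$ is algebraic, Lemma 2.4(1) gives $\rho(0) = \bigvee\{c \in K(A) \mid c^n = 0 \text{ for some } n \geq 1\}$, so it suffices to prove that any compact $c$ with $c^n = 0$ satisfies $c = 0$. I first handle the case $n = 2$: if $c \in K(A)$ and $c^2 = 0$, then $c \leq c^{\perp}$, and since the PF hypothesis makes $c^{\perp}$ pure, applying the definition of purity to the compact element $c \leq c^{\perp}$ yields $c^{\perp} \vee c^{\perp} = 1$, i.e.\ $c^{\perp} = 1$. Combined with $c \cdot c^{\perp} = 0$ (which follows from the infinite distributive law) this gives $c = c \cdot 1 = c \cdot c^{\perp} = 0$. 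The general case follows by strong induction on $n$: for $n \geq 2$, put $m = \lceil n/2 \rceil$; then $c^m \in K(A)$ because $K(A)$ is closed under multiplication, and $(c^m)^2 = c^{2m} \leq c^n = 0$ (using that the powers of $c$ are decreasing in an integral quantale), so the $n = 2$ case forces $c^m = 0$. Since $m < n$, the induction hypothesis then concludes $c = 0$.

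For (2), let $x, y \in L(A)$ satisfy $x \wedge y = 0$ and lift them via surjectivity of $\lambda_A$ on $K(A)$ to compact elements $a, b$ with $\lambda_A(a) = x$ and $\lambda_A(b) = y$. Then $\lambda_A(ab) = \lambda_A(a) \wedge \lambda_A(b) = 0$, so by Lemma 3.2(5) we have $(ab)^n = 0$ for some $n \geq 1$. By part (1) $A$ is semiprime, and since $(ab)^n = 0 \leq p$ forces $ab \leq p$ for every $m$-prime $p$, we obtain $ab \leq \rho(0) = 0$, i.e.\ $ab = 0$. Hence $b \leq a^{\perp}$, and because $a^{\perp}$ is pure (by the PF hypothesis) and $b$ is compact, the definition of purity yields $a^{\perp} \vee b^{\perp} = 1$. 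Compactness of $1$ together with algebraicity of $A$ supplies compact elements $u \leq a^{\perp}$ and $v \leq b^{\perp}$ with $u \vee v = 1$. Setting $\bar u = \lambda_A(u)$ and $\bar v = \lambda_A(v)$, Lemma 3.2(2) gives $\bar u \vee \bar v = \lambda_A(u \vee v) = \lambda_A(1) = 1$, while $au \leq a \cdot a^{\perp} = 0$ and $bv \leq b \cdot b^{\perp} = 0$ yield $x \wedge \bar u = \lambda_A(au) = 0$ and $y \wedge \bar v = \lambda_A(bv) = 0$. These are precisely the witnesses required by the definition of a conormal lattice.

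The only genuinely delicate step is the exponent-halving reduction in part (1); once the $n = 2$ case has been extracted from purity, the induction is routine. In part (2) the essential move is to deploy semiprimeness from (1) to replace the nilpotence $(ab)^n = 0$ by the exact equality $ab = 0$, since without this one would only obtain $a \cdot b^{\perp}$-type relations up to radical, and the compact witnesses of $a^{\perp} \vee b^{\perp} = 1$ would not descend cleanly to conormality witnesses in $L(A)$.
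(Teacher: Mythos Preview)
The paper states this lemma as a citation from \cite{GG} and supplies no proof of its own, so there is no in-paper argument to compare against. Your proof is correct: in part (1) the reduction to the case $c^2=0$ via exponent halving is valid (powers decrease in an integral quantale and $K(A)$ is closed under multiplication), and the purity of $c^{\perp}$ applied to the compact $c\leq c^{\perp}$ indeed forces $c^{\perp}=1$, whence $c=c\cdot c^{\perp}=0$; in part (2) the essential use of semiprimeness to pass from $(ab)^n=0$ to $ab=0$ before invoking purity of $a^{\perp}$ is exactly the right move, and the compact witnesses $u,v$ descend via $\lambda_A$ to the required conormality data in $L(A)$.
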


Now we remind from \cite{GG} some characterization theorems of $PF$ - quantales. In what follows they will be intensely used in proving some algebraic and topological results on pure elements in a $PF$ - algebra.

\begin{teorema}\cite{GG}
Let $A$ be a coherent quantale. Then $A$ is a $PF$ - quantale if and only if  $A$ is a semiprime $mp$ - quantale.

\end{teorema}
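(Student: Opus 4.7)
The plan is to prove both implications by routing through the reticulation, translating the $PF$ and $mp$ conditions into the conormality of $L(A)$ using Lemma 6.1.

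For the forward direction, there is essentially nothing to do beyond citing the preceding results. Lemma 6.4(1) gives that any $PF$-quantale $A$ is semiprime, and Lemma 6.4(2) gives that $L(A)$ is conormal; hence, by Lemma 6.1, $A$ is an $mp$-quantale. So the implication ``$PF$ $\Rightarrow$ semiprime $mp$'' is immediate from the already-established machinery.

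The real content of the theorem lies in the converse, and this is where I would focus the argument. Assuming $A$ is a semiprime $mp$-quantale, Lemma 6.1 tells me $L(A)$ is conormal, and I must verify that $c^\perp$ is pure for every $c \in K(A)$. Fix such a $c$, and pick any compact $d$ with $d \leq c^\perp$, so that $cd = 0$; I need to produce the equality $c^\perp \vee d^\perp = 1$. Passing to the reticulation, $\lambda_A(c) \wedge \lambda_A(d) = \lambda_A(cd) = 0$, so conormality of $L(A)$ supplies elements $u, v \in L(A)$ satisfying $\lambda_A(c) \wedge u = \lambda_A(d) \wedge v = 0$ and $u \vee v = 1$. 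Surjectivity of $\lambda_A \colon K(A) \to L(A)$ lets me lift $u = \lambda_A(e)$ and $v = \lambda_A(f)$ for suitable compact $e, f \in K(A)$.

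The final step is the routine translation back to $A$. From $\lambda_A(ce) = \lambda_A(c) \wedge u = 0$ and $\lambda_A(df) = \lambda_A(d) \wedge v = 0$, the semiprimeness of $A$ (Lemma 3.2(9)) gives $ce = 0$ and $df = 0$, i.e., $e \leq c^\perp$ and $f \leq d^\perp$. The equation $\lambda_A(e \vee f) = u \vee v = 1$ forces $e \vee f = 1$ by Lemma 3.2(3). Combining these, $1 = e \vee f \leq c^\perp \vee d^\perp$, which is the pureness witness I needed. The only mild obstacle is the bookkeeping required to transport a purely lattice-theoretic datum $(u, v)$ in $L(A)$ back to compact witnesses in $A$, but semiprimeness (together with the standard ``$\lambda_A$ reflects the relevant equalities'' lemmas) turns this into a direct calculation.
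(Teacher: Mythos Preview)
Your proof is correct, but note that the paper itself does not prove this theorem: it is quoted from \cite{GG} without argument, so there is no in-paper proof to compare against. (The same equivalence is also restated, again without proof, as $(1)\Leftrightarrow(2)$ of Theorem~6.8.)

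That said, your argument is entirely in the spirit of the paper's reticulation methodology and is almost certainly close to what \cite{GG} does: the forward direction is exactly Lemma~6.4 plus Lemma~6.1, and for the converse you translate the $mp$ hypothesis into conormality of $L(A)$ via Lemma~6.1, apply conormality at the lattice level to $\lambda_A(c)\wedge\lambda_A(d)=0$, and then use semiprimeness (Lemma~3.2(9), with $ce,df\in K(A)$ by coherence) together with Lemma~3.2(3) to lift the lattice witnesses back to $A$. All steps are valid as written.
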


\begin{teorema} \cite{GG} For a coherent quantale $A$ consider the following conditions:
\usecounter{nr}
\begin{list}{(\arabic{nr})}{\usecounter{nr}}
\item Any minimal $m$ - prime element of $A$ is pure;

\item $A$ is an $mp$ - quantale.
\end{list}
Then $(1)$ implies $(2)$. If the quantale $A$ is semiprime then the converse implication holds.
\end{teorema}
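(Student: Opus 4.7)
\textbf{Proof plan.} The two implications are handled separately, and the second one is where the semiprime hypothesis enters.

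For $(1)\Rightarrow(2)$, the plan is a direct argument by contradiction. Let $p\in Spec(A)$ and suppose there exist two distinct elements $q_1,q_2\in Min(A)$ with $q_1,q_2\leq p$. Since $q_1\neq q_2$ and both are joins of compact elements, I may choose (after possibly swapping them) a compact element $c\leq q_1$ with $c\not\leq q_2$. Purity of $q_1$ gives $q_1\lor c^{\perp}=1$. The inequality $q_1\leq p<1$ then forces $c^{\perp}\not\leq p$, hence $c^{\perp}\not\leq q_2$. Finally, from $c\cdot c^{\perp}=0\leq q_2$ and the $m$-primality of $q_2$ we would get $c\leq q_2$ or $c^{\perp}\leq q_2$, contradicting the choices above. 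So $p$ admits at most one minimal $m$-prime below it, and the existence (using $1\in K(A)$) is standard, giving uniqueness.

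For $(2)\Rightarrow(1)$ under the semiprime hypothesis, the idea is to transfer the problem to the reticulation and back. Fix $p\in Min(A)$. By Lemma 6.1 the lattice $L(A)$ is conormal, and by Lemma 3.5 the image $p^{\ast}$ is a minimal prime ideal of $L(A)$. The key auxiliary step is: \emph{in a conormal bounded distributive lattice, every minimal prime ideal is a $\sigma$-ideal}. I would prove this inside $L(A)$ as follows: given $x\in p^{\ast}$, minimality of $p^{\ast}$ provides $y\notin p^{\ast}$ with $x\land y=0$; conormality applied to $x\land y=0$ produces $u,v\in L(A)$ with $x\land u=0$, $y\land v=0$ and $u\lor v=1$; then $v\land y=0\in p^{\ast}$ together with $y\notin p^{\ast}$ and primality forces $v\in p^{\ast}$, so $1=u\lor v\in Ann(x)\lor p^{\ast}$. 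Hence $p^{\ast}$ is a $\sigma$-ideal of $L(A)$. Now Corollary 4.10 (which needs semiprimeness) says that $(p^{\ast})_{\ast}$ is a pure element of $A$, and since $p$ is radical we have $(p^{\ast})_{\ast}=\rho(p)=p$ by Lemma 3.3(7). Therefore $p$ is pure.

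The main obstacle is the second implication, specifically the little lattice-theoretic fact that minimal prime ideals of a conormal lattice are $\sigma$-ideals; once this is in hand, everything else is a matter of invoking Lemma 6.1, Lemma 3.5 and Corollary 4.10. An alternative, more self-contained route would be to work entirely in $A$: starting from $c\in K(A)$ with $c\leq p$, use Corollary 3.15 to obtain $d\in K(A)$ with $dc=0$ and $d\not\leq p$, then apply the compact form of normality of $L(A)$ combined with Lemma 3.2(3) and semiprimeness to lift the conormality data to compact witnesses $c',d'\in K(A)$ satisfying $c'\leq c^{\perp}$, $d'\leq p$ and $c'\lor d'=1$, which immediately gives $p\lor c^{\perp}=1$. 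Either route is essentially the same argument, viewed either upstairs or downstairs along the reticulation.
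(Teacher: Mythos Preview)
The paper does not supply its own proof of this theorem: it is quoted from \cite{GG} and immediately followed by Corollary 6.7. So there is nothing to compare against line by line; what remains is to check that your argument stands on its own, and it does.

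Your proof of $(1)\Rightarrow(2)$ is correct. A marginally shorter variant, using the characterization in Theorem 6.3(2), would be: with $c\in K(A)$, $c\leq q_1$, $c\not\leq q_2$, purity of $q_1$ gives $q_1\lor c^{\perp}=1$; from $c\,c^{\perp}=0\leq q_2$ and $c\not\leq q_2$ the $m$-primality of $q_2$ yields $c^{\perp}\leq q_2$, hence $q_1\lor q_2\geq q_1\lor c^{\perp}=1$. Either way the conclusion follows.

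For $(2)\Rightarrow(1)$ your reticulation route is clean and fully justified by results already in the paper: Lemma 6.1 gives conormality of $L(A)$; the order-isomorphism of Lemma 3.5 sends $Min(A)$ onto $Min_{Id}(L(A))$, so $p^{\ast}$ is a minimal prime ideal; your short lattice argument (minimal prime $+$ conormal $\Rightarrow$ $\sigma$-ideal) is the standard one; and Corollary 4.10 together with Lemma 3.3(3) returns $p=(p^{\ast})_{\ast}$ as a pure element. The alternative ``upstairs'' route you sketch also works; the only cosmetic slip is that the minimal-prime characterization you invoke is Corollary 3.16 (the semiprime specialization of Proposition 3.15), not ``Corollary 3.15''. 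Once you have $d\in K(A)$ with $cd=0$ and $d\not\leq p$, conormality of $L(A)$ applied to $\lambda_A(c)\wedge\lambda_A(d)=0$ and semiprimeness (Lemma 3.2(9)) produce $c',d'\in K(A)$ with $cc'=0$, $dd'=0$, $c'\lor d'=1$; then $d'\leq p$ (since $dd'=0\leq p$ and $d\not\leq p$) and $c'\leq c^{\perp}$, so $p\lor c^{\perp}=1$.
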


\begin{corolar}\cite{GG} Let $A$ be a semiprime quantale. Then $A$ is a $PF$ - quantale if and only if any minimal $m$ - prime element of $A$ is pure.

\end{corolar}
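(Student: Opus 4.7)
The plan is to derive Corollary 6.7 as a direct combination of Theorem 6.5 and Theorem 6.6, using the hypothesis that $A$ is semiprime in both directions.

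First I would handle the forward implication. Assume $A$ is a $PF$-quantale. By Theorem 6.5, $A$ is a semiprime $mp$-quantale; in particular, $A$ is an $mp$-quantale. Since $A$ is assumed semiprime, the second clause of Theorem 6.6 applies: the converse implication ``$A$ is $mp$ implies every minimal $m$-prime element is pure'' holds under the semiprime hypothesis. Thus every $p\in Min(A)$ is pure.

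Conversely, assume every minimal $m$-prime element of $A$ is pure. By the first (unconditional) implication of Theorem 6.6, $A$ is an $mp$-quantale. Combining this with the standing hypothesis that $A$ is semiprime, Theorem 6.5 yields that $A$ is a $PF$-quantale.

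The argument is essentially a bookkeeping step; no genuine obstacle appears, since the semiprime hypothesis in the statement is precisely what is needed to activate the non-trivial direction of Theorem 6.6 and to feed into the characterization of Theorem 6.5. The only thing worth double-checking is that the two theorems are being applied in compatible directions, which they are: Theorem 6.5 translates between ``$PF$'' and ``semiprime $+$ $mp$'', while Theorem 6.6 translates between ``$mp$'' (under semiprimeness) and ``every minimal $m$-prime is pure''.
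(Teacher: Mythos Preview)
Your argument is correct: combining Theorem~6.5 (the equivalence of ``$PF$'' with ``semiprime $mp$'') with Theorem~6.6 (the equivalence, under semiprimeness, of ``$mp$'' with ``every minimal $m$-prime is pure'') yields the corollary exactly as you describe. The paper itself does not give a proof of Corollary~6.7, citing it from \cite{GG}; your derivation from the two preceding theorems is the intended reading of the word ``corollary'' here.
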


\begin{teorema}\cite{GG}
For a coherent quantale $A$ the following are equivalent:
\usecounter{nr}
\begin{list}{(\arabic{nr})}{\usecounter{nr}}

\item $A$ is a $PF$ - quantale;

\item $A$ is a semiprime $mp$ - quantale;

\item If $c,d\in K(A)$ then $cd = 0$ implies $c^{\perp}\lor d^{\perp} = 1$;

\item If $c,d\in K(A)$ then $(cd)^{\perp}$ = $c^{\perp}\lor d^{\perp}$;

\item For each $c\in K(A)$, $c^{\perp}$ is a pure element.

\end{list}
\end{teorema}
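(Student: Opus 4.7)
The equivalence $(1) \Leftrightarrow (5)$ is the definition of a $PF$-quantale, and $(1) \Leftrightarrow (2)$ is the content of Theorem 6.5. The implications $(3) \Rightarrow (5)$ and $(4) \Rightarrow (3)$ are immediate: for $(3) \Rightarrow (5)$, if $c \in K(A)$ and $d \in K(A)$ with $d \leq c^{\perp}$, then $cd = 0$, so $(3)$ gives $c^{\perp} \lor d^{\perp} = 1$, proving $c^{\perp}$ pure; and $(4) \Rightarrow (3)$ follows because $cd=0$ yields $(cd)^{\perp}=1$. Thus only $(2) \Rightarrow (3)$ and $(5) \Rightarrow (4)$ require real work, and I would prove them in that order.

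For $(2) \Rightarrow (3)$, the plan is to pass to the reticulation. Assume $A$ is a semiprime $mp$-quantale and let $c,d \in K(A)$ with $cd = 0$. By Lemma 6.1 the reticulation $L(A)$ is a conormal lattice. The elements $\lambda_A(c), \lambda_A(d) \in L(A)$ satisfy $\lambda_A(c) \land \lambda_A(d) = \lambda_A(cd) = 0$, so by conormality there exist $u, v \in L(A)$ with $\lambda_A(c) \land u = \lambda_A(d) \land v = 0$ and $u \lor v = 1$. Using surjectivity of $\lambda_A$, lift $u, v$ to compact elements $c', d'$ of $A$. From $\lambda_A(cc')=0$ and $\lambda_A(dd')=0$, semiprimeness together with Lemma 3.2(8) yields $cc' = dd' = 0$, hence $c' \leq c^{\perp}$ and $d' \leq d^{\perp}$. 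From $\lambda_A(c' \lor d') = 1$ and Lemma 3.2(3) we get $c' \lor d' = 1$, so $c^{\perp} \lor d^{\perp} \geq c' \lor d' = 1$, which is $(3)$.

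For $(5) \Rightarrow (4)$, the inequality $c^{\perp} \lor d^{\perp} \leq (cd)^{\perp}$ is automatic. For the reverse, since $A$ is algebraic it suffices to show $e \leq c^{\perp} \lor d^{\perp}$ for every compact $e \leq (cd)^{\perp}$. From $ecd = 0$ we get $ec \leq d^{\perp}$, and since $A$ is coherent the product $ec$ is compact. Hypothesis $(5)$ gives that $d^{\perp}$ is pure, so $d^{\perp} \lor (ec)^{\perp} = 1$. Multiplying by $e$ using infinite distributivity yields $e = ed^{\perp} \lor e(ec)^{\perp}$; here $ed^{\perp} \leq d^{\perp}$, and $e(ec)^{\perp} \cdot c = (ec)(ec)^{\perp} = 0$ shows $e(ec)^{\perp} \leq c^{\perp}$. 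Hence $e \leq c^{\perp} \lor d^{\perp}$, completing $(4)$.

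The main obstacle is step $(2) \Rightarrow (3)$: one needs to perform the reduction to the reticulation correctly and then turn witnesses in $L(A)$ into quantale witnesses, which forces essential use of both Lemma 6.1 (to obtain conormality) and semiprimeness (to lift the vanishing conditions $\lambda_A(cc') = 0$ back to $cc' = 0$). The step $(5) \Rightarrow (4)$ is a clean multiplicative trick once one notices that $ec$ lies in $K(A)$, so that purity of $d^{\perp}$ can be applied to it.
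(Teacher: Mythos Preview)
Your proof is correct. All five implications are handled cleanly: the reductions $(1)\Leftrightarrow(5)$ and $(1)\Leftrightarrow(2)$ are indeed definitional and Theorem~6.5 respectively; $(4)\Rightarrow(3)\Rightarrow(5)$ are the trivial directions; and your arguments for $(2)\Rightarrow(3)$ via conormality of $L(A)$ (Lemma~6.1) together with semiprimeness to lift $\lambda_A(cc')=0$ to $cc'=0$, and for $(5)\Rightarrow(4)$ via the multiplicative splitting $e = ed^{\perp}\lor e(ec)^{\perp}$, are both valid.

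Note, however, that the paper does not supply its own proof of this theorem: it is quoted from \cite{GG} and stated without argument, so there is no in-text proof to compare against. Your approach is a natural one and is consistent with the surrounding machinery the paper develops (reticulation, Lemma~6.1, Lemma~3.2), so it fits the paper's style even though the original proof presumably lives in the cited reference.
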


\begin{propozitie}
Let $A$ be a coherent $PF$ - quantale. If $p\in Spec(A)$ then $O(p)\in Min(A)$.
\end{propozitie}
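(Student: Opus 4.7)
The plan is to show that $O(p)$ equals the unique minimal $m$-prime element lying below $p$, which exists because $PF$-quantales are semiprime $mp$-quantales (Lemma 6.4 and Theorem 6.5). The key observation is that Theorem 5.10 already identifies $O(p)$ with $\bigwedge \Lambda(p)$ in the semiprime case, so the task reduces to showing that this meet is itself a minimal $m$-prime.

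First I would invoke the $mp$-property: there is a unique $q \in Min(A)$ such that $q \leq p$. I then claim that $q \leq r$ for every $r \in \Lambda(p)$. Indeed, any such $r \in Spec(A)$ admits some minimal $m$-prime element $q'$ below it (since $1 \in K(A)$), and then $q' \leq r \leq p$, so by uniqueness of the minimal prime below $p$ we must have $q' = q$. Therefore $q \leq r$, and since $q$ itself lies in $\Lambda(p)$, we obtain $q = \bigwedge \Lambda(p)$.

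Since $A$ is semiprime (Lemma 6.4(1)), Theorem 5.10 gives $O(p) = \bigwedge \Lambda(p)$, and combining with the previous paragraph yields $O(p) = q \in Min(A)$, as desired.

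There is no real obstacle here: the argument is essentially an assembly of three earlier results (Lemma 6.4, Theorem 6.5, Theorem 5.10) together with the defining property of $mp$-quantales. The only point that requires a moment of care is verifying that the unique minimal $m$-prime below $p$ is below every element of $\Lambda(p)$, which follows immediately from the uniqueness clause in the $mp$-definition.
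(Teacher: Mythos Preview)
Your proof is correct but takes a different route from the paper's. Both arguments invoke Theorem~5.10 to identify $O(p)$ with $\bigwedge \Lambda(p)$ in the semiprime setting, but they diverge on how to show this element is a minimal $m$-prime. You exploit the $mp$-structure (via Theorem~6.5): the unique minimal $m$-prime $q$ below $p$ must lie below every $r \in \Lambda(p)$ (by uniqueness applied to $r$), so $q = \bigwedge \Lambda(p) = O(p) \in Min(A)$. The paper instead verifies directly that $O(p)$ is $m$-prime, using the $PF$-quantale identity $(cd)^{\perp} = c^{\perp} \lor d^{\perp}$ from Theorem~6.8: if $c,d \in K(A)$ and $cd \leq O(p)$ then $(cd)^{\perp} \not\leq p$ by Lemma~4.17(2), hence $c^{\perp} \not\leq p$ or $d^{\perp} \not\leq p$, giving $c \leq O(p)$ or $d \leq O(p)$; minimality then follows because $O(p) = \bigwedge \Lambda(p)$ lies below any minimal prime in $\Lambda(p)$. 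Your argument is slightly more conceptual and uses only the $mp$-property rather than the annihilator identity, while the paper's computation makes the primality of $O(p)$ explicit through the specific $PF$-characterization.
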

\begin{proof}
By taking into account Theorem 5.10, it suffices to show that $O(p)$ is an $m$ - prime element of $A$. Let us consider two compact elements $c,d$ of $A$ such that $cd\leq O(p)$. By Lemma 4.17,(2) it follows that  $(cd)^{\perp}\not\leq p$. Since $(cd)^{\perp}$ = $c^{\perp}\lor d^{\perp}$ (by Theorem 6.8,(3)), we have $c^{\perp}\lor d^{\perp}\not\leq p$, hence $c^{\perp}\not\leq p$ or $d^{\perp}\not\leq p$. According to Lemma 4.17,(2), we get $c\leq O(p)$ or $d\leq O(p)$, hence $O(p)$ is $m$ - prime.

\end{proof}

\begin{teorema}
Let $A$ be a $PF$ - quantale. The pure elements of $A$ have the form $\bigwedge (Min(A)\bigcap E)$, where $E$ is a closed subset of $Spec_F(A)$.
\end{teorema}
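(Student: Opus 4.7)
The plan is to prove both implications using: that a $PF$-quantale is a semiprime $mp$-quantale (Theorem 6.5), that every minimal $m$-prime element is pure (Corollary 6.7), that for each $p\in Spec(A)$ the element $O(p)$ is the unique member of $Min(A)\cap\Lambda(p)$ (Proposition 6.9), and that $Min_F(A)$ is compact, as stated in Section 6.

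For the forward implication, let $a$ be a pure element of $A$. Semiprimeness together with Proposition 4.5 gives $a=\rho(a)=\bigwedge V(a)$. For each $p\in V(a)$ I first check that $a\leq O(p)$: if $c\in K(A)$ with $c\leq a$, purity yields $a\vee c^{\perp}=1$, so $c^{\perp}\not\leq p$, whence $c\leq O(p)$ by Lemma 4.17(2); taking the join over such compact $c$ gives $a\leq O(p)$. Combined with $O(p)\leq p$ this produces $a=\bigwedge\{O(p):p\in V(a)\}$. Now $V(a)$ is compact in $Spec_Z(A)$, so by the formula $cl_F(V(a))=\bigcup_{p\in V(a)}\Lambda(p)$ recalled in Section 2 and by $mp$-uniqueness, intersecting with $Min(A)$ yields $Min(A)\cap cl_F(V(a))=\{O(p):p\in V(a)\}$. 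Setting $E=cl_F(V(a))$, which is flat-closed, one obtains $a=\bigwedge(Min(A)\cap E)$.

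For the converse, assume $a=\bigwedge T$ with $T=Min(A)\cap E$ and $E$ flat-closed, and pick $c\in K(A)$ with $c\leq a$. Suppose by contradiction that $a\vee c^{\perp}<1$ and fix $m\in Max(A)$ with $a\vee c^{\perp}\leq m$. The crucial step is to show $O(m)\in T$. If not, then for every $q\in T$ there exists $c_{q}\in K(A)$ with $c_{q}\leq q$ and $c_{q}\not\leq m$, so the flat-opens $V(c_{q})$ cover $T$. Since $T$ is closed in the compact space $Min_F(A)$, finitely many $V(c_{q_1}),\ldots,V(c_{q_n})$ already cover $T$. Using $V(c_1)\cup V(c_2)=V(c_1 c_2)$ (a direct consequence of $m$-primality), the element $c'=c_{q_1}\cdots c_{q_n}$ satisfies $c'\leq q$ for every $q\in T$, so $c'\leq a\leq m$; but $m$ is $m$-prime while none of the factors $c_{q_i}$ lies below $m$, a contradiction. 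Hence $O(m)\in T$, which yields $c\leq a\leq O(m)$. Since $O(m)$ is pure, $O(m)\vee c^{\perp}=1$; yet $O(m)\leq m$ and $c^{\perp}\leq m$ force $1\leq m$, contradicting the maximality of $m$. Thus $a\vee c^{\perp}=1$, so $a$ is pure.

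The main obstacle is the converse direction, specifically the need to produce a minimal prime in $T$ sitting below the putative witness $m$. This is handled by combining the flat-compactness of $Min_F(A)$ with the $m$-prime identity $V(c_1)\cup V(c_2)=V(c_1 c_2)$, which together collapse an open cover of $T$ into a single compact element $c'$ lying below the meet $\bigwedge T=a$; the contradiction then follows from $m$-primality applied to the product $c'$.
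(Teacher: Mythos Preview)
Your proof is correct and follows essentially the same strategy as the paper. For the forward implication you are actually more careful than the paper: you explicitly verify $a\le O(p)$ for $p\in V(a)$ and take $E=cl_F(V(a))$, whereas the paper just asserts $a=\bigwedge(Min(A)\cap V(a))$ with a somewhat garbled reference to Theorem~6.6. For the converse both arguments pick $m\in Max(A)$ with $a\vee c^{\perp}\le m$, locate the minimal prime below $m$ (your $O(m)$, the paper's $q$), and then use a flat--compactness argument to derive a contradiction; the only difference is cosmetic---the paper covers $E$ inside the compact space $Spec_F(A)$ and builds auxiliary elements $x_p,y_p$ with $x\vee y=1$, while you cover $T=Min(A)\cap E$ inside the compact space $Min_F(A)$ and finish directly via the $m$--primality of $m$ applied to the product $c'$.
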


\begin{proof}
 Let $a$ be a pure element of $A$.  According to Theorem 6.6, the pure element $a$ is minimal $m$ - prime, hence $a = \bigwedge (Min(A)\bigcap V(a))$. By Theorem 6.3,(7), $Min(A)\bigcap V(a)$ is a closed subset of $Min_F(A)$.

Conversely, assume that $E$ is a closed subset of $Spec_F(A)$ and $a = \bigwedge (Min(A)\bigcap E)$. Let $c$ be a compact element of $A$ such that $c\leq a$. We have to prove that $a\lor c^{\perp} = 1$. Assume by absurdum that $a\lor c^{\perp} < 1$, so $a\lor c^{\perp}\leq m$, for some $m\in Max(A)$. Let us consider a minimal $m$ - prime element $q$ of $A$ such that $q\leq m$.

Assume that $q\in E$, so we have $c\leq q$. By Theorem 6.6, the minimal $m$ - prime element $q$ is pure, hence $q\lor c^{\perp} = 1$. From $q\leq m$ and $c^{\perp}\leq m$ we obtain $1 = q\lor c^{\perp}\leq m$, contradicting that $m$ is a maximal element. We conclude that $q\notin E$, so $q\neq p$ for all $p\in Min(A)\bigcap E$.

Since $A$ is an $mp$ - quantale, we can apply Theorem 6.3,(2), hence for all $p\in Min(A)\bigcap E$ there exist $s_p, y_p\in K(A)$ such that $x_p\leq p$, $y_p\leq q$ and $x_p\lor y_p = 1$. Assume $r\in E$ and take $p\in Min(A)$ such that $p\leq r$. It follows that $x_p\leq p\leq r$, so $r\in V(x_p)$. Thus we obtain the following inclusion:

$E\subseteq \bigcup \{V(x_p)| p\in Min(A)\bigcap E\}$.

We remind that $E$ is a closed subset of the compact space $Spec_F(A)$, hence $E$ is itself compact. Then there exist a positive integer $n$ and $p_1,...,p_n\in Min(A)\bigcap E$ such that $E\subseteq V(x_{p_1})\bigcup...\bigcup V(x_{p_n})$.

Let us denote $x_i = x_{p_i}$, $y_i = y_{p_i}$  for all $i = 1,...,n$ and $x = x_1\cdot...\cdot x_n$, $y = y_1\lor...\lor y_n$. Thus $x,y\in K(A)$ and $y\lor x_i = 1$ for all $i = 1,...,n$, hence by applying Lemma 2.1,(3) we obtain $y\lor x = y\lor x_1\cdot...\cdot x_n = 1$.

Since $y_i\leq q$ for all $i = 1,...,n$, we get $y\leq q$. For all $r\in Min(A)\bigcap E$ we have $r\in V(x_1)\bigcup...\bigcup V(x_n)$, so there exists $i\in \{1,...,n\}$ such that $x_i\leq r$. It follows that $x\leq x_i\leq r$. Thus for all  $r\in Min(A)\bigcap E$ we have $x\leq r$, hence $x\leq \bigwedge (Min(A)\bigcap E) = a$.

From $x\leq a\leq m$ and $y\leq q\leq m$ we obtain $1 = x\lor y\leq m$, contradicting $m\in Max(A)$. Therefore $a\lor c^{\perp} = 1$, so $a$ is a pure element of $A$.

\end{proof}

Let $R$ be a $PF$ - ring. If we apply Theorem 6.10 for the $PF$ - quantale $A = Id(R)$ then we obtain Theorem 7.3 of \cite{Aghajani}.

\begin{lema}
Let $A$ be a coherent quantale. For any $m\in Max(Vir(A))$ there exists $n\in Max(A)$ such that $Vir(n) = m$.
\end{lema}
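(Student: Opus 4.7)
The plan is very short: I would produce $n$ by taking any maximal element of $A$ lying above $m$ and then use the maximality of $m$ inside $Vir(A)$ to force $Vir(n)=m$.

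First I would observe that $m<1$ in $A$ (maximality of $m$ in $Vir(A)$ means $m$ is a proper element of $Vir(A)$, in particular $m\neq 1$). Since $A$ is coherent we have $1\in K(A)$, so by the standard fact recalled in Section~2 every proper element of $A$ lies below some maximal element; hence there exists $n\in Max(A)$ with $m\leq n<1$.

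Next I would apply the operator $Vir(\cdot)$ to the chain $m\leq n<1$. Because $m$ is itself a pure element (it belongs to $Vir(A)$), Lemma~4.1 gives $Vir(m)=m$, so $m=Vir(m)\leq Vir(n)$. On the other hand $Vir(n)\leq n<1$ by definition of the operator, and by Lemma~4.1~(3) the element $Vir(n)$ is a join of pure elements, hence $Vir(n)\in Vir(A)$. Thus $Vir(n)$ is a pure element satisfying $m\leq Vir(n)<1$.

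Finally, the maximality of $m$ in the frame $Vir(A)$ forces $Vir(n)=m$, which finishes the proof. There is no real obstacle here; the only point worth checking carefully is the strict inequality $Vir(n)<1$, which I would justify by the elementary observation $Vir(n)\leq n<1$ used above.
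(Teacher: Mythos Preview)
Your proposal is correct and follows essentially the same route as the paper: pick $n\in Max(A)$ above $m$, observe $m=Vir(m)\leq Vir(n)$, and use maximality of $m$ in $Vir(A)$ to conclude $Vir(n)=m$. The only cosmetic difference is that the paper invokes Lemma~4.2\,(4) (the map $Vir:Spec(A)\to Spec(Vir(A))$) to get $Vir(n)\in Spec(Vir(A))$ and hence $Vir(n)<1$, whereas you obtain $Vir(n)<1$ more directly from $Vir(n)\leq n<1$ together with Lemma~4.1\,(3); your justification is in fact slightly more elementary.
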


\begin{proof}
Let $m$ be an element of $Max(Vir(A))$ and $n\in Max(A)$ such that $m\leq n$. By Lemma 3.2,(4) one can consider the function $Vir: Spec(A)\rightarrow Spec(Vir(A))$. Then $m = Vir(m)\leq Vir(n)$, so we get $m = Vir(n)$, because $m\in Max(Vir(A))$ and $Vir(n)\in Spec(Vir(A))$.
\end{proof}

\begin{teorema}
If $A$ is a coherent $PF$ - quantale then $Min(A) = Max(Vir(A))$.
\end{teorema}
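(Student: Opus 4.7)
I would prove the two inclusions separately, and the whole proof rests on a simple auxiliary observation which I will isolate first.

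\emph{Key auxiliary observation.} If $b\in Vir(A)$, $p\in Spec(A)$ with $b\leq p$, and $r\in Spec(A)$ with $r\leq p$, then $b\leq r$. To see this, pick any $c\in K(A)$ with $c\leq b$. Purity of $b$ gives $b\lor c^{\perp}=1$; since $b\leq p<1$ we get $c^{\perp}\not\leq p$. From $c\cdot c^{\perp}=0\leq r$ and the $m$-primality of $r$ one of $c\leq r$ or $c^{\perp}\leq r$ holds; the second possibility would force $c^{\perp}\leq p$, contradicting what we just obtained. Hence $c\leq r$, and since $A$ is algebraic, $b=\bigvee\{c\in K(A)\mid c\leq b\}\leq r$.

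\emph{Inclusion $Min(A)\subseteq Max(Vir(A))$.} Fix $p\in Min(A)$. By Corollary 6.7 (applied in the PF-setting where Lemma 6.4(1) supplies semiprimality) $p$ is pure, so $p\in Vir(A)$; also $p<1$ since $p$ is $m$-prime. Suppose $a\in Vir(A)$ with $p\leq a<1$. Because $1\in K(A)$, $a\leq m'$ for some $m'\in Max(A)$. Theorem 6.6 (or 6.8) says $A$ is an $mp$-quantale, so there is a unique $q\in Min(A)$ with $q\leq m'$. Applying the Key observation with $b=a$, the prime $p=m'$ and $r=q$, we get $a\leq q$. Thus $p\leq q$ with $p,q\in Min(A)$; Theorem 6.3(2) forces $p=q$, hence $a\leq p$ and therefore $a=p$. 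So $p$ is maximal in $Vir(A)$.

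\emph{Inclusion $Max(Vir(A))\subseteq Min(A)$.} Let $m\in Max(Vir(A))$. By Lemma 6.11 choose $n\in Max(A)$ with $Vir(n)=m$. Using again the $mp$-property of $A$, let $q$ be the unique element of $Min(A)$ with $q\leq n$. Since $q$ is pure (PF $\Rightarrow$ every minimal $m$-prime is pure) and $q\leq n$, the definition of $Vir(\cdot)$ yields $q\leq Vir(n)=m$. For the reverse inequality apply the Key observation to $b=m$, $p=n$, $r=q$: $m$ is pure, $m\leq n$, and $q\leq n$ is $m$-prime, so $m\leq q$. Therefore $m=q\in Min(A)$.

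\textbf{Main obstacle.} The whole argument pivots on the Key observation; once it is at hand, everything else is short bookkeeping using Theorem 6.3(2), Lemma 6.11, and the characterization of $PF$-quantales as semiprime $mp$-quantales (Theorem 6.6). The only non-trivial point in that observation is the switch ``$c^{\perp}\leq r$ leads to $c^{\perp}\leq p$,'' which is what makes the $m$-primality of an \emph{arbitrary} $r\leq p$ conspire with the purity of $b$; I expect that packaging this cleanly (rather than, say, trying to use the explicit representation of pure elements from Theorem 6.10 for the first inclusion) is what keeps the proof short.
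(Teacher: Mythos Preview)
Your proof is correct and hinges on the same core observation as the paper's (a pure element $b$ and an $m$-prime $r$ sharing a proper upper bound must satisfy $b\leq r$). The paper's execution is marginally more economical: for $Min(A)\subseteq Max(Vir(A))$ it picks $m\in Max(Vir(A))$ with $p\leq m$ and applies the observation with $m$ itself as the common upper bound to get $m=p$ immediately (no detour through $Max(A)$ or the $mp$-property), and for the reverse inclusion it reuses the already-proved first inclusion rather than invoking the observation a second time---but these are organizational differences only.
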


\begin{proof} Firstly we shall establish the inclusion $Min(A)\subseteq Max(Vir(A))$. Assume that $p$ is a minimal $m$ - prime element of $A$. By Corollary 6.7, $p$ is a pure element of $A$.

Since $p < 1$ there exists $m\in Max(Vir(A))$ such that $p\leq m$. Let us consider a compact element $x$ of $A$ such that $x\leq m$. Since $m$ is pure we have $m\lor x^{\perp} = 1$, hence $x^{\perp}\not\leq m$. Since $p\leq m$, we get $x^{\perp}\not\leq m$. It follows that $x\leq m$, hence $p = m$. We conclude that $p\in Max(Vir(A))$.

Conversely, assume that $m\in Max(Vir(A))$. By the previous lemma there exists $n\in Max(A)$ such that $Vir(n) = m$. Let us take a minimal $m$ - prime element $p$ such that $p\leq n$. According to the first part of the proof, $p$ is a maximal element of the frame $Vir(A)$. Thus $p = Vir(p)\leq Vir(n) = m$, hence $p  = m$, because $m,p\in Max(Vir(A))$. It results that $m\in Min(A)$, hence the inclusion  $Max(Vir(A))\subseteq Min(A)$ is established.

\end{proof}

\begin{propozitie} Let $A$ be a coherent $PF$ - algebra and $a\in A$. Then $K(a) = \bigwedge(Max(A)\bigcap V(a))$ is a pure element of $A$.

\end{propozitie}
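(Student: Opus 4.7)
The plan is to mirror the proof of Proposition 5.17 (the analogous statement for normal coherent quantales), substituting the PF-quantale machinery of Section 6 for the normal-quantale tools used there. Fix a compact element $c\leq K(a)$; the goal is to show $K(a)\lor c^{\perp}=1$.

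First, for every $m\in Max(A)\cap V(a)$ I would produce $d_m\in K(A)$ satisfying $cd_m=0$ and $d_m\not\leq m$. In the normal case this came from the identity $c\leq O(m)=Ker(m)=Vir(m)$; in the PF case the analogous inequality is accessed through Proposition 6.9, which gives $O(m)\in Min(A)$ (hence pure by Corollary 6.7), combined with Lemma 4.17(2), which equates $c\leq O(m)$ with $c^{\perp}\not\leq m$. Once $c^{\perp}\not\leq m$ is in hand, compactness of $c$ supplies the desired $d_m$. Separately, for each $m\in Max(A)$ with $a\not\leq m$ I pick a compact $e_m\leq a$ with $e_m\not\leq m$.

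I would then show $\bigvee\{d_m\}\lor\bigvee\{e_m\}=1$: otherwise the join would sit below some $n\in Max(A)$, and a case split on whether $a\leq n$ or not yields a contradiction (the case $a\leq n$ forces $d_n\leq n$; the case $a\not\leq n$ forces $e_n\leq n$). Compactness of $1\in K(A)$ then provides a finite subfamily $d_1,\dots,d_k,e_1,\dots,e_l$ whose join is $1$. Setting $d=\bigvee d_i$ and $e=\bigvee e_j$, I get $cd=0$, $e\leq a$, and $d\lor e=1$. Where the normal-quantale proof used a normality splitting of $d\lor e=1$ into disjoint complements, here I would exploit Theorem 6.8(3): $cd=0$ already implies $c^{\perp}\lor d^{\perp}=1$, and combining this with $d\lor e=1$ and $e\leq a$ lets me extract the compact witness below $K(a)$ needed for $K(a)\lor c^{\perp}=1$, by checking it lies below every $O(m)$ with $m\in Max(A)\cap V(a)$ via the first step.

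The main obstacle I anticipate is precisely the first step, namely establishing $c^{\perp}\not\leq m$ from only $c\leq K(a)\leq m$. In the normal setting the identity $O(m)=Vir(m)$ together with Lemma 5.13 make this inequality essentially tautological, but in the PF setting no such identity is available. The argument must instead route through the minimality of $O(m)$ from Proposition 6.9, combined with Corollary 3.16 (which characterizes $Min(A)$ by the implication $c\leq p\Rightarrow c^{\perp}\not\leq p$); alternatively, one may descend to the conormal lattice $L(A)$ (Lemma 6.4) and transfer the conclusion through Proposition 5.9 and Theorem 4.19. Whichever route is chosen, this is the place where the PF-hypothesis (and not merely semiprimeness) is essentially used.
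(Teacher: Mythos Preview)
Your overall plan matches the paper's proof: reuse the covering argument from Proposition~5.17 to produce compact $d,e$ with $cd=0$, $e\leq a$, $d\lor e=1$; then from $cd=0$ invoke Theorem~6.8(3) to get $c^{\perp}\lor d^{\perp}=1$, extract compact $x\leq c^{\perp}$, $y\leq d^{\perp}$ with $x\lor y=1$, and check $y\leq O(m)$ for every $m\in Max(A)\cap V(a)$, so that $y\leq K(a)$ and $1=x\lor y\leq c^{\perp}\lor K(a)$.

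The ``obstacle'' you anticipate, however, stems from a misprint in the statement. As is clear from the first line of the proof of Proposition~5.17 (``for all $m\in Max(A)\cap V(a)$ we have $c\leq O(m)$''), from the present proof (``$y\leq O(m)$, hence $y\leq K(a)$''), and explicitly from the proof of Lemma~6.14, the intended definition is
\[
K(a)=\bigwedge\{O(m):m\in Max(A),\ a\leq m\},
\]
not $\bigwedge(Max(A)\cap V(a))$. With this reading the first step is immediate: $c\leq K(a)$ gives $c\leq O(m)$ for each such $m$, and Lemma~4.17(2) yields $c^{\perp}\not\leq m$. No detour through Proposition~6.9, Corollary~3.16, or the reticulation is needed.

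Your proposed fixes could not have worked under the literal reading, because under that reading the proposition is simply false: if $R$ is any local integral domain that is not a field (a $PF$-ring with unique maximal ideal $\mathfrak m$), then in $A=Id(R)$ one has $\bigwedge(Max(A)\cap V(0))=\mathfrak m$, yet for any nonzero compact $c\leq\mathfrak m$ one has $c^{\perp}=0\leq\mathfrak m$, so $\mathfrak m$ is not pure.

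One minor imprecision: in the last step you say the compact witness is checked to lie below each $O(m)$ ``via the first step'', but the paper's verification there is independent of the first step. It uses only $e\leq a\leq m$ together with $d\lor e=1$ to force $d\not\leq m$, and then $y\leq d^{\perp}$ gives $d\leq y^{\perp}$, hence $y^{\perp}\not\leq m$, i.e.\ $y\leq O(m)$ by Lemma~4.17(2).
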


\begin{proof}
Let $c$ be a compact element of $A$ such that $c\leq K(a)$. We shall prove that $K(a)\lor c^{\perp} = 1$. By using the same argument as in the proof of Theorem  one can find two compact element $d$ and $e$ such that $e\leq a$, $cd = 0$ and $d\lor e = 1$. Since $A$ is a $PF$ - quantale, from $cd = 0$ we get $c^{\perp}\lor d^{\perp} = 1$ (cf. Theorem 6.8,(4)). Then there exist the compact elements $x$ and $y$ such that $x\leq c^{\perp}$, $y\leq d^{\perp}$ and $x\lor y = 1$.

We shall prove that $y\leq K(a)$. Let $m$ be a maximal element of $A$ such that $a\leq m$, hence $e\leq a\leq m$. If $d\leq m$ then $1 = e\lor d\leq m$, contradicting $m\in Max(A)$. Then we obtain $d\not\leq m$. From $y\leq d^{\perp}$ we get $d\leq y^{\perp}$, hence $y^{\perp}\not\leq m$. According to Lemma 4.17,(2) we have $y\leq O(m)$. We have proven that for all maximal element $m$, $a\leq m$ implies $y\leq O(m)$, hence $y\leq K(a)$.

We remark that $1 = y\lor x\leq K(a)\lor c^{\perp}$, so $K(a)\lor c^{\perp} = 1$. We conclude that $K(a)$ is pure.

\end{proof}

\begin{lema} Let $A$ be a coherent $PF$ - quantale, $a\in Var(A)$ and $m\in Max(A)$ such that $a\leq m$. Assume that the following condition holds:

(*) For all $c\in K(A)$, $c\leq O(m)$ implies $K(c)\leq a$.

Then $a = O(m)$.

\end{lema}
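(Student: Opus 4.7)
The plan is to establish the two inequalities $a\leq O(m)$ and $O(m)\leq a$ separately; the purity of $a$ handles the first, while hypothesis $(*)$ together with the trivial bound $c\leq K(c)$ handles the second.

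For $a\leq O(m)$, I would fix a compact element $c\leq a$. Since $a$ is pure, $a\lor c^{\perp}=1$. If we had $c^{\perp}\leq m$, then combining with $a\leq m$ would give $1\leq m$, contradicting $m<1$; hence $c^{\perp}\not\leq m$. Together with $c\leq a\leq m$, Lemma 4.17,(2) yields $c\leq O(m)$. Passing to the join over all compact $c\leq a$ and using that $A$ is algebraic delivers $a\leq O(m)$. This direction uses neither the $PF$-hypothesis nor condition $(*)$.

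For $O(m)\leq a$, I would invoke $(*)$ directly. Let $c\in K(A)$ satisfy $c\leq O(m)$; by $(*)$, $K(c)\leq a$. By the definition $K(c)=\bigwedge(Max(A)\cap V(c))$, every maximal element appearing in the meet dominates $c$, so $c\leq K(c)$; the meet is nonempty since $c\leq O(m)\leq m<1$ forces $c$ to be proper and hence below some maximal element (using $1\in K(A)$). Therefore $c\leq K(c)\leq a$, and running $c$ over all compact elements below $O(m)$ and using algebraicity once more gives $O(m)\leq a$.

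Combining the two inequalities produces $a=O(m)$. I do not foresee a serious obstacle: once the equality is split into the two inequalities, each is a direct consequence of a single ingredient (purity of $a$ with Lemma 4.17,(2) in one direction, and $(*)$ with the trivial bound $c\leq K(c)$ in the other). The $PF$-assumption is not strictly used inside the argument, but it is what makes $(*)$ a natural hypothesis: via Proposition 6.13 the element $K(c)$ is itself pure, so condition $(*)$ is a statement about pure elements comparing with $a\in Vir(A)$.
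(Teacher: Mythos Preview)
Your argument for $a\leq O(m)$ is correct and in fact cleaner than the paper's (which invokes $Vir(m)=O(m)$, a fact that requires separate justification in the $PF$ setting).

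The second direction, however, has a genuine gap stemming from a misreading of the notation $K(\cdot)$. Although the displayed formula in the paper literally reads $K(a)=\bigwedge(Max(A)\cap V(a))$, the proofs of Propositions 5.17 and 6.13 and of the present lemma make it clear that the intended meaning is
\[
K(a)=\bigwedge\{O(n)\mid n\in Max(A),\ a\leq n\},
\]
not the Jacobson-type radical $\bigwedge\{n\mid n\in Max(A),\ a\leq n\}$. Under the correct definition the inequality $c\leq K(c)$ you rely on is false in general: it would require $c\leq O(n)$, i.e.\ $c^{\perp}\not\leq n$, for every maximal $n\geq c$. This already fails in $Id(R)$ for an integral domain $R$ that is not a field, where $O(n)=0$ for every maximal ideal $n$ while nonzero nonunit $c$ exist.

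The paper closes this gap by producing, for each compact $c\leq O(m)$, a \emph{different} compact element $d\leq O(m)$ with $c\leq K(d)$, and then applying $(*)$ to $d$ rather than to $c$. Concretely: since $O(m)$ is a minimal $m$-prime in a $PF$-quantale, it is pure, so $O(m)\lor c^{\perp}=1$; pick compact $d\leq O(m)$ and $e\leq c^{\perp}$ with $d\lor e=1$. For any maximal $n\geq d$ one then has $e\not\leq n$, hence $c^{\perp}\not\leq n$, hence $c\leq O(n)$; taking the meet over all such $n$ gives $c\leq K(d)$. Now $(*)$ applied to $d$ yields $K(d)\leq a$, whence $c\leq a$. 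Note that this argument genuinely uses the $PF$ hypothesis (for the purity of $O(m)$), contrary to your remark that it is not strictly needed.
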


\begin{proof} Since $a\leq m$ and $a$ is pure we get $a = Vir(a)\leq Vir(m) = O(m)$. In order to show that $O(m)\leq a$, assume that $c$ is a compact element such that $c\leq O(m)$. Since $O(m) = Vir(m)$ is pure we have $O(m)\lor c^{\perp} = 1$, so there exist the compact elements $d,e$ such that $d\leq O(m)$, $e\leq c^{\perp}$ and $d\lor e = 1$.

Now we shall prove that $c\leq K(d)$. Let us consider a maximal element $n$ of $A$ such that $d\leq n$. Assume that $c^{\perp}\leq n$, so $e\leq c^{\perp}\leq n$, therefore $1 = d\lor e\leq n$. This contradicts $n\in Max(A)$, hence $c^{\perp}\not\leq n$. By Lemma 4.17,(2) we obtain $c\leq O(n)$. It follows that $c\leq\bigwedge\{O(n)| n\in Max(A), d\leq n \} = K(d)$. In accordance with (*), $d\leq O(m)$ implies $K(d)\leq a$, hence $c\leq K(d)\leq a$. It follows that
$O(m) = \bigvee\{c\in K(A)|c\leq O(m)\}\leq a$.

\end{proof}

\begin{teorema} If $A$ is a coherent $PF$ - quantale then $Spec(Vir(A)) = Max(Vir(A))$.

\end{teorema}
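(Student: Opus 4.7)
I would prove the nontrivial inclusion $Spec(Vir(A)) \subseteq Max(Vir(A))$. Given $p \in Spec(Vir(A))$, since $p < 1$ and $1 \in K(A)$ I would pick $m \in Max(A)$ with $p \leq m$. Because $p$ is pure, $p \leq Vir(m)$, and in a $PF$-quantale $Vir(m) = O(m)$: by Proposition 6.9 and Theorem 6.12, $O(m) \in Min(A) = Max(Vir(A))$, so $O(m)$ is a pure element below $m$, whence $O(m) \leq Vir(m)$; conversely, maximality of $O(m)$ in $Vir(A)$ together with $Vir(m) \leq m < 1$ forces $Vir(m) = O(m)$. Thus $p \leq O(m) \in Max(Vir(A))$, reducing the theorem to the equality $p = O(m)$.

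I would obtain this equality from Lemma 6.14 applied with $a := p$. The only nontrivial hypothesis is condition $(\ast)$: for every $c \in K(A)$ with $c \leq O(m)$, $K(c) \leq p$. Fix such a $c$. By Lemma 4.17(2), $c \leq O(m)$ gives $c^{\perp} \not\leq m$, hence $c^{\perp} \not\leq p$ (since $p \leq m$); moreover, $c^{\perp}$ is pure by Theorem 6.8 and $K(c)$ is pure by Proposition 6.13, so both lie in the frame $Vir(A)$. The plan is to establish the stronger identity $K(c) \wedge c^{\perp} = 0$; then $0 \leq p$ together with primality of $p$ in the frame $Vir(A)$ yields $K(c) \leq p$ or $c^{\perp} \leq p$, and the latter has just been excluded.

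The main obstacle is the identity $K(c) \wedge c^{\perp} = 0$. I would verify it pointwise over $Min(A)$: take any compact $e \leq K(c) \wedge c^{\perp}$ and any $q \in Min(A)$. Since $c \cdot c^{\perp} = 0 \leq q$ and $q$ is $m$-prime, either $c \leq q$ or $c^{\perp} \leq q$. If $c^{\perp} \leq q$, then $e \leq c^{\perp} \leq q$. If $c \leq q$, choose $n \in Max(A)$ with $q \leq n$; because $A$ is an $mp$-quantale (Theorem 6.5), $q$ is the unique minimal $m$-prime below $n$, so $q = O(n) = Vir(n)$. Since $c \leq q \leq n$ we have $K(c) \leq n$, and purity of $K(c)$ forces $K(c) \leq Vir(n) = q$, whence $e \leq q$ in this case too. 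Hence $e \leq \bigwedge Min(A) = \bigwedge Spec(A) = \rho(0) = 0$ by semiprimeness, completing $(\ast)$ and the proof.
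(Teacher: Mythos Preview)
Your proof is correct and follows essentially the same route as the paper's: reduce to Lemma 6.14 by showing condition $(\ast)$, which in turn rests on the identity $K(c)\,c^{\perp}=0$ together with $c^{\perp}\not\leq p$ and primality of $p$ in $Vir(A)$. The paper verifies $K(c)\,c^{\perp}=0$ by a case split over $n\in Max(A)$ (using Corollary 5.12), while you do the equivalent check over $q\in Min(A)$, passing through a maximal $n\geq q$; your argument that $Vir(m)=O(m)$ via Proposition 6.9 and Theorem 6.12 is also exactly what the paper implicitly uses when writing $q=Vir(m)=O(m)$.
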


\begin{proof} It suffices to prove that $Spec(Vir(A))\subseteq Max(Vir(A))$. Assume that $p\in Spec(Vir(A))$. Let $q$ be a maximal element of the frame $Vir(A)$ such that $p\leq q$.

Let us consider a compact element $c$ of $A$ such that $c\leq q$. For each $n\in Max(A)$ there exist two possible casses:

$Case 1: c\leq n$. By the definition of $K(c)$ we have $K(c)\leq O(n)$.

$Case 2: c\not\leq n$. Since $n$ is $m$ - prime it follows that $c^{\perp}\leq n$. In accordance with Theorem 6.8,(5), $c^{\perp}$ is a pure element of $A$, so $c^{\perp}\leq Vir(n) = O(n)$.

In virtue of these two cases one obtains $K(c)c^{\perp}\leq \bigwedge\{O(n)| n\in Max(A)$, hence $K(c)c^{\perp} = 0$ (by Corrolary 5.12).

Since $c\leq q = O(q)$ we have $c^{\perp}\not\leq q$ (by Lemma 4.17,(2)). According to Theorem 6.8,(5) and Proposition 6.13, $c^{\perp}$ and $K(c)$ are pure elements, hence $K(c)\leq p$.

We have proven that for each compact element $c\leq q$ we have $K(c)\leq p$. We remark that $q < 1$ (because $q\in Max(Vir(A))$). Let us consider $m\in Max(A)$ such that $q\leq m$. Thus $q = Vir(q)\leq Vir(m)$, hence $q = Vir(m) = O(m)$. Therefore for each compact element $c$, $c\leq O(m)$ implies $K(c)\leq p$, i.e. the condition $(*)$ is satisfied (with $p$ instead of $a$). By applying Lemma 6.14 we conclude that $p = O(m) =q$, so $p\in Max(Vir(A))$.

\end{proof}

\begin{remarca} Let $A$ be a $PF$ - quantale. By applying the previous result and Proposition 6.2 of \cite{GG} it follows that $Vir(A)$ is a hyperarchimedean frame.Thus, by using Theorem 6.8,(8) of \cite{GG}, we have the following identity of topological spaces: $Spec_Z(Vir(A)) = Spec_F(Vir(A))$. Taking into account Theorem 6.12, it follows that $Min_F(A)$ and $Spec_Z(Vir(A))$ are identical as topological spaces.

\end{remarca}

\begin{remarca} Let $R$ be a $PF$ - ring. By applying the previous Theorems 6.12 and 6.16 to the $PF$ - quantale $A = Id(R)$ we obtain Theorems 5.1 and 5.3 of \cite{Tar3}. Remark 6.17 can be view as an abstract version of Theorem 5.4 of \cite{Tar3}.

\end{remarca}

\section{Purified quantales}

Let us recall from \cite{Aghajani} that a commutative ring $R$ is a {\emph{purified ring}} if for all distinct minimal ideals $P$ and $Q$ of $R$ there exists an idempotent element $e$ such that $e\in P$ and $1 - e\in Q$. A quantale $A$ is said to be a {\emph{purified quantale}} if for all minimal $m$ - prime elements $p$ and $q$ of $A$ there exists $e\in B(A)$ such that $e\leq p$ and $\neg e\leq q$. Then a commutative ring $R$ is a purified ring if and only in $Id(R)$ is a purified quantale.

The results of this section generalize to purified quantales some theorems proved in \cite{Aghajani} for purified rings.

\begin{lema}
Any purified quantale $A$ is an $mp$ - quantale.
\end{lema}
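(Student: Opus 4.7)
The plan is to apply the characterization of $mp$-quantales given in Theorem 6.3, specifically the equivalence $(1)\Leftrightarrow(2)$: a coherent quantale $A$ is an $mp$-quantale iff every two distinct minimal $m$-prime elements $p,q$ satisfy $p\lor q=1$. So the entire task reduces to verifying this join condition for a purified quantale.

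To do this, I would take arbitrary distinct $p,q\in Min(A)$ and invoke the purifiedness hypothesis to produce an element $e\in B(A)$ with $e\leq p$ and $\neg e\leq q$. Then $p\lor q\geq e\lor\neg e=1$, so $p\lor q=1$, as required. No further computation is needed; the argument is immediate once one matches the definition of purified against condition (2) of Theorem 6.3.

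The only subtlety worth checking is which ambient hypothesis the statement implicitly carries. Theorem 6.3 is stated for coherent quantales, and the definition of purified quantale in this section is given in that same setting (the paper's standing assumption for Sections 6--7). Under coherence one has $1\in K(A)$, which guarantees that for every $p\in Spec(A)$ there does exist some $q\in Min(A)$ with $q\leq p$; combined with the uniqueness extracted above (or, equivalently, with condition (2) of Theorem 6.3), this yields the $mp$-property in full. If one prefers to bypass Theorem 6.3 entirely, essentially the same argument works directly: given $p\in Spec(A)$, existence of a minimal $m$-prime below $p$ comes from $1\in K(A)$, and if $p_1\neq p_2$ were two such, then the purified property would give $e\in B(A)$ with $e\leq p_1\leq p$ and $\neg e\leq p_2\leq p$, forcing $1=e\lor\neg e\leq p$, a contradiction with $p<1$. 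There is no genuine obstacle here; the lemma is essentially a direct translation of the purified axiom into the $mp$-condition via the Boolean identity $e\lor\neg e=1$.
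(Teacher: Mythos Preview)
Your argument is correct and matches the paper's proof essentially line for line: take distinct $p,q\in Min(A)$, use the purified hypothesis to get $e\in B(A)$ with $e\leq p$ and $\neg e\leq q$, conclude $p\lor q=1$, and invoke Theorem~6.3(2). The additional discussion of coherence and the alternative direct argument are sound but unnecessary, since the paper's proof is already this short.
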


\begin{proof}
Let $p,q$ be two distinct elements of $Min(A)$. Thus there exists a complemented element $e$ of $A$ such that $e\leq p$ and $\neg e\leq q$, hence $p\lor q = 1$. In accordance to Theorem 6.3,(2) it follows that $A$ is a $mp$ - quantale.
\end{proof}

Let $A$ be a coherent quantale and $a\in A$. Let us consider the quantale $[a)_A$ (defined in Section 2) and the unital quantale morphism $u_a^A : A \rightarrow [a)_A$, defined by $u_a^A(x)=x \lor a$, for all $x \in A$. It is easy to see that the negation operation $\neg^a$ of $[a)_A$ is defined by $\neg^a(x)$ = $\neg x\lor a$, for any $x\in [a)_A$. By Lemma 14 of \cite{Cheptea1}, one can consider the Boolean morphism $B(u_a^A): B(A)\rightarrow B([a)_A)$ defined by $B(u_a^A)(e) = u_a^A(e)$, for all $e\in B(A)$. According to \cite{Cheptea1}, an element $a\in A$ has the lifting property ($LP$) if the Boolean morphism $B(u_a^A): B(A)\rightarrow B([a)_A)$ is surjective. The quantale $A$ has $LP$ if each element of $A$ has $LP$.

\begin{lema}
$\rho(0)$ has $LP$.
\end{lema}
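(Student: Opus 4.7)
The plan is to prove that any Boolean element $\bar{e} \in B([\rho(0))_A)$ can be lifted to a complemented element of $A$, i.e., to produce some $e \in B(A)$ with $e \vee \rho(0) = \bar{e}$. The starting point will be to unpack what complementarity in $[\rho(0))_A$ means: if $\bar{f} \in B([\rho(0))_A)$ is the Boolean complement of $\bar{e}$, then $\bar{e} \vee \bar{f} = 1$ and $\bar{e} \cdot_{\rho(0)} \bar{f} = \bar{e}\bar{f} \vee \rho(0) = \rho(0)$, so $\bar{e}\bar{f} \leq \rho(0)$.

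Next I would use the coherence of $A$: since $1 \in K(A)$ and $\bar{e} \vee \bar{f} = 1$, and since $A$ is algebraic, one can extract compact elements $c \in K(A)$ with $c \leq \bar{e}$ and $d \in K(A)$ with $d \leq \bar{f}$ such that $c \vee d = 1$. Then $cd \leq \bar{e}\bar{f} \leq \rho(0)$ and $cd \in K(A)$, so by Lemma 2.4(2) there is an integer $n \geq 1$ with $c^n d^n = (cd)^n = 0$. By Lemma 2.1(2), $c \vee d = 1$ yields $c^n \vee d^n = 1$, and then Lemma 3.9(6) gives $c^n, d^n \in B(A)$. The candidate lifting is $e := c^n$.

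To finish, I will verify that $e \vee \rho(0) = \bar{e}$. From $c^n \leq c \leq \bar{e}$ and $\rho(0) \leq \bar{e}$ we already get $c^n \vee \rho(0) \leq \bar{e}$, and symmetrically $d^n \vee \rho(0) \leq \bar{f}$. The equalities $c^n \vee d^n = 1$ and $c^n d^n = 0$ translate into $(c^n \vee \rho(0)) \vee (d^n \vee \rho(0)) = 1$ and $(c^n \vee \rho(0)) \cdot_{\rho(0)} (d^n \vee \rho(0)) = \rho(0)$, so $c^n \vee \rho(0)$ and $d^n \vee \rho(0)$ are Boolean complements in $[\rho(0))_A$. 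Using distributivity in the Boolean algebra $B([\rho(0))_A)$, I compute
\[
\bar{e} = \bar{e} \wedge \bigl((c^n \vee \rho(0)) \vee (d^n \vee \rho(0))\bigr) = (c^n \vee \rho(0)) \vee \bigl(\bar{e} \wedge (d^n \vee \rho(0))\bigr),
\]
and the second summand is below $\bar{e} \wedge \bar{f} = \rho(0)$, hence equals $\rho(0)$. Therefore $\bar{e} = c^n \vee \rho(0) = u_{\rho(0)}^A(e)$, proving surjectivity of $B(u_{\rho(0)}^A)$.

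The only delicate step is the very last one: merely producing $e \in B(A)$ with $e \vee \rho(0) \leq \bar{e}$ is easy, but matching $\bar{e}$ on the nose requires the uniqueness of complements in $B([\rho(0))_A)$ together with the symmetric construction of $d^n$ as a witness for $\bar{f}$. Everything else is an application of the arithmetic of $\rho$, the compactness of $1$, and the standard idempotent-extraction lemmas already collected in Section 2 and Section 3 of the paper.
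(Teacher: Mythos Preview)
Your proof is correct and follows essentially the same route as the paper: extract compact $c\leq \bar e$, $d\leq \bar f$ with $c\vee d=1$, use $cd\leq\rho(0)$ to get $c^nd^n=0$, and conclude $c^n,d^n\in B(A)$ via Lemma 3.8(6) (you cite it as 3.9(6), a harmless slip). The only cosmetic difference is the last step: the paper obtains $\bar e\leq u_{\rho(0)}^A(c^n)$ by the one-line negation argument $\bar e=\neg^{\rho(0)}\bar f\leq \neg^{\rho(0)}u_{\rho(0)}^A(d^n)=u_{\rho(0)}^A(\neg d^n)=u_{\rho(0)}^A(c^n)$, whereas you reach the same conclusion by a direct distributivity computation in $B([\rho(0))_A)$; both are equally valid and equally short.
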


\begin{proof}
Let us denote $a = \rho(0)$. We have to prove that $B(u_a^A))$ is surjective. Let $x$ be a complemented element of $[a)_A$, so there exists $y\in [a)_A$ such that $x\lor y = 1$ and $xy\lor a = x \cdot_a y = a$. Since $1$ is compact, there exist $c,d\in K(A)$ such that $c\leq x$, $d\leq y$ and $c\lor d = 1$. On the other hand, $xy\leq a = \rho(0)$ implies $cd\leq \rho(0)$, so by Lemma 2.4,(2), there exists a positive integer $n$ such that $c^nd^n = 0$. In accordance with Lemma 2.1,(2) we have  $c^n\lor d^n = 1$. By using Lemma 3.8(6), it follows that $c^n, d^n\in B(A)$ and $c^n = \neg d^n$. One remarks that $u_a^A(c^n)\leq x$ and $u_a^A(d^n)\leq y$. The second inequality implies $x = \neg^a y \leq \neg^a(u_a^A(d^n)) = u_a^A(\neg d^n) = u_a^A(c^n)$. We have proven that $u_a^A(c^n) = x$ and $c^n\in B(A)$, hence $a = \rho(0)$ has $LP$.
\end{proof}

\begin{teorema} A coherent quantale $A$ is a purified quantale if and only if $[\rho(0))_A$ is a purified quantale.

\end{teorema}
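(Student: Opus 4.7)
The plan is to exploit two facts the author has set up in Section~2 and the previous lemma: first, by Lemma~2.7 the passage $A \rightsquigarrow [\rho(0))_A$ preserves $Spec$ and $Max$ (so it preserves $Min$ as well, since minimality is a poset-theoretic property on the common spectrum), and second, by Lemma~7.2 the element $\rho(0)$ has the lifting property, i.e.\ $B(u_{\rho(0)}^A): B(A) \to B([\rho(0))_A)$ is surjective. The proof will just be a translation between complemented elements of $A$ and of $[\rho(0))_A$ via $u_{\rho(0)}^A(x)=x\lor\rho(0)$.

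For the forward direction, assume $A$ is purified and take two distinct minimal $m$-prime elements $p,q$ of $[\rho(0))_A$. By Lemma~2.7 these are also distinct elements of $Min(A)$, so there exists $e\in B(A)$ with $e\leq p$ and $\neg e\leq q$. Since $\rho(0)\leq p$ and $\rho(0)\leq q$ (every $m$-prime contains $\rho(0)$), the element $f=e\lor\rho(0)=u_{\rho(0)}^A(e)$ lies in $B([\rho(0))_A)$ with complement $\neg^{\rho(0)}f=\neg e\lor\rho(0)$, and $f\leq p$, $\neg^{\rho(0)}f\leq q$. Thus $[\rho(0))_A$ is purified.

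For the converse, assume $[\rho(0))_A$ is purified and let $p,q\in Min(A)$ be distinct; they are distinct minimal $m$-prime elements of $[\rho(0))_A$ by Lemma~2.7. Hence there exists $f\in B([\rho(0))_A)$ with $f\leq p$ and $\neg^{\rho(0)} f\leq q$. By Lemma~7.2 (the lifting property of $\rho(0)$) there exists $e\in B(A)$ with $u_{\rho(0)}^A(e)=e\lor\rho(0)=f$, and consequently $u_{\rho(0)}^A(\neg e)=\neg e\lor\rho(0)=\neg^{\rho(0)}f$. The inequalities $e\leq e\lor\rho(0)=f\leq p$ and $\neg e\leq \neg e\lor\rho(0)=\neg^{\rho(0)}f\leq q$ then give the required witness in $B(A)$, so $A$ is purified.

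The only real point of vigilance is step one of each direction, namely the identification of $Min([\rho(0))_A)$ with $Min(A)$; once this is noted as a consequence of the $Spec$-equality in Lemma~2.7, the rest is mechanical bookkeeping with the lifting map $u_{\rho(0)}^A$ and the explicit formula $\neg^{\rho(0)}x=\neg x\lor\rho(0)$ that was recorded just before Lemma~7.2.
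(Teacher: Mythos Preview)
Your proof is correct and follows essentially the same route as the paper's own proof: identify $Min(A)$ with $Min([\rho(0))_A)$ via the $Spec$-equality, push complemented witnesses along $u_{\rho(0)}^A$ in the forward direction, and lift them back via Lemma~7.2 in the converse. The only slip is a citation: the equality $Spec(A)=Spec([\rho(0))_A)$ (and hence of the minimal spectra) is Lemma~2.6, not Lemma~2.7.
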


\begin{proof}
Let us denote $a = \rho(0)$. Assume that $A$ is a purified quantale and consider $p,q\in Min([a)_A)$ such that $p\neq q$. Observing that $Min(A) = Min([a)_A)$, it results that there exists $e\in B(A)$ such that $e\leq p$ and $\neg e\leq q$. It follows that $u_a^A(e)\in B([a)_A)$, $u_a^A(e)\leq u_a^A(p) = p$ and $\neg^au_a^A(e) = u_a^A(\neg e)\leq u_a^A(q) = q$, therefore $[a)_A$ is a purified quantale.

Conversely, suppose that $[a)_A$ is a purified quantale and consider two distinct minimal $m$ - prime elements $p,q$ of $A$. Then $p,q\in Min([a)_A)$, hence, by taking into account the hypothesis that $[a)_A$ is a purified quantale there exists $f\in B([a)_A)$ such that $f\leq p$ and $\neg^a f\leq q$. In accordance with Lemma 7.2, $a = \rho(0)$ has the lifting property, so there exists $e\in B([a)_A)$ such that $a\lor e =  u_a^A(e) = f$. Sine $B( u_a^A)$ is a Boolean morphism, we have $a\lor \neg e  =  u_a^A(\neg e) = \neg^au_a^A(e) = \neg^a f$, so $\neg^a e\leq q$. Thus $A$ is a purified quantale.
\end{proof}

\begin{lema} Let $U$ be a subset of $Spec_F(A)$. Then $U$ is a clopen subset of $Spec_F(A)$ if and only if $U = V(e)$, for some $e\in B(A)$.

\end{lema}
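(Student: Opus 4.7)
The plan splits the equivalence into two implications, both driven by the arithmetic of $B(A)$ together with the identities $V(c) \cup V(d) = V(cd)$ and $V(c) \cap V(d) = V(c \lor d)$, which hold on $Spec(A)$ because each $p$ is $m$-prime. The reverse implication is essentially an arithmetic calculation, while the forward one relies crucially on the compactness of $Spec_F(A)$.

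For the reverse implication, assume $e \in B(A)$. By Lemma 3.9 both $e$ and $\neg e$ lie in $K(A)$, so $V(e)$ and $V(\neg e)$ are basic opens of $Spec_F(A)$. The relations $e \lor \neg e = 1$ and $e \cdot \neg e = 0$, combined with $m$-primality, give the equivalence $e \leq p \Leftrightarrow \neg e \not\leq p$ for each $p \in Spec(A)$. Thus $V(e)$ and $V(\neg e)$ partition $Spec(A)$, so $V(e)$ is both open and closed in $Spec_F(A)$.

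For the forward implication, let $U$ be clopen in $Spec_F(A)$. Since $Spec_F(A)$ is compact (as the inverse space of the spectral space $Spec_Z(A)$, cf.\ \cite{Dickmann},\cite{Johnstone}) and $U$ is closed in it, $U$ is itself compact. Hence the open cover of $U$ by basic opens $V(c_i)$, $c_i \in K(A)$, admits a finite subcover, and using that $K(A)$ is closed under multiplication together with $V(c) \cup V(d) = V(cd)$, one obtains $U = V(c)$ for some $c \in K(A)$. Applying the same argument to the clopen set $Spec(A) \setminus U$ yields $Spec(A) \setminus U = V(d)$ for some $d \in K(A)$. Now $V(c \lor d) = V(c) \cap V(d) = \emptyset$ forces $c \lor d = 1$ (since $1 \in K(A)$ guarantees that any proper element lies under a maximal one), while $V(cd) = V(c) \cup V(d) = Spec(A)$ forces $cd \leq \rho(0)$. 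Lemma 2.4(2) then produces an integer $n \geq 1$ with $c^n d^n = 0$; Lemma 2.1(2) gives $c^n \lor d^n = 1$; and Lemma 3.8(6) concludes that $c^n \in B(A)$. Since $V(c) = V(c^n)$ by $m$-primality, setting $e := c^n$ gives $U = V(e)$ with $e \in B(A)$.

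The only delicate point I anticipate is the appeal to compactness of $Spec_F(A)$, which belongs to the background theory of spectral spaces and their inverse topologies invoked in Section~2 rather than being proved in the excerpt. Everything else is a transparent combination of Lemmas 2.1, 2.4, 3.8, 3.9 with the defining properties of the flat topology.
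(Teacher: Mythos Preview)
Your proof is correct and follows essentially the same route as the paper's: compactness of $Spec_F(A)$ reduces the clopen $U$ and its complement to sets of the form $V(c)$ and $V(d)$ with $c,d\in K(A)$, after which the chain $c\lor d=1$, $cd\leq\rho(0)$, $c^n d^n=0$, $c^n\lor d^n=1$, $c^n\in B(A)$ is identical (same lemmas 2.1(2), 2.4(2), 3.8(6)). The only cosmetic difference is that you obtain $U=V(c)$ directly by using the compactness of $U$ itself, whereas the paper uses compactness of the whole space to get $V(e)\subseteq U$, $V(f)\subseteq Spec(A)\setminus U$ with $V(e)\cup V(f)=Spec(A)$ and deduces equality at the end; this is a trivial variation.
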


\begin{proof}
Assume that $U$ is a clopen subset of $Spec_F(A)$. Then $V = Spec_F(A) - U$ is a clopen subset of $Spec_F(A)$ and $U\bigcup V = Spec_F(A)$, $U\bigcap V = \emptyset$. Since $Max_F(A)$ is compact, there exist $e,f\in K(A)$ such that $V(e)\subseteq U$, $V(f)\subseteq V$ and $V(ef) = V(e)\bigcup V(f) = Spec_F(A)$, hence $ef\leq \rho(0)$. We remark that $V(e\lor f) = V(e)\bigcap V(f)\subseteq U\bigcap V = \emptyset$, so $e\lor f = 1$. From $ef\leq \rho(0)$ we have $e^nf^n = 0$, for some positive integer $n$. By Lemma 2.1,(2) we have $e^n\lor f^n = 1$, therefore by using Lemma 3.8,(6), we obtain $e^n, f^n\in B(A)$. Now it is easy to prove that $U = V(e^n)$ and $V = V(f^n)$. The converse implication is obvious.

\end{proof}

Recall from \cite{Johnstone}, p.69 that a topological space $X$ is said to be

${\bullet}$ {\emph{totally disconnected}}, if the only connected subsets of $X$ are single points;

${\bullet}$ {\emph{totally separated}}, if for all distinct points $x,y\in X$, there exists a clopen subset of $X$ containing $x$ but not $y$.

\begin{teorema}
For a semiprime coherent quantale $A$ the following are equivalent:
\usecounter{nr}
\begin{list}{(\arabic{nr})}{\usecounter{nr}}

\item $A$ is a purified quantale;

\item $A$ is an $mp$ - quantale and $Min_F(A)$ is totally separated;

\item $A$ is an $mp$ - quantale and $Min_F(A)$ is totally disconnected;

\item $A$ is an $mp$ - quantale and $Min_F(A)$ is a Boolean space;

\item The family $(V(e)\bigcap Min(A))_{e\in B(A)}$ is a basis of open sets for $Min_F(A)$.

\item Any minimal $m$ - prime element $p$ of $A$ is regular;

\item $Min(A) = Sp(A)$;

\item $A$ is an $mp$ - quantale and any pure element of $A$ is regular.

\end{list}
\end{teorema}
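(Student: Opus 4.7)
The strategy is to partition the eight conditions into an algebraic block $\{(1),(6),(7),(8)\}$ and a topological block $\{(2),(3),(4),(5)\}$, connecting the two via Remark 6.17 (which identifies $Min_F(A)$ with $Spec_Z(Vir(A))$ for $PF$-quantales) and Lemma 7.4 (which characterizes the clopens of $Spec_F(A)$ as the sets $V(e)$ with $e\in B(A)$). I plan to establish the cycle
\[(1) \Rightarrow (4) \Rightarrow (3) \Rightarrow (2) \Rightarrow (5) \Rightarrow (6) \Leftrightarrow (7) \Leftrightarrow (8) \Rightarrow (1).\]

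For the topological arc $(1)\Rightarrow(4)\Rightarrow(3)\Rightarrow(2)\Rightarrow(5)$: Lemma 7.1 gives that $A$ is $mp$, hence $PF$ by Theorem 6.6. Theorem 6.12 identifies $Min(A)$ with $Max(Vir(A))$, and Remark 6.17, together with the hyperarchimedean property of $Vir(A)$, shows that $Min_F(A)=Spec_Z(Vir(A))$ is a Boolean space, giving $(4)$. The chain $(4)\Rightarrow(3)\Rightarrow(2)$ is classical topology since $Min_F(A)=Min_Z(A)$ is compact Hausdorff in our semiprime $mp$ setting (Theorems 6.2 and 6.3). For $(2)\Rightarrow(5)$, total separation supplies a basis of clopens of $Min_F(A)$; each such clopen arises, via Lemma 7.4 and the subspace topology, in the form $V(e)\cap Min(A)$ with $e\in B(A)$.

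For $(5)\Rightarrow(6)$: given $p\in Min(A)$ and a compact $c\leq p$, applying $(5)$ to $V(c)\cap Min(A)$ yields $e\in B(A)$ with $p\in V(e)\cap Min(A)\subseteq V(c)\cap Min(A)$; semiprimeness together with the identity $\bigwedge Min(A)=0$ forces $c\leq e\leq p$, so $p$ is a join of complemented elements. The equivalences $(6)\Leftrightarrow(7)\Leftrightarrow(8)$ combine Corollary 6.7 (minimal primes are pure in a $PF$-quantale), Lemma 5.1 (regular implies pure), Proposition 4.5 (pure implies radical in the semiprime case), and the observation that in an $mp$-quantale $s_A(p)$ coincides with the unique minimal $m$-prime below $p$. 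Finally, $(6)\Rightarrow(1)$: for distinct $p,q\in Min(A)$, pick a compact $c\leq p$ with $c\not\leq q$; regularity of $p$ bounds $c$ by a finite join $e$ of complemented elements below $p$; then $e\in B(A)$, $e\leq p$, $e\not\leq q$, and $m$-primality of $q$ forces $\neg e\leq q$.

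The principal obstacle is $(7)\Rightarrow(8)$: namely, that under $(7)$ every pure element is regular, even though Theorem 6.10 expresses pure elements as meets of (now regular) minimal primes and a meet of regular elements need not be regular. The plan is to transport the problem to $L(A)$ via the frame isomorphism $Vir(A)\cong Vir(Id(L(A)))$ of Theorem 4.16 and the Boolean isomorphism $B(A)\cong B(L(A))$ of Corollary 3.11: condition $(7)$ translates to a ``Stone-type'' property of $L(A)$ in which the annihilator of every element is generated by an idempotent, and this in turn forces every $\sigma$-ideal of $L(A)$ to be generated by elements of $B(L(A))$, which pulls back to the desired regularity statement for pure elements of $A$. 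The translation in $(5)\Rightarrow(6)$ likewise benefits from the reticulation via Lemma 3.3 and Proposition 3.13, and all remaining steps are routine consequences of the machinery developed in Sections 3--6.
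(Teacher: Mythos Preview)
Your argument for $(1)\Rightarrow(4)$ is circular in a fatal way: it never uses the purified hypothesis beyond obtaining that $A$ is an $mp$-quantale (via Lemma 7.1 and Theorem 6.5). Remark 6.17 says that $Vir(A)$ is hyperarchimedean and $Min_F(A)=Spec_Z(Vir(A))$ for \emph{every} $PF$-quantale, so if your deduction ``hyperarchimedean $\Rightarrow$ $Spec_Z(Vir(A))$ is Boolean'' were valid, you would have shown that $Min_F(A)$ is Boolean for every semiprime $mp$-quantale $A$, i.e.\ that condition $(4)$ reduces to ``$A$ is $mp$''. That collapses the whole theorem to ``purified $\Leftrightarrow$ $mp$'' in the semiprime case, which is false (cf.\ Corollary 7.7, which would then be vacuous, and Theorem 8.5, which distinguishes $PF$ from $PP$). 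The missing point is that ``hyperarchimedean'' is defined in terms of compact elements, and $Vir(A)$ is only known to be a spatial frame, not a coherent one; without coherence the hyperarchimedean condition does not force the spectrum to be Boolean. A related slip is your claim that $Min_F(A)=Min_Z(A)$ holds ``in our semiprime $mp$ setting'': by Theorem 6.2 this equality is equivalent to $Min_Z(A)$ being compact, which by Theorem 8.5 is precisely the $PP$ condition, strictly stronger than purified.

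The paper proceeds differently and more directly. For the topological block it proves $(1)\Rightarrow(2)$ by hand: given distinct $p,q\in Min(A)$, the purified hypothesis supplies $e\in B(A)$ with $e\leq p$ and $\neg e\leq q$, and then $V(e)$, $V(\neg e)$ are disjoint flat clopens separating them; the equivalence $(2)\Leftrightarrow(3)\Leftrightarrow(4)$ then follows from Johnstone's theorem on compact $T_1$ spaces. For the algebraic block the paper avoids your ``principal obstacle'' $(7)\Rightarrow(8)$ entirely: it closes the loop via $(7)\Rightarrow(1)$ (an easy two-line argument using that minimal primes are $m$-prime) and obtains $(8)$ from $(5)$ by a direct topological computation in $Spec_F(A)$, not via the reticulation. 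Your proposed translation of $(7)$ into a ``Stone-type'' property of $L(A)$ is also off target: $L(A)$ being a Stone lattice corresponds to $A$ being a $PP$-quantale (Theorem 8.3), which is strictly stronger than purified.
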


\begin{proof}
Recall from Corollary 8.6 of \cite{GG} that $Min_F(A)$ is a compact T1 - space. According to Theorem 4.2 of \cite{Johnstone}, the properties $(2)$,$(3)$ and $4$ are equivalent.

$(1)\Rightarrow(2)$ In accordance with Lemma 7.1, $A$ is an $mp$ - quantale. Then for a distinct points $p,q$ of $Min_F(A)$ there exists $e\in B(A)$ such that $e\leq p$, $\neg e\leq q$, so $p\in V(e)$, $q\in V(\neg e)$ and $V(e)\bigcap V(\neg e) = \emptyset$. Then $Min_F(A)$ is totally separated.

$(4)\Rightarrow(5)$ We apply Lemma 7.4.

$(5)\Rightarrow(1)$ Let $p,q$ be two distinct points of $Min(A)$. Then $U = Spec(A) - \{q\}$ = $Spec(A) - \Lambda(q)$ is an open subset of $Spec_F(A)$ and $p\in U$, hence there exists $e\in B(A)$ such that $p\in V(e)\bigcap Min_F(A)\subseteq U\bigcap Min_F(A)$. It follows that $e\leq p$ and $\neg e\leq q$, so $A$ is a purified quantale.

$(1)\Rightarrow(6)$
Assume that $p\in Min(A)$ and $c\in K(A)$ such that $c\leq p$. The property (1) is equivalent to (4), hence $Min_F(A)$ is a Boolean space. Since $p\in V(c)$ and $V(c)\bigcap Min(A)$ is an open subset of $Min_F(A)$ one can find an element $e\in B(A)$ such that $p\in V(e)\bigcap Min(A)\subseteq V(c)\bigcap Min(A)$. Therefore

$c\leq \bigwedge(V(c)\bigcap Min(A)\leq \bigwedge(V(e)\bigcap Min(A)\leq \rho(e))$,

so $c^n\leq e$ for some positive integer $n$. According to Theorem 6.8,(4) we have $(c^n)^{\perp} = c^{\perp}$, hence $c^n\leq e$ implies $\neg e = e^{\perp} \leq (c^n)^{\perp} = c^{\perp}$. It follows that $c\leq (c^{\perp})^{\perp}\leq \neg\neg e = e$. We have proven that $p\leq \bigvee \{e\in B(A)|e\leq p\}$, hence $p = \bigvee \{e\in B(A)|e\leq p\}$. We conclude that the minimal $m$ - prime element $p$ is regular.

$(6)\Rightarrow(1)$
Let $p,q$ be two distinct minimal $m$ - prime elements of $A$. By taking into account the hypothesis, we have $p = \bigvee \{e\in B(A)|e\leq p\}$ and $q = \bigvee \{e\in B(A)|e\leq q\}$. Since $p,q$ are distinct we can find an element $e\in B(A)$ such that $e\leq p$ and $e\not\leq q$. Then $\neg e\leq q$, hence $A$ is a purified quantale.

$(6)\Rightarrow(7)$
We shall prove that $Min(A)\subseteq Sp(A)$. Consider an element $p\in Min(A)$, so by the condition (6), $p$ is regular. In order to show that $p$ is max - regular, let us consider $e\in B(A)$ such that $e\not\leq p$. Thus $\neg e\leq p$, hence $1 = e\lor \neg e\leq p\lor e$, so $p\lor e = 1$. It follows that $p\in Sp(A)$.

Now we shall prove that $Sp(A)\subseteq Min(A)$. Let us consider $p\in Sp(A)$ and take an element $q\in Spec(A)$ such that $s_A(q) = p$ (because the function $s_A: Spec(A)\rightarrow Sp(A)$ is surjective). If $r$ is a minimal $m$ - prime element of $A$ such that $r\leq q$ then we have $s_A(r)\leq s_A(q) = p$, so $s_A(r) = p$ (because $p$ and $s_A(r)$ are max - regular). The minimal $m$ - prime element $r$ is regular, hence $s_A(r) = r$. Thus $r = p$, hence we get $p\in Min(A)$.

$(7)\Rightarrow(1)$
Obviously.

$(5)\Rightarrow(8)$
We have proven that (5) is equivalent to (2), so $A$ is a semiprime $mp$ - quantale (i.e. a $PF$ - quantale). Let $a$ be a pure element of $A$. By Theorem 6.6, $a$ is a minimal $m$ - prime element, hence $V(a)$ is a closed subset of $Spec_F(A)$. Then $U = Spec(A) - V(a)$ is an open subset of $Spec_F(A)$. Applying the hypothesis (5) we find a family $(e_i)_{i\in I}$ of complemend elements of $A$ such that $U\bigcap Min(A)$ = $\displaystyle \bigcup_{i\in I}(V(e_i)\bigcap Min(A))$.

We shall prove that $U = \displaystyle \bigcup_{i\in I}V(e_i)$. Let $p\in \displaystyle \bigcup_{i\in I}V(e_i)$, so $e_j\leq p$ for some $j\in I$. Assume by absurdum that $a\leq p$, so $a\notin U\bigcap Min(A)$. By taking into account the equality $U\bigcap Min(A)$ = $\displaystyle \bigcup_{i\in I}(V(e_i)\bigcap Min(A))$, it follows that $a\notin V(e_j)$,  i.e. $e_j\not\leq a$. Therefore $\neg e_j\leq a\leq p$, so we obtain $1 = e_j\lor \neg e_j \leq p$, contradicting that $p\in Spec(A)$. It follows that $a\not\leq p$, so $p\in U$.

In order to prove the converse inclusion $U\subseteq \displaystyle \bigcup_{i\in I}V(e_i)$, let us assume that $p\notin \displaystyle \bigcup_{i\in I}V(e_i)$, hence for all $i\in I$ we have $e_i\not\leq p$. Consider a minimal $m$ - prime element $n$ such that $n\leq p$, hence $e_i\not\leq n$ for all $i\in I$. This implies $n\notin V(e_i)\bigcap Min(A)$ for all $i\in I$, hence $n\notin U\bigcap Min(A)$. Since $n\notin U$ implies $a\leq n$, we obtain $a\leq n\leq p$, hence
$p\in U$. We have proven that $U = \displaystyle \bigcup_{i\in I}V(e_i)$, hence the following equalities hold:

$V(a)$ = $Spec(A) - \displaystyle \bigcup_{i\in I}V(e_i)$ = $\displaystyle \bigcap_{i\in I}D(e_i)$  =  $\displaystyle \bigcap_{i\in I}V(\neg e_i)$= $V(\displaystyle \bigvee_{i\in I}\neg e_i)$.

Let us consider the regular element $b = \displaystyle \bigvee_{i\in I}\neg e_i$. Then $\rho(a)$ = $\bigwedge V(a)$ = $\bigwedge V(b)$ = $\rho(b)$. By Lemma 5.1, the regular element $b$ is pure. In accordance with Proposition 4.5, for the pure elements $a$ and $b$ we have $a = \rho(a) = \rho(b) = b$. Therefore $a$ is regular element.

$(8)\Rightarrow(6)$
According to Theorem 6.6, any minimal $m$ - prime element $p$ is pure, so $p$ is regular.

\end{proof}

\begin{corolar}
Any hyperarchimedean coherent quantale $A$ is purified.
\end{corolar}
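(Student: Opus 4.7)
The plan is to exploit the characterization of hyperarchimedean coherent quantales recalled just before Proposition 5.16: $L(A)$ is a Boolean algebra, equivalently $Max(A)=Spec(A)$. Since every $m$-prime element dominates some minimal $m$-prime, this forces $Min(A)=Max(A)=Spec(A)$. So two distinct elements $p,q\in Min(A)$ are in fact distinct maximal elements of $A$, and hence $p\lor q=1$. By compactness of $1$, I can then pick $c,d\in K(A)$ with $c\leq p$, $d\leq q$ and $c\lor d=1$.

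The hyperarchimedean hypothesis supplies an integer $n\geq 1$ with $c^n\in B(A)$; set $e=c^n$. Lemma 2.1(2) upgrades $c\lor d=1$ to $c^n\lor d^n=1$, i.e.\ $e\lor d^n=1$, and obviously $e\leq c\leq p$. The remaining point is $\neg e\leq q$, which follows from the distributivity of $\cdot$ over $\lor$ together with $e\cdot\neg e=0$ (since $e\in B(A)$):
\[
\neg e \;=\; \neg e\cdot(e\lor d^n) \;=\; (\neg e\cdot e)\lor(\neg e\cdot d^n) \;=\; \neg e\cdot d^n \;\leq\; d^n \;\leq\; d \;\leq\; q.
\]
Thus $e\in B(A)$ witnesses $e\leq p$ and $\neg e\leq q$, which is exactly the definition of $A$ being purified.

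There is no real obstacle: the only subtle decision is to replace $c$ by a suitable power $c^n$ so as to land in $B(A)$, and that is precisely what the hyperarchimedean assumption delivers. An equivalent route, for the reader who prefers working in the reticulation, would be to note that $p^{\ast}$ and $q^{\ast}$ are distinct prime ideals of the Boolean lattice $L(A)$, separate them by a complementary pair $x,\neg x$, and lift $x$ back to $B(A)$ via the isomorphism of Corollary 3.11, concluding with Lemma 3.3(8); but the direct calculation above is more economical.
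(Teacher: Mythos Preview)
Your argument is correct. From $Max(A)=Spec(A)$ you correctly deduce $Min(A)=Max(A)$, so distinct minimal $m$-primes $p,q$ satisfy $p\lor q=1$; compactness of $1$ together with algebraicity of $A$ yields the compact elements $c\leq p$, $d\leq q$ with $c\lor d=1$; and the hyperarchimedean hypothesis plus Lemma~2.1(2) let you pass to $e=c^n\in B(A)$ with $e\lor d^n=1$, after which the computation $\neg e=\neg e\cdot(e\lor d^n)=\neg e\cdot d^n\leq d\leq q$ finishes the job. Every step is sound.

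Your route differs from the paper's. The paper does not verify the definition of ``purified'' directly; instead it invokes the characterization Theorem~7.5, specifically the equivalence $(1)\Leftrightarrow(4)$: it argues that hyperarchimedean implies $Spec_F(A)$ is a Boolean space (hence normal, so $A$ is an $mp$-quantale by Theorem~6.3) and $Min_F(A)=Spec_F(A)$ is Boolean, and then reads off ``purified'' from Theorem~7.5(4). So the paper's proof is topological and leans on the heavy machinery of Section~7, whereas your proof is purely algebraic and self-contained, needing only Lemma~2.1(2) and the defining property of hyperarchimedean quantales. Your approach is more economical and has the further advantage of not passing through Theorem~7.5, whose statement is restricted to \emph{semiprime} quantales (a hypothesis the paper's proof of this corollary does not explicitly verify). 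The paper's approach, on the other hand, situates the corollary within the general topological picture of purified quantales developed in Theorem~7.5.
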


\begin{proof}
In accordance with the characterization theorems of hyperarchimedean quantales given in \cite{Cheptea1},\cite{GG} it follows that $Max(A) = Spec(A) = Min(A)$ and $Spec_F(A)$ is a Boolean space. Thus $A$ is an $mp$ - quantale (by Theorem 6.3,(6)) and $Min_F(A)$ is a Boolean space. By applying Theorem 7.5,(4) we conclude that $A$ is purified.
\end{proof}

\begin{corolar}
Let $A$ be a coherent $PF$ - quantale. If $Min_Z(A)$ is compact then $A$ is purified.
\end{corolar}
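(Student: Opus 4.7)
The plan is to reduce the statement to the equivalence $(1)\Leftrightarrow (4)$ of Theorem 7.5, which characterizes purified semiprime quantales as those $mp$-quantales for which $Min_F(A)$ is a Boolean space. Since the hypothesis concerns the spectral topology while the equivalence is stated in terms of the flat topology, the bridge will come from Theorem 6.2, whose content is precisely the collapse of the two topologies on $Min(A)$ under compactness.

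First, I would check that $A$ is a semiprime $mp$-quantale. This is immediate: by Lemma 6.4(1), any $PF$-quantale is semiprime, and by Theorem 6.6, any $PF$-quantale is an $mp$-quantale. In particular, Theorem 7.5 (stated for semiprime coherent quantales) is applicable to $A$, and the first half of condition (4) is already verified.

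Next, I would verify that $Min_F(A)$ is a Boolean space. Here I invoke Theorem 6.2, which applies since $A$ is semiprime: the compactness of $Min_Z(A)$ (our hypothesis) is equivalent to $Min_Z(A) = Min_F(A)$ as topological spaces and also equivalent to $Min_Z(A)$ being a Boolean space. Combining these two equivalences, $Min_F(A)$ coincides with $Min_Z(A)$ and inherits its Boolean structure.

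Finally, having established both halves of condition (4) of Theorem 7.5, I conclude by that theorem that $A$ is a purified quantale. The proof is essentially a short combination of previous results, so the main (and only) subtlety is identifying that Theorem 6.2 is exactly what converts the spectral-topology hypothesis into the flat-topology conclusion required to feed Theorem 7.5; there is no genuine obstacle to overcome beyond correctly chaining these equivalences.
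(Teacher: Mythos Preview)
Your proof is correct and follows essentially the same route as the paper's: both use Theorem~6.2 to pass from compactness of $Min_Z(A)$ to $Min_F(A)$ being a Boolean space, and then invoke condition~(4) of Theorem~7.5. One minor point: the fact that a $PF$-quantale is an $mp$-quantale is Theorem~6.5 (or Theorem~6.8), not Theorem~6.6.
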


\begin{proof}
Since $Min_Z(A)$ is compact it follows that $Min_Z(A) = Min_F(A)$ (cf.Theorem 6.2,(1)). We know that $Min_Z(A)$ is a zero - dimensional Hausdorff space and $Min_F(A)$ is a compact space, hence in our case, $Min_F(A)$ is a Boolean space. By applying Theorem 7.5,(4) we conclude that $A$ is a purified quantale.
\end{proof}

\section{$PP$ - quantales}

\hspace{0.5cm} In this section we shall define the $PP$ - quantales as an abstraction of $PP$ - rings (= Baer rings) \cite{Aghajani},\cite{Lam},\cite{Simmons}, Stone lattices \cite{BalbesDwinger},\cite{Simmons}, Stone $MV$ - algebras \cite{Belluce}, Baer $BL$ - algebras \cite{g}, Stone residuated lattices \cite{Muresan},\cite{Rasouli},etc.

Let $A$ be an algebraic quantale such that $1\in K(A)$. Then $A$ will be called a {\emph{$PP$ - quantale}} if for any $c\in K(A)$ we have $c^{\perp}\in B(A)$. A {\emph{$PP$ - frame}} is a $PP$ - quantale which is a frame.

Let $L$ be a bounded distributive lattice. Following \cite{BalbesDwinger},\cite{Simmons}, $L$ is said to be a {\emph{Stone lattice}} if for any $x\in L$ there exists $e\in B(L)$ such that $Ann(x)$ is the ideal $[e)$ of $L$ generated by the point set $\{x\}$. Then $L$ is a Stone lattice if and only if $Id(L)$ is a $PP$ - frame.

Let $R$ be a commutative ring. Then $R$ is said to be a $PP$ - ring if the annihilator of any element of $R$ is generated by an idempotent element. Then $R$ is a $PP$ - ring if and only if $Id(R)$ is a $PP$ - quantale.

Throughout this section we shall assume that $A$ is coherent quantale.

\begin{lema}
Any $PP$ - quantale $A$ is semiprime.
\end{lema}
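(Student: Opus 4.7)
The plan is to reduce semiprimality to the statement that a compact element with $c^n=0$ must be zero, and then to handle this by a short induction on the exponent $n$ using the hypothesis that $c^{\perp}\in B(A)$ for every compact $c$.

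First, I would recall that $A$ being algebraic gives $\rho(0)=\bigvee\{c\in K(A)\mid c\leq \rho(0)\}$. By Lemma 2.4(2), for any compact $c$, $c\leq\rho(0)$ is equivalent to $c^n=0$ for some integer $n\geq 1$. So it is enough to show that every compact $c$ with $c^n=0$ is already $0$; then $\rho(0)$ is a join of zeros, hence $\rho(0)=0$.

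For the base case $n=2$, the argument is: if $c\cdot c=0$, then by the definition of $c^{\perp}$ we have $c\leq c^{\perp}$. Since $A$ is a $PP$-quantale, $c^{\perp}\in B(A)$, and by Lemma 2.8(2) the meet with a complemented element agrees with the product; thus
\[
c \;=\; c\wedge c^{\perp} \;=\; c\cdot c^{\perp} \;=\; 0,
\]
the last equality being the defining property of $c^{\perp}$. For the inductive step $n\geq 3$, observe that $c^{n-1}\in K(A)$ because $K(A)$ is closed under multiplication in a coherent quantale, and by integrality ($c\leq 1$) one has $c^{2n-2}\leq c^{n}=0$, i.e.\ $(c^{n-1})^{2}=0$. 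The base case applied to $c^{n-1}$ yields $c^{n-1}=0$, and the induction hypothesis finishes the argument.

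There is no real obstacle: the only small point to be careful about is invoking Lemma 2.8(2) correctly (the product–meet identity for complemented elements) in the base case, and using integrality to bound $c^{2n-2}$ by $c^{n}$ in the inductive step. Once the compact elements below $\rho(0)$ are pinned down as $0$, algebraicity of $A$ immediately gives $\rho(0)=0$, which is the definition of semiprime.
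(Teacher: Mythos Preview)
Your proof is correct and follows essentially the same route as the paper: reduce semiprimality via Lemma 2.4(2) to showing $c^n=0\Rightarrow c=0$ for compact $c$, then use $c\leq c^{\perp}\in B(A)$ together with $c\wedge c^{\perp}=c\cdot c^{\perp}=0$ to handle the square case and descend by induction. The only cosmetic issue is that the product--meet identity for complemented elements is Lemma~3.8(2) in the paper, not Lemma~2.8(2).
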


\begin{proof}
Firstly we remark that for any $a\in A$ such that $a\leq a^{\perp}$ and $a^{\perp}\in B(A)$ we have $a = a\land a^{\perp} = aa^{\perp} = 0$.

In order to prove that $A$ is semiprime let $c$ be a compact element such that $c^n = 0$ for some positive integer $n$. Thus $c^{n-1}\leq (c^{n-1})^{\perp}$ and $(c^{n-1})^{\perp}\in B(A)$, hence $c^{n-1} = 0$. By using many times this argument one gets $c = 0$. By using Lemma 2.4 it follows that $A$ is semiprime.

\end{proof}

\begin{lema} If $A$ is semiprime then the following hold:
\usecounter{nr}
\begin{list}{(\arabic{nr})}{\usecounter{nr}}
\item If $e\in B(A)$ then $\rho(e) = e$;

\item If $a\in A$ and $\rho(a)\in B(A)$ then $a = \rho(a)$.
\end{list}
\end{lema}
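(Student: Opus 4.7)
The plan is to exploit the Boolean identity $e\lor e^{\perp}=1$ and $e\cdot e^{\perp}=0$ (which holds for any $e\in B(A)$ because, in an integral commutative quantale, the meet-complement $\neg e$ coincides with $e^{\perp}$: indeed $e\cdot \neg e\le e\land\neg e=0$, and this forces $\neg e=e^{\perp}$). Once we have this, both parts reduce to a short computation using the multiplicative identity $x=x\cdot 1=x\cdot(e\lor e^{\perp})$ and the radical arithmetic of Lemma 2.2.

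For part (1), I would fix $e\in B(A)$ and note that $e\leq \rho(e)$ is automatic from Lemma 2.2(1), so only $\rho(e)\leq e$ needs work. Using Lemma 2.2(2) and the semiprime hypothesis,
\[
\rho(e)\land\rho(e^{\perp})=\rho(e\cdot e^{\perp})=\rho(0)=0.
\]
Hence $\rho(e)\cdot e^{\perp}\le \rho(e)\land\rho(e^{\perp})=0$, and therefore
\[
\rho(e)=\rho(e)\cdot 1=\rho(e)\cdot(e\lor e^{\perp})=\rho(e)\cdot e\;\lor\;\rho(e)\cdot e^{\perp}=\rho(e)\cdot e\leq e.
\]

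For part (2), I would set $e=\rho(a)\in B(A)$. From $a\le\rho(a)=e$ one gets $a\lor e^{\perp}\le 1$; the key step is to promote this to equality. Since $e\le\rho(e^{\perp})\lor\rho(a)=\rho(e^{\perp}\lor a)$ and also $e^{\perp}\le\rho(e^{\perp}\lor a)$, we have $\rho(a\lor e^{\perp})\ge e\lor e^{\perp}=1$, so by Lemma 2.2(3), $a\lor e^{\perp}=1$. Distributing the multiplication gives $e=e(a\lor e^{\perp})=ea\lor ee^{\perp}=ea\le a$, and combined with $a\le e$ we conclude $a=\rho(a)$.

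The only mild subtlety is confirming that the meet-complement of $e\in B(A)$ coincides with $e^{\perp}$, which is routine in the integral commutative setting assumed throughout the paper; once that is granted, the two parts are essentially one-line calculations, and no further machinery (such as the reticulation) is needed.
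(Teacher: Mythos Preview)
Your proof is correct and takes a genuinely different, more elementary route than the paper.

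For part (1), the paper works through the reticulation: given $c\in K(A)$ with $c^2\le e$, it computes $\lambda_A(ce^{\perp})=\lambda_A(c^2)\land\lambda_A(e^{\perp})=\lambda_A(c^2e^{\perp})=0$, so $ce^{\perp}=0$ by semiprimeness (Lemma~3.2(9)), whence $c\le e^{\perp\perp}=e$; an induction on the exponent together with Lemma~2.4 then yields $\rho(e)\le e$. You bypass the reticulation entirely: from $\rho(e)\land\rho(e^{\perp})=\rho(ee^{\perp})=\rho(0)=0$ and the splitting $\rho(e)=\rho(e)(e\lor e^{\perp})$ you obtain $\rho(e)\le e$ in one line. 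For part (2), the paper uses that $c=\rho(a)\in B(A)\subseteq K(A)$ (Lemma~3.9), so Lemma~2.4(2) gives $c^n\le a$ for some $n$, and idempotence of $c$ yields $c\le a$. Your argument instead shows $a\lor e^{\perp}=1$ (via $\rho(a\lor e^{\perp})=1$ and Lemma~2.2(3)) and then distributes $e=e(a\lor e^{\perp})=ea\le a$. Your approach is self-contained within the quantale arithmetic and needs neither the reticulation nor the compactness of Boolean elements; the paper's route for (2) is marginally shorter once those tools are already available.

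One minor slip: you write $\rho(e^{\perp})\lor\rho(a)=\rho(e^{\perp}\lor a)$, but in general only $\rho(e^{\perp})\lor\rho(a)\le\rho(e^{\perp}\lor a)$ holds (Lemma~2.2(4) gives $\rho(a\lor b)=\rho(\rho(a)\lor\rho(b))$, not the equality without the outer $\rho$). This does not affect your argument, since all you need is $e\lor e^{\perp}\le\rho(a\lor e^{\perp})$, which follows directly from $e=\rho(a)\le\rho(a\lor e^{\perp})$ and $e^{\perp}\le a\lor e^{\perp}\le\rho(a\lor e^{\perp})$.
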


\begin{proof}
(1) Assume that $c\in K(A)$ and $c^2\leq e$. By Lemma 3.8,(1) we have $c^{\perp}\in B(A)$, hence $\lambda_A(ce^{\perp})$ = $\lambda_A(c)\land \lambda_A(e^{\perp})$ = $\lambda_A(c^2)\land \lambda_A(e^{\perp})$ =  $\lambda_A(c^2e^{\perp})$ = $\lambda_A(0) = 0$. Since $A$ is semiprime one gets $ce^{\perp} = 0$, so $c\leq e^{\perp \perp} = e$. By using the same argument, one can prove by induction that for any $c\in K(A)$ and for any positive integer $n$, $c^n\leq e$. In accordance with Lemma 2.4 one obtains $\rho(e)\leq e$, so $\rho(e) = e$.

(2) Assume that $a\in A$ and $\rho(a)\in B(A)$. By Lemma 3.9, $c = \rho(a)$ is a compact element of $A$, hence there exists a positive integer $n$ such that $c^n\leq a$. Since $c\in B(A)$ we have $c^n = c$, hence $c\leq a$. Thus $\rho(a)\leq a$, so we obtain $\rho(a) = a$.

\end{proof}

The following theorem is a generalization of a result proved by Simmons for the case of $PP$ - rings (see \cite{Simmons}).

\begin{teorema}
For a quantale $A$ let us consider the following properties:
\usecounter{nr}
\begin{list}{(\arabic{nr})}{\usecounter{nr}}

\item $A$ is a $PP$ - quantale;

\item $R(A)$ is a $PP$ - frame;

\item The reticulation $L(A)$ is a Stone lattice.

\end{list}
Then $(1)\Rightarrow(3)$ and $(1)\Leftrightarrow(2)$ hold. If $A$ is semiprime then the implication $(3)\Rightarrow(1)$ is valid.
\end{teorema}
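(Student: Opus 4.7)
The plan is to route everything through the annihilator formula of Proposition 3.13 and the Boolean isomorphism $\lambda_A|_{B(A)}\colon B(A)\to B(L(A))$ (the corollary after Lemma 3.10), together with the frame isomorphism $R(A)\cong Id(L(A))$ of Proposition 3.7 which brings condition (2) into the picture.

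For $(1)\Rightarrow(3)$, I first use Lemma 8.1 to obtain that $A$ is semiprime, so Proposition 3.13 supplies $\mathrm{Ann}(c^{\ast})=(c^{\perp})^{\ast}$ for every $c\in K(A)$. A typical element of $L(A)$ is $x=\lambda_A(c)$, and by Lemma 3.3(5) the ideal $c^{\ast}$ equals the principal ideal $(x]$, so $\mathrm{Ann}(x)=(c^{\perp})^{\ast}$. The PP hypothesis places $c^{\perp}$ in $B(A)\subseteq K(A)$ (Lemma 3.9), so a second application of Lemma 3.3(5) rewrites $(c^{\perp})^{\ast}$ as the principal ideal $(\lambda_A(c^{\perp})]$; its generator lies in $B(L(A))$ by the Boolean isomorphism, so every annihilator in $L(A)$ is principal with complemented generator, which is exactly the Stone condition. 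The implication $(3)\Rightarrow(1)$, under semiprimeness, is obtained by running the same chain backwards: the Stone property supplies $f\in B(A)$ with $\mathrm{Ann}(\lambda_A(c))=(\lambda_A(f)]=f^{\ast}$, Proposition 3.13 identifies this with $(c^{\perp})^{\ast}$, Lemma 3.3(7) then gives $\rho(c^{\perp})=\rho(f)$, and Lemma 8.2(1),(2) collapse this to $c^{\perp}=f\in B(A)$.

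For $(1)\Leftrightarrow(2)$, the strategy is to apply the equivalence just developed to the frame $R(A)$ itself, which is automatically semiprime as a frame (its bottom $\rho(0)$ is already the smallest radical element). The reticulation of $R(A)$ is canonically identified with $L(A)$ via $\lambda_A(c)\mapsto\lambda_{R(A)}(\rho(c))$: using Lemma 3.2 and the $\rho$-calculus of Lemma 2.2 one verifies the three reticulation axioms, the key point being that in $R(A)$ multiplication is meet, so axiom (3) reduces to order preservation, which matches Lemma 3.2(7). Consequently $R(A)$ is a PP-frame iff $L(R(A))\cong L(A)$ is a Stone lattice, i.e.\ $(2)\Leftrightarrow(3)$; combining with $(1)\Rightarrow(3)$ gives $(1)\Rightarrow(2)$, and the converse is obtained by transferring the semiprime case of $(3)\Rightarrow(1)$ through the Boolean isomorphism $B(A)\cong B(R(A))$ induced by the corollary after Lemma 3.10 applied to both $A$ and $R(A)$. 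The main obstacle I expect is exactly this last transfer: Lemma 8.2(2) is the crucial tool, since it lets one conclude $c^{\perp}=\rho(c^{\perp})$ as soon as $\rho(c^{\perp})$ is recognised as complemented, and so it is what allows a radical-level PP/Stone statement to descend to the quantale-level PP statement on $A$.
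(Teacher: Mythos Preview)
Your arguments for $(1)\Rightarrow(3)$ and for the semiprime case $(3)\Rightarrow(1)$ are essentially the paper's: Proposition~3.13 to rewrite $\mathrm{Ann}(\lambda_A(c))$ as $(c^{\perp})^{\ast}$, the Boolean isomorphism of Corollary~3.11 to recognise $\lambda_A(c^{\perp})$ as complemented, and in the converse direction Lemma~8.2 to collapse $\rho(c^{\perp})=\rho(e)$ to $c^{\perp}=e$. For the link with condition~(2) the paper is more direct than your $L(R(A))\cong L(A)$ computation: it simply invokes the frame isomorphism $R(A)\cong Id(L(A))$ of Proposition~3.7, and observes that by definition $L(A)$ is a Stone lattice iff $Id(L(A))$ is a $PP$-frame, hence iff $R(A)$ is a $PP$-frame. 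So the paper's proof under the heading ``$(1)\Leftrightarrow(2)$'' is in fact a one-line proof of $(2)\Leftrightarrow(3)$; your reticulation-of-$R(A)$ detour reaches the same conclusion but with more work.

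There is, however, a genuine gap in your proposed route to the unconditional implication $(2)\Rightarrow(1)$. You plan to descend from the radical level to $A$ via Lemma~8.2(2), but that lemma is stated and proved only for \emph{semiprime} $A$; without semiprimeness you cannot conclude $c^{\perp}=\rho(c^{\perp})$ from $\rho(c^{\perp})\in B(A)$. This is not a removable technicality: the implication $(2)\Rightarrow(1)$ actually fails without the semiprime hypothesis. For instance, take $A=Id(\mathbb{Z}/4\mathbb{Z})$, a three-element chain $0<a<1$ with $a^{2}=0$. Then $R(A)=\{a,1\}$ is the two-element frame (hence trivially a $PP$-frame) and $L(A)$ is the two-element Boolean algebra (hence Stone), but $a^{\perp}=a\notin B(A)$, so $A$ is not a $PP$-quantale. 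Note that the paper's own proof does not establish $(2)\Rightarrow(1)$ either: what is written under ``$(1)\Leftrightarrow(2)$'' is really the argument for $(2)\Leftrightarrow(3)$, and the passage back to $(1)$ goes through the semiprime step $(3)\Rightarrow(1)$. Thus what both the paper and your proposal actually deliver is $(1)\Rightarrow(3)$, $(2)\Leftrightarrow(3)$, and, under semiprimeness, $(3)\Rightarrow(1)$; the full unconditional equivalence $(1)\Leftrightarrow(2)$ cannot be obtained.
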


\begin{proof}

$(1)\Rightarrow(3)$
Assume that $A$ is $PP$ - quantale. By Lemma 8.1, A is semiprime and, by Corollary 3.11, the function $\lambda_A|_{B(A)}: B(A) \rightarrow B(L(A))$ is a Boolean isomorphism. Let $x\in L(A)$, hence there exists $c\in K(A)$ such that $x = \lambda_A(c)$. Since $A$ is a $PP$ - quantale we have $c^{\perp}\in B(A)$, hence $\lambda_A(c^{\perp})\in B(L(A))$. We shall prove that $Ann(\lambda_A(c)) = (\lambda_A(c^{\perp})]$. By Proposition 3.13 we have $Ann(\lambda_A(c)) = Ann(c^{\ast}) = (c^{\perp})^{\ast}$. Let us consider an element $y\in Ann(\lambda_A(c)) = (c^{\perp})^{\ast}$, so there exists $d\in K(A)$ such that $d\leq c^{\perp}$ and $y = \lambda_A(d)$. Thus $y = \lambda_A(d)\leq \lambda_A(c^{\perp})$, so $y\in (\lambda_A(c^{\perp})]$. We have proven that $Ann(\lambda_A(c))\subseteq (\lambda_A(c^{\perp})]$.

On the other hand, from $\lambda_A(c^{\perp})\land \lambda_A(c)$ = $\lambda_A(cc^{\perp})$ = $\lambda_A(0) = 0$ we obtain $ \lambda_A(c^{\perp})\in Ann(\lambda_A(c))$, hence $(\lambda_A(c^{\perp})]\subseteq Ann(\lambda_A(c))$.

$(1)\Leftrightarrow(2)$
In accordance with Proposition 3.7, the frames $R(A)$ and $Id(L(A))$ are isomorphic.Then $L(A)$ is a Stone lattice iff $Id(L(A))$ is a $PP$ - frame iff $R(A)$ is a $PP$ - frame.

$(3)\Rightarrow(1)$
Assume now that $A$ is semiprime and $L(A)$ is a Stone lattice. Let $c$ be a compact element of $A$, so there exists $f\in B(L(A))$ such that $Ann(\lambda_A(c)) = (f]$. By Corollary 3.11 there exists $e\in B(A)$ such that $f = \lambda_A(e)$. According to Proposition 3.13 we have $(c^{\perp})^{\ast} = Ann(\lambda_A(c)) = ( \lambda_A(e)] = e^{\ast}$, hence, by using Proposition 3.3,(7) one gets $\rho(c^{\perp}) = ((c^{\perp})^{\ast})_{\ast} = (e^{\ast})_{\ast} = \rho(e)$. By Lemma 8.2,(1) we have $\rho(0) = 0$. Since $\rho(c^{\perp}) = e\in B(A)$, by applying Lemma 8.2,(2) one obtains $c^{\perp} = e$, hence $c^{\perp}\in B(A)$, so $A$ is a $PP$ - quantale.

\end{proof}

By using the previous result one can obtain characterization theorems for $PP$ - quantales by 
transferring from lattices to rings the properties that describe the Stone lattices.

\begin{lema}\cite{Cornish}
Let  $L$ be a bounded distributive lattice. Then $L$ is a Stone lattice if and only if $L$ is conormal and $Min_{Id,Z}(L)$ is compact.
\end{lema}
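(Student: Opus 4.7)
The plan is to handle the two implications separately, with the converse being where the hypotheses genuinely combine.

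For the forward direction, conormality follows immediately from the Stone hypothesis: given $x\wedge y = 0$, pick $e\in B(L)$ with $\mathrm{Ann}(x)=(e]$; then $y\le e$, and taking $u:=e$, $v:=\neg e$ we get $u\vee v = 1$, $u\wedge x = 0$ (from $e\in\mathrm{Ann}(x)$), and $v\wedge y\le\neg e\wedge e = 0$. For the compactness of $Min_{Id,Z}(L)$, I would identify it with the Stone space $Spec_{Id}(B(L))$ via the restriction map $P\mapsto P\cap B(L)$: in a Stone lattice every clopen of the form $V(x)\cap Min_{Id,Z}(L)$ arises as $D(e)\cap Min_{Id,Z}(L)$ for a complemented generator $e$ of $\mathrm{Ann}(x)$, which lets one produce the inverse of the restriction map. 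Since $Spec_{Id}(B(L))$ is always compact, so is $Min_{Id,Z}(L)$.

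For the converse, fix $x\in L$ and aim to produce $e\in B(L)$ with $\mathrm{Ann}(x)=(e]$. The first step is to write
\[
V(x)\cap Min_{Id,Z}(L) \;=\; \bigcup_{t\in \mathrm{Ann}(x)}\bigl(D(t)\cap Min_{Id,Z}(L)\bigr),
\]
which follows from the identity $P = O(P)$ holding for every minimal prime. Since $V(x)\cap Min_{Id,Z}(L)$ is closed in the compact space $Min_{Id,Z}(L)$, a finite sub-union suffices, and taking the join one obtains a single $t\in\mathrm{Ann}(x)$ with $V(x)\cap Min_{Id,Z}(L) = D(t)\cap Min_{Id,Z}(L)$. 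Next, apply conormality to the relation $x\wedge t = 0$ to get $u,v\in L$ with $u\vee v = 1$ and $u\wedge x = 0 = v\wedge t$; distributing then yields $t\le u$ and $x\le v$.

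The central and most delicate step is upgrading the pair $(u,v)$ into an actual complementary pair in $B(L)$. My plan is to compute $D(u)\cap Min_{Id,Z}(L) = D(t)\cap Min_{Id,Z}(L) = V(x)\cap Min_{Id,Z}(L)$ (the first equality uses $t\le u$ together with the fact that $u\wedge x=0$ forces $D(u)\cap Min \subseteq V(x)\cap Min$), and symmetrically $D(v)\cap Min_{Id,Z}(L) = V(t)\cap Min_{Id,Z}(L)$. These two clopens are disjoint, so $u\wedge v$ lies in every minimal prime; invoking the general fact that $\bigcap Min_{Id,Z}(L) = (0]$ in any bounded distributive lattice, this gives $u\wedge v = 0$, and combined with $u\vee v = 1$ we get $u\in B(L)$ with complement $v$.

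Finally, set $e := u$. The inclusion $(e]\subseteq\mathrm{Ann}(x)$ is immediate from $e\wedge x = 0$. For the reverse inclusion, given $y\in\mathrm{Ann}(x)$, the plan is to verify $y\wedge v = 0$ by checking membership in every minimal prime $P$: either $v\in P$ (then $y\wedge v\in P$), or $v\notin P$, in which case $P\in V(t)\cap Min = D(x)\cap Min$, so $x\notin P$ and $y\wedge x = 0\in P$ force $y\in P$ by primality. Thus $y\wedge v\in\bigcap Min_{Id,Z}(L) = (0]$, so $y\le e$. The main obstacle throughout is the third paragraph's step: turning the non-complementary pair supplied by conormality into genuinely complementary elements of $B(L)$, which is exactly where conormality and compactness of $Min_{Id,Z}(L)$ must be used in tandem.
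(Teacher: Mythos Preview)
The paper does not supply a proof of this lemma; it is quoted from \cite{Cornish} as a known result and used as a black box in Theorem~8.5. Your argument is correct and self-contained.

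One remark on the forward direction: your identification of $Min_{Id,Z}(L)$ with the Stone space of $B(L)$ is valid but left a bit sketchy. A shorter route is to note that in a Stone lattice, for each $x$ the element $y:=x^{*}\in B(L)$ (the generator of $\mathrm{Ann}(x)$) satisfies $x\wedge y=0$ and $\mathrm{Ann}(x\vee y)=\{0\}$, since $z\wedge x=0=z\wedge x^{*}$ forces $z\le x^{*}\wedge x^{**}=0$; this is exactly Speed's criterion for compactness of the minimal spectrum (the lattice analogue of condition~(4) in Theorem~6.2 of the paper).

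Your converse is the standard argument and is carried out correctly: the compactness of $Min_{Id,Z}(L)$ is used to collapse the cover $V(x)\cap Min=\bigcup_{t\in\mathrm{Ann}(x)}D(t)\cap Min$ to a single $t$, conormality produces the pair $(u,v)$, and the identity $\bigcap Min_{Id}(L)=\{0\}$ (valid in any bounded distributive lattice) upgrades $(u,v)$ to a complementary pair. The final verification that $\mathrm{Ann}(x)=(u]$ is also correct.
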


\begin{teorema}
For a semiprime quantale $A$ the following properties are equivalent:
\usecounter{nr}
\begin{list}{(\arabic{nr})}{\usecounter{nr}}

\item $A$ is a $PP$ - quantale;

\item The reticulation $L(A)$ is a Stone lattice;

\item $L(A)$ is a conormal lattice and $Min_{Id,Z}(L(A))$ is compact;

\item $A$ is a $PF$ - quantale and  $Min_Z(A)$ is compact;

\item $A$ is a $PF$ - quantale and  $Min_Z(A)$ is a Boolean space;

\item $A$ is an $mp$ - quantale and  $Min_Z(A)$ is a Boolean space;

\item $Spec_F(A)$ is a normal space and  $Min_Z(A)$ is a Boolean space;

\end{list}

\end{teorema}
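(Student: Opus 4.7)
The plan is to obtain all the equivalences by chaining together results already proved in the paper, using the reticulation functor as the central bridge between quantale-theoretic and lattice-theoretic statements.

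First I would establish $(1) \Leftrightarrow (2)$ directly from Theorem 8.3: since $A$ is assumed semiprime, that theorem gives both directions, identifying $A$ being a $PP$-quantale with $L(A)$ being a Stone lattice. Next, for $(2) \Leftrightarrow (3)$, I would simply apply Lemma 8.4 to the bounded distributive lattice $L = L(A)$.

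For $(3) \Leftrightarrow (4)$ I would use two transfer facts already in hand. First, Lemma 6.1 says that $L(A)$ is conormal if and only if $A$ is an $mp$-quantale, and by Theorem 6.5, under the semiprime hypothesis this is equivalent to $A$ being a $PF$-quantale. Second, the discussion preceding Theorem 6.2 observes that $Min_Z(A)$ is homeomorphic to $Min_{Id,Z}(L(A))$, so the compactness condition translates verbatim. The equivalences $(4) \Leftrightarrow (5)$ and $(5) \Leftrightarrow (6)$ are then immediate: the first from Theorem 6.2, which for semiprime quantales equates compactness of $Min_Z(A)$ with its being a Boolean space, and the second from Theorem 6.5 once again (semiprime plus $mp$ equals $PF$). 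Finally, $(6) \Leftrightarrow (7)$ follows from Theorem 6.3, which tells us that $A$ is an $mp$-quantale if and only if $Spec_F(A)$ is a normal space.

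Since each link of the chain is a direct citation of a previously established lemma, theorem, or homeomorphism, there is no genuinely hard step here; the proof is essentially an orchestration argument. The only point requiring care is making sure the semiprime hypothesis is actively used wherever needed, namely in Theorem 8.3 for $(1) \Leftrightarrow (2)$, in Theorem 6.5 for the passage between $mp$ and $PF$, and in Theorem 6.2 for the compact/Boolean equivalence on $Min_Z(A)$. I would therefore structure the write-up as a short cycle $(1) \Rightarrow (2) \Rightarrow (3) \Rightarrow (4) \Rightarrow (5) \Rightarrow (6) \Rightarrow (7) \Rightarrow (1)$, or equivalently present the seven conditions as a chain of biconditionals, citing the appropriate earlier result at each step and noting explicitly that semiprimeness is what allows each of the delicate equivalences to go through.
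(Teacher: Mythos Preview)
Your proposal is correct and follows essentially the same approach as the paper: the paper also proves the chain of biconditionals $(1)\Leftrightarrow(2)\Leftrightarrow(3)\Leftrightarrow(4)\Leftrightarrow(5)\Leftrightarrow(6)\Leftrightarrow(7)$, citing Theorem 8.3, Lemma 8.4, Lemma 6.1 with Theorem 6.5 plus the homeomorphism $Min_Z(A)\cong Min_{Id,Z}(L(A))$, Theorem 6.2, Theorem 6.5, and Theorem 6.3 respectively. Your remarks about where the semiprime hypothesis is actually used are an accurate gloss on what the paper does more tersely.
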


\begin{proof}

$(1)\Leftrightarrow (2)$ By Theorem 8.3.

$(2)\Leftrightarrow (3)$ By Lemma 8.4.

$(3)\Leftrightarrow (4)$ Since $A$ is semiprime, the reticulation $L(A)$ is a conormal lattice if and only if $A$ is a $PF$ - quantale (cf. Lemma 6.1 and Theorem 6.5). On the other hand, $Min_Z(A)$ and $Min_{Id,Z}(L(A))$ are homeomorpic topological spaces. Then the equivalence of $(3)$ and $(4)$ follows.

$(4)\Leftrightarrow (5)$ By Theorem 6.2.

$(5)\Leftrightarrow (6)$ By Theorem 6.5.

$(6)\Leftrightarrow (7)$ By Theorem 6.3.

\end{proof}

\begin{lema}\cite{Cornish1}
Let $L$ be a conormal lattice. Then $L$ is a Stone lattice if and only if the inclusion $Min_Z(L)\subseteq Spec_Z(A)$ has a continuous retraction.

\end{lema}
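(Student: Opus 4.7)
The plan is to handle the two implications separately. For the backward direction, suppose $L$ is conormal and $r \colon Spec_{Id,Z}(L) \to Min_{Id,Z}(L)$ is a continuous retraction. The Stone topology on the prime spectrum of any bounded distributive lattice is compact, and $r$ is surjective (being a retraction onto $Min_{Id,Z}(L)$), so $Min_{Id,Z}(L)$ is the continuous image of a compact space and therefore compact. Lemma 8.4 then delivers that $L$ is a Stone lattice.

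For the forward direction, suppose $L$ is both conormal and Stone. Conormality guarantees that each prime ideal $P$ of $L$ contains a unique minimal prime, and by Proposition 5.9 this minimal prime equals $O(P) = \bigcap \Lambda_{Id}(P)$. Define $r(P) = O(P)$ for every $P \in Spec_{Id}(L)$; then $r$ restricts to the identity on $Min_{Id,Z}(L)$, so it is a set-theoretic retraction, and only continuity remains to be verified. Fix $x \in L$ and consider the preimage of the basic open set $D_{Id}(x) \cap Min_{Id,Z}(L)$:
\[
r^{-1}(D_{Id}(x) \cap Min_{Id,Z}(L)) = \{P \in Spec_{Id}(L) \mid x \notin O(P)\}.
\]
Unfolding the definition of $O(P)$, the condition $x \notin O(P)$ is equivalent to saying that $x \wedge y \neq 0$ for every $y \in L \setminus P$, which is the same as $Ann(x) \subseteq P$. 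Hence the preimage equals $V_{Id}(Ann(x))$. The Stone hypothesis now allows one to write $Ann(x) = (e]$ for some $e \in B(L)$, so $V_{Id}(Ann(x)) = V_{Id}(e)$; and since $e \wedge \neg e = 0 \in P$ together with $e \vee \neg e = 1 \notin P$ force exactly one of $e, \neg e$ to lie in the prime ideal $P$, we get $V_{Id}(e) = D_{Id}(\neg e)$, which is open in $Spec_{Id,Z}(L)$. Thus $r$ is continuous.

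I do not anticipate a serious obstacle: the backward implication is essentially a one-line compactness transfer followed by Lemma 8.4, while the forward implication reduces, via the characterization of annihilators in a Stone lattice as principal ideals generated by complemented elements, to the standard observation that any prime ideal meets each complemented pair in exactly one element. The only mildly delicate conceptual point is recognizing that the retraction one is looking for is forced to coincide with the operator $O(\cdot)$, which is pinned down by the conormality assumption and Proposition 5.9.
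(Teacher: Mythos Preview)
The paper does not give its own proof of this lemma: it is stated with a citation to \cite{Cornish1} and used as a black box in the proof of Theorem~8.7. So there is nothing in the paper to compare your argument against at the level of proof strategy.

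That said, your proof is correct and self-contained within the paper's framework. The backward implication is exactly the one-line compactness transfer you describe, followed by an appeal to Lemma~8.4. For the forward implication, identifying the retraction with $P \mapsto O(P)$ is the natural move, and your computation of the preimage $r^{-1}(D_{Id}(x)\cap Min_{Id,Z}(L)) = V_{Id}(Ann(x)) = V_{Id}(e) = D_{Id}(\neg e)$ is clean and accurate; each step (the characterization of $O(P)$ via annihilators, the Stone hypothesis giving $Ann(x)=(e]$ with $e$ complemented, and the clopen nature of $V_{Id}(e)$ for complemented $e$) is standard. One small remark: in the forward direction you begin by assuming $L$ is ``both conormal and Stone,'' but conormality is already contained in the Stone hypothesis via Lemma~8.4, so the assumption of conormality in the statement of the lemma is only doing work in the backward direction.
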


\begin{teorema}
For a $PF$ - quantale $A$ the following properties are equivalent:
\usecounter{nr}
\begin{list}{(\arabic{nr})}{\usecounter{nr}}

\item $A$ is a $PP$ - quantale;

\item The reticulation $L(A)$ is a Stone lattice;

\item The inclusion $Min_{Id,Z}(L(A))\subseteq Spec_{Id,Z}(L(A))$ has a continuous retraction;

\item The inclusion $Min_Z(A)\subseteq Spec_Z(A)$ has a continuous retraction;

\item For any $c\in K(A)$, $Min(A)\bigcap D(c)$ is an open subset of $Spec_Z(Vir(A))$;

\item $Min_Z(A)$ is a compact space.

\end{list}

\end{teorema}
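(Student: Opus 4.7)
The plan is to establish the chain $(1)\Leftrightarrow(2)\Leftrightarrow(3)\Leftrightarrow(4)$, the equivalence $(1)\Leftrightarrow(6)$, and finally $(5)\Leftrightarrow(6)$, by chaining results collected earlier in the paper.

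First, $(1)\Leftrightarrow(2)$ is immediate from Theorem 8.3, because any $PF$-quantale is semiprime by Lemma 6.4(1). For $(2)\Leftrightarrow(3)$, note that $A$ being a $PF$-quantale implies, via Lemma 6.4(2), that $L(A)$ is a conormal lattice, so Lemma 8.6 directly gives the equivalence between $L(A)$ being Stone and the inclusion $Min_{Id,Z}(L(A))\subseteq Spec_{Id,Z}(L(A))$ admitting a continuous retraction. For $(3)\Leftrightarrow(4)$, I would transport the retraction along the homeomorphism $u:Spec_Z(A)\to Spec_{Id,Z}(L(A))$ of Lemma 3.5, with inverse $v$: because both $u$ and $v$ are order-preserving, they restrict to mutually inverse homeomorphisms between $Min_Z(A)$ and $Min_{Id,Z}(L(A))$, and any continuous retraction at one level conjugates into one at the other.

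The equivalence $(1)\Leftrightarrow(6)$ is a direct application of Theorem 8.5: the equivalence $(1)\Leftrightarrow(4)$ there reads ``a semiprime quantale is $PP$ iff it is $PF$ and $Min_Z(\cdot)$ is compact''; since $A$ is $PF$ by hypothesis, this collapses precisely to ``$A$ is $PP$ iff $Min_Z(A)$ is compact''.

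The most delicate step is $(5)\Leftrightarrow(6)$. By Remark 6.17, $Min_F(A)$ and $Spec_Z(Vir(A))$ coincide as topological spaces for a $PF$-quantale, so condition (5) reads: every set $Min(A)\bigcap D(c)$ is open in $Min_F(A)$. Since in the flat topology the basic open sets are the $V(c)$, each $Min(A)\bigcap D(c)$ is automatically closed in $Min_F(A)$, so (5) really says that these sets are clopen there. The direction $(6)\Rightarrow(5)$ then follows at once: if $Min_Z(A)$ is compact, Theorem 6.2 yields $Min_Z(A)=Min_F(A)$, and the basic Zariski opens $Min(A)\bigcap D(c)$ are open in this common topology, hence in $Spec_Z(Vir(A))$. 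For $(5)\Rightarrow(6)$, the sets $Min(A)\bigcap D(c)$ form a basis of the subspace topology $Min_Z(A)$, so (5) asserts that the Zariski topology on $Min(A)$ is coarser than the flat topology. The identity map $Min_F(A)\to Min_Z(A)$ is then a continuous bijection from a compact space to a Hausdorff space (both properties coming from the discussion preceding Theorem 6.2), hence a homeomorphism; therefore the two topologies on $Min(A)$ coincide and $Min_Z(A)$ is compact. The principal obstacle is this last implication, where the minimality of compact Hausdorff topologies must be invoked to upgrade a one-sided inclusion of topologies into equality; everything else amounts to bookkeeping of the theorems already proved.
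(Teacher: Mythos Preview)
Your proof is correct and close in spirit to the paper's, but the treatment of condition~(5) is organized differently and is worth a brief comparison. The paper closes the cycle by proving $(1)\Rightarrow(5)\Rightarrow(6)\Rightarrow(1)$: for $(1)\Rightarrow(5)$ it uses the $PP$-hypothesis directly, writing $Min(A)\cap D(c)=\{p\in Spec(Vir(A))\mid c^{\perp}\not\leq p\}$ via Corollary~3.16 and the identification $Min(A)=Spec(Vir(A))$ from Theorems~6.12 and~6.16; for $(5)\Rightarrow(6)$ it checks that the continuous bijection $Vir\circ i:Min_Z(A)\to Spec_Z(Vir(A))$ becomes an \emph{open} map under~(5), hence a homeomorphism onto the compact target. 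You instead first obtain $(1)\Leftrightarrow(6)$ from Theorem~8.5 and then argue $(5)\Leftrightarrow(6)$ by invoking Remark~6.17 to rewrite $Spec_Z(Vir(A))$ as $Min_F(A)$, so that~(5) becomes the statement that the identity $Min_F(A)\to Min_Z(A)$ is continuous; the compact--Hausdorff rigidity argument then replaces the paper's open-map check. Your route avoids the explicit annihilator computation in $(1)\Rightarrow(5)$ at the cost of leaning on Remark~6.17 (itself a corollary of Theorems~6.12 and~6.16) and Theorem~6.2; the paper's route is slightly more self-contained for $(1)\Rightarrow(5)$ but amounts to the same homeomorphism for $(5)\Rightarrow(6)$, just phrased dually (open map versus closed map from compact to Hausdorff).
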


\begin{proof}

$(1)\Leftrightarrow (2)$ By Theorem 8.3.

$(2)\Leftrightarrow (3)$ By Lemma 8.6.

$(3)\Leftrightarrow (4)$ This equivalence follows because $Spec_Z(A)$ (resp. $Min_Z(A)$) is homeomorphic to $Spec_{Id,Z}(L(A))$ (resp. $Min_{Id,Z}(L(A))$) .

$(1)\Leftrightarrow (5)$ Let $c$ be a compact element of $A$, hence $c^{\perp}\in B(A)$. By Theorems 6.12 and 6.16 we have $Min(A) = Spec(Vir(A))$. According to Corolarry 3.16, for each $p\in Min(A)$ the following equivalence holds: $c\leq p$ iff $c^{\perp}\not\leq p$. Therefore one obtain the equality $Min(A)\bigcap D(c)$ = $\{p\in Spec(Vir(A))|c^{\perp}\not\leq p\}$, hence $Min(A)\bigcap D(c)$ is open in $Spec(Vir(A))$.

$(5)\Leftrightarrow (6)$ According to Lemma 4.2,(4) one can consider the composition $Vir\circ i$ of the following two continuous maps: the inclusion $i:Min_Z(A)\rightarrow Spec_Z(A)$ and $Vir:Spec_Z(A)\rightarrow Spec_Z(Vir(A))$. By Theorems 6.12 and 6.16, $Vir\circ i$ is a continuous bijection. The hypothesis (5) implies that $Vir\circ i$ is an open map, so it is a homeomorphism. Since $Spec_Z(Vir(A))$ is a compact space, it follows that $Min_Z(A)$ is also compact.

$(6)\Leftrightarrow (1)$ By Theorem 8.5,(4).

\end{proof}

\begin{corolar}
Any $PP$ - quantale is a purified quantale.

\end{corolar}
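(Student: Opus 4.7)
The plan is to package the result as a direct consequence of the two characterization theorems already at hand, Theorem 8.5 and Corollary 7.7. First I would invoke Lemma 8.1 to conclude that the given $PP$-quantale $A$ is semiprime, which is the standing hypothesis needed to run the chain of equivalences in Theorem 8.5. At this point $A$ qualifies for every condition listed there.

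Next I would apply the equivalence $(1)\Leftrightarrow(4)$ of Theorem 8.5 to read off two facts simultaneously: that $A$ is a $PF$-quantale, and that the topological space $Min_Z(A)$ is compact. (Alternatively one could use $(1)\Leftrightarrow(5)$, which delivers the stronger statement that $Min_Z(A)$ is a Boolean space; either version suffices for the next step.) Since $A$ has been assumed coherent throughout the section, we are now exactly in the hypothesis of Corollary 7.7.

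Finally I would invoke Corollary 7.7 to conclude that $A$ is a purified quantale, which is the desired statement. There is no real obstacle in this argument: the whole proof is a two-line citation chain, and the only thing to verify is that the coherence assumption of the section carries over so that Corollary 7.7 is applicable. Thus the corollary reduces to checking the hypotheses of the two invoked results, both of which are immediate.
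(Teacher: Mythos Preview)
Your proposal is correct and follows essentially the same route as the paper. The only cosmetic difference is that you cite Corollary 7.7 directly, whereas the paper inlines that corollary's argument (invoking Theorem 8.5,(5) to get that $Min_Z(A)$ is a Boolean space, then Theorem 6.2,(1) for $Min_Z(A)=Min_F(A)$, and finally Theorem 7.5,(4)); your explicit appeal to Lemma 8.1 for semiprimeness is also a point the paper leaves tacit.
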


\begin{proof}
Let $A$ be a $PP$ - quantale. By Theorem 8.5,(5), $Min_Z(A)$ is a Boolean space, hence $Min_Z(A) = Min_F(A)$ (cf. Theorem 6.2,(1)). Then $Min_F(A)$ is a Boolean space, therefore, according to Theorem 7.5,(4), it follows that $A$ is a purified quantale.

\end{proof}

\end{document}